\title{The de Rham functor for logarithmic D-modules}
\author{Clemens Koppensteiner}
\setlist[enumerate,1]{label=(\roman*)}
\setlist[enumerate,2]{label=(\alph*)}
\setlist[enumerate,3]{label=(\Alph*)}
\setlist[enumerate,4]{label=\arabic*.}
\theoremstyle{plain}
\newtheorem{Theorem}{Theorem}[section]
\newtheorem*{Theorem*}{Theorem}
\newtheorem{Proposition}[Theorem]{Proposition}
\newtheorem{Corollary}[Theorem]{Corollary}
\newtheorem{Lemma}[Theorem]{Lemma}
\theoremstyle{definition}
\newtheorem{Definition}[Theorem]{Definition}
\theoremstyle{remark}
\newtheorem{Remark}[Theorem]{Remark}
\declaretheorem[name=Example,sibling=Theorem,qed={\lower-0.3ex\hbox{$\bigcirc$}}]{Example}
\newcommand\NN{\mathbb{N}}
\newcommand\ZZ{\mathbb{Z}}
\newcommand\CC{\mathbb{C}}
\newcommand\cat\mathbf
\newcommand\Hom{\operatorname{Hom}}
\newcommand\Ext{\operatorname{Ext}}
\newcommand\id{\operatorname{id}}   
\newcommand\dual\vee
\newcommand\catMod[2][]{\cat{Mod}_{#1}(#2)}     
\newcommand\opcat{{\mathrm{op}}}
\newcommand\RR{\mathbb{R}}
\newcommand\catD[2][]{\cat{D}^{#1}(#2)}
\newcommand\catDb[2][]{\cat{D}^{\mathrm{b}}_{#1}(#2)}
\newcommand\catDp[1]{\catD[+]{#1}}
\newcommand\supp{\operatorname{supp}}   
\newcommand\sheaf\mathcal
\newcommand\sheafHom{\underline{\Hom}}  
\newcommand\sO{\sheaf{O}}           
\newcommand\sT{\sheaf{T}}           
\newcommand\sD{\sheaf{D}}           
\newcommand\canon\omega             
\newcommand\rigidDC{\sheaf R}       
\newcommand\twist\mu                
\newcommand\DVerdier{\mathbb{D}}    
\newcommand\DSerre{\mathbb{D}^{\mathrm{Se}}}    
\newcommand\spf\bullet              
\newcommand\Sp{\operatorname{Sp}}   
\newcommand\Odc{\omega'}            
\newcommand\Ch{\operatorname{Ch}}   
\newcommand\Omod{\sO\mbox{-mod}}    
\newcommand\DR{\operatorname{DR}}   
\newcommand\tDR{\widetilde{\operatorname{DR}}}  
\newcommand\logM{M}                             
\newcommand\logMbar{\overline{\logM}}           
\newcommand\logMbargp{\logMbar^{\mathrm{gp}}}   
\newcommand\logK{K}                             
\newcommand\Clog{\CC^{\log}}        
\newcommand\Olog{\sO^{\mathrm{log}}}
\newcommand\tOlog{\tilde{\sO}^{\mathrm{log}}}
\newcommand\tDlog{\tilde{\sD}^{\mathrm{log}}}
\newcommand\tOmegalogi[1]{\tilde{\Omega}^{#1,\mathrm{log}}}
\newcommand\tDVerdierlog{\tilde{\mathbb{D}}^{\mathrm{log}}}
\newcommand\catSh[1]{\cat{Sh}(#1)}
\newcommand\catLogLocSys[1]{\cat{L}^\Lambda_{\mathrm{coh}}(\Clog_{#1})}
\newcommand\catClogMod[1]{\catMod{\Clog_{#1}}}      
\newcommand\catClogModTau[1]{\catMod[\tau]{\Clog_{#1}}}         
\newcommand\catClogModMon[1]{\catMod[\Lambda]{\Clog_{#1}}}      
\newcommand\catDbClogMod[1]{\catDb{\Clog_{#1}}}     
\newcommand\catDbfClogMod[1]{\cat{D}^{\mathrm{b}}_f(\Clog_{#1})}    
\newcommand\catDbClogModMon[1]{\catDb[\Lambda]{\Clog_{#1}}}
\newcommand\logdc{\omega}                           
\newcommand\tlogdc{\widetilde{\omega}}              
\newcommand\tDD{\widetilde{\mathbb{D}}}             
\newcommand\catCohOMod[1]{\cat{Coh}(\sO_{#1})}   
\newcommand\catDbOMod[1]{\cat{D}^{\mathrm{b}}(\sO_{#1})}    
\newcommand\catDMod[1]{\catMod{\sD_{#1}}}           
\newcommand\catDModOp[1]{\catMod{\sD^\opring_{#1}}} 
\newcommand\catHolDMod[1]{\cat{Hol}(\sD_{#1})}   
\newcommand\catCohDMod[1]{\cat{Coh}(\sD_{#1})}   
\newcommand\catQCDMod[1]{\cat{QCoh}(\sD_{#1})}   
\newcommand\catDbDMod[1]{\cat{D}^{\mathrm{b}}(\sD_{#1})}    
\newcommand\catDpDMod[1]{\cat{D}^{+}(\sD_{#1})}             
\newcommand\catDbqcDMod[1]{\cat{D}^{\mathrm{b}}_{\mathrm{qc}}(\sD_{#1})}    
\newcommand\catDmqcDMod[1]{\cat{D}^{\mathrm{-}}_{\mathrm{qc}}(\sD_{#1})}    
\newcommand\catDbcohDMod[1]{\cat{D}^{\mathrm{b}}_{\mathrm{coh}}(\sD_{#1})}  
\newcommand\catDbcohDModop[1]{\cat{D}^{\mathrm{b}}_{\mathrm{coh}}(\sD_{#1}^\opring)}    
\newcommand\catDbholDMod[1]{\cat{D}^{\mathrm{b}}_{\mathrm{h}}(\sD_{#1})}    
\newcommand\pt{\mathrm{pt}}                 
\newcommand\as[1]{\mathbb{A}^{#1}}          
\newcommand\ps[1]{\mathbb{P}^{#1}}          
\newcommand\ulps[1]{\ul{\mathbb{P}}^{#1}}   
\newcommand\logdim{\operatorname{logdim}}   
\newcommand\KN[1]{#1_{\mathrm{log}}}        
\newcommand\an{{\mathrm{an}}}               
\newcommand\Spec{\operatorname{Spec}}       
\newcommand\ul\underline
\newcommand\opring{{\mathrm{op}}}   
\newcommand\gr{\operatorname{gr}}   
\newcommand\gp{{\mathrm{gp}}}
\newcommand\from\leftarrow
\newcommand\isoto{\xrightarrow{\sim}}
\newcommand\res[2]{\mathchoice{\left.#1\right|_{#2}}{#1|_{#2}}{#1|_{#2}}{#1|_{#2}}} 
\newcommand\rquot[2]{
    \mathchoice%
        {\left.#1\kern-0.2ex\middle/\kern-0.3ex\lower0.7ex\hbox{$\displaystyle #2$}\right.}%
        {\left.#1\middle/#2\right.}%
        {\left.#1\middle/#2\right.}%
        {\left.#1\middle/#2\right.}%
}
\begin{document}

\begin{abstract}
    In the first part we deepen the six-functor theory of (holonomic) logarithmic D-modules, in particular with respect to duality and pushforward along projective morphisms.
    Then, inspired by work of Ogus, we define a logarithmic analogue of the de Rham functor, sending logarithmic D-modules to certain graded sheaves on the so-called Kato--Nakayama space.
    For holonomic modules we show that the associated sheaves have finitely generated stalks and that the de Rham functor intertwines duality for D-modules with a version of Poincar\'e--Verdier duality on the Kato--Nakayama space.
    Finally, we explain how the grading on the Kato--Nakayama space is related to the classical Kashiwara--Malgrange V-filtration for holonomic D-modules.
\end{abstract}

\maketitle

\setcounter{tocdepth}{1}
\tableofcontents

\section{Introduction}\label{sec:intro}

It is a well known observation that in many regards compact spaces behave better that non-compact ones.
Thus in algebraic geometry, using Hironaka's Theorem, one often finds oneself having to consider a smooth (compact) variety $\ul X$ together with a normal crossings divisor $D$.
Objects living on $\ul X$ then have to take this boundary into account.
A classical example of this are meromorphic connections which play an integral role in Deligne's Riemann--Hilbert correspondence.

In logarithmic geometry (regular) meromorphic connections get recast as integrable logarithmic connections on the smooth log variety $X = (\ul X, D)$.
That is, one constructs the logarithmic cotangent bundle $\Omega_X^1$ and defines a log connection to be a coherent sheaf $\sheaf M$ on $X$ together with a $\CC$-linear map $\nabla \colon \sheaf M \to \sheaf M \otimes_{\sO_X} \Omega_X^1$ satisfying the Leibniz rule.
One wishes to classify such objects via a logarithmic analogue of the Riemann--Hilbert correspondence.

To see why this takes some effort let us consider some examples of integrable connections on $\as 1$ with the log structure given by $D = \{ x = 0\}$.
For any complex number $\lambda$ one can define a connection $\nabla_\lambda$ on $\sO_X$ by $\nabla_\lambda(1) = \lambda \otimes \frac{dx}{x}$.
One notes that unlike the similarly defined connections on $\as 1 \setminus \{0\}$ (where $\nabla_\lambda \cong \nabla_\lambda'$ if $\lambda = \lambda' \bmod \ZZ$), in the logarithmic setting one has $\nabla_\lambda \ncong \nabla_{\lambda'}$ for any $\lambda \ne \lambda'$.

The first and most obvious problem when looking for a generalization of the Riemann--Hilbert correspondence to this setting is the fact that $\as 1$ does not support any non-trivial locally constant sheaves.
Thus Kato and Nakayama \cite{KatoNakayama} define an auxiliary topological space $\KN X$, living over $X$, on which the correspondence takes place.
This space, now usually called the Kato--Nakayama space of $X$, is essentially the real blowup of $X$ along $D$.
Thus in the example at hand one replaces the origin of $\as 1$ by a circle.

The space $\KN X$ thus has a non-trivial fundamental group and hence interesting locally constant sheaves.
However monodromy around the circle can only record $\lambda \mod \ZZ$.
Thus Ogus further enhances $\KN X$ with a sheaf of gradings $\Lambda$ \cite{Ogus}.
In our example this sheaf is locally constant with stalk $\CC$ on the added circle over the origin and $0$ elsewhere.
Then the sheaf corresponding to the connection $\nabla_\lambda$ on $\sO_X$ would be the local system with monodromy $e^{2\pi i \lambda}$ placed in degree $\lambda$.

However even this is not quite sufficient, as logarithmic connections are not necessarily vector bundles.
For example the same formula as above defines connections on $\sO_X / (x^2)$.
In order to record the possibly interesting action of $x$ at the origin Ogus further enhances $\KN X$ with a $\Lambda$-graded sheaf of rings $\Clog_X$.
For $\as 1$ this sheaf has stalk $\CC[t]$ with $\deg t = -1$ on the circle over the origin and is the constant sheaf $\CC$ elsewhere (the restriction map is given by $t \mapsto 1$).
Thus the target of the Riemann--Hilbert correspondence should be $\Lambda$-graded $\Clog_X$-modules on $\KN X$, with some kind of \enquote{coherency} condition \cite[Definition~3.2.4]{Ogus}.
For example the connections $\nabla_\lambda$ on $\sO_X$ correspond to $\Clog_X \otimes_\CC \sheaf F$, where $\sheaf F$ is the local system with monodromy $e^{2\pi i \lambda}$ placed in degree $\lambda$ as before.

In classical theory, the Riemann--Hilbert correspondence has long been generalized to (regular holonomic) D-modules.
It is thus an obvious question whether the above theory can be generalized to logarithmic D-modules.

The study of holonomic log D-modules was started in \cite{KT} where in particular one examined the interaction of holonomicity and duality.
To have any hope of obtaining a generalization of the Riemann--Hilbert correspondence, one needs a good understanding of the six functor formalism for both sides of the correspondence.
For graded sheaves on the Kato--Nakayama space, this was done via the new theory of \enquote{graded topological spaces} explored in \cite{Koppensteiner:graded_top}, where one in particular generalized Poincar\'e--Verdier duality to this setting.
For logarithmic D-modules this is done in the first part of the present paper.

\subsection{Functors on D-modules}

Let $f\colon X \to Y$ be a morphism of smooth log varieties.
The definition of the pushforward $f_\spf$ was already given in \cite{KT}, and is virtually identical to the corresponding definition for ordinary D-modules.
We study some properties of this pushforward, and in particular prove in Section~\ref{sec:pushforward_hol} that if $f$ is projective then $f_\spf$ preserves holonomicity.

The definition of the pullback $f^!$ is more subtle.
It is constricted by two factors: If $f$ is proper, then $f^!$ should be right adjoint to $f_{\spf}$, and in any case it should match the $!$-pullback for graded topological spaces.
In particular for $p\colon X \to \pt$ the structure map, $p^!\sO_{\pt}$ should correspond via the logarithmic Riemann--Hilbert correspondence to a certain subsheaf $\tlogdc_{\KN X}$ of the dualizing complex of the Kato--Nakayama space viewed as a graded topological spaces.
We compute this dualizing complex in Section~\ref{sec:Clog}, where it turns out that it is a complex of the type of coherent $\Clog$-modules considered by Ogus.
Thus we know that the Riemann--Hilbert correspondence of Ogus must match any proposed definition of $p^!\sO_{\pt}$ with $\tlogdc_{\KN X}$.
It follows that $p^!$ is not simply a shift of the naive pullback $f^*$ as is the case in the classical theory.

The definition of $f^!$ is given in Section~\ref{sec:pullback}.
It is inspired by the computation of the dualizing complex done in Section~\ref{sec:duality}.

In \cite{KT} one only uses the existence and properties of a rigid dualizing complex for $\sD_X$.
For further results it is useful to have an explicit formula.
Generalizing work of Chemla \cite{Chemla:2004:RigidDualizingComplexForQuantumEnvelopingAlgebras}, we show that the rigid dualizing complex is given by $\sD_X \otimes_{\sO_X} \twist_X$, where $\twist_X$ is the left $\sD_X$-module corresponding to the Grothendieck dualizing complex of $\ul X$ (which is naturally a right $\sD_X$-module).
One notes that in general the underlying classical variety $\ul X$ may have singularities, so that $\twist_X$ may be nontrivial.
We also show that duality commutes with pushforward along projective morphisms (Theorem~\ref{thm:proper_duality}).

\subsection{The logarithmic de Rham functor}

In order to construct the Riemann--Hilbert correspondence for logarithmic connections, Ogus introduces an additional $\Lambda$-graded sheaf of rings on $\KN X$, the enhanced structure sheaf $\tOlog_X$ \cite[Section~3.3]{Ogus}.
If $\tau\colon \KN X \to X$ is the projection and $\sheaf F$ is any $\sO_X$-module, one sets $\tau^*\sheaf F = \tOlog_X \otimes_{\tau^{-1}\sO_X} \tau^{-1}\sheaf F$.
In Section~\ref{sec:Olog} we define sheaf of differential operators $\tDlog_X$ on $\KN X$ so for any $\sD_X$-module $\sheaf M$ the pullback $\tau^*\sheaf M$ is naturally a $\tDlog_X$-module.

In analogy with the classical definition, the logarithmic de Rham functor is given by
\[
    \tDR_X(\sheaf M) = \tau^*\canon_X \otimes_{\tDlog_X} \tau^*\sheaf M.
\]
If $\sheaf M$ is $\sO_X$-coherent this agrees up to a shift with \cite[Theorem~3.4.2]{Ogus}.
We prove the following important finiteness theorem for $\tDR$.

\begin{Theorem*}
    Let $\sheaf M \in \catDbholDMod{X}$ be holonomic.
    Then the stalks of $\tDR_X(\sheaf M)$ are finitely generated over the corresponding stalk of $\Clog_X$.
\end{Theorem*}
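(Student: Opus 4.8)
The plan is to reduce the statement, which is local on $\KN X$, to a generating family of $\catDbholDMod{X}$ and then to treat the generators by hand. Fix a point $\tilde x \in \KN X$ lying over $x \in X$ and work in a neighbourhood where $X$ is $\as n$ with the normal crossings divisor $\{x_1\cdots x_k = 0\}$. Since $\tDR_X(\sheaf M)$ is computed by the finite logarithmic de Rham complex of $\tau^*\sheaf M$, the functor $\tDR_X$ is triangulated, and the full subcategory of objects whose stalk at $\tilde x$ is finitely generated over $\Clog_{X,\tilde x}$ is closed under shifts, cones, and direct summands: the stalk $\Clog_{X,\tilde x}$ is a (graded) polynomial $\CC$-algebra, hence Noetherian, so finite generation propagates through the long exact cohomology sequence of any triangle. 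It therefore suffices to verify the conclusion on a set of generators.

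The generators I would use are the direct images $f_\spf \sheaf L$, where $f\colon W \to X$ is a projective morphism of smooth log varieties and $\sheaf L$ is an $\sO_W$-coherent integrable log connection. That these generate $\catDbholDMod{X}$ (under shifts, cones, and direct summands) is the logarithmic analogue of the classical statement: using Hironaka's resolution one resolves the support of a holonomic module, observes that its generic part along the top stratum of the support is an $\sO$-coherent connection, and then inducts on the dimension of the support, invoking that projective pushforward preserves holonomicity (Section~\ref{sec:pushforward_hol}). For the base case $\sheaf L$ itself I would invoke the identification recalled above of $\tDR_W$ on $\sO_W$-coherent modules with the Riemann--Hilbert image of \cite[Theorem~3.4.2]{Ogus}, which is by construction one of Ogus's coherent $\Clog_W$-modules and hence has finitely generated stalks; the case $\sheaf L = \sO_W$ is exactly Ogus's logarithmic Poincar\'e lemma $\tDR_W(\sO_W) \simeq \Clog_W$ (up to shift).

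It then remains to propagate finiteness through $f_\spf$. The key input I need is a base-change compatibility
\[ \tDR_X(f_\spf \sheaf L) \simeq R(\KN f)_*\, \tDR_W(\sheaf L), \]
where $\KN f\colon \KN W \to \KN X$ is the induced map of Kato--Nakayama spaces. Granting this, finiteness for $\sheaf L$ holds by the base case, and it remains only to check that $R(\KN f)_*$ preserves finite generation of the $\Clog$-stalks. Since $f$ is projective, $\KN f$ is proper, so the stalk of $R(\KN f)_*\tDR_W(\sheaf L)$ at $\tilde x$ is the cohomology of $\tDR_W(\sheaf L)$ over the compact fibre $(\KN f)^{-1}(\tilde x)$; by properness and the graded Poincar\'e--Verdier formalism of \cite{Koppensteiner:graded_top} this is finitely generated over the Noetherian ring $\Clog_{X,\tilde x}$.

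The main obstacle is precisely the pushforward compatibility together with this properness estimate. I expect the compatibility to follow by expressing $\tDR_X$ through the $!$-pullback $p^!\sO_{\pt}$, whose image is the subsheaf $\tlogdc_{\KN X}$ computed in Section~\ref{sec:Clog}, and then applying the fact that duality commutes with projective pushforward (Theorem~\ref{thm:proper_duality}); the projection formula reduces the general statement to the case $\sheaf L = \sO_W$. The genuine subtlety is that the underlying classical variety of $W$ may be singular, so that the twist $\twist_W$ is nontrivial and the comparison is not a literal base change but must be mediated by the explicit rigid dualizing complex $\sD_W \otimes_{\sO_W}\twist_W$ of Section~\ref{sec:duality}. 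Once this compatibility is established and matched with the graded proper-pushforward finiteness, the d\'evissage closes and the theorem follows.
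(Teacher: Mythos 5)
Your overall strategy---d\'evissage to $\sO$-coherent connections, Ogus's finiteness for those, and propagation through a proper pushforward---is the right flavor, but it rests on two steps that are genuine gaps in the logarithmic setting. The first and most serious is the generation claim: that $\catDbholDMod{X}$ is generated under shifts, cones and summands by objects $f_\spf\sheaf L$ with $f$ projective and $\sheaf L$ an integrable log connection. The classical proof of this (resolve the support, restrict to an open stratum, induct on $\dim\supp$) needs to know that the extension of a connection across a divisor (some avatar of $j_*$ or $\sheaf M(*Z)$) stays holonomic, and in the log setting this is only available in very restricted situations (the paper proves a version only when $\logMbargp_X$ has constant rank, Lemma~\ref{lem:single_stratum_hol}); moreover the resolutions one would need must satisfy the injectivity condition on characteristics required by Theorem~\ref{thm:projective_pf_preserves_holonomic} and Proposition~\ref{prop:DR_commutes_with_proper_pushforward}, which is not automatic. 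The paper avoids this entirely: it inducts on $\dim X$ rather than on the support, uses Weierstrass preparation (Lemma~\ref{lem:straighten_divisor}) to present $X$ locally as $X'\times D^1$ with the singular locus of $\sheaf M$ finite over $X'$, extends $\sheaf M$ to $X'\times\ps 1$, and then plays the open/closed triangle for the divisor against the pushforward along the projection to $X'$. No generation statement is ever needed.

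The second gap is your ``properness estimate.'' Properness of $\KN f$ alone does not give finite generation of the stalks of $R(\KN f)_*$: cohomology of a compact fiber with coefficients in a sheaf with finitely generated stalks need not be finitely generated without a constructibility hypothesis, and on top of that the grading group changes along $\KN f$ (the fiber carries $\Clog_W$ graded by $\Lambda_W$, while the pushforward only retains the $\Lambda_X$-graded part). The paper has to introduce the notion of poorly constructible sheaves and prove Lemma~\ref{lem:proper_pf_and_fg}, whose algebraic heart is Lemma~\ref{lem:fg_and_grading_restriction} on restriction of gradings; both are invisible in your sketch. Finally, the pushforward compatibility you flag as the main obstacle is proved in the paper (Proposition~\ref{prop:DR_commutes_with_proper_pushforward}) not via duality but directly from the projection formula and the $\sO$-module comparison theorem of Gray (Theorem~\ref{thm:tau_and_O-pushfworward}); deriving it from Theorem~\ref{thm:proper_duality} as you suggest would be delicate, since duality on the $\Clog$ side is only controlled for holonomic inputs via Theorem~\ref{thm:DR_and_duality}, whose proof in turn runs through the finiteness theorem's induction.
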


Further, as a check that this new de Rham functor respects the six functor formalism we show the following fundamental result.

\begin{Theorem*}
    The functor $\tDR_X$ intertwines (rigid) duality for logarithmic D-modules with Poincar\'e--Verdier duality for the graded topological space $\KN X$.
\end{Theorem*}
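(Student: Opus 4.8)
The plan is to produce a natural isomorphism
\[
    \tDR_X(\DD_X \sheaf M) \isoto \DD_{\KN X}(\tDR_X \sheaf M)
\]
for $\sheaf M \in \catDbholDMod{X}$, where $\DD_X$ is rigid duality and $\DD_{\KN X}$ is Poincar\'e--Verdier duality on the graded topological space $\KN X$. First I would write both dualities explicitly: using the formula for the rigid dualizing complex established above, $\DD_X \sheaf M = R\sheafHom_{\sD_X}(\sheaf M, \sD_X \otimes_{\sO_X} \twist_X)$ (with the usual side-changing producing a left module), while $\DD_{\KN X}(\sheaf F) = R\sheafHom_{\Clog_X}(\sheaf F, \tlogdc_{\KN X})$ with $\tlogdc_{\KN X}$ the dualizing complex computed in Section~\ref{sec:Clog}.

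I would then factor $\tDR_X$ into the two operations from which it is built and prove the compatibility in two steps. Since $\tau^*\sheaf M = \tOlog_X \otimes_{\tau^{-1}\sO_X} \tau^{-1}\sheaf M$ and $\tDlog_X$ arises from $\tau^{-1}\sD_X$ by the same enhancement, the first step is to show that the enhanced pullback $\tau^*$ intertwines $\DD_X$ with the relative duality $R\sheafHom_{\tDlog_X}(-, \tau^*(\sD_X \otimes_{\sO_X} \twist_X))$; this is a base-change statement for $R\sheafHom$ and reduces to flatness of $\tau^{-1}\sD_X \to \tDlog_X$, so that pullback commutes with Hom out of a good resolution of $\sheaf M$. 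The second step is that the de Rham complex functor $\tau^*\canon_X \otimes_{\tDlog_X}(-)$ converts this relative duality into $\DD_{\KN X}$. Its \emph{heart} is the identity
\[
    \tDR_X(\sD_X \otimes_{\sO_X} \twist_X) \isoto \tlogdc_{\KN X}
\]
(up to the standard shift), i.e.\ that $\tDR_X$ carries the rigid dualizing complex of $\sD_X$ to the dualizing complex of the Kato--Nakayama space; combined with the fact that tensoring with $\tau^*\canon_X$ over $\tDlog_X$ realizes fibrewise Poincar\'e duality along $\tau$ (whose fibres are compact real tori), this gives the desired intertwining.

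To verify that the second step is a genuine isomorphism I would argue locally. By the finiteness theorem above both sides have coherent stalks over $\Clog_X$, so the statement is local and biduality is available; it therefore suffices to check the map on stalks. Using Kashiwara's equivalence and the local structure theory of holonomic log D-modules, I'd reduce stalkwise to an integrable log connection on a smooth log subvariety, where Ogus's Riemann--Hilbert correspondence provides an explicit model for $\tDR_X$. There one checks directly that dualizing the local system of monodromy $e^{2\pi i \lambda}$ placed in degree $\lambda$ produces monodromy $e^{-2\pi i \lambda}$ in degree $-\lambda$, which is matched on both sides by the grading and shift conventions built into $\tlogdc_{\KN X}$.

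The hard part will be exactly this second step, and specifically the local verification, which combines three delicate ingredients: the nontrivial factor $\twist_X$ that appears when $\ul X$ is singular, the relative Poincar\'e duality along the torus fibres of $\tau$, and the bookkeeping of the $\Lambda$-grading. Matching the grading shifts of Poincar\'e--Verdier duality for graded topological spaces from \cite{Koppensteiner:graded_top} with the eigenvalue data recorded on the D-module side is where the real work lies; the explicit computation of $\tlogdc_{\KN X}$ in Section~\ref{sec:Clog} is precisely what makes this matching tractable.
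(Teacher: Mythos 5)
Your overall architecture --- factor $\tDR_X$ through $\tau^*$ and the de Rham complex, identify where the dualizing complex goes, and reduce the isomorphism statement to integrable connections handled by Ogus's Riemann--Hilbert correspondence --- is the same as the paper's. But two of your steps contain genuine gaps. First, the ``heart'' identity is misstated: with the left-multiplication structure one computes
\[
    \tDR_X(\sD_X\otimes_{\sO_X}\twist_X)\cong\tau^*\canon_X\otimes_{\tOlog_X}\tau^*\twist_X\cong\tau^*\Odc_{\ul X},
\]
a complex of $\tOlog_X$-modules which is not $\tlogdc_{\KN X}$ in any shift. The identity actually needed, and proved from Corollary~\ref{cor:complex_for_twist}, Proposition~\ref{prop:KN_dc} and exactness of $\tDR_X$ on connections, is $\tDR_X(\twist_X)\cong\tlogdc_{\KN X}$; the $\sD_X$ factor must first be consumed by $\sheafHom_{\tDlog_X}(\tOlog_X,-)$ as in Lemma~\ref{lem:DR_as_Hom} before comparing with the topological dualizing complex. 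Relatedly, no fibrewise Poincar\'e duality along the tori $\tau_X^{-1}(x)$ enters: the duality on the target is $\Clog_X$-linear, and the fibre topology is absorbed into the monodromy condition $\Psi$ already built into $\tlogdc_{\KN X}$.

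The more serious gap is the reduction to integrable connections ``using Kashiwara's equivalence and the local structure theory of holonomic log D-modules.'' Kashiwara's equivalence fails for logarithmic D-modules (it is only recovered within a single log stratum, Lemma~\ref{lem:single_stratum_hol}, and $i^!i_\spf\neq\id$ already for the log point in $\as 1$), and holonomic log D-modules admit no stalkwise d\'evissage into connections on smooth log subvarieties. The reduction that actually works is the induction on $\dim X$ from the proof of Theorem~\ref{thm:finiteness}: locally write $X\cong X'\times D^1$ with the singular-support divisor $Z$ finite over $X'$ (Lemma~\ref{lem:straighten_divisor}), extend $\sheaf M$ to $X'\times\ps 1$, and play the open--closed triangle for $U=X\setminus Z$ against commutation of $\tDR$ with projective pushforward (Proposition~\ref{prop:DR_commutes_with_proper_pushforward}) and preservation of holonomicity (Theorem~\ref{thm:projective_pf_preserves_holonomic}). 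Without this machinery your argument does not reach general holonomic modules. Finally, in the connection case the check should not be a rank-one monodromy computation (connections need not split into such, cf.\ $\sO_X/(x^2)$): the paper instead uses Ogus's identification $\tau^*\sheaf M\cong\tOlog_X\otimes_{\Clog_X}\tDR_X(\sheaf M)$ and tensor--hom adjunction, which settles all cases at once.
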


In the process of proving these theorems we also show that $\tDR_X$ commutes with pushforward along projective morphisms whenever possible.
(We note that such pushforward can only preserve holonomicity when the log structure on the target is \enquote{no worse then the log structure on the source,} see Theorem~\ref{thm:projective_pf_preserves_holonomic}.)

In the final section we consider a smooth variety $\ul X$ together with a smooth divisor $D = \{ t =0\}$.
If $\sheaf M$ is a holonomic $\sD_{\ul X}$-module, then one can endow it with the Kashiwara--Malgrange V-filtration given by the eigenvalues of the action of $t\frac{\partial}{\partial t}$.
On the other hand $\tDR_X(\sheaf M)$ can be filtered as a $\Clog_X$-module by the $\Lambda$-grading.
We show that $\tDR_X$ exactly matches the V-filtration with the $\Lambda$-grading filtration.
This gives a nice interpretation for the appearance of the grading sheaf $\Lambda$.
In future work we are interested in extending the classical V-filtration to a multi-filtration for holonomic $\sD_X$-modules on arbitrary smooth log varieties.
This filtration would then conjecturally match the filtration by $\Lambda$-grading on the de Rham functor.

\subsection{Acknowledgements}

The author would like to thank David Ben-Zvi, Pierre Deligne and Dmitry Vaintrob for enlightening conversations regarding this project and in particular Mattia Talpo for answering many questions about logarithmic geometry.
He would also like to thank an anonymous referee for providing many valuable comments and suggestions.
The author was supported by the National Science Foundation under Grant No.~DMS-1638352.

\subsection{Conventions}

We will need to consider both complex algebraic and analytic varieties.
In order to present a unified perspective whenever possible, the word \enquote{variety} will always encapsulate both settings unless otherwise specified.
We will be mostly concerned with derived categories of D-modules and other sheaves.
Hence, unless noted otherwise, all functors will be derived and we will not use the signifiers $\mathbb{L}$ and $\mathbb{R}$.

\section{Logarithmic D-modules}

In this paper we are concerned with D-modules on a smooth, potentially idealized, complex log variety $X$.
For algebraic $X$ these were introduced in \cite{KT}.
In this section we will review the basic constructions of this theory and add some complements.
We refer to \cite[Section~2.1]{KT} for a brief introduction into the required notions of logarithmic geometry.
The reader unfamiliar with \emph{idealized} log varieties may safely assume that the idealized structure of the considered log varieties is trivial.
We have included the idealized case, as it does not impose any additional challenges and is useful for inductive arguments.
For the benefit of the reader who is not interested in the formalism of logarithmic geometry, we will summarize in Example~\ref{ex:snc} the definition in the case that the log structure is given by a simple normal crossings divisor on a smooth variety.

In order to fix notation, we will always denote the sheaf of monoids defining the logarithmic structure on $X$ by $\logM_X$.
Associated to this sheaf we have the characteristic sheaf $\logMbar_X$ and its associated sheaf of groups $\logMbargp_X$.
The latter is a sheaf of torsion free abelian groups and we will write $r_X$ for its generic rank.
If $X$ is smooth log variety (with trivial idealized structure), then $r_X = 0$.
The rank of $\logMbargp_X$ induces a filtration of $X$ by closed subvarieties
\[ 
    X = X^0 \supseteq X^1 \supseteq \dotsb \supseteq X^{\dim X} \supseteq X^{\dim X + 1} = \emptyset,
\]
where 
\[
    X^i = \{ x \in X : \operatorname{rk}(\logMbargp_{X,x}) \ge i + r_X \}.
\]
If $X$ is idealized, its sheaf of ideals will be denoted by $\logK_X \subseteq \logM_X$.
The underlying ordinary variety of $X$ will be denoted by $\ul X$.

For any closed subset $Z$ of $X$ we define the logarithmic dimension of $Z$ to be
\begin{equation}\label{eq:logdim}
    \logdim Z = \max_{0\le k \le \dim X}\bigl(\dim(Z \cap X^k) + k\bigr) + r_X.
\end{equation}
As $X$ is smooth its sheaf of logarithmic $1$-forms $\Omega^1_X$ is locally free of rank $\logdim X$.
Its dual $\sT_X$ generates the sheaf of logarithmic differential operators $\sD_X$.
We recall from \cite[Corollary~3.4]{KT} the following local description of $\sD_X$.

Fix a closed point $p$ of $X$.
Locally around $p$ we can find a classically smooth morphism $X \to \mathsf{A}_{P,K}$ for $P = \logMbar_p$ and some ideal $K \subseteq P$, where $\mathsf{A}_{P,K} = \Spec k[P]/\langle K \rangle$.
Hence étale locally around $p$ we can identify $X$ with $\as n \times \mathsf{A}_{P,K}$ for some $n$.
Logarithmic coordinates of $X$ at $p$ are given by the coordinates $x_1, \dotsc, x_n$ of $\as n$ (corresponding to elements of $\sO_{X,p}$ giving a system of parameters of $\sO_{S,p}$, where $S$ is the locally closed log stratum through $p$), and by a map $u\colon \logMbar_p \to \sO_{X,p}$ lifting to $\logM_p \to \sO_{X,p}$.

\begin{Proposition}\label{prop:D_X_in_coords}
    Let $X$ be a smooth idealized log variety, $p \in X$ a closed point, and fix logarithmic coordinates of $X$ at $p$ as above.
    Then \'etale locally around $p$ the sheaf $\sD_X$ can be described as the (non-commutative) algebra generated by $x_i$ for $0\leq i\leq n$, by $t^m$ with $m \in \logMbar_{X,p}$, by symbols $\partial_i$ for $0\leq i \leq n$ and $\partial_{m_i}$, where $m_1,\hdots, m_k$ are elements of $\logMbar_p$ that form a basis of $\logMbargp_{X,p}$, subject to all the following relations:
    \begin{align*}
        t^k = 0 & \;\;\;\; \text{ for all } \;\; k \in K_{X,p}, \\
        [x_i,x_j]=0 & \;\;\;\;  \text{ for all } \;\; 0\leq i,j\leq n,\\
        [t^m,t^{m'}]=0 &  \;\;\;\;  \text{ for all }  \;\; m,m' \in \logMbar_{X,p}, \\
        t^{m+m'}=t^m\cdot t^{m'} &  \;\;\;\;  \text{ for all }  \;\; m,m'\in \logMbar_{X,p},\\
        [\partial_i,\partial_j]=0 & \;\;\;\;  \text{ for all } \;\;  1\leq i,j\leq n,\\
        [\partial_i,\partial_{m_j}]=0 & \;\;\;\;  \text{ for all } \;\; 0\leq i\leq n  \; \text{ and } \; 0\leq j\leq k,\\
        [\partial_{m_i},\partial_{m_j}]=0 & \;\;\;\;  \text{ for all } \;\;  1\leq i,j\leq k,\\
        [\partial_i, x_j]=\delta_{ij} & \;\;\;\;  \text{ for all } \;\; 0\leq i,j\leq n, \\
        [\partial_{m_i},t^{m}]= a_i t^m & \;\;\;\;  \text{ for all } \;\; 0\leq i\leq k \; \text{ and } \; m=\, \scriptstyle\sum_i \displaystyle a_i m_i \in \logMbar_{X,p}, \\
        [\partial_i, t^m]=0 & \;\;\;\;  \text{ for all } \;\; 0\leq i\leq n \; \text{ and } \; m\in \logMbar_{X,p},\\
        [\partial_{m_i}, x_j]=0 & \;\;\;\;  \text{ for all } \;\; 0\leq i\leq k  \; \text{ and } \; 0\leq j\leq n.\\
    \end{align*}
\end{Proposition}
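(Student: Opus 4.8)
The plan is to reduce to an explicit \'etale-local model for $X$ and then identify $\sD_X$ through its order filtration together with a Poincar\'e--Birkhoff--Witt argument. First I would invoke the structure theory of smooth (idealized) log schemes: \'etale locally around $p$ there is a chart exhibiting $\ul X$ as smooth in the classical sense over the affine toric model $\Spec \CC[\logMbar_{X,p}]$ (modulo the monomial ideal generated by $K$ in the idealized case), with the stratum $S$ through $p$ cut out by the monomial functions $t^m$. Thus \'etale locally $\sO_X$ is identified with $\sO_S$ tensored with $\CC[\logMbar_{X,p}]$ modulo that ideal, the chosen regular sequence $x_1,\dotsc,x_n$ provides coordinates in the smooth direction $S$, and the $t^m$ provide the monoid-algebra generators in the toric direction. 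This makes transparent the $\sO_X$-algebra relations, in particular $t^{m+m'}=t^m t^{m'}$, commutativity of the $x_i$ and of the $t^m$, and the nilpotency $t^k=0$ for $k\in K$.

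Next I would compute $\Omega^1_X$ and its dual $\sT_X$ in this chart. Since $X$ is smooth, $\Omega^1_X$ is locally free on the ordinary differentials $dx_i$ together with the logarithmic differentials $d\log t^{m_j}$ for the basis $m_1,\dotsc,m_k$ of $\logMbargp_{X,p}$; dually $\sT_X$ is free on the ordinary vector fields $\partial_i$ and the logarithmic Euler operators $\partial_{m_j}$. A direct check of the action of these derivations on $\sO_X$ then establishes every bracket relation in the statement: $\partial_i$ acts as the ordinary partial derivative, giving $[\partial_i,x_j]=\delta_{ij}$ and $[\partial_i,t^m]=0$, while $\partial_{m_i}$ acts on $t^m=t^{\sum_j a_j m_j}$ by the integer weight $a_i$, yielding $[\partial_{m_i},t^m]=a_i t^m$ and $[\partial_{m_i},x_j]=0$. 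Commutativity of the various $\partial$ among themselves follows because these derivations span an abelian Lie subalgebra in these coordinates.

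Having verified that the listed generators satisfy the listed relations, there is a canonical surjection from the abstract algebra $A$ presented by these generators and relations onto $\sD_X$. To prove it is an isomorphism I would compare associated graded algebras for the order filtration. On the $\sD_X$ side the symbol map identifies $\gr\sD_X$ with $\operatorname{Sym}_{\sO_X}\sT_X$, which is a free $\sO_X$-module on ordered monomials in the $\partial_i$ and $\partial_{m_j}$ because $\sT_X$ is locally free. On the $A$ side, filtering by total degree in the $\partial$-symbols and using the commutator relations to move every $\partial$ to the right produces exactly these ordered monomials as a spanning set, so $\gr A$ surjects onto $\operatorname{Sym}_{\sO_X}\sT_X$ compatibly with the symbol map. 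A Poincar\'e--Birkhoff--Witt normal-form argument shows these ordered monomials are moreover $\sO_X$-linearly independent in $A$, so the surjection on associated gradeds is an isomorphism, whence $A\isoto\sD_X$.

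The main obstacle is the bookkeeping in the toric direction. Unlike the classical Weyl algebra, here the base ring in the monoid directions is $\CC[\logMbar_{X,p}]$ modulo the idealized relations, which is typically non-reduced, and the operators $\partial_{m_i}$ act on it by nontrivial weights rather than killing the generators. Ensuring that the normal form survives in the presence of $t^{m+m'}=t^m t^{m'}$ and $t^k=0$, and that commuting $\partial_{m_i}$ past a monomial $t^m$ only produces the scalar correction $a_i t^m$ without disturbing freeness over $\sO_X$, is the delicate point; I would handle it by working with the weight grading induced by the $\logMbargp_{X,p}$-action and checking that the order filtration is compatible with it.
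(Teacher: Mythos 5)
This proposition is not proved in the paper at all: it is recalled verbatim from \cite[Corollary~3.4]{KT}, so there is no internal argument to compare against. Your reconstruction is correct and follows the standard route (which is also essentially how the cited source proceeds): Kato's structure theorem reduces to the local model, a smooth scheme times $\Spec\CC[\logMbar_{X,p}]$ modulo the monomial ideal generated by $K$, where $\sT_X$ is free on the $\partial_i$ and the logarithmic Euler operators $\partial_{m_j}$, the bracket relations are verified directly, and the presented algebra is identified with the enveloping algebroid via the order filtration. One small streamlining: rather than establishing $\sO_X$-linear independence of the ordered monomials inside the abstractly presented algebra $A$ (which would require a diamond-lemma or faithful-representation argument), it suffices to note that they span $A$ by the straightening you describe and that their images in $\sD_X$ are independent by Rinehart's Poincar\'e--Birkhoff--Witt theorem for the enveloping algebroid of the locally free Lie algebroid $\sT_X$; together with surjectivity this already forces $A \isoto \sD_X$, and the argument is insensitive to the non-reducedness of $\sO_X$ in the idealized case since PBW only uses freeness of $\sT_X$ over $\sO_X$.
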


If $X$ has trivial idealized structure, then the map $u$ is injective and we can interpret $\partial_{m_i}$ as \enquote{$\partial \log(u(m_i)) = u(m_i) \frac{\partial}{\partial(u_{m_i})}$.}

The sheaf $\sD_X$ has a canonical filtration by the degree of a differential operator.
The associated graded $\gr \sD_X$ is canonical isomorphic to the push-forward $\pi_*\sO_{T^*X}$ of the structure sheaf of the log cotangent space $T^*X$.
As for any filtered sheaf of rings with commutative associated graded, this is used to define the characteristic variety $\Ch(\sheaf M)$ of a coherent $\sD_X$-modules $\sheaf M$ as conical closed subset of the log cotangent space $T^*X$ \cite[Appendix~D]{HottaTakeuchiTanisaki:2008:DModulesPerverseSheavesRepresentationTheory}.
Such $\sheaf M$ is called holonomic if the logarithmic dimension of each component of $\Ch(\sheaf M)$ is equal to $\logdim X$ \cite[Definition~3.22]{KoppensteinerTalpo:2019:HolonomicPerverseLogarithmicDModules}.

\begin{Example}\label{ex:snc}
    A common situation is that the log variety $X$ is given by a simple normal crossings $D$ on a smooth variety $\ul X$.
    In this case we can locally pick coordinates $z_1,\dotsc, z_n$ such that $D$ is given by $z_1\dotsm z_\ell = 0$ for some $0 \le \ell \le n$.
    
    In this situation $X^0 = X$ and $X^i$ is given by the natural stratification of $D$, i.e.
    \[
        X^k = \bigcup_{i_1,\dotsc,i_k} \{ x_{i_1} = \dotsb = x_{i_k} = 0 \}
        \quad\text{ with } i_j \in \{1,\dotsc,\ell\} \text{ distinct.}
    \]
    On each $X^k \setminus X^{k-1}$ the characteristic sheaf $\logMbar_X$ is constant with stalk $\NN^k$ and similarly $\logMbargp_X$ has stalks $\ZZ^k$.
    The idealized structure is trivial, i.e.~$\logK_X = 0$.

    The sheaf of (log) differential operators $\sD_X$ is the subalgebra of the sheaf of (ordinary) differential operators $\sD_{\ul X}$ generated by $z_i\frac{\partial}{\partial z_i}$ for $1 \le i \le \ell$, by $\frac{\partial}{\partial z_i}$ for $\ell < i \le n$, as well as $\sO_X$.
    Thus in particular one has the commutator relation $[z_i\frac{\partial}{\partial z_i}, z_i] = z_i$ for $1 \le i \le \ell$.
    (Log) 1-forms $\Omega_X^1$ are the rank $n$ bundle with sections given by $\frac{dz_i}{z_i}$ for $1 \le i \le \ell$ and $dz_i$ for $i > \ell$.
    Its total space is the (log) cotangent space $T^*X$ (which one should be careful not to confuse with the (ordinary) cotangent space $T^*\ul X$ of $\ul X$).

    The log structure of $T^*X$ is obtained by pulling back the divisor $D$ along the projection $T^*X \to \ul X$.
    The log dimension of closed subsets of $T^*X$ is given as in \eqref{eq:logdim} with $r_{T^*X} = 0$.

    We will sometimes need to consider D-modules on (components) of $X^k$ with the so-called induced idealized log structure.
    This means that $\logMbar_{X^k} = \res{\logMbar_X}{X^k}$, $\sD_{X^k} = \sO_{X^k} \otimes_{\sO_X} \sD_X$, $\Omega_{ X^k}^1 = \sO_{X^k} \otimes_{\sO_X} \Omega_{X}^1$ and $T^*X^k = \res{T^*X}{X^k} = X^k \times_X T^*X$.
    The indexing of the filtration gets shifted so that $(X^k)^i = X^{i+k}$ and one takes $r_{X^k} = r_{T^*X^k} = k$ in \eqref{eq:logdim} (or alternatively one considers $Z$ as a subset of $T^*X$ instead).
\end{Example}

\subsection{Logarithmic D-modules on analytic varieties}

In \cite{KT} one only considered D-modules on complex algebraic varieties.
While this is the main case of interest to us, in order to define the de Rham functor, we need to also consider the corresponding analytic varieties.

For a smooth analytic log variety $X$, one can define the sheaf of rings $\sD_X$ and the various categories of $\sD_X$-modules in an analogous way to the algebraic setting.
As in the classical case there are a few subtle differences between the algebraic and analytic settings.
While $\sD_X$ is still a noetherian coherent sheaf of rings, Lemma~3.8 of \cite{KT} does not hold in the analytic setting and hence coherent $\sD_X$-modules in general only locally admit a good filtration.
As holonomicity is a local condition, one can still talk about the subcategory of holonomic $\sD_X$-modules, and the log Bernstein inequality of \cite[Theorem~3.21]{KT} continues to hold.

In \cite{KT}, duality for log D-modules is defined via a so-called rigid dualizing complex (see Section~\ref{sec:duality} for the concept of a rigid dualizing complex).
In the algebraic setting the global existence of such a complex is guaranteed by the main result of \cite{YekutieliZhang:2006:DualizingComplexesPerverseSheavesOnNoncommutativeRingedSchemes}.
Unfortunately, the proof of this result fails for analytic varieties, as it uses noetherian induction and hence requires the underlying topological space to be noetherian.
However the existence of a rigid dualizing complex is still guaranteed locally, which suffices for most arguments.
(We will also explicitly construct a global rigid dualizing complex in Section~\ref{sec:duality}.)
The log perverse t-structure continues to exist and so does the description of holonomic $\sD_X$-modules given in \cite[Theorem~4.6]{KT}.

If $X$ is an algebraic log variety, we write $X^\an$ for the corresponding analytic log variety \cite[Definition~V.1.1.4]{Ogus:2018:LecturesOnLogarithmicGeometry}.
The canonical map of log spaces
\[
    \iota\colon (X^\an,\,\sO_{X^\an},\,\logM_{X^\an}) \to (X,\,\sO_X\,\logM_X)
\]
is, by definition, strict.
Thus it induces a morphism of sheaves of rings $\iota^{-1}\sD_X \to \sD_{X^\an}$ satisfying
\[
    \sD_{X^\an}
    \cong \sO_{X^\an} \otimes_{\iota^{-1}\sO_X} \iota^{-1}\sD_X
    \cong \iota^{-1}\sD_X \otimes_{\iota^{-1}\sO_X} \sO_{X^\an}.
\]
The functor
\[
    \sheaf M \mapsto \sheaf M^\an =
    \sO_{X^\an} \otimes_{\iota^{-1}\sO_X} \iota^{-1}\sheaf M \cong
    \sD_{X^\an} \otimes_{\iota^{-1}\sD_X} \iota^{-1}\sheaf M
\]
is exact \cite[Proposition~10]{Serre:1956:GAGA}.
Hence it extends to a functor $\catDbDMod{X} \to \catDbDMod{X^\an}$, preserving the coherent and holonomic subcategories.

As noted in the introduction, unless otherwise specified from now on the word variety will include both the complex algebraic and analytic versions.

\subsection{The log canonical bundle}\label{sec:log_canonical}

If $X$ is a smooth idealized log variety, then the log canonical sheaf
\[
    \canon_X = \bigwedge^{\mathclap{\logdim X}} \Omega_X^1
\]
is a line bundle.
It is a right $\sD_X$-module via the negative of the Lie derivative.
Specifically, for a log differential operator $\theta\in \sheaf T_X$, define $(\operatorname{Lie} \theta)\omega$ for $\omega \in \canon_X$ by
\[
((\operatorname{Lie} \theta)\omega) (\theta_1,\hdots, \theta_n)=\theta(\omega(\theta_1,\hdots,\theta_n))-\sum_{i=1}^n \omega(\theta_1,\hdots, [\theta,\theta_i],\hdots, \theta_n)
\]
where $\theta_i\in \sheaf T_X$ and $n=\logdim X$.
Viewing $\theta$ as an element of $\sD_X$, we set $\omega \theta = -(\operatorname{Lie}\theta)\omega$.

\begin{Example}\label{ex:snc_canon}
    If $X = \as n$ with log structure given by the divisor $z_1 \dotsm z_n = 0$, then $\canon_X$ is the trivial line bundle generated by $\frac{dz_1}{z_1} \wedge \dots \wedge \frac{dz_n}{z_n}$.
    The action of $z_i\frac{\partial}{\partial z_i}$ on a section $f\frac{dz_1}{z_1} \wedge \dots \wedge \frac{dz_n}{z_n}$ is given by $-z_i\frac{\partial f}{\partial z_i} \frac{dz_1}{z_1} \wedge \dots \wedge \frac{dz_n}{z_n}$.

    Via the evident map $\sD_X \to \sD_{\ul X}$, the canonical bundle $\canon_{\ul X}$ of the affine space with trivial log structure also is a right $\sD_X$-module.
    There the action of $z_i\frac{\partial}{\partial z_i}$ on a section $f dz_1 \wedge \dots \wedge dz_n$ is given by $-(f + z_i\frac{\partial f}{\partial z_i}) dz_1 \wedge \dots \wedge dz_n$.
    In other words, the natural inclusion
    \[
        \canon_{\ul X} \hookrightarrow \canon_X, \quad
        f dz_1 \wedge \dots \wedge dz_n \mapsto z_1\dotsm z_n f \frac{dz_1}{z_1} \wedge \dots \wedge \frac{dz_n}{z_n}
    \]
    is a morphism of right $\sD_X$-modules.
\end{Example}

For later use, let us study the local situation.
Fix a toric monoid $P$ and consider the affine toric variety $\mathsf{A}_P = \Spec \CC[P]$.
Let $\sigma$ be the convex polyhedral cone generated by $P$ in $P^\gp \otimes \RR$.
For a face $\tau$ of the dual cone $\sigma^\dual$ write $V(\tau)$ for the closure of the corresponding torus orbit $\operatorname{orb}(\tau) = \Spec[P^\gp \cap \tau^\perp]$.
We endow it again with the toric (non-idealized) log structure.
We set $P^\gp_\tau = P^\gp \cap \tau^\perp$.

Set $r = \operatorname{rk} P^\gp$.
By \cite[Proposition~IV.1.1.4]{Ogus:2018:LecturesOnLogarithmicGeometry}, the log canonical bundle $\canon_{\mathsf A_P}$ is identified with $\sO_{\mathsf A_P} \otimes \bigwedge^r P^\gp$.
If $\tau \prec \sigma^\dual$ is a ray (i.e., $V(\tau)$ has codimension $1$) we can write each element of $\canon_{\mathsf A_P}$ in the form $m_1 \wedge \dots \wedge m_r$ with $m_2,\dots,m_r \in P^\gp_\tau$.
Ishida \cite[123]{Ishida:1987:TorusEmbeddingsAndDeRhamComplexes} defines the \emph{Poincar\'e residue map}
\[
    \canon_{\mathsf A_P} = \sO_{\mathsf A_P} \otimes \bigwedge^r P^\gp \to \sO_{V(\tau)} \otimes \bigwedge^{r-1} P^\gp_\tau = \canon_{V(\tau)},
\]
which sends a section $a \otimes m_1 \wedge \dots \wedge m_r$ to $a \otimes \langle m_1, n \rangle m_2 \wedge \dots \wedge m_r$, where $n$ is a primitive element of $(P^\gp)^\dual$ such that $\langle m, n \rangle = 0$ for $m \in P^\gp_\tau$ and $\langle m, n \rangle \ge 0$ for $m \in P^\gp \setminus P^\gp_\tau$.

If we let the differential operator corresponding to $n$ act by zero on $\canon_{V(\tau)}$, the sheaf $\canon_{V(\tau)}$ obtains a right $\sD_{\mathsf A_P}$-module structure.
With the notation of Proposition~\ref{prop:D_X_in_coords}, the action of this differential operator on $\canon_{\mathsf A_P}$ introduces a factor of $t^n$, which vanishes in $\sO_{V(\tau)}$.
It follows that the Poincar\'e residue map is a morphism of right $\sD_{\mathsf A_P}$-modules.

\begin{Example}\label{ex:snc_residue}
    Continuing Example~\ref{ex:snc_canon}, we can identify $\as n$ with $\mathsf A_{\NN^n}$.
    The subvarieties $V(\tau)$, for $\tau \prec \sigma^\dual$ a ray, correspond to the coordinate hyperplanes in $\as n$.
    If $V(\tau) = \{ z_ i = 0 \}$, the Poincar\'e residue map is given by
    \[
        f \frac{dz_1}{z_1} \wedge \dots \wedge \frac{dz_n}{z_n} \mapsto \res{f}{V(\tau)} \frac{dz_1}{z_1} \wedge \dots \wedge \widehat{\frac{dz_i}{dz_i}} \wedge \dots \wedge \frac{dz_n}{z_n}.
    \]
    In particular, if $n = 1$ we obtain the usual residue $f \frac{dz}{z} \mapsto f(0)$.
\end{Example}

\subsection{Left and right modules}\label{sec:left_and_right}

As in the classical situation, one has the following module structures on tensor products and internal homs.

\begin{Proposition}\label{prop:tensor_module_struct}\label{prop:hom_module_struct}
    Let $\sheaf M,\, \sheaf M' \in \catDMod{X}$ and $\sheaf N,\, \sheaf N' \in \catDModOp{X}$.
    Then with $\theta \in \sT_X$ one has the following module structures:
    \begin{align*}
        \sheaf M \otimes_{\sO_X} \sheaf M' &\in \catDMod{X},  & (m \otimes m')\theta & \coloneqq \theta m \otimes m' + m \otimes \theta m', \\
        \sheaf N \otimes_{\sO_X} \sheaf M &\in \catDModOp{X}, & (n \otimes m)\theta & \coloneqq n\theta \otimes m - n \otimes \theta m, \\
        \sheafHom_{\sO_X}(\sheaf M,\, \sheaf M') & \in \catDMod{X},   & (\theta\phi)(m) & \coloneqq \theta(\phi(m)) - \phi(\theta m), \\
        \sheafHom_{\sO_X}(\sheaf N,\, \sheaf N') & \in \catDMod{X},   & (\theta\phi)(n) & \coloneqq -\phi(n)\theta + \phi(n\theta), \\
        \sheafHom_{\sO_X}(\sheaf M,\, \sheaf N)  & \in \catDModOp{X}, & (\phi\theta)(m) & \coloneqq \phi(m)\theta + \phi(\theta m).
    \end{align*}
\end{Proposition}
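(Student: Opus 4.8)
The plan is to reduce every assertion to the generators and relations of $\sD_X$. Since $\sD_X$ is generated over $\sO_X$ by the sheaf of logarithmic vector fields $\sT_X$, and a section $\theta \in \sT_X$ acts on $\sO_X$ as a $\CC$-linear derivation, the relevant relations (readable off from Proposition~\ref{prop:D_X_in_coords}) are the commutativity of $\sO_X$, the rule $[\theta, f] = \theta(f)$ for $f \in \sO_X$, and the identity of $[\theta, \theta']$ with the Lie bracket of vector fields. Consequently, to equip a given $\sO_X$-module with a (left or right) $\sD_X$-module structure extending its $\sO_X$-structure, it suffices to produce a $\CC$-linear action of $\sT_X$ compatible with these relations. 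For each of the five displayed formulas I would therefore carry out two checks: first that the proposed action of $\theta$ is well-defined on the tensor product, respectively on the internal $\sheafHom$, and second that it realizes the generating relations above.

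For well-definedness in the tensor cases one checks that the formula respects the balancing over $\sO_X$. For instance, for $\sheaf M \otimes \sheaf M'$ one expands both $\theta \cdot (fm \otimes m')$ and $\theta \cdot (m \otimes fm')$ using the Leibniz rule $\theta(fm) = \theta(f)m + f\theta m$ and finds that the two agree; the analogous computation handles $\sheaf N \otimes \sheaf M$ once the signs are in place. In the $\sheafHom$ cases one instead verifies that $\theta\phi$ (respectively $\phi\theta$) is again $\sO_X$-linear whenever $\phi$ is: for $\sheafHom_{\sO_X}(\sheaf M, \sheaf M')$ the computation $(\theta\phi)(fm) = \theta(f)\phi(m) + f\theta(\phi(m)) - \theta(f)\phi(m) - f\phi(\theta m) = f(\theta\phi)(m)$ shows the two occurrences of $\theta(f)$ cancel, and the remaining four formulas behave in the same way.

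For the relations, $\sO_X$-commutativity is immediate, the relation $[\theta, f] = \theta(f)$ is exactly the Leibniz computation above, and the bracket relation is checked by expanding $\theta \cdot (\theta' \cdot x) - \theta' \cdot (\theta \cdot x)$ and invoking the corresponding bracket relation already valid on $\sheaf M, \sheaf M', \sheaf N, \sheaf N'$. These verifications are formally identical to the classical, non-logarithmic case, since the only structural facts used are that $\sT_X$ acts by derivations of $\sO_X$ and closes under the Lie bracket, both of which hold for logarithmic vector fields. The one point genuinely requiring care is bookkeeping rather than conceptual: in the right-module formulas the signs must be arranged so that the $\sT_X$-action is an \emph{anti}-homomorphism, i.e. so that $[\theta,\theta']$ acts as the negative of the commutator of the individual operators, and one must confirm that the displayed signs make both the well-definedness cancellations and the anti-homomorphism property come out correctly. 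I would accordingly treat the left-module tensor product in full and then record only the sign changes needed for the remaining four structures.
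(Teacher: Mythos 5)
Your proposal is correct and follows exactly the route the paper intends: the paper offers no proof, simply asserting that these structures exist \enquote{as in the classical situation,} and your verification (well-definedness over the $\sO_X$-balancing, the Leibniz relation $[\theta,f]=\theta(f)$, the bracket relation, and the sign bookkeeping making the right-module actions anti-homomorphisms) is precisely the classical argument, which carries over verbatim because logarithmic vector fields still act by derivations on $\sO_X$ and are closed under the Lie bracket.
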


The following lemma is also classical.

\begin{Lemma}\label{lem:tensor_product_switching}
    Let $\sheaf M,\, \sheaf M' \in \catDMod{X}$ and $\sheaf N \in \catDModOp{X}$.
    Then there exist isomorphisms
    \[
        (\sheaf N \otimes_{\sO_X} \sheaf M') \otimes_{\sD_X} \sheaf M \cong
        \sheaf N \otimes_{\sD_X} (\sheaf M \otimes_{\sO_X} \sheaf M') \cong
        (\sheaf N \otimes_{\sO_X} \sheaf M) \otimes_{\sD_X} \sheaf M'.
    \]
\end{Lemma}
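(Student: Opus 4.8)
The plan is to deduce all three objects from the associativity and commutativity constraints of the tensor product over $\sO_X$, checking in each case that the relevant $\sO_X$-linear isomorphism is compatible with the relations defining the tensor product over $\sD_X$. Since the assertion is local and compatible with restriction, I would argue on the level of presheaf tensor products (equivalently, on stalks), where the modules are presented by generators and relations. The central point is the following claim: for a right $\sD_X$-module $\sheaf N$ and left $\sD_X$-modules $\sheaf A,\sheaf B$, the canonical associativity isomorphism of $\sO_X$-modules
\[
    \alpha\colon (\sheaf N \otimes_{\sO_X} \sheaf A) \otimes_{\sO_X} \sheaf B \isoto \sheaf N \otimes_{\sO_X} (\sheaf A \otimes_{\sO_X} \sheaf B), \qquad (n \otimes a) \otimes b \mapsto n \otimes (a \otimes b),
\]
descends to an isomorphism $(\sheaf N \otimes_{\sO_X} \sheaf A) \otimes_{\sD_X} \sheaf B \isoto \sheaf N \otimes_{\sD_X} (\sheaf A \otimes_{\sO_X} \sheaf B)$.

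To prove this claim I would use that, by Proposition~\ref{prop:D_X_in_coords}, $\sD_X$ is generated as a sheaf of algebras by $\sO_X$ and $\sT_X$; since $\alpha$ is already $\sO_X$-linear, and since the tensoring-down relation for a product $PQ$ is a consequence of those for $P$ and $Q$, it suffices to check compatibility with a local logarithmic vector field $\theta \in \sT_X$. Using the module structures of Proposition~\ref{prop:tensor_module_struct}, the defining relation of the left-hand side is $(n \otimes a)\theta \otimes b = (n \otimes a) \otimes \theta b$, that is
\[
    (n\theta \otimes a) \otimes b - (n \otimes \theta a) \otimes b - (n \otimes a) \otimes \theta b = 0,
\]
while that of the right-hand side is $n\theta \otimes (a \otimes b) = n \otimes \theta(a \otimes b)$, that is
\[
    n\theta \otimes (a \otimes b) - n \otimes (\theta a \otimes b) - n \otimes (a \otimes \theta b) = 0.
\]
Applying $\alpha$ term by term carries the first relation exactly onto the second, so $\alpha$ (and, by the symmetric computation, $\alpha^{-1}$) respects the $\sD_X$-relations and descends to the asserted isomorphism.

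Granting the claim, the two isomorphisms of the lemma follow formally. Taking $(\sheaf A,\sheaf B) = (\sheaf M,\sheaf M')$ identifies the middle term $\sheaf N \otimes_{\sD_X}(\sheaf M \otimes_{\sO_X} \sheaf M')$ with the right-hand term $(\sheaf N \otimes_{\sO_X} \sheaf M) \otimes_{\sD_X} \sheaf M'$. Taking instead $(\sheaf A,\sheaf B) = (\sheaf M',\sheaf M)$ gives $(\sheaf N \otimes_{\sO_X} \sheaf M') \otimes_{\sD_X} \sheaf M \cong \sheaf N \otimes_{\sD_X} (\sheaf M' \otimes_{\sO_X} \sheaf M)$, and I would then compose with the commutativity constraint $\sheaf M' \otimes_{\sO_X} \sheaf M \cong \sheaf M \otimes_{\sO_X} \sheaf M'$. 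This last map is an isomorphism of \emph{left} $\sD_X$-modules precisely because the left action $\theta(m \otimes m') = \theta m \otimes m' + m \otimes \theta m'$ is symmetric in the two factors, so it is in particular $\sD_X$-linear and induces the desired isomorphism of the middle term with the left-hand term.

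I expect no serious obstacle: the argument is purely formal once the module structures of Proposition~\ref{prop:tensor_module_struct} are available, and it never uses the logarithmic nature of $\sT_X$ beyond the Leibniz rule. The only points requiring genuine care are the sign bookkeeping in the right-module structure on $\sheaf N \otimes_{\sO_X} \sheaf A$ and the verification that the commutativity constraint is $\sD_X$-linear; both are immediate from the explicit formulas.
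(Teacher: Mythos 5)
Your proof is correct and is exactly the standard argument that the paper is deferring to when it says the lemma \enquote{is also classical} (the paper itself gives no proof): one checks that the $\sO_X$-linear associativity and commutativity constraints respect the extra relations imposed by $\otimes_{\sD_X}$, which by the generation of $\sD_X$ by $\sO_X$ and $\sT_X$ reduces to the single computation with $\theta \in \sT_X$ that you carry out. The sign bookkeeping in your two displayed relations matches the module structures of Proposition~\ref{prop:tensor_module_struct}, so there is nothing to add.
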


Recall that the log canonical bundle $\canon_X = \bigwedge^{\logdim X} \Omega_X^1$ is a right $\sD_X$-module.
Thus by Proposition~\ref{prop:tensor_module_struct} the assignment
\[
    \sheaf M \mapsto \canon_X \otimes_{\sO_X} \sheaf M
\]
extends to an equivalence of categories $\catDMod{X} \to \catDModOp{X}$ with quasi-inverse given by
\begin{equation}\label{eq:right_to_left}
    \sheaf N \mapsto \canon_X^\dual \otimes_{\sO_X} \sheaf M \coloneqq \sheafHom_{\sO_X}(\canon_X,\, \sheaf M).
\end{equation}
As $\canon_X$ is a line bundle, these operations are clearly exact and extend to the various derived categories.
We will use them frequently to switch between left and right $\sD_X$-modules.

\subsection{Some Resolutions}

Let $X$ be a smooth idealized log variety and fix a coherent left $\sD_X$-module $\sheaf M \in \catCohDMod{X}$.
Assume that $\sheaf M$ has a global good filtration $F_\bullet$.
For any fixed integer $k$ define a complex $\Sp_k(\sheaf M)$ with entries
\[
    \Sp_k^{-p}(\sheaf M) = \sD_X \otimes_{\sO_X} \bigl(\bigwedge^{p}\Theta_X\bigr) \otimes_{\sO_X} F_{k-p}\sheaf M,
    \quad 0 \le p \le \logdim X
\]
and differential
\[
    \delta\colon
    \sD_X \otimes_{\sO_X} \bigl(\bigwedge^{p}\Theta_X\bigr) \otimes_{\sO_X} F_{k-p}\sheaf M \to 
    \sD_X \otimes_{\sO_X} \bigl(\bigwedge^{p-1}\Theta_X\bigr) \otimes_{\sO_X} F_{k-p+1}\sheaf M
\]
given by
\begin{multline*}
    \delta(P \otimes (v_1 \wedge \dotsb \wedge v_p) \otimes m) =
    \sum_{i=1}^p (-1)^{i-1}Pv_i \otimes (v_1 \wedge \dotsb \wedge \hat{v_i} \wedge \dotsb \wedge v_p) \otimes m - \\ -
    \sum_{i=1}^p (-1)^{i-1}P \otimes (v_1 \wedge \dotsb \wedge \hat{v_i} \wedge \dotsb \wedge v_p) \otimes v_im + \\ +
    \sum_{1 \le i \le j \le p} (-1)^{i+j}P \otimes ([v_i,v_j] \wedge v_1 \wedge \dotsb \wedge \hat{v_i} \wedge \dotsb \wedge \hat{v_j} \wedge \dotsb \wedge v_p) \otimes m.
\end{multline*}

\begin{Lemma}[Logarithmic Spencer Resultion]\label{lem:spencer_resolution}
    For any coherent (left) $\sD_X$-module $\sheaf M$ with a fixed good filtration the complex $\Sp_k(\sheaf M)$ is a resolution of $\sheaf M$ for all sufficiently large integers $k$.
\end{Lemma}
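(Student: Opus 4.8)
The plan is to prove acyclicity of the augmented complex $\Sp_k(\sheaf M)\to\sheaf M$ (with augmentation $P\otimes m\mapsto Pm$) by passing to an associated graded for a suitable filtration and recognising the result as a Koszul complex. As acyclicity may be checked stalkwise, I would first reduce to a neighbourhood of a closed point where, by smoothness, $\Theta_X$ is free of rank $\logdim X$; there the symbols of a frame $v_1,\dotsc,v_N$ of $\Theta_X$ exhibit $\gr\sD_X\cong\operatorname{Sym}_{\sO_X}\Theta_X$ as a polynomial algebra over $\sO_X$. (One should check that this identification of $\gr\sD_X$ with respect to the order filtration persists in the idealized case, using the presentation of Proposition~\ref{prop:D_X_in_coords}.) I would then filter each term $\Sp_k^{-p}(\sheaf M)=\sD_X\otimes_{\sO_X}\bigl(\bigwedge^p\Theta_X\bigr)\otimes_{\sO_X}F_{k-p}\sheaf M$ by assigning to $P\otimes\omega\otimes m$, with $P$ of order $a$ and $m\in F_b\sheaf M$, the level $a+b+p$. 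A direct check shows that all three terms of $\delta$ are filtered: the first two (right multiplication of $P$ by $v_i$, and the action $v_i m$) preserve the level, while the bracket term strictly lowers it, and the augmentation is filtered once $\sheaf M$ carries the good filtration $F_\bullet$.

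The associated graded complex then has terms $\gr\sD_X\otimes_{\sO_X}\bigl(\bigwedge^p\Theta_X\bigr)\otimes_{\sO_X}\gr\sheaf M$, truncated in homological degree $p$ to summands of $\sheaf M$-degree at most $k-p$, and its differential is the leading-symbol part $\gr\delta_1-\gr\delta_2$. Before truncation I would identify this with the Koszul complex of the finitely generated graded $\gr\sD_X$-module $\gr\sheaf M$ attached to the identity section $\sum_i v_i^\vee\otimes v_i$ of $\operatorname{End}(\Theta_X)$, whose differential contracts against $(v_i\cdot)\otimes 1 - 1\otimes(v_i\cdot)$. Because $\Theta_X$ is locally free, these operators form a Koszul-regular sequence on $\gr\sD_X\otimes_{\sO_X}\gr\sheaf M$: treating $v_i\otimes 1-1\otimes v_i$ as new polynomial generators turns this into the standard coordinate Koszul complex of a polynomial ring over $\gr\sheaf M$, which is acyclic. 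Thus the untruncated graded complex is a resolution of $\gr\sheaf M$; this is the graded incarnation of the classical (untruncated) Spencer resolution, into which $\Sp_k(\sheaf M)$ includes.

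It remains to see that truncation to $\sheaf M$-degrees $\le k-p$ preserves acyclicity once $k$ is large, which is where the hypothesis on $k$ enters. As $F_\bullet$ is good, $\gr\sheaf M$ is coherent over $\gr\sD_X$ and hence locally generated in degrees $\le d$ for some $d$; for $k\ge d$ the truncated augmentation $\bigoplus_{b\le k}\operatorname{Sym}^{\ell-b}\Theta_X\otimes\gr_b\sheaf M\to\gr_\ell\sheaf M$ is onto in every internal degree $\ell$, and a Koszul bookkeeping argument (already visible in the rank-one case, where the truncated differential is a telescoping sum) shows the truncated complex stays exact in every internal degree. Since the filtration is exhaustive and bounded below in each homological degree, exactness of the associated graded forces acyclicity of the augmented complex, proving the lemma. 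I expect the main obstacle to be exactly this last point: establishing that the degreewise truncation by $F_{k-p}\sheaf M$ preserves exactness of the Koszul complex in all internal degrees simultaneously once $k$ exceeds the generation degree of $\gr\sheaf M$, rather than only in degrees $\le k$.
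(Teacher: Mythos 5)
Your argument is the standard filtered--Koszul proof of the Spencer resolution, which is essentially all the paper offers: its ``proof'' is the single remark that the claim follows exactly as in the classical case, with a reference to Mebkhout, and your filtration by $a+b+p$, the identification of the associated graded with the diagonal Koszul complex of $\gr\sheaf M$ over $\gr\sD_X\cong\operatorname{Sym}_{\sO_X}\Theta_X$, and the change of variables making $v_i\otimes 1-1\otimes v_i$ a regular sequence are all correct. The one step you flag as the remaining obstacle --- exactness of the truncated graded complex in internal degrees above $k$ --- is indeed the crux of the classical argument, and it can be closed by d\'evissage: one checks it directly for induced modules $\gr\sD_X\otimes_{\sO_X}W$ with $W$ an $\sO_X$-module concentrated in bounded degrees (an explicit computation generalizing your rank-one telescoping), and then propagates to a general finitely generated $\gr\sheaf M$ via a finite graded presentation and descending induction on homological degree, the finite length $\logdim X+1$ of the complex being what keeps the required lower bound on $k$ finite.
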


This is shown exactly as in the classical case, see for example~\cite[Proposition~I.2.1.18]{Mebkhout:1989:LeFormalismeDesSixOperationsPourLesDModules}.
In particular, one obtains the following very useful resolutions.

\begin{Corollary}\label{cor:canon_resolution}\label{cor:O_resolution}
    One has the following locally free resolutions of the left $\sD_X$-module $\sO_X$ and the right $\sD_X$-module $\canon_X$:
    \begin{equation}\label{eq:O_resolution}
        0 \to \sD_X \otimes_{\sO_X} \bigwedge^{\logdim X} \Theta_X \to \cdots \to \sD_X \otimes_{\sO_X} \bigwedge^0 \Theta_X \to \sO_X \to 0,
    \end{equation}
    \begin{equation}\label{eq:canon_resolution}
        0 \to \bigwedge^{0} \Omega_X \otimes_{\sO_X} \sD_X \to \cdots \to \bigwedge^{\logdim X} \Omega_X \otimes_{\sO_X} \sD_X \to \canon_X \to 0.
    \end{equation}
\end{Corollary}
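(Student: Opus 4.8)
The plan is to obtain both resolutions from the logarithmic Spencer resolution of Lemma~\ref{lem:spencer_resolution}, applied to the left $\sD_X$-module $\sheaf M = \sO_X$, and then to pass to $\canon_X$ via the side-changing equivalence $\sheaf M \mapsto \canon_X \otimes_{\sO_X} \sheaf M$ set up just above the statement.

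First I would equip $\sO_X$ with the good filtration given by $F_k\sO_X = \sO_X$ for $k\ge 0$ and $F_k\sO_X = 0$ for $k<0$; as $\gr^F\sO_X$ is concentrated in degree $0$ this is manifestly a good filtration, with $\sO_X$ generated over $\sD_X$ by the section $1$. For every $k\ge\logdim X$ and all $0\le p\le\logdim X$ one has $F_{k-p}\sO_X = \sO_X$, so the complex $\Sp_k(\sO_X)$ is independent of $k$ with
\[
    \Sp_k^{-p}(\sO_X) = \sD_X\otimes_{\sO_X}\bigl(\bigwedge^p\Theta_X\bigr)\otimes_{\sO_X}\sO_X = \sD_X\otimes_{\sO_X}\bigwedge^p\Theta_X
\]
and differential the usual Spencer differential, its augmentation $\Sp_k^0(\sO_X)=\sD_X\to\sO_X$ being $P\mapsto P\cdot 1$. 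By Lemma~\ref{lem:spencer_resolution} this complex is a resolution of $\sO_X$, which is exactly \eqref{eq:O_resolution}; each term is locally free since $\bigwedge^p\Theta_X$ is locally free over $\sO_X$.

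To obtain \eqref{eq:canon_resolution} I would apply the side-changing functor $\canon_X\otimes_{\sO_X}(-)$ to \eqref{eq:O_resolution}. As $\canon_X$ is a line bundle this functor is exact, so it carries the resolution \eqref{eq:O_resolution} of $\sO_X$ to a resolution of $\canon_X\otimes_{\sO_X}\sO_X = \canon_X$ by locally free right $\sD_X$-modules. It then remains to identify the terms, i.e.\ to produce isomorphisms of right $\sD_X$-modules
\[
    \canon_X\otimes_{\sO_X}\bigl(\sD_X\otimes_{\sO_X}\bigwedge^p\Theta_X\bigr) \cong \bigwedge^{\logdim X - p}\Omega_X\otimes_{\sO_X}\sD_X.
\]
This combines two standard local identifications: the transposition isomorphism $\canon_X\otimes_{\sO_X}\sD_X\otimes_{\sO_X}E\cong(\canon_X\otimes_{\sO_X}E)\otimes_{\sO_X}\sD_X$, which converts the side-changed right structure of Proposition~\ref{prop:tensor_module_struct} into the free right structure (for any $\sO_X$-module $E$, carried along for the ride), together with the contraction isomorphism $\canon_X\otimes_{\sO_X}\bigwedge^p\Theta_X\cong\bigwedge^{\logdim X-p}\Omega_X$ coming from the perfect pairing on the locally free sheaf of rank $\logdim X$. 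Under these identifications the side-changed Spencer differential becomes the differential of \eqref{eq:canon_resolution}.

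The one point requiring care is the transposition isomorphism in the logarithmic setting: one must check that the transpose anti-automorphism of $\sD_X$, fixing $\sO_X$ and the $t^m$ while sending $\partial_i\mapsto-\partial_i$ and $\partial_{m_i}\mapsto-\partial_{m_i}$, is well defined, i.e.\ respects the relations of Proposition~\ref{prop:D_X_in_coords}. The interesting new relation is the noncommutative $[\partial_{m_i},t^m] = a_i t^m$, which one verifies is indeed preserved. Granting this, every step is a purely local computation identical to the classical case, so I expect no genuine obstacle beyond the bookkeeping of signs and indices.
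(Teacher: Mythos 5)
Your proposal is correct and is essentially the paper's own argument: the paper presents the corollary as an immediate consequence of the logarithmic Spencer resolution (Lemma~\ref{lem:spencer_resolution}) applied to $\sO_X$ with its trivial good filtration, and the passage to \eqref{eq:canon_resolution} via the exact side-changing functor $\canon_X\otimes_{\sO_X}(-)$ together with the contraction $\canon_X\otimes_{\sO_X}\bigwedge^p\Theta_X\cong\bigwedge^{\logdim X-p}\Omega_X$ is the intended (and standard) route. Your check that the transposition respects the relation $[\partial_{m_i},t^m]=a_it^m$ is the right point of care in the logarithmic setting, and it goes through as you say.
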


\begin{proof}
    The complex~\eqref{eq:O_resolution} is just the logarithmic Spencer resolution for $\sO_X$.
    The second line follows from the first by applying the side-switching operation of Section~\ref{sec:left_and_right}, which is evidently exact.
\end{proof}

\section{Operations on D-modules}\label{sec:functors}

In \cite{KT} one only briefly touches on the definition of the pushforward and pullback functors for logarithmic D-modules.
In this section we will go into more detail on the definition of these functors, as well as giving an explicit formula for duality and showing some standard identities for these three operations.

\subsection{Pushforward}

Let $f\colon X \to Y$ be a morphism of smooth idealized log varieties.
If $\sheaf N$ is a (left) $\sD_Y$-module, then as in the classical setting the map $\sT_X \to f^*\sT_Y$ gives the pullback $f^*\sheaf N = \sO_X \otimes_{f^{-1}\sO_Y} f^{-1}\sheaf N$ a canonical $\sD_X$-module structure (see \cite[Section~3.1.4]{KT}).
In particular, one obtains the \emph{transfer module} $\sD_{X \to Y} = f^*\sD_Y$.
As $\sD_Y$ is a locally free $\sO_Y$-module, $\sD_{X \to Y}$ is a locally free $\sO_X$-module concentrated in cohomological degree $0$.

\begin{Lemma}\label{lem:transfer_module_pd}
    Let $f\colon X \to Y$ be a morphism of smooth idealized log varieties.
    Then $\sD_{X \to Y}$ has weak dimension at most $\logdim X$ as left $\sD_X$-module.
\end{Lemma}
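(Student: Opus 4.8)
The plan is to produce the resolution explicitly by inducing up from the logarithmic Spencer resolution of $\sO_X$. Recall from Corollary~\ref{cor:O_resolution} that $\sO_X$ admits the locally free resolution \eqref{eq:O_resolution} by the left $\sD_X$-modules $\sD_X \otimes_{\sO_X} \bigwedge^p \Theta_X$ for $0 \le p \le \logdim X$. First I would tensor this complex over $\sO_X$ with $\sD_{X \to Y}$, equipping the result with the diagonal left $\sD_X$-module structure of the first line of Proposition~\ref{prop:tensor_module_struct}. Since $\sD_{X \to Y} = f^*\sD_Y$ is free as an $\sO_X$-module (because $\sD_Y$ is free over $\sO_Y$), the functor $-\otimes_{\sO_X}\sD_{X \to Y}$ is exact. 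Hence the Spencer complex stays exact after tensoring, and its $H^0$ is $\sO_X \otimes_{\sO_X} \sD_{X \to Y} = \sD_{X \to Y}$; moreover the diagonal action there satisfies $\theta\cdot(1\otimes m) = 1\otimes\theta m$, so it restricts to the original left $\sD_X$-action. Thus the tensored complex is a resolution of $\sD_{X \to Y}$ as a left $\sD_X$-module.

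Next I would identify the terms via the standard projection-formula isomorphism of left $\sD_X$-modules
\[
    \bigl(\sD_X \otimes_{\sO_X} \bigwedge^p \Theta_X\bigr) \otimes_{\sO_X} \sD_{X \to Y}
    \cong \sD_X \otimes_{\sO_X} \bigl(\bigwedge^p \Theta_X \otimes_{\sO_X} \sD_{X \to Y}\bigr),
\]
where the left-hand side carries the diagonal action and the right-hand side the action on the first factor only. This isomorphism, straightened using the connection on $\sD_{X \to Y}$, is purely formal and carries over verbatim from the classical case. Because $\bigwedge^p \Theta_X$ is locally free of finite rank and $\sD_{X \to Y}$ is $\sO_X$-free, the sheaf $\bigwedge^p \Theta_X \otimes_{\sO_X} \sD_{X \to Y}$ is locally free over $\sO_X$, so each term $\sD_X \otimes_{\sO_X}(\bigwedge^p \Theta_X \otimes_{\sO_X} \sD_{X \to Y})$ is a locally free, in particular projective, left $\sD_X$-module. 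Finally, since $\Omega^1_X$ and hence $\Theta_X$ is locally free of rank $\logdim X$, we have $\bigwedge^p \Theta_X = 0$ for $p > \logdim X$, so the resolution has length at most $\logdim X$, as claimed.

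The only genuine content is the verification of the projection-formula isomorphism together with the compatibility of the diagonal $\sD_X$-action with the Spencer differential. Both are formal consequences of the Leibniz rule and go through unchanged in the logarithmic setting, since Proposition~\ref{prop:tensor_module_struct} supplies exactly the module structures used in the classical argument. I expect the main obstacle to be bookkeeping rather than substance: one must check, term by term and on the differentials, that the abstract resolution obtained from flatness really coincides with the induced-module description above, so that the resolving objects are genuinely projective $\sD_X$-modules.
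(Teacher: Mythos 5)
Your proof is correct, but it takes a different route from the one the paper actually writes down. The paper's proof is a homological-dimension argument: it passes to the order filtration, notes that $\gr \sD_{X \to Y}$ is a free $\sO_X$-module over $\gr \sD_X \cong \sO_X[t_1,\dotsc,t_{\logdim X}]$ (locally), invokes the commutative-algebra bound on projective dimension there, and then lifts the bound back to $\sD_X$ via the standard filtered-lifting lemma \cite[Lemma~D.2.4]{HottaTakeuchiTanisaki:2008:DModulesPerverseSheavesRepresentationTheory}. You instead build the resolution explicitly by tensoring the Spencer resolution \eqref{eq:O_resolution} of $\sO_X$ with the $\sO_X$-flat module $\sD_{X \to Y}$ and straightening the diagonal action via the induced-module isomorphism $(\sD_X \otimes_{\sO_X} E) \otimes_{\sO_X} N \cong \sD_X \otimes_{\sO_X}(E \otimes_{\sO_X} N)$; that straightening is legitimate in the logarithmic setting and is of exactly the same nature as Lemma~\ref{lem:free_module_tensor_involution} (lift the identity on associated gradeds). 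What you end up with is precisely the complex the paper mentions in the sentence immediately after its proof (\enquote{the Spencer complex $\Sp(\sD_{X \to Y})$ is a projective resolution of $\sD_{X \to Y}$}), so your argument in effect proves that remark directly and deduces the lemma from it, whereas the paper proves the abstract bound first and records the explicit resolution as an afterthought. The trade-off: your version is self-contained and produces a usable resolution, at the cost of verifying the straightening isomorphism and the compatibility of differentials; the paper's version is shorter and needs no explicit complex, but relies on the black-boxed lifting lemma. Both arguments locate the bound $\logdim X$ in the same place --- the rank of $\Theta_X$, equivalently the number of variables in $\gr \sD_X$.
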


\begin{proof}
    Exactness can be checked locally, so we may assume that $X$ is affine and $\sD_{X \to Y}$ is a free $\sO_X$-module, as is $\gr \sD_{X \to Y}$.
    Since locally $\gr \sD_X$ is isomorphic to $\sO_X[t_1,\dotsc,t_{\logdim X}]$, taking a Koszul resolution of the free $\sO_X$-module $\gr_{\sD_{X \to Y}}$ shows that $\gr \sD_{X \to Y}$ has weak dimension at most $\logdim X$ as a $\gr \sD_X$-module (alternatively, we could apply the more general \cite[Lemma~2.3.5]{Bjoerk:1979:RingsOfDifferentialOperators}).
    Thus, by \cite[Proposition~2.3.12]{Bjoerk:1979:RingsOfDifferentialOperators}, $\sD_{X \to Y}$ also has weak dimension at most $\logdim X$ as a left $\sD_X$-module.
\end{proof}

Concretely, as each $F_i\sD_{X \to Y}$ is a locally free $\sO_X$-module, the Spencer complex $\Sp(\sD_{X \to Y})$ is a locally free resolution of $\sD_{X \to Y}$.

As in the classical setting, one defines the $(f^{-1}\sD_Y,\, \sD_X)$-bimodule
\[
    \sD_{Y \from X} = \canon_X \otimes_{\sO_X} \sD_{X \to Y} \otimes_{f^{-1}\sO_Y} f^{-1}\canon_Y^\dual,
\]
which by Lemma~\ref{lem:transfer_module_pd} has finite weak dimension as a right $\sD_X$-module.
Thus the pushforward functor
\[
    f_{\spf} \colon \catDpDMod{X} \to \catDpDMod{Y},\quad \sheaf M \mapsto f_*(\sD_{Y \from X} \otimes_{\sD_X} \sheaf M)
\]
is well defined.
If $f\colon X \to Y$ and $g\colon  Y \to Z$ are two morphisms of smooth idealized log varieties, one can copy the proof of \cite[Proposition~1.5.21]{HottaTakeuchiTanisaki:2008:DModulesPerverseSheavesRepresentationTheory} (which is essentially an application of the projection formula for $f_*\colon \catD{f^{-1}\sD_Y} \to \catD{\sD_Y}$) to show that
\[
    (g \circ f)_\spf \cong g_\spf \circ f_\spf\colon \catDbDMod{X} \to \catDbDMod{Z}.
\]
For a projection $f\colon X \times Y \to Y$ one sets $\Omega_f^k = (\bigwedge^k \Omega_X) \boxtimes \sO_Y$ and defines the \emph{relative de Rham complex} $\DR_f(\sheaf M)$ for $\sheaf M \in \catQCDMod{X \times Y}$ by
\begin{equation}\label{eq:relative_DR_complex}
    (\DR_f(\sheaf M))^i = \begin{cases}
        \Omega_f^{i + \logdim Y} \otimes_{\sO_{X \times Y}} \sheaf M & \text{if } -\logdim Y \le i \le 0 \\
        0 & \text{otherwise}
    \end{cases}
\end{equation}
with the differential as usual induced by the map $\sheaf M \to \sheaf M \otimes \Omega^1_{X \times Y}$ defining the $\sD_{X \times Y}$-module structure on $\sheaf M$.
Since $\sD_{Y \from X \times Y} = \canon_X \boxtimes \sD_Y$ as right $\sD_{X \times Y}$-modules, it follows from Corollary~\ref{cor:canon_resolution} that
\[
    f_{\spf}\sheaf M = f_*\DR_f(\sheaf M).
\]
Any morphism $f\colon X \to Y$ of smooth idealized log varieties can be factored into a closed immersion followed $X \to X \times Y$ followed by the smooth projection $X \times Y \to Y$.
For both of these the pushforward of $\sO$-modules preserves boundedness of complexes.
Thus, by Lemma~\ref{lem:transfer_module_pd}, $f_{\spf}$ also preserves boundedness, i.e.~restricts to a functor $f_{\spf}\colon \catDbDMod{X} \to \catDbDMod{Y}$.

\begin{Lemma}\label{lem:proper_pushforward_preserves_coherence}
    Let $f\colon X \to Y$ be a proper map of smooth idealized log varieties.
    Let $\sheaf M \in \catDbcohDMod{X}$ and assume that locally on $Y$ the cohomology modules of $\sheaf M$ admit a good filtration.
    Then $f_\spf\sheaf M$ is contained in $\catDbcohDMod{Y}$.
\end{Lemma}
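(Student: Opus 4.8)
The plan is to adapt the classical argument (see \cite[Section~2.5]{HottaTakeuchiTanisaki:2008:DModulesPerverseSheavesRepresentationTheory}): reduce to a single coherent module carrying a good filtration, compute the pushforward by a \emph{filtered} complex, pass to the associated graded on the log cotangent bundle where coherence becomes Grothendieck's finiteness theorem for proper morphisms, and finally transport coherence back from the graded to the filtered level. For the first step, note that coherence of the cohomology sheaves of $f_\spf\sheaf M$ may be checked locally on $Y$, so I may shrink $Y$ so that each cohomology module $\mathcal H^j(\sheaf M)$ carries a global good filtration. The stupid truncations of $\sheaf M$ fit into distinguished triangles whose pushforwards, together with the fact that $f_\spf$ is triangulated and that the coherent objects form a thick subcategory of $\catDbDMod{Y}$, reduce the problem to a single coherent $\sD_X$-module $\sheaf M$ equipped with a global good filtration $F_\bullet\sheaf M$ and placed in degree $0$.

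Next I would make the computation of $f_\spf\sheaf M = f_*(\sD_{Y\from X}\otimes_{\sD_X}\sheaf M)$ filtered. Resolving $\sheaf M$ by the logarithmic Spencer resolution of Lemma~\ref{lem:spencer_resolution} (compare Corollary~\ref{cor:canon_resolution}) and tensoring by the transfer bimodule realises $\sD_{Y\from X}\otimes_{\sD_X}\sheaf M$ as a relative de Rham complex, generalising \eqref{eq:relative_DR_complex}, of left $f^{-1}\sD_Y$-modules whose terms have the shape
\[
    f^{-1}\sD_Y \otimes_{f^{-1}\sO_Y} \sheaf G^p,
    \qquad
    \sheaf G^p = \canon_X \otimes_{\sO_X} \textstyle\bigwedge^{p}\Theta_X \otimes_{\sO_X} F_\bullet\sheaf M \otimes_{\sO_X} f^{*}\canon_Y^\dual,
\]
with each $\sheaf G^p$ an $\sO_X$-coherent sheaf. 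The order filtration on $\sD_Y$ together with the good filtration on $\sheaf M$ equips this complex, and hence its image under $f_*$, with an exhaustive, bounded-below filtration, whose finiteness is controlled by Lemma~\ref{lem:transfer_module_pd}.

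Passing to the associated graded is the geometric heart of the argument. Writing $\pi_Y\colon T^*Y \to Y$ for the projection, the identity $\gr\sD_Y = \pi_{Y*}\sO_{T^*Y}$ (here $\gr\sD_X$ is locally $\sO_X[t_1,\dotsc,t_{\logdim X}]$, as in the proof of Lemma~\ref{lem:transfer_module_pd}) lets me reinterpret the associated graded complex on the log cotangent correspondence
\[
    T^*X \xleftarrow{\ \rho\ } X \times_Y T^*Y \xrightarrow{\ \varpi\ } T^*Y,
\]
where $\varpi$ is the base change of $f$ and $\rho$ is induced by the logarithmic codifferential $df$. Up to the canonical twists by $\canon_X$ and $f^*\canon_Y^\dual$, the associated graded should be identified with $\varpi_*$ of a coherent $\sO_{X\times_Y T^*Y}$-module built from $\rho^*\gr\sheaf M$. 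Because $f$ is proper, $\varpi$ is proper, so Grothendieck's coherence theorem (respectively Grauert's theorem in the analytic case) applied to the underlying, possibly singular, ordinary varieties shows that the cohomology of this graded pushforward is coherent over $\sO_{T^*Y} = \gr\sD_Y$. Finally I would transport this back: a standard comparison (via the Rees module, or the convergence of the spectral sequence of the filtered complex) shows that if the associated graded of the pushforward has $\gr\sD_Y$-coherent cohomology, then the cohomology of $f_\spf\sheaf M$ is itself $\sD_Y$-coherent.

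I expect the main obstacle to be precisely this last comparison, namely verifying that the filtration induced on $f_\spf\sheaf M$ is again \emph{good}, which amounts to the convergence of the spectral sequence of the filtered pushforward. In the analytic setting this must be argued locally on $Y$, since good filtrations exist only locally there (as noted above), and one must track the logarithmic dimension bookkeeping of \eqref{eq:logdim} through the correspondence so that $\varpi$ is genuinely proper and the coherence theorem applies on the underlying spaces; carrying along the log twists and the idealized structure should present no essential difficulty.
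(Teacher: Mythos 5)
Your proposal is correct and takes essentially the same route as the paper: the paper's entire proof is the remark that the classical argument (Mebkhout, Th\'eor\`eme~I.5.4.1) carries over verbatim once one has the logarithmic Spencer resolution of Lemma~\ref{lem:spencer_resolution}, and that classical argument is exactly the filtered relative de Rham complex, passage to the associated graded on the log cotangent correspondence, Grothendieck/Grauert coherence for the proper map $\varpi$, and transport of coherence back through the (good) filtration that you outline.
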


\begin{proof}
    Using the resolution of Lemma~\ref{lem:spencer_resolution} one reduces to the fact that pushforward along proper maps preserves coherence of $\sO$-modules, see the first part of \cite[Th\'eor\`eme~I.5.4.1]{Mebkhout:1989:LeFormalismeDesSixOperationsPourLesDModules}.
\end{proof}

Let us remark that here, as in many of the following statements, the condition on the existence of a good filtration is always satisfied in the algebraic setting.

We will call a morphism $i\colon X \to Y$ a \emph{closed immersion} if the underlying morphism of varieties $i\colon \ul X \to \ul Y$ is a closed immersion (note that this is different from the usual use of the term in logarithmic geometry).
For a closed immersion $i$, we can give the right adjoint to $i_\spf$ in the usual way:

\begin{Definition}\label{def:!-pullback_closed_immersion}
    Let $i\colon Z \hookrightarrow X$ be a closed immersion of smooth idealized log varieties.
    Define $i^!\colon \catDpDMod{X} \to \catDpDMod{X}$ by
    \[
        \sheaf M \mapsto \sheafHom_{i^{-1}\sD_X}(\sD_{X \from Z},\, i^{-1}\sheaf M),
    \]
    where the left $\sD_Z$-module structure is given by the right action of $\sD_Z$ on $\sD_{X \from Z}$.
\end{Definition}

\begin{Proposition}\label{prop:i^!_adjunction}
    Let $i\colon Z \hookrightarrow X$ be a closed immersion of smooth idealized log varieties.
    Then $i^!$ is the right adjoint to $i_\spf \colon \catDpDMod{Z} \to \catDpDMod{X}$.
\end{Proposition}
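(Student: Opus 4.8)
The plan is to realize $i^!$ as the composite of the right adjoints of the two elementary functors making up $i_\spf$, and then to identify this composite with the formula of Definition~\ref{def:!-pullback_closed_immersion}. Since $i$ is a closed immersion the sheaf pushforward $i_*$ is exact, so unwinding the definition of $i_\spf$ expresses it as the composition
\[
    \catDpDMod{Z} \xrightarrow{\ \sD_{X\from Z}\otimes_{\sD_Z}(-)\ } \catDp{i^{-1}\sD_X} \xrightarrow{\ i_*\ } \catDpDMod{X},
\]
the middle term being the bounded-below derived category of (left) $i^{-1}\sD_X$-modules on $Z$. A right adjoint to a composite is the composite of the individual right adjoints in the opposite order, so it is enough to compute the right adjoint of each arrow.

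First I would dispose of the tensor factor, which is formal. The extension/restriction-of-scalars (tensor--hom) adjunction for the $(i^{-1}\sD_X, \sD_Z)$-bimodule $\sD_{X\from Z}$ gives a bifunctorial isomorphism of derived $\Hom$-complexes on $Z$,
\[
    \Hom_{i^{-1}\sD_X}\bigl(\sD_{X\from Z}\otimes_{\sD_Z}\sheaf N,\ \sheaf G\bigr)
    \cong
    \Hom_{\sD_Z}\bigl(\sheaf N,\ \sheafHom_{i^{-1}\sD_X}(\sD_{X\from Z},\ \sheaf G)\bigr),
\]
so that the right adjoint of the first arrow is $\sheaf G \mapsto \sheafHom_{i^{-1}\sD_X}(\sD_{X\from Z},\ \sheaf G)$, with the left $\sD_Z$-structure coming from the right $\sD_Z$-action on $\sD_{X\from Z}$ exactly as in Definition~\ref{def:!-pullback_closed_immersion}. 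That this functor and its adjoint preserve bounded-below complexes follows from the finite projective dimension of $\sD_{X\from Z}$ as a right $\sD_Z$-module (Lemma~\ref{lem:transfer_module_pd}) and as a left $i^{-1}\sD_X$-module (the logarithmic Spencer/Koszul resolution of Lemma~\ref{lem:spencer_resolution}), respectively.

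The real content is the second arrow. As a functor between sheaves of modules, the right adjoint of $i_*$ is not $i^{-1}$ but the functor $\Gamma_{[Z]}$ of (derived) sections supported on $Z$, namely $\sheaf M \mapsto i^\flat\sheaf M = i^{-1}\Gamma_{[Z]}\sheaf M$; hence the honest right adjoint of $i_\spf$ is a priori $\sheaf M \mapsto \sheafHom_{i^{-1}\sD_X}(\sD_{X\from Z},\ i^\flat\sheaf M)$. To finish I must show that the natural morphism $i^\flat\sheaf M \to i^{-1}\sheaf M$, coming from $\Gamma_{[Z]}\sheaf M \to \sheaf M$, becomes an isomorphism after applying $\sheafHom_{i^{-1}\sD_X}(\sD_{X\from Z},-)$. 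Writing $j\colon U = X \setminus \ul Z \hookrightarrow X$ for the open complement, its cone is $i^{-1}j_*j^{-1}\sheaf M$, so the statement to prove is the vanishing
\[
    \sheafHom_{i^{-1}\sD_X}\bigl(\sD_{X\from Z},\ i^{-1}j_*j^{-1}\sheaf M\bigr) = 0.
\]
I expect this to be the main obstacle. I would check it locally, using the coordinate description of $\sD_X$ in Proposition~\ref{prop:D_X_in_coords}: there $\ul Z$ is cut out by part of the regular sequence $x_1, \dotsc, x_n$, and Lemma~\ref{lem:spencer_resolution} provides a finite Koszul-type resolution of $\sD_{X\from Z}$ over $i^{-1}\sD_X$ built on these defining functions. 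Applying $\sheafHom_{i^{-1}\sD_X}(-,\ i^{-1}j_*j^{-1}\sheaf M)$ turns this into a co-Koszul complex whose differentials are multiplication by the $x_i$; since every section of $i^{-1}j_*j^{-1}\sheaf M$ is already defined away from $Z$, the $x_i$ act invertibly and the complex is acyclic. This is the logarithmic analogue of the classical computation matching the D-module and topological pullbacks along a closed immersion; the extra care needed is that $\ul X$ may be singular and that $\ul Z$ may meet the logarithmic strata of $X$, which is why one works with the regular sequence and the split tangent directions of Proposition~\ref{prop:D_X_in_coords} rather than with a global coordinate system. Granting the vanishing, the right adjoint of $i_\spf$ is $\sheaf M \mapsto \sheafHom_{i^{-1}\sD_X}(\sD_{X\from Z}, i^{-1}\sheaf M) = i^!\sheaf M$, which is the assertion.
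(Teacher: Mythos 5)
Your overall strategy is the same as the paper's: after the formal tensor--hom step, the honest right adjoint of $i_*$ is $i^{-1}\Gamma_Z$ rather than $i^{-1}$, and the whole proposition reduces to showing that the natural map
\[
    \sheafHom_{i^{-1}\sD_X}\bigl(\sD_{X \from Z},\, i^{-1}\Gamma_Z\sheaf M\bigr)
    \longrightarrow
    \sheafHom_{i^{-1}\sD_X}\bigl(\sD_{X \from Z},\, i^{-1}\sheaf M\bigr)
\]
is an isomorphism --- via the triangle $\Gamma_Z\sheaf M \to \sheaf M \to j_*j^{-1}\sheaf M$ this is exactly your vanishing statement, and it is precisely Lemma~\ref{lem:i^!_and_Gamma} of the paper. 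The gap is in your proof of that vanishing. First, the resolution you invoke is not available in general: the Spencer complex of Lemma~\ref{lem:spencer_resolution} resolves left modules using $\bigwedge^{\bullet}\Theta$ and is the tool for the $\sD_Z$-side of $i_\spf$, not for $\sD_{X\from Z}$ as a left $i^{-1}\sD_X$-module; the latter is a twist of $i^{-1}\sD_X \otimes_{i^{-1}\sO_X}\sO_Z$, and since $\ul X$ may be singular, $\ul Z$ need not be cut out by a regular sequence, so no finite Koszul resolution on defining equations need exist. Second, and more seriously, even when such a resolution does exist the claim that the $x_i$ act invertibly on $i^{-1}j_*j^{-1}\sheaf M$ is false as soon as $Z$ has codimension at least $2$: a section defined on $V\setminus Z$ still meets the divisor $\{x_i=0\}$. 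For $X=\as{2}$ with trivial log structure, $Z$ the origin and $\sheaf M=\sO_X$, Hartogs gives $(j_*j^{-1}\sO_X)_0=\sO_{X,0}$, on which neither coordinate is invertible, and the co-Koszul complex of this $H^0$ alone has nonvanishing top cohomology $\sO_{X,0}/(x_1,x_2)$; the required total vanishing only comes from a cancellation against $R^1j_*$, which needs a localization/\v{C}ech argument over the cover of $X\setminus Z$ by the loci $\{x_i\ne 0\}$ (on each of which some $x_i$ is invertible, so that Koszul cohomology, being annihilated by the ideal $(x_1,\dotsc,x_c)$, dies).

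The paper avoids all of this in Lemma~\ref{lem:i^!_and_Gamma} by a resolution-free argument: the ideal sheaf $\sheaf J$ of $Z$ annihilates $\sD_{X\from Z}$, so every $i^{-1}\sD_X$-linear map out of $\sD_{X\from Z}$ automatically factors through the $\sheaf J$-torsion $i^{-1}R^0\Gamma_Z\sheaf M$; the statement is then derived by checking that $i^{-1}R^0\Gamma_Z$ sends injective $\sD_X$-modules to injective $i^{-1}\sD_X$-modules. To complete your proof you should either substitute that annihilator argument for your invertibility claim, or carry out the \v{C}ech refinement sketched above in a setting where the Koszul resolution actually exists.
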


\begin{Lemma}\label{lem:i^!_and_Gamma}
    Let $i\colon Z \hookrightarrow X$ be a closed immersion of smooth idealized log varieties.
    Then for any $\sheaf M \in \catDpDMod{X}$ there exists a canonical equivalence
    \[
        \Hom_{i^{-1}\sD_X}(\sD_{X \from Z},\, i^{-1}\sheaf M)
        \cong
        \Hom_{i^{-1}\sD_X}(\sD_{X \from Z},\, i^{-1}\Gamma_Z\sheaf M).
    \]
\end{Lemma}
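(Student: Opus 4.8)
The plan is to compare the two sides by means of the localization triangle of the closed immersion $i$ and its open complement. Write $j\colon U \hookrightarrow X$ for the inclusion of the open complement $U = X \setminus Z$. For any $\sheaf M \in \catDpDMod{X}$ one has the distinguished triangle
\[
    \Gamma_Z\sheaf M \to \sheaf M \to j_* j^{-1}\sheaf M \xrightarrow{+1}
\]
(recall that all functors are tacitly derived), and the natural map $\Gamma_Z\sheaf M \to \sheaf M$ is precisely the one inducing the comparison in the statement. Applying the exact functor $i^{-1}$ and then $\Hom_{i^{-1}\sD_X}(\sD_{X\from Z},\,{-})$ yields a distinguished triangle whose first map is exactly the morphism we wish to show is an isomorphism. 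Thus it suffices to prove the vanishing
\[
    \Hom_{i^{-1}\sD_X}(\sD_{X\from Z},\, i^{-1}j_* j^{-1}\sheaf M) = 0.
\]

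To evaluate this Hom I would resolve $\sD_{X\from Z}$ by free left $i^{-1}\sD_X$-modules. Working locally and using Proposition~\ref{prop:D_X_in_coords}, choose coordinates in which $\ul Z \hookrightarrow \ul X$ is cut out by part of a regular sequence $x_1,\dotsc,x_c$. Since $\sD_X$ is free over $\sO_X$, the Koszul complex of this sequence resolves $\sD_{Z \to X} = \sO_Z \otimes_{i^{-1}\sO_X} i^{-1}\sD_X$ by free right $i^{-1}\sD_X$-modules, with differentials given by left multiplication by the $x_i$. Converting right to left modules by the side-switching operation \eqref{eq:right_to_left} (twisted by $\canon_Z$ and $i^{-1}\canon_X^\dual$, which only tensor each term by a line bundle) turns this into a finite resolution of $\sD_{X\from Z}$ by free left $i^{-1}\sD_X$-modules whose differentials are right multiplication by the $x_i$. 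Feeding this resolution into $\Hom_{i^{-1}\sD_X}(-,\, i^{-1}\sheaf K)$ and using the adjunction for induced modules collapses the answer, up to a line-bundle twist, to the Koszul complex of $i^{-1}\sheaf K$ with respect to multiplication by $x_1,\dotsc,x_c$; that is, to $\sheafHom_{i^{-1}\sO_X}(\sheaf E^\spf,\, i^{-1}\sheaf K)$ for a finite complex $\sheaf E^\spf$ of locally free $\sO_X$-modules quasi-isomorphic to $\sO_Z$ (up to a shift and a line-bundle twist). In particular $j^{-1}\sheaf E^\spf$ is acyclic, since $x_1,\dotsc,x_c$ generate the unit ideal on $U$.

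It then remains to observe that this vanishes for $\sheaf K = j_* j^{-1}\sheaf M$. Because $\sheaf E^\spf$ is a bounded complex of locally free sheaves, $i^{-1}$ commutes with $\sheafHom$ out of it, so the complex in question is the restriction to $Z$ of $\sheafHom_{\sO_X}(\sheaf E^\spf,\, j_* j^{-1}\sheaf M)$. The $(j^{-1}, j_*)$-adjunction for internal $\sheafHom$ gives
\[
    \sheafHom_{\sO_X}(\sheaf E^\spf,\, j_* j^{-1}\sheaf M) \cong j_*\,\sheafHom_{\sO_U}(j^{-1}\sheaf E^\spf,\, j^{-1}\sheaf M),
\]
which vanishes since $j^{-1}\sheaf E^\spf$ is acyclic. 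Restricting along $i$ gives the required vanishing, and hence the lemma. I expect the only real difficulty to lie in the bookkeeping of the middle paragraph: checking that the side-switching genuinely converts the free right resolution into a free left resolution with the differentials as claimed, and that the resulting complex $\sheaf E^\spf$ is, up to harmless twists, quasi-isomorphic to $\sO_Z$, so that its restriction to $U$ is acyclic. The localization triangle and the concluding adjunction are formal.
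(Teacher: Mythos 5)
Your overall strategy---reducing the lemma via the localization triangle $\Gamma_Z\sheaf M \to \sheaf M \to j_*j^{-1}\sheaf M$ to the vanishing of $\Hom_{i^{-1}\sD_X}(\sD_{X\from Z},\, i^{-1}j_*j^{-1}\sheaf M)$---is sound and genuinely different from the paper's argument, which proves the underived statement directly (any $i^{-1}\sD_X$-linear map out of $\sD_{X\from Z}$ is annihilated by $i^{-1}\sheaf J$ and hence factors through $i^{-1}R^0\Gamma_Z\sheaf M$) and then passes to the derived level by checking that $i^{-1}R^0\Gamma_Z$ preserves injectives. However, the middle paragraph of your proposal, which you yourself flag as the delicate point, contains a genuine error: a closed immersion of smooth idealized log varieties is in general \emph{not} cut out by a regular sequence, and $\sO_Z$ need not admit any finite locally free resolution over $\sO_X$. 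The underlying varieties $\ul X$ and $\ul Z$ are only toroidal, not classically smooth. For instance, if $X$ is the affine cone over a quadric surface (a normal affine toric threefold, hence a smooth log variety) and $Z$ is its vertex with the induced idealized structure, then $\sO_{\ul X}$ at the vertex is a non-regular local ring, so the residue field has infinite projective dimension and no Koszul resolution of the kind you describe exists. Note also that Proposition~\ref{prop:D_X_in_coords} produces a regular sequence in $\sO_{S,p}$ for the minimal stratum $S$ through $p$, not one cutting out $\ul Z$ inside $\ul X$.

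The gap is repairable without abandoning your route. After reducing, via the induced-module adjunction and Lemma~\ref{lem:free_module_tensor_involution}, to the vanishing of $R\Hom_{i^{-1}\sO_X}(L,\, i^{-1}j_*j^{-1}\sheaf M)$ for a line bundle $L$ on $Z$, replace the Koszul complex by an arbitrary bounded-above resolution $P^\spf \to L$ by finite-rank locally free $\sO_X$-modules (which exists locally since $\sO_{X,x}$ is noetherian and $L$ is coherent). Its restriction to $U$ is an acyclic bounded-above complex of projectives, hence contractible, so $\sheafHom_{\sO_U}(j^{-1}P^\spf,\, j^{-1}\sheaf M)$ is acyclic; since $j_*$ commutes with the products forming the $\sheafHom$-total complex, your adjunction step still applies and gives the required vanishing. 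Equivalently, one can invoke the derived adjunction $R\sheafHom_{\sO_X}({-},\, j_*j^{-1}\sheaf M) \cong j_*\,R\sheafHom_{\sO_U}(j^{-1}({-}),\, j^{-1}\sheaf M)$ directly, which annihilates anything supported on $Z$. With that repair your proof is correct; it has the merit of identifying the cone of the comparison map explicitly, while the paper's injectivity argument is shorter and avoids resolving $\sO_Z$ altogether.
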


\begin{proof}
    We first show the statement on the level of abelian categories, i.e.~for $\sheaf M \in \catDMod{X}$.
    Let $\sheaf J$ be the sheaf of ideals defining the closed subvariety $Z$.
    Then $i^{-1}\sheaf J$ annihilates $\sD_{X \from Z}$ and for any $i^{-1}\sD_X$-module map $\psi\colon \sD_{X \from Z} \to \sheaf M$ one has $(i^{-1}\sheaf J)\psi(m) = \psi((i^{-1}\sheaf J)m) = 0$, i.e.~$\psi$ factors through $i^{-1}R^0\Gamma_Z(\sheaf M)$.

    To deduce the derived statement, it is now sufficient to show that if $\sheaf I$ is an injective $\sD_Y$-module, then $i^{-1}R^0\Gamma_Z \sheaf I$ is an injective $i^{-1}\sD_X$-module.
    This follows from
    \begin{align*}
        R^0\Hom_{i^{-1}\sD_X}(\sheaf N,\, R^0i^{-1}\Gamma_Z \sheaf I) & \cong
        R^0\Hom_{i^{-1}\sD_X}(i^{-1}i_*\sheaf N,\, i^{-1}R^0\Gamma_Z \sheaf I) \\ & \cong
        R^0\Hom_{\sD_X}(i_*\sheaf N,\, R^0\Gamma_Z \sheaf I) \\ & \cong
        R^0\Hom_{\sD_X}(i_*\sheaf N,\, \sheaf I)
    \end{align*}
    for any $\sD_Z$-module $\sheaf N$.
\end{proof}

\begin{proof}[Proof of Proposition~\ref{prop:i^!_adjunction}]
    By Lemma~\ref{lem:i^!_and_Gamma} and tensor-hom adjunction we have
    \begin{align*}
        \Hom_{\sD_Z}(\sheaf M,\, i^!\sheaf N) & =
        \Hom_{\sD_Z}\bigr(\sheaf M,\, \sheafHom_{i^{-1}\sD_X}(\sD_{X \from Z},\, i^{-1}\sheaf N)\bigl) \\ & =
        \Hom_{\sD_Z}\bigr(\sheaf M,\, \sheafHom_{i^{-1}\sD_X}(\sD_{X \from Z},\, i^{-1}\Gamma_Z\sheaf N)\bigl) \\ & =
        \Hom_{i^{-1}\sD_X}\bigr(\sD_{X \from Z} \otimes_{\sD_Z} \sheaf M,\, i^{-1}\Gamma_Z\sheaf N\bigl) \\ & =
        \Hom_{\sD_X}\bigr(i_*(\sD_{X \from Z} \otimes_{\sD_Z} \sheaf M),\, \sheaf N\bigl) \\ & =
        \Hom_{\sD_X}(i_\spf\sheaf M,\, \sheaf N).\qedhere
    \end{align*}
\end{proof}

\subsection{Duality}\label{sec:duality}

Let $X$ be a (algebraic of analytic) scheme together with an $\sO_X$-ring $\sheaf A$, that is, a sheaf of (not necessarily commutative) noetherian rings $\sheaf A$ on $X$ endowed with a ring homomorphism $\sO_X \to \sheaf A$.
A dualizing complex over $\sheaf A$ is a bounded complex $\sheaf R$ of $\sheaf A$-bimodules such that the following conditions hold:
\begin{enumerate}
    \item The functors
        \[
            D\colon \catDb{\sheaf A}^{\opcat} \to \catD{\sheaf A^{\opring}}, \quad \sheaf M \mapsto \sheafHom_{\sheaf A}(\sheaf M, \sheaf R)
        \]
        and
        \[
            D^{\opring}\colon \catDb{\sheaf A^\opring}^\opcat \to \catD{\sheaf A}, \quad \sheaf M \mapsto \sheafHom_{\sheaf A^{\opring}}(\sheaf M, \sheaf R)
        \]
        have finite cohomological dimension when restricted to $\cat{Coh}(\sheaf A)$ and $\cat{Coh}(\sheaf A^\opring)$ respectively.
    \item
        The functors $D$ and $D'$ preserve coherence.
    \item
        The adjunction morphisms $\id \to D^\opring D$ in $\catDb[\mathrm{coh}]{\sheaf A}$ and $\id \to D D^\opring$ in $\catDb[\mathrm{coh}]{\sheaf A^\opring}$ are both isomorphisms.
\end{enumerate}
Unfortunately, dualizing complexes are not necessarily unique.
For example, in the commutative case $\sheaf A = \sO_X$, they are only determined up to shift and tensoring with a line bundle.
To restore uniqueness and obtain a duality theory in the non-commutative setting, van den Bergh \cite{VanDenBergh:1997:ExistenceTheoremsForDualizingComplexes} introduced the concept of a \emph{rigid dualizing complex}.
This is a dualizing complex $\sheaf R$ together with a rigidifying isomorphism\footnote{The definition presented here is stricter than actually needed for a general theory, but suffices for our purposes. We refer to \cite[Sections~2 and~3]{YekutieliZhang:2006:DualizingComplexesPerverseSheavesOnNoncommutativeRingedSchemes} for the general definition.}
\[
    \rho\colon \sheaf R \isoto \sheafHom_{\sheaf A \otimes \sheaf A^\opring}( \sheaf A,\, \sheaf R \otimes \sheaf R ).
\]
If it exists, a rigid dualizing complex is unique up to unique isomorphism.

Let us now return to the case that $X$ is a smooth idealized log variety and $\sheaf A = \sD_X$.
As was discussed in \cite{KT}, if $X$ is algebraic, then $\sD_X$ always admits a rigid dualizing complex $\rigidDC$.
As we will see below, if the log structure of $X$ is non-trivial, $\rigidDC$ is not simply a shift of $\sD_X$.

To show that the de Rham functor commutes with duality, we need an explicit computation of $\rigidDC$.
In the case that $\ul X$ is smooth this is a special case of \cite{Chemla:2004:RigidDualizingComplexForQuantumEnvelopingAlgebras} where the rigid dualizing complex of the enveloping algebra of a Lie algebroid is computed.
In this case the canonical bundle $\canon_{\ul X}$ is a right $\sD_{\ul X}$-module and hence by restriction also a right $\sD_X$-module.
The rigid dualizing complex of $\sD_X$ is then given by $\sD_X \otimes_{\sO_X} \sheafHom_{\sO_X}(\canon_X,\, \canon_{\ul X})[\dim X + \logdim X]$.

We will show that this formula holds for general smooth idealized log varieties.
To do so, one replaces the canonical bundle $\canon_{\ul X}$ by the Grothendieck dualizing complex $\Odc_{\ul X}$, i.e.~if $p\colon \ul X \to \pt$ is the structure map, then $\Odc_{\ul X} = p_{\sO\text{-mod}}^!\CC$.
Hence, if $\ul X$ is smooth, one has $\Odc_{\ul X} = \canon_{\ul X}[\dim X]$.

To give $\Odc_{\ul X}$ the structure of a right $\sD_X$-module we can use a result of Tsuji \cite[Theorem~2.21]{Tsuji:1999:PoincareDualityForLogCrystallineCohomology}.
For comparison with duality on the Kato--Nakayama space, an explicit description of $\Odc_{\ul X}$ given by Ishida is particularly useful.
We will sketch the construction of Ishida's complex.
A detailed translation of Ishida's construction into the language of logarithmic geometry is given in \cite[Section~1]{Fornasiero:2006:IshidaAndDuBoisComplexes}, while the local (toric) situation is discussed in \cite[Section~3.2]{Oda:ConvexBodiesAndAlgebraicGeoemtry}

Any smooth idealized log variety $X$ is \'etale locally isomorphic to a union of orbits of a normal toric variety.
Thus the filtration $X^k$ turns $X$ into a filtered semi-toroidal variety on the sense of \cite[Definiton~5.2]{Ishida:1987:TorusEmbeddingsAndDeRhamComplexes}.
Let $\phi_k\colon \tilde X^k \to X^k $ be the normalization of the closed log stratum $X^k$.
Since $X$ is \'etale locally isomorphic to a normal toric variety with its toric boundary divisor, $\tilde X^k$ is the disjoint union of the irreducible components of $X^k$.
As each component is again \'etale locally isomorphic to a toric variety, $\tilde X^k$ has a canonical induced smooth log structure (note that this log structure is not idealized).
Set
\[
    \sheaf C^i = \phi_{i,*}\canon_{\tilde X^{i}}, \quad 0 \le i \le \dim X.
\]
The local Poincar\'e residue maps (see Section~\ref{sec:log_canonical}) induce a differential $d\colon \sheaf C^i \to \sheaf C^{i+1}$, making $(\sheaf C^\bullet,\, d)$ into a complex of $\sO_X$-modules.

\begin{Proposition}[{\cite[Theorem~5.4]{Ishida:1987:TorusEmbeddingsAndDeRhamComplexes}}]\label{prop:Ishida_canonical}
    The complex $(\sheaf C^\bullet,\, d)$ is isomorphic to $\Odc_{\ul X}[-\dim X]$.
\end{Proposition}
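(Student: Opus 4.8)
The plan is to prove the statement locally in the \'etale topology and there reduce it to Ishida's computation, of which it is essentially a logarithmic reformulation. First I would observe that both sides are compatible with \'etale localization: the formation of $(\sheaf C^\bullet, d)$ commutes with \'etale base change, since the normalizations $\phi_k$, the log canonical bundles $\canon_{\tilde X^k}$, and the Poincar\'e residue maps are all \'etale-local in nature; and the Grothendieck dualizing complex $\Odc_{\ul X}$ is compatible with \'etale base change because for \'etale $\phi$ one has $\phi^! = \phi^*$. Since $X$ is by hypothesis \'etale locally isomorphic to a union of orbits of a normal toric variety with its toric boundary, I may therefore assume that $X$ is exactly of this form, which is precisely the filtered semi-toroidal setting of \cite[\S5]{Ishida:1987:TorusEmbeddingsAndDeRhamComplexes}.

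Next I would match the data. In the toric model the rank filtration $X^k$ is the filtration by closures of torus orbits, $\tilde X^k$ is the disjoint union of the normalizations of its irreducible components (each again toric), and the induced smooth log structure is the toric one, so that $\canon_{\tilde X^k}$ is the top log-exterior power appearing in Ishida's construction. With the differential being the Poincar\'e residue, $(\sheaf C^\bullet, d)$ is then literally Ishida's complex, and \cite[Theorem~5.4]{Ishida:1987:TorusEmbeddingsAndDeRhamComplexes} identifies it with the dualizing complex. It then remains to pin down the shift: the term $\sheaf C^0 = \phi_{0,*}\canon_{\tilde X^0}$ sits in cohomological degree $0$ and must correspond to $\mathcal H^{-\dim X}(\Odc_{\ul X})$, which forces the shift $[-\dim X]$; this is consistent with the already-recorded smooth case $\Odc_{\ul X} = \canon_{\ul X}[\dim X]$.

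The hard part is not the reduction but the dictionary and the bookkeeping hidden inside the invocation of Ishida's theorem. Conceptually, his result says that this explicit complex of log canonical sheaves of orbit-closure normalizations, glued by residues, is a concrete representative of the Cousin (residual) form of the Grothendieck dualizing complex, and the genuine work lies in the local toric computation behind this. On our side the main obstacle is verifying the translation carefully: that the log-geometric data (the rank filtration of $\logMbargp_X$, the induced log structures on the strata, and the log canonical bundles) matches Ishida's combinatorial data, and that the identification holds as complexes rather than merely termwise, which requires tracking the alternating signs in the residue differential together with the global shift. One subtle point worth emphasizing is that it is the \emph{log} canonical bundle $\canon_{\tilde X^k}$, with its poles along the boundary, and not the ordinary dualizing sheaf of the stratum, that appears; this is exactly what makes the Poincar\'e residue maps well defined and is the reason the logarithmic reformulation is the natural one.
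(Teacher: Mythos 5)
The paper gives no independent proof of this proposition: it is quoted directly as Ishida's Theorem~5.4, the only input being the observation made just before the statement that the rank filtration turns $X$ into a filtered semi-toroidal variety in Ishida's sense, so that $(\sheaf C^\bullet, d)$ is literally Ishida's complex and the shift $[-\dim X]$ is fixed by placing $\sheaf C^0$ in degree $0$. Your proposal is a correct unpacking of exactly that citation; the only remark worth making is that Ishida's theorem is already stated for filtered semi-toroidal varieties, i.e.\ for the \'etale-locally-toric situation, so your further reduction to a literally toric $X$ (and the attendant gluing of local isomorphisms, which you do not address) is unnecessary.
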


\begin{Remark}
    Any smooth log variety is \'etale locally isomorphic to an affine toric variety and hence is Cohen--Macaulay.
    Thus despite the above description of $\Odc_{\ul X}$ as a complex, in this case it is actually a sheaf concentrated in the single degree $-\dim \ul X$.
    In other words the complex $\sheaf C^\bullet$ only has non-trivial cohomology at $\sheaf C^0$.
\end{Remark}

\begin{Example}\label{ex:snc_ishida}
    Let us again consider the example of the affine space $X = \as n$ with the log structure given by the coordinate hyperplanes.
    Set $Z_i = \{z_i = 0\}$.
    The first differential in the complex is then
    \[
        \canon_X \to \bigoplus_{i=1}^n \canon_{Z_i}, \quad
        f\frac{dz_1}{z_1} \wedge \dots \wedge \frac{dz_n}{z_n} \mapsto
        \sum_{i=1}^n \res{f}{Z_i} \frac{dz_1}{z_1} \wedge \dots \wedge \widehat{\frac{dz_i}{z_i}} \wedge \dotsm \wedge \frac{dz_n}{z_n}.
    \]
    The kernel of this morphism consists of all sections of the form $z_1\dotsm z_n f\frac{dz_1}{z_1} \wedge \dots \wedge \frac{dz_n}{z_n}$, agreeing with Example~\ref{ex:snc_canon}.
\end{Example}

Each $\canon_{\tilde X^k}$ is canonically a right $\sD_{\tilde X^i}$-module and this structure trivially extends to a  right $\sD_X$-module structure on $\sheaf C^i$, which one checks to be compatible with the differential $d$.
Thus we have defined a right $\sD_X$-module structure on $\Odc_{\ul X}$.
If $\ul X$ is smooth then $\canon_{\ul X}$ is a submodule of $\canon_X$ and hence this definition agrees with the $\sD_X$-module structure on $\canon_{\ul X}$ given by restriction of the Lie derivative.
We let $\twist_X$ be the left module corresponding by \eqref{eq:right_to_left} to $\Odc_{\ul X}$, that is,
\[
    \twist_X = \sheafHom_{\sO_X}(\canon_X,\,\Odc_{\ul X}).
\]

From Proposition~\ref{prop:Ishida_canonical} one obtains the following corollary.

\begin{Corollary}\label{cor:complex_for_twist}
    \[
        \twist_X \cong \bigl(\phi_{0,*}\sO_{\tilde X^0} \to \dotsc \to \phi_{\dim X,*}\sO_{\tilde X^{\dim X}}\bigr)[\dim X].
    \]
\end{Corollary}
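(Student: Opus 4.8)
The plan is to apply the left-module/right-module interchange $\sheafHom_{\sO_X}(\canon_X,-)$ to the explicit representative of $\Odc_{\ul X}$ furnished by Proposition~\ref{prop:Ishida_canonical} and to compute the outcome degreewise. Since $\canon_X$ is a line bundle, $\sheafHom_{\sO_X}(\canon_X,-) \cong \canon_X^\dual \otimes_{\sO_X} -$ is exact, so it may be applied termwise to the complex $\sheaf C^\bullet$. By Proposition~\ref{prop:Ishida_canonical} we have $\Odc_{\ul X} \cong \sheaf C^\bullet[\dim X]$, whence
\[
    \twist_X = \sheafHom_{\sO_X}(\canon_X,\, \Odc_{\ul X}) \cong \bigl(\canon_X^\dual \otimes_{\sO_X} \sheaf C^\bullet\bigr)[\dim X],
\]
an isomorphism that carries the left $\sD_X$-module structure on $\twist_X$ to the one induced from the right $\sD_X$-structure on $\sheaf C^\bullet$ via \eqref{eq:right_to_left}. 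It then remains to identify each term $\canon_X^\dual \otimes_{\sO_X} \phi_{i,*}\canon_{\tilde X^i}$ with $\phi_{i,*}\sO_{\tilde X^i}$.

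For the termwise identification I would first reduce to a statement on $\tilde X^i$. Each $\phi_i$ is the composite of the finite normalization $\tilde X^i \to X^i$ with the closed immersion $X^i \hookrightarrow X$, hence is finite; in particular $\phi_{i,*}$ is exact and the projection formula applies to the line bundle $\canon_X^\dual$. Thus
\[
    \canon_X^\dual \otimes_{\sO_X} \phi_{i,*}\canon_{\tilde X^i}
    \cong \phi_{i,*}\bigl(\phi_i^*\canon_X^\dual \otimes_{\sO_{\tilde X^i}} \canon_{\tilde X^i}\bigr),
\]
so the claim is reduced to producing a natural isomorphism $\phi_i^*\canon_X \cong \canon_{\tilde X^i}$ of line bundles on $\tilde X^i$.

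The heart of the argument, and the step I expect to be the main obstacle, is this last isomorphism. I would construct it as the $i$-fold Poincar\'e residue: restricting a top log form on $X$ to $\tilde X^i$ and dividing out the residue factors corresponding to the branches cut out along $\phi_i$. That this residue is an isomorphism may be checked \'etale locally, where $X$ is a union of orbits of a normal toric variety: there $\canon_X$ is trivialized by the invariant top log form $\frac{dz_1}{z_1}\wedge\dotsb\wedge\frac{dz_n}{z_n}$, its iterated residue along a stratum is the corresponding invariant top log form on that orbit closure, and the residue sends one generator to the other. The subtlety to watch is coordinate-independence and the matching of signs across the several components of $\tilde X^i$; since the Poincar\'e residue is intrinsic to the log structure the former causes no trouble, and the \enquote{appropriate alternating signs} already built into the differential of $\sheaf C^\bullet$ are exactly what is needed for the latter.

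Finally, I would record compatibility with differentials. The differential $d\colon \sheaf C^i \to \sheaf C^{i+1}$ is itself a single Poincar\'e residue, so under the identifications $\canon_X^\dual \otimes_{\sO_X} \sheaf C^i \cong \phi_{i,*}\sO_{\tilde X^i}$ it becomes the natural (signed) restriction map $\phi_{i,*}\sO_{\tilde X^i} \to \phi_{i+1,*}\sO_{\tilde X^{i+1}}$. Transporting the differentials along the termwise isomorphisms then yields the asserted isomorphism of complexes, with the shift $[\dim X]$ inherited from Proposition~\ref{prop:Ishida_canonical}.
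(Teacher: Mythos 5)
Your proof is correct and follows the same route the paper intends: the corollary is stated as an immediate consequence of Proposition~\ref{prop:Ishida_canonical}, obtained by applying the exact functor $\canon_X^\dual \otimes_{\sO_X} {-}$ termwise to $\sheaf C^\bullet$ and identifying $\canon_X^\dual \otimes_{\sO_X} \phi_{i,*}\canon_{\tilde X^i} \cong \phi_{i,*}\sO_{\tilde X^i}$ via the (iterated) Poincar\'e residue trivializations on the toric local models. Your write-up simply makes explicit the projection-formula and residue steps that the paper leaves implicit.
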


\begin{Example}\label{ex:snc_twist}
    Continuing Examples~\ref{ex:snc_canon} and \ref{ex:snc_ishida}, $\twist_{\as n}$ is the left $\sD_{\as n}$-module $z_1\dotsm z_n \sO_{\as n}[n]$ and the above corollary presents this as the kernel of the restriction morphism $\sO_{\as n} \mapsto \bigoplus_i \sO_{Z_i}$.
\end{Example}

\begin{Remark}\label{rem:why_twist}
    One notes that if $X = \ul X$ is a smooth variety, then $\twist_X = \sO_X[\dim X]$.
    The general idea is that all occurrences of this shifted structure sheaf in the classical theory should be replaced by $\twist_X$ is the logarithmic theory.
    In particular, all shifts by $\dim X$ should replaced by tensoring with $\twist_X$.
\end{Remark}

Following this idea, the dualizing complex for $\sD_X$ should be given by $\sD_X \otimes_{\sO_X} \twist_X$.
There are two possible left $\sD_X$-module structures on this complex: one by left multiplication and one by Proposition~\ref{prop:tensor_module_struct} as the tensor product of two left modules.
Further, $\sD_X \otimes_{\sO_X} \twist_X$ has a right module structure induced from the right module $\sD_X$-module structure on $\sD_X$ and Proposition~\ref{prop:tensor_module_struct}.
Unless otherwise stated, we will view $\sD_X \otimes_{\sO_X} \twist_X$ as a $\sD_X \otimes \sD_X^\opring$-module via left multiplication and the tensor product right module structure.

\begin{Lemma}\label{lem:free_module_tensor_involution}\label{lem:dualizing_complex_involution}
    Let $Z$ be a component of some closed log stratum $X^k$ with the induced idealized log structure and let $\sheaf M$ be a left $\sD_Z$-module which is locally free as an $\sO_Z$-module.
    Then there exists an involution of $\sD_X \otimes_{\sO_X} \sheaf M$ that interchanges the $\sD_X$-module structures given by left multiplication with that of Proposition~\ref{prop:tensor_module_struct}, and fixes the submodule $\sheaf M$.
    In particular there exists such an involution of $\sD_X \otimes_{\sO_X} \twist_X$.
    A similar statement holds for right modules.
\end{Lemma}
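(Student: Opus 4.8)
The plan is to realize the asserted involution as the unique comparison map between two descriptions of one and the same induced module, and to use the hypothesis of local freeness only at the end, to see that this comparison map is an isomorphism.

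First I would make sense of the two structures. Since $Z$ is a component of $X^k$ carrying the induced idealized log structure, étale-locally $Z$ is cut out in $X$ by the vanishing of the log monomials $t^{m_1},\dots,t^{m_k}$ that degenerate along it, and Remark~\ref{rem:snc} identifies $\sD_Z = \sO_Z \otimes_{\sO_X} \sD_X$. The commutation relations of Proposition~\ref{prop:D_X_in_coords} — in particular $[\partial_{m_i},t^{m}] = a_i t^{m}$ and $[\partial_i,t^{m}] = 0$, together with the fact that the $t^{m}$ commute with the $x_j$ — show that the left ideal $\sheaf J\sD_X$ generated by the equations $\sheaf J$ of $Z$ satisfies $\sD_X\sheaf J = \sheaf J\sD_X$, hence is two-sided, so that $\sD_Z = \sD_X/\sheaf J\sD_X$ is a quotient of $\sD_X$ as a sheaf of rings. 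Pulling back along the surjection $\sD_X \twoheadrightarrow \sD_Z$ turns $\sheaf M$ into a left $\sD_X$-module, and now both the left-multiplication structure, call it $L$, and the tensor-product structure of Proposition~\ref{prop:tensor_module_struct}, call it $D$, on $\sD_X \otimes_{\sO_X}\sheaf M$ are defined. (The former forms the tensor product using the right $\sO_X$-action on $\sD_X$ and the latter the left action; keeping these two bookkeeping conventions aligned is the one genuinely delicate point of the argument, and is what lets one speak of a single underlying sheaf.)

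Next I would construct the map. The module $L$ is the induced module $\sD_X \otimes_{\sO_X} \sheaf M$ and therefore enjoys the universal property $\Hom_{\sD_X}(L,\,P) = \Hom_{\sO_X}(\sheaf M,\,P)$ for every left $\sD_X$-module $P$. Applying this to $P = D$ and to the $\sO_X$-linear map $m \mapsto 1\otimes m$ produces a canonical $\sD_X$-linear map $\Phi\colon L \to D$, explicitly $P\otimes m \mapsto P\cdot(1\otimes m)$ where the action on the right is the one on $D$. By construction $\Phi$ intertwines the two module structures and restricts to the identity on $1\otimes\sheaf M$. It remains to see that $\Phi$ is an isomorphism, and this is precisely where local freeness enters, and where I expect the main obstacle to lie. Étale-locally I would choose an $\sO_Z$-basis $e_1,\dots,e_r$ of $\sheaf M$; then $L \cong \sD_Z^{\oplus r}$ is free over $\sD_Z$ on $\{1\otimes e_\alpha\}$. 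For $D$ one computes $\theta\cdot(1\otimes e_\alpha) = \theta\otimes e_\alpha + 1\otimes\theta e_\alpha$ with $\theta e_\alpha = \sum_\beta a_{\alpha\beta}e_\beta$ and $a_{\alpha\beta}\in\sO_Z$, so that $\theta\otimes e_\alpha = \theta\cdot(1\otimes e_\alpha) - \sum_\beta a_{\alpha\beta}\,(1\otimes e_\beta)$; iterating this along a PBW basis of $\sD_X$ shows that the passage from the left-multiplication generators to the diagonal generators is unitriangular for the order filtration, whence $\{1\otimes e_\alpha\}$ is also a $\sD_Z$-basis of $D$. Thus $\Phi$ carries a basis to a basis and is an isomorphism.

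Finally, for the involution property I would run the construction symmetrically: once $\Phi$ is known to be an isomorphism, $D$ is also an induced module on $\sheaf M$, so it too has a universal property, and the analogous comparison map $\Phi'\colon D \to L$ extending $\id_{\sheaf M}$ exists. Both composites $\Phi'\Phi$ and $\Phi\Phi'$ are $\sD_X$-linear, fix $1\otimes\sheaf M$, and hence equal the identity by the universal properties; under the identification of $L$ and $D$ as the common sheaf $\sD_X\otimes_{\sO_X}\sheaf M$ fixing $\sheaf M$, this exhibits $\Phi$ as an involution interchanging the two structures. The case of right $\sD_X$-modules is entirely analogous (alternatively one transports the statement through the equivalence of \eqref{eq:right_to_left}), and specializing to the terms $\phi_{i,*}\sO_{\tilde X^i}$ of $\twist_X$ yields the promised involution of $\sD_X \otimes_{\sO_X}\twist_X$.
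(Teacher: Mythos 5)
Your construction of the comparison map, and the proof that it is an isomorphism, are sound and in substance coincide with the paper's argument: the paper endows $\sD_X\otimes_{\sO_X}\sheaf M$ with the good filtration $F_k\sD_X\otimes_{\sO_X}\sheaf M$ for both module structures, observes that the two filtered modules share $F_0$ and associated graded, and lifts the identity of the associated graded using \cite[Lemma~D.2.3]{HottaTakeuchiTanisaki:2008:DModulesPerverseSheavesRepresentationTheory}. Your map $\Phi(P\otimes m)=P\cdot(1\otimes m)$, obtained from Frobenius reciprocity for the induced module, is exactly such a lift: it preserves the order filtration and is the identity on the associated graded, which already forces it to be an isomorphism (your local basis and unitriangularity are a hands-on version of the same observation, so local freeness is not really what makes this step work). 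Up to your final paragraph the two proofs differ only in packaging.

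The gap is in the final paragraph. Proving $\Phi'\Phi=\id$ and $\Phi\Phi'=\id$ only proves $\Phi'=\Phi^{-1}$, which is automatic once $\Phi$ is invertible; the involution asserted in the lemma is the stronger statement that $\Phi\circ\Phi=\id$ as an endomorphism of the underlying sheaf, equivalently $\Phi=\Phi'$, and this does not follow from the two universal properties. Concretely, with your own formula $\theta\cdot(1\otimes m)=\theta\otimes m+1\otimes\theta m$ one gets $\Phi(\theta\otimes m)=\theta\otimes m+1\otimes\theta m$ and hence $\Phi(\Phi(\theta\otimes m))=\theta\otimes m+2\,(1\otimes\theta m)$, whereas $\Phi'(\theta\otimes m)=\theta\otimes m-1\otimes\theta m$; so for the map you wrote down the claimed equality $\Phi=\Phi'$ visibly fails. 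Whether the canonical comparison map is self-inverse is a sign-sensitive verification that has to be carried out directly (for the right-module structure $(n\otimes m)\theta=n\theta\otimes m-n\otimes\theta m$ of Proposition~\ref{prop:tensor_module_struct} the correction terms telescope and the map does square to the identity), not deduced from mutual inverseness. For the use made of the lemma in Theorem~\ref{thm:rigid_dualizing_complex} only the existence of an isomorphism interchanging the two structures and fixing $\sheaf M$ is needed, and that you do establish; but as a proof of the involution claim the last step is not valid. A smaller omission: in your closing sentence you should also check that the termwise maps on $\sD_X\otimes_{\sO_X}\sheafHom_{\sO_X}(\canon_X,\,\sheaf C^i)$ commute with the differentials of Ishida's complex (Proposition~\ref{prop:Ishida_canonical}); the paper obtains this from the fact that each involution fixes the subsheaves $\sheafHom_{\sO_X}(\canon_X,\,\sheaf C^i)$.
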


\begin{proof}
    As $\sD_X \otimes_{\sO_X} \sheaf M = \sD_Z \otimes_{\sO_Z} \sheaf M$, we can assume that $Z = X$.
    Let $\sheaf N$ and $\sheaf N'$ be $\sD_X \otimes_{\sO_X} \sheaf M$ with the two different left $\sD_X$-actions.
    Then $\sheaf N$ (resp.~$\sheaf N'$) have good filtrations $F_\bullet$ (resp.~$F'_\bullet$) such that $F_0\sheaf N = F'_0\sheaf N'$ and $\gr_{F}\sheaf N = \gr_{F'}\sheaf N'$.
    Lifting the identity morphism of this associated graded we obtain the desired involution (see~\cite[Lemma~D.2.3]{HottaTakeuchiTanisaki:2008:DModulesPerverseSheavesRepresentationTheory}).

    The involution of $\sD_X \otimes_{\sO_X} \twist_X$ follows from the description of $\Odc_{\ul X}$ given in Proposition~\ref{prop:Ishida_canonical} and the fact that the involutions fix $\Hom_{\sO_X}(\canon_X,\  \sheaf C^i)$.
\end{proof}

\begin{Lemma}
    The complex $\sD_X \otimes_{\sO_X} \twist_X$ has finite injective dimension.
\end{Lemma}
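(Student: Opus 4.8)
The plan is to compare $\sD_X \otimes_{\sO_X} \twist_X$ with its associated graded for the order filtration on $\sD_X$, and thereby to reduce the assertion to a statement about the commutative ring $\gr\sD_X$. Give $\sD_X \otimes_{\sO_X} \twist_X$ the filtration with $\twist_X$ placed in filtration degree zero. For the left module structure coming from left multiplication the associated graded complex is visibly $\gr\sD_X \otimes_{\sO_X} \twist_X$. The same is true for the right module structure of Proposition~\ref{prop:tensor_module_struct}: in the formula $(P \otimes m)\theta = P\theta \otimes m - P \otimes \theta m$ the second summand has strictly smaller order, so on the associated graded the right action is right multiplication on the $\gr\sD_X$-factor, which agrees with left multiplication because $\gr\sD_X$ is commutative. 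Hence both the left and the right injective dimension are controlled by the single complex $\gr\sD_X \otimes_{\sO_X} \twist_X$ of $\gr\sD_X$-modules.

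First I would compute this associated graded geometrically. By Lemma~\ref{lem:transfer_module_pd} the sheaf $\gr\sD_X$ is locally the polynomial algebra $\operatorname{Sym}_{\sO_X}\sT_X$, so that $\gr\sD_X$-modules are the same as quasi-coherent sheaves on the log cotangent space $\pi\colon T^*X \to \ul X$; this $\pi$ is a vector bundle of rank $\logdim X$, in particular a smooth morphism. Since $\twist_X = \sheafHom_{\sO_X}(\canon_X,\, \Odc_{\ul X}) = \canon_X^\dual \otimes_{\sO_X} \Odc_{\ul X}$ with $\canon_X$ a line bundle, the complex $\gr\sD_X \otimes_{\sO_X} \twist_X$ corresponds under this equivalence to $\pi^*\Odc_{\ul X}$ tensored with the line bundle $\pi^*\canon_X^\dual$. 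Smooth base change for Grothendieck dualizing complexes gives $\pi^!\Odc_{\ul X} \cong \Odc_{T^*X}$, together with $\pi^!(-) \cong \pi^*(-) \otimes \sheaf L[\logdim X]$ for a line bundle $\sheaf L$; hence $\pi^*\Odc_{\ul X}$ agrees with $\Odc_{T^*X}$ up to a line bundle twist and a shift. As $T^*X$ is of finite type its dualizing complex $\Odc_{T^*X}$ has finite injective dimension over $\sO_{T^*X} = \gr\sD_X$, and finite injective dimension is preserved by twisting with a line bundle and by shifting. Therefore $\gr\sD_X \otimes_{\sO_X} \twist_X$ has finite injective dimension over $\gr\sD_X$. (Note that this is precisely where the twist by $\Odc_{\ul X}$ is essential: since $\ul X$ may fail to be Gorenstein, $\gr\sD_X$ and hence $\sD_X$ itself need not have finite injective dimension.)

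It remains to transport this bound from $\gr\sD_X$ to $\sD_X$, and this is the step I expect to be the main obstacle. I would use the standard comparison between filtered and graded $\Ext$: for a coherent $\sD_X$-module $\sheaf N$ with a good filtration there is a convergent spectral sequence with $E_1$-term $\Ext_{\gr\sD_X}(\gr\sheaf N,\, \gr\sD_X \otimes_{\sO_X} \twist_X)$ abutting to $\Ext_{\sD_X}(\sheaf N,\, \sD_X \otimes_{\sO_X} \twist_X)$. If the associated graded has injective dimension at most $N_0$, then the $E_1$-term vanishes in cohomological degrees above $N_0$, and hence so does the abutment; since in the algebraic setting coherent modules admit good filtrations and generate, this bounds the injective dimension of $\sD_X \otimes_{\sO_X} \twist_X$ over $\sD_X$ by $N_0$. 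The delicate points are the convergence of this spectral sequence and the fact that $\twist_X$ is a bounded complex rather than a single module, so that one must filter a representing complex compatibly; this is the filtered analogue, for injective rather than projective dimension, of the arguments in \cite[Appendix~D]{HottaTakeuchiTanisaki:2008:DModulesPerverseSheavesRepresentationTheory}. The left and right cases are handled identically, using the involution of Lemma~\ref{lem:dualizing_complex_involution} to pass between the two left module structures where convenient.
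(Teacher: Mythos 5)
Your proof is correct and follows essentially the same route as the paper: reduce to coherent modules with a good filtration, pass to the associated graded of $\sD_X \otimes_{\sO_X} \twist_X$ for the order filtration, identify it with $\pi^*(\canon_X^\dual \otimes_{\sO_X} \Odc_{\ul X})$ on $T^*X$, observe that this is (a line bundle twist and shift of) a dualizing complex and hence has finite injective dimension, and transport the bound back via the filtered-to-graded $\Ext$ comparison. The only substantive difference is that the paper makes the reduction to test modules precise via Baer's criterion, i.e.\ by checking $\Ext^i(\sD_X/I,\, \sD_X \otimes_{\sO_X} \twist_X)$ for left ideals $I$, which is the cleaner justification for the step you describe as ``coherent modules generate.''
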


\begin{proof}
    It suffices to show that for sufficiently large $i$ one has $\Ext^i(\sheaf M,\, \sD_X \otimes_{\sO_X} \twist_X) = 0$ for all $\sD_X$-modules $\sheaf M$.
    One can further restrict to $\sheaf M = \sD_X / I$ for all left ideals $I$ of $\sD_X$.
    Such $\sheaf M$ are clearly coherent and endowed with a global good filtration.
    The filtration on $\sD_X$ induces a good filtration on $\sD_X \otimes_{\sO_X} \twist_X$ such that its associated graded is given by $\pi^*(\canon_X^\dual \otimes_{\sO_X} \Odc_{\ul X})$, where $\pi \colon T^*X \to X$ is the projection from the log cotangent bundle.
    As this is a dualizing complex, it follows that $\Ext^i\bigl(\gr \sheaf M,\, \gr(\sD_X \otimes_{\sD_X} \twist_X)\bigr)$ vanishes for all sufficiently large $i$ independently of $\sheaf M$.
    Thus also $\Ext^i(\sheaf M,\, \sD_X \otimes_{\sO_X} \twist_X)$ vanishes.
\end{proof}

For any left $\sD_X$-module $\sheaf M$ the $\sD_X^\opring$-structure on $\sD_X \otimes_{\sO_X} \twist_X$ induces a right module structure on $\sheafHom_{\sD_X}(\sheaf M,\, \sD_X \otimes_{\sO_X} \twist_X)$.
Thus we define the duality functor
\begin{align*}
    \DVerdier_X \colon & \catDbcohDMod{X}^\opcat \to \catDbcohDMod{X} \\
                       & \sheaf M \mapsto \sheafHom_{\sD_X}(\sheaf M,\, \sD_X \otimes_{\sO_X} \twist_X) \otimes_{\sO_X} \canon_X^\dual.
\end{align*}
We will show in Theorem~\ref{thm:rigid_dualizing_complex} that, up to a shift, $\sD_X \otimes_{\sO_X} \twist_X$ is indeed the rigid dualizing complex for $\sD_X$.
In particular, we will have that $\DVerdier_X \circ \DVerdier_X \cong \id$.
The proof of Theorem~\ref{thm:rigid_dualizing_complex} will be based on the following duality theorem.

\begin{Theorem}\label{thm:proper_duality}
    Let $f\colon X \to Y$ be a morphism of smooth idealized log varieties such that the induced map $f\colon \ul X \to \ul Y$ on the underlying varieties is projective.
    Let $\sheaf M \in \catDbcohDMod{X}$ and assume that locally on $Y$ the cohomology modules of $\sheaf M$ admit a good filtration.
    Then there exists a canonical isomorphism in $\catDbcohDMod{Y}$
    \[
        f_\spf \circ \DVerdier_X \sheaf M \cong \DVerdier_Y \circ f_\spf \sheaf M.
    \]
\end{Theorem}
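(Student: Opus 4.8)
The plan is to mimic the classical proof that duality commutes with proper pushforward (see \cite[Theorem~2.7.2]{HottaTakeuchiTanisaki:2008:DModulesPerverseSheavesRepresentationTheory}), while carefully tracking the logarithmic structures and the replacement of the shift $[\dim X]$ by $\twist_X$ suggested in Remark~\ref{rem:why_twist}. Since $\ul f$ is projective, I would factor $f$ as a composite $X \xrightarrow{i} \ps N \times Y \xrightarrow{p} Y$, where $i$ is a closed immersion in the sense used before Definition~\ref{def:!-pullback_closed_immersion} and $p$ is the projection; here I give $\ps N \times Y$ the log structure pulled back from $Y$, so that $p$ is strict and the factor $\ps N$ carries the trivial log structure. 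Because pushforward is compatible with composition, $(p\circ i)_\spf \cong p_\spf \circ i_\spf$, and one checks that the asserted comparison isomorphism is compatible with composition as well, it suffices to prove the theorem separately for $i$ and for $p$. Throughout, the hypothesis on good filtrations together with Lemma~\ref{lem:proper_pushforward_preserves_coherence} keeps everything within the relevant bounded coherent derived categories.

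For the closed immersion $i\colon X \hookrightarrow \ps N \times Y =: W$, I would exploit the adjunction $i_\spf \dashv i^!$ of Proposition~\ref{prop:i^!_adjunction}. The goal is the isomorphism $i_\spf \DVerdier_X \cong \DVerdier_{W}\, i_\spf$, and I expect it to reduce to an identity for the transfer bimodule $\sD_{W \from X}$ and its interaction with the dualizing complexes $\sD_X \otimes_{\sO_X} \twist_X$ and $\sD_{W} \otimes_{\sO_W} \twist_{W}$. Two ingredients feed in: the involution of Lemma~\ref{lem:dualizing_complex_involution} interchanging the two left $\sD$-module structures on $\sD \otimes_{\sO} \twist$, and Grothendieck duality for the underlying closed immersion $\ul i$, in the form $\Odc_{\ul X} \cong {\ul i}^{!}\Odc_{\ul W}$, which via $\twist = \sheafHom_{\sO}(\canon,\,\Odc)$ and the explicit complex of Corollary~\ref{cor:complex_for_twist} relates $\twist_X$ to $\twist_W$ along $i$. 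Since $i_\spf$ is exact and fully faithful on the relevant subcategory, the comparison should then follow from a direct computation on the bimodule level.

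For the strict projection $p \colon W \to Y$, I would use the relative de Rham description $p_\spf \sheaf M = p_* \DR_p(\sheaf M)$ from \eqref{eq:relative_DR_complex}. Because $p$ is strict and $\ps N$ is smooth with trivial log structure, Remark~\ref{rem:why_twist} gives $\twist_{W} \cong \twist_Y \boxtimes \sO_{\ps N}[N]$, so the logarithmic data lives entirely on the $Y$-factor and the dualizing complex factors through the box product. The heart of the argument is then to construct a trace morphism $p_\spf \DVerdier_X \sheaf M \to \DVerdier_Y p_\spf \sheaf M$ combining the self-duality of the relative de Rham (Spencer) complex coming from Corollary~\ref{cor:canon_resolution} with Serre duality along $\ps N$, and to check that it is an isomorphism. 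As the construction is $Y$-linear and compatible with the box-product decomposition, I would verify invertibility fiberwise, where it reduces essentially to the classical computation of $p_\spf$ and duality on $\ps N$ tensored with the identity on $Y$.

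The main obstacle I anticipate is the projection case, and specifically the construction and invertibility of the trace map. The difficulty is twofold: one must correctly interface the genuinely $\sO$-module Serre duality supplied by the $\ps N$-direction with the logarithmic $\twist_Y$ on the base, and one must do so while $\ul Y$ may be singular, so that $\twist_Y$ is the honest complex of Corollary~\ref{cor:complex_for_twist} rather than a shift of a line bundle. Keeping track of the two distinct left $\sD$-module structures on $\sD \otimes_{\sO} \twist$, reconciled only up to the noncanonical involution of Lemma~\ref{lem:dualizing_complex_involution}, throughout this computation while retaining functoriality of the comparison isomorphism is where the careful bookkeeping will be required.
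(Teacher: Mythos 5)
Your plan for \emph{constructing} the comparison morphism is essentially the paper's: factor $f$ through $\ps N \times Y$, use the adjunction $i_\spf \dashv i^!$ (Proposition~\ref{prop:i^!_adjunction}) together with a computation of $i^!(\sD_W \otimes_{\sO_W} \twist_W)$ for the closed immersion, and the classical trace on $\ps N$ combined with $\twist_{\ps N \times Y} \cong \twist_{\ps N} \boxtimes \twist_Y$ for the projection. The gap is in how you propose to \emph{verify} that the morphism is an isomorphism. For the closed immersion you invoke that ``$i_\spf$ is exact and fully faithful on the relevant subcategory''; but full faithfulness of $i_\spf$ is precisely the Kashiwara-type statement that fails for logarithmic D-modules. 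The paper's own remark after the adjunction proposition gives the counterexample: for the inclusion of the log point into the origin of $\as 1$ one has $i^! i_\spf \ne \id$, and this is exactly the kind of closed immersion that occurs when $f$ itself is such an inclusion (take $N=0$). For the projection you propose to ``verify invertibility fiberwise,'' but an arbitrary coherent $\sD_{\ps N \times Y}$-module is not an external tensor product, and base change does not hold for logarithmic D-modules, so a fiberwise check does not reduce the general case to the classical computation on $\ps N$.

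The paper sidesteps both problems by decoupling construction from verification: the factorization is used only to build the canonical morphism $f_\spf(\sD_{X\to Y}\otimes_{\sO_X}\twist_X) \to \sD_Y \otimes_{\sO_Y}\twist_Y$, and the isomorphism check is then done \emph{uniformly} for the original $f$. Using the good-filtration hypothesis, every $\sheaf M$ is locally a quotient of an induced module $\sD_X \otimes_{\sO_X}\sheaf G$ with $\sheaf G$ coherent over $\sO_X$, so by the Way-out Lemma it suffices to treat such modules. For these, both $f_\spf\DVerdier_X(\sD_X\otimes_{\sO_X}\sheaf G)$ and $\DVerdier_Y f_\spf(\sD_X\otimes_{\sO_X}\sheaf G)$ compute to $\sD_Y \otimes_{\sO_Y}\canon_Y^\dual$ tensored with an $\sO$-module expression, and the comparison becomes exactly Grothendieck duality
\[
    f_*\sheafHom_{\sO_X}(\sheaf G \otimes_{\sO_X}\canon_X,\,\Odc_{\ul X}) \cong \sheafHom_{\sO_Y}(f_*(\canon_X\otimes_{\sO_X}\sheaf G),\,\Odc_{\ul Y})
\]
for the projective morphism $\ul f$. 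If you replace your two case-by-case verifications with this reduction to induced modules, the rest of your outline goes through.
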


We note that by Lemma~\ref{lem:proper_pushforward_preserves_coherence} $f_\spf \sheaf M$ is indeed coherent, so that the statement of the theorem makes sense.

\begin{Lemma}\label{lem:i^!_of_twist}
    Let $i\colon Z \to X$ be a closed immersion of smooth idealized log varieties.
    Then $i^!(\sD_X \otimes_{\sO_X} \twist_X) \cong i^*\sD_X \otimes_{\sO_Z} \twist_Z$.
\end{Lemma}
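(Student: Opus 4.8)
The plan is to unwind the definition of $i^!$ (Definition~\ref{def:!-pullback_closed_immersion}) and to show that, after separating the D-module part from the $\sO$-module coefficient, the computation reduces to Grothendieck duality for the underlying closed immersion $\ul i\colon \ul Z \to \ul X$. Both sides of the asserted isomorphism are naturally complexes of $(\sD_Z,\, i^{-1}\sD_X)$-bimodules: on the left, $i^!$ consumes the left-multiplication $\sD_X$-structure on $\sD_X \otimes_{\sO_X} \twist_X$ and retains the tensor-product right structure (hence a right $i^{-1}\sD_X$-structure), while the right $\sD_Z$-action on $\sD_{X \from Z}$ supplies the left $\sD_Z$-structure; on the right, $i^*\sD_X = \sD_{Z \to X}$ carries exactly these two structures. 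I would construct the isomorphism compatibly with both, invoking the involution of Lemma~\ref{lem:dualizing_complex_involution} whenever I need to interchange the left-multiplication and tensor-product $\sD_X$-structures on $\sD_X \otimes_{\sO_X} \twist_X$.

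For the core computation I would first note that $\sD_X \otimes_{\sO_X} \twist_X$ needs no derived correction, since $\sD_X$ is $\sO_X$-flat, and that $\sD_{X \from Z}$ has finite projective dimension as a left $i^{-1}\sD_X$-module (the side-switched form of Lemma~\ref{lem:transfer_module_pd}, resolved by the Spencer complex of Lemma~\ref{lem:spencer_resolution}). Hence the derived $\sheafHom_{i^{-1}\sD_X}(\sD_{X \from Z},\, -)$ is computed by a finite locally free resolution and commutes with the flat coefficient extension. Writing $\sD_{X \from Z} = \canon_Z \otimes_{\sO_Z} \sD_{Z \to X} \otimes_{i^{-1}\sO_X} i^{-1}\canon_X^\dual$ and using that the left-multiplication $\sD_X$-action commutes with the right $\sO_X$-action through which the coefficient $\twist_X$ is attached, I would move the $i^{-1}\sD_X$-linear Hom past the coefficient. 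This trades $\sheafHom_{i^{-1}\sD_X}(\sD_{X \from Z},\, i^{-1}\sD_X \otimes_{\sO_X} \twist_X)$ for an $\sO$-module computation against the coefficient; the two $\canon$-twists in $\sD_{X \from Z}$ contribute precisely the relative-canonical factor distinguishing the naive pullback $i^*$ from the exceptional one, so that the whole expression computes the $\sO$-module shriek $i^!_{\Omod}$ applied to $\Odc_{\ul X}$, up to the transfer module $\sD_{Z \to X}$.

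It then remains to invoke Grothendieck duality for $\ul i$, namely $i^!_{\Omod}\Odc_{\ul X} \cong \Odc_{\ul Z}$ (which follows from $p_Z = p_X \circ \ul i$ on structure maps). Side-switching by $\canon_Z$ identifies the resulting coefficient with $\twist_Z = \sheafHom_{\sO_Z}(\canon_Z,\, \Odc_{\ul Z})$, and reassembling with the transfer module yields $i^*\sD_X \otimes_{\sO_Z} \twist_Z$, the shifts matching automatically because Grothendieck duality relates the normalized dualizing complexes without an extra shift. To track the $\sD$-module structures through this identification I would use the explicit description of $\twist_X$ and $\twist_Z$ as Ishida's complexes (Corollary~\ref{cor:complex_for_twist}), whose terms $\phi_{k,*}\sO_{\tilde X^k}$ carry their canonical right $\sD$-structures.

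I expect the main obstacle to be exactly the case in which $Z$ is (a component of) a log stratum $X^k$ with its induced log structure, so that the log structures of $X$ and $Z$ genuinely differ and $\twist_X$, $\twist_Z$ are nontrivial complexes rather than shifted line bundles. Here one must check that the D-module $i^!$ really reproduces the $\sO$-module Grothendieck duality compatibly with the Poincar\'e residue differentials and the right $\sD$-structures on the normalized-strata complexes, i.e.\ that the normalization diagram relating $\tilde Z^\bullet$ to $\tilde X^\bullet$ is matched by the Koszul differentials of the transfer resolution. I would settle this by reducing, via Proposition~\ref{prop:Ishida_canonical}, to an \'etale-local toric model, where the residue maps and Koszul differentials can be compared directly.
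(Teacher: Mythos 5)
Your proposal follows essentially the same route as the paper's proof: unwind Definition~\ref{def:!-pullback_closed_immersion}, use that $\sD_{X \from Z}$ is induced from the line bundle $i^{-1}\canon_X^\dual \otimes_{i^{-1}\sO_X} \canon_Z$ to convert the $\sD$-linear Hom into an $\sO$-linear one, pull out the free $\sD_X$ factor, cancel the canonical twists, and conclude with $\sheafHom_{i^{-1}\sO_X}(\sO_Z,\, i^{-1}\Odc_{\ul X}) \cong \Odc_{\ul Z}$. The paper's argument is just this chain of isomorphisms, functorial in the coefficient, so neither the involution of Lemma~\ref{lem:dualizing_complex_involution} nor the local toric verification of the residue differentials that you anticipate turns out to be needed.
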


\begin{proof}
    \begin{align*}
        i^!\twist_X &= \sheafHom_{i^{-1}\sD_X}(\sD_{X \from Z},\, i^{-1}\sD_X \otimes_{i^{-1}\sO_X} i^{-1}\twist_X) \\ & \cong
        \sheafHom_{i^{-1}\sD_X}\bigl(i^{-1}\sD_X \otimes_{i^{-1}\sO_X} i^{-1}\canon_X^\dual \otimes_{i^{-1}\sO_X} \canon_Z,\, i^{-1}\sD_X \otimes_{i^{-1}\sO_X} i^{-1}\twist_X \bigr) \\ & \cong
        \sheafHom_{i^{-1}\sO_X}\bigl(i^{-1}\canon_X^\dual \otimes_{i^{-1}\sO_X} \canon_Z,\, i^{-1}\sD_X \otimes_{i^{-1}\sO_X} i^{-1}\twist_X \bigr) \\ & \cong
        i^{-1}\sD_X \otimes_{i^{-1}\sO_X} \sheafHom_{i^{-1}\sO_X}\bigl(i^{-1}\canon_X^\dual \otimes_{i^{-1}\sO_X} \canon_Z,\, i^{-1}\canon_X^\dual \otimes_{i^{-1}\sO_X} i^{-1}\Odc_{\ul X} \bigr) \\ & \cong
        i^*\sD_X \otimes_{\sO_Z} \sheafHom_{i^{-1}\sO_X}\bigl(\canon_Z,\, i^{-1}\Odc_{\ul X} \bigr) \\ & \cong
        i^*\sD_X \otimes_{\sO_Z} \sheafHom_{i^{-1}\sO_X}\bigl(\sO_Z \otimes_{\sO_Z} \canon_Z,\, i^{-1}\Odc_{\ul X} \bigr) \\  & \cong
        i^*\sD_X \otimes_{\sO_Z} \sheafHom_{\sO_Z}\bigl(\canon_Z,\, \sheafHom_{i^{-1}\sO_X}(\sO_Z, i^{-1}\Odc_{\ul X}) \bigr) \\ & \cong
        i^*\sD_X \otimes_{\sO_Z} \sheafHom_{\sO_Z}\bigl(\canon_Z,\, \Odc_{\ul Z} \bigr) \cong 
        i^*\sD_X \otimes_{\sO_Z} \twist_Z. \qedhere
    \end{align*}
\end{proof}

\begin{Lemma}\label{lem:proper_duality_morphism}
    Let $f\colon X \to Y$ be a morphism of smooth idealized log varieties such that the induced map $f\colon \ul X \to \ul Y$ on the underlying varieties is projective.
    Then there exists a canonical morphism of functors $\catDbcohDMod{X} \to \catDbcohDMod{Y}$
    \[
        f_\spf \circ \DVerdier_X \to \DVerdier_Y \circ f_\spf.
    \]
\end{Lemma}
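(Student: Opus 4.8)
The plan is to imitate the classical construction of the duality--pushforward comparison (compare \cite[\S2.7]{HottaTakeuchiTanisaki:2008:DModulesPerverseSheavesRepresentationTheory} and \cite[\S I.5]{Mebkhout:1989:LeFormalismeDesSixOperationsPourLesDModules}), systematically replacing the shifted structure sheaf $\sO_X[\dim X]$ by $\twist_X$ in the spirit of Remark~\ref{rem:why_twist}, so that the dualizing complex $\sD_X \otimes_{\sO_X} \twist_X$ takes over the role of $\sD_X[\dim X]$. Since a general $!$-pullback is not yet available, I would not invoke an abstract adjunction but instead factor the projective map $\ul f\colon \ul X \to \ul Y$ as a closed immersion $i\colon X \hookrightarrow W \coloneqq \ps{N} \times Y$ followed by the projection $p\colon W \to Y$, where $\ps{N}$ carries the trivial and $W$ the product log structure. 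Because $f_\spf$ is compatible with composition, it suffices to build the morphism for $i$ and for $p$ separately and to compose; the independence of the factorization can be checked by dominating any two choices by a common refinement, and in any case is subsumed once the composite is shown to be an isomorphism in Theorem~\ref{thm:proper_duality}.

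For the closed immersion $i$ the comparison is formal given the preceding results. Sheafifying the adjunction of Proposition~\ref{prop:i^!_adjunction} gives $\sheafHom_{\sD_W}(i_\spf \sheaf M,\, \sD_W \otimes_{\sO_W} \twist_W) \cong i_*\,\sheafHom_{\sD_X}(\sheaf M,\, i^!(\sD_W \otimes_{\sO_W} \twist_W))$, and Lemma~\ref{lem:i^!_of_twist} identifies the inner term with $i^*\sD_W \otimes_{\sO_X} \twist_X$. After integrating out the residual transfer-module direction against $\sD_{W \from X}$ and bookkeeping the twists by $\canon^\dual$ with the side-changing operations, this yields a canonical morphism $i_\spf \circ \DVerdier_X \to \DVerdier_W \circ i_\spf$, which is in fact an isomorphism.

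For the projection $p$ I would use the relative de Rham complex: by \eqref{eq:relative_DR_complex} one has $p_\spf \sheaf M = p_*\DR_p(\sheaf M)$. Since $\ps{N}$ is smooth with trivial log structure, $\twist_{\ps{N}} = \sO_{\ps{N}}[N]$, and the explicit description of Corollary~\ref{cor:complex_for_twist} shows $\twist_W \cong \twist_{\ps{N}} \boxtimes \twist_Y$, so that duality on $W$ alters only the $\ps{N}$-direction and by the classical dualizing complex. The sought morphism is then induced by Grothendieck--Serre duality for $\ul p$, concretely by the trace $p_* \canon_{\ps{N}}[N] \to \sO_Y$, promoted through the relative de Rham complex to a morphism of complexes of $\sD_Y$-modules. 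The good-filtration hypothesis together with Lemma~\ref{lem:proper_pushforward_preserves_coherence} ensures that both sides lie in $\catDbcohDMod{Y}$, so that the statement is well posed.

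The hard part will be the projection step. The closed-immersion case is purely a matter of adjunction and the already-established identification of dualizing complexes, but for $p$ one must construct the Serre--Grothendieck trace \emph{as a map of $\sD_Y$-modules compatible with the $\twist$-enhanced dualizing complex} rather than merely on the underlying $\sO$-modules. Verifying this compatibility --- tracking the left/right module conventions fixed before Lemma~\ref{lem:dualizing_complex_involution}, the differentials in Corollary~\ref{cor:complex_for_twist}, and the signs in the de Rham differential, and checking naturality in $\sheaf M$ --- is where essentially all of the work lies; the remaining identifications are assembled from canonical adjunction and projection-formula isomorphisms.
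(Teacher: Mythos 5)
Your proposal is correct and follows essentially the same route as the paper: factor $f$ through $\ps{N}\times Y$, handle the closed immersion via the adjunction of Proposition~\ref{prop:i^!_adjunction} together with Lemma~\ref{lem:i^!_of_twist}, and handle the projection via $\twist_{\ps N \times Y} \cong \twist_{\ps N}\boxtimes\twist_Y$ and the classical trace $p_\spf\sO_{\ps N}[N]\to\CC$. The only (cosmetic) difference is that the paper first reduces the whole construction to a single canonical morphism $f_\spf(\sD_{X\to Y}\otimes_{\sO_X}\twist_X)\to\sD_Y\otimes_{\sO_Y}\twist_Y$ of dualizing complexes before splitting into the two cases, whereas you build the natural transformation factor by factor.
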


\begin{proof}
    For any $\sheaf M \in \catDbcohDMod{X}$ we have
    \begin{align*}
        f_\spf\DVerdier_X\sheaf M & \cong f_*\bigl(\sheafHom_{\sD_X}(\sheaf M,\, \sD_X \otimes_{\sO_X} \twist_X) \otimes_{\sD_X} \sD_{X \to Y} \bigr) \otimes_{\sO_X} \canon_Y^\dual \\
                                  & \cong f_*\bigl(\sheafHom_{\sD_X}(\sheaf M,\, \sD_{X\to Y} \otimes_{\sO_X} \twist_X) \bigr) \otimes_{\sO_X} \canon_Y^\dual
        \intertext{and}
        \DVerdier_X f_\spf \sheaf M & \cong \sheafHom_{\sD_Y}(f_\spf\sheaf M,\, \sD_Y \otimes_{\sO_Y} \twist_Y) \otimes_{\sO_Y} \canon_Y^\dual.
    \end{align*}
    Considering the morphism
    \begin{align*}
        f_*\sheafHom_{\sD_X}(\sheaf M,\, \sD_{X\to Y} \otimes_{\sO_X} \twist_X)
        & \to
        f_*\sheafHom_{f^{-1}\sD_Y}(\sD_{Y \from X} \otimes_{\sD_X} \sheaf M,\, \sD_{Y \from X} \otimes_{\sD_X} \sD_{X\to Y} \otimes_{\sO_X} \twist_X)
        \\ & \to
        \sheafHom_{\sD_Y}\bigl(f_*(\sD_{Y \from X} \otimes_{\sD_X} \sheaf M),\, f_*(\sD_{Y \from X} \otimes_{\sD_X} \sD_{X\to Y} \otimes_{\sO_X} \twist_X)\bigr)
        \\ & \cong
        \sheafHom_{\sD_Y}\bigl(f_\spf(\sheaf M),\, f_\spf(\sD_{X\to Y} \otimes_{\sO_X} \twist_X)\bigr),
    \end{align*}
    it thus suffices to construct a canonical morphism
    \[
        f_\spf(\sD_{X \to Y} \otimes_{\sO_X} \twist_X) \to \sD_Y \otimes_{\sO_Y} \twist_Y.
    \]
    We can assume that $f$ is either a closed immersion $X \to \ul{\ps n} \times Y$ or the projection $\ul{\ps n} \times Y \to Y$.
    In the first case, by Proposition~\ref{prop:i^!_adjunction} and Lemma~\ref{lem:i^!_of_twist} we have
    \begin{align*}
        \Hom_{\sD_Y}(f_\spf(\sD_{X \to Y} \otimes_{\sO_X} \twist_X),\, \sD_Y \otimes_{\sO_Y} \twist_Y) &\cong
        \Hom_{\sD_Y}(f^*\sD_Y \otimes_{\sO_X} \twist_X,\, f^!(\sD_Y \otimes_{\sO_Y} \twist_Y)) \\&\cong
        \Hom_{\sD_Y}(f^*\sD_Y \otimes_{\sO_X} \twist_X,\, f^*\sD_Y \otimes_{\sO_X} \twist_X).
    \end{align*}
    Thus we take the image of the identity morphism for our map.
    
    If $f$ is a projection $\ulps n \times Y \to Y$, then $\twist_{\ulps n \times Y} = \twist_{\ulps n} \boxtimes \twist_Y$.
    Thus we can reduce to the case that $Y$ is a point.
    But $\twist_{\ulps n} = \sO_{\ps n}[n]$, so that the desired morphism is just the classical trace morphism $f_{\spf}\sO_{\ps n}[n] \to \CC$.

    Finally, we need to check that the constructed morphism is independent of the chosen factorization of $f$.
    Given two different factorizations as above one can form a commutative diagram
    \[
        \begin{tikzcd}
            & \ulps{n_1} \times Y \arrow[dr, hook, "j_1"] \arrow[d] & \\
            X \arrow[ur, hook, "i_1"] \arrow[r, "f"] \arrow[dr, hook, "i_2"] & Y & \ulps N \times Y \arrow[l] \\
            & \ulps{n_2} \times Y \arrow[ur, hook, "j_2"] \arrow[u] & 
        \end{tikzcd}
    \]
    for some $N \ge n_1, n_2$.
    For closed immersions, the constructed map is obtained by adjunction.
    Hence it is compatible with composition.
    Thus it suffices to show that $\ulps{n_1} \times Y \to Y$ and $\ulps{n_1} \times Y \to \ulps{N} \times Y \to Y$ induce the same duality map.
    As before, we reduce to the case that $Y$ is a point.
    But then all log structures are trivial, so that the statement is just usual duality for D-modules.
\end{proof}

\begin{proof}[Proof of Theorem~\ref{thm:proper_duality}]
    We have to show that the morphism of Lemma~\ref{lem:proper_duality_morphism} is an isomorphism.
    As usual, it suffices to show this for $\sheaf M$ an object of the abelian category $\catCohDMod{X}$.
    Then by assumption, locally on $Y$, $\sheaf M$ is generated by an $\sO_X$-coherent $\sO_X$-submodule, i.e.~it is a quotient of a module of the form $\sD_X \otimes_{\sO_X} \sheaf G$ for some coherent $\sO_X$-module $\sheaf G$, with action given by left multiplication.
    Thus by the Way-out Lemma it suffices to prove the statement for modules of the form $\sD_X \otimes_{\sO_X} \sheaf G$.

    We thus compute
    \begin{align*}
        f_\spf\DVerdier_X(\sD_X \otimes_{\sO_X} \sheaf G) &=
        f_\spf\bigl(\sheafHom_{\sD_X}(\sD_X \otimes_{\sO_X} \sheaf G,\, \sD_X \otimes_{\sO_X} \twist_X) \otimes_{\sO_X} \canon_X^\dual \bigr) \\ &=
        f_\spf\bigl(\sD_X \otimes_{\sO_X} \sheafHom_{\sO_X}( \sheaf G,\, \sO_X) \otimes_{\sO_X} \twist_X \otimes_{\sO_X} \canon_X^\dual\bigr) \\ &=
        f_*\bigl(\sD_{Y \from X} \otimes_{\sO_X} \sheafHom_{\sO_X}( \sheaf G,\, \twist_X) \otimes_{\sO_X} \canon_X\bigr) \\ &=
        f_*\bigl(f^{-1}(\sD_Y \otimes_{\sO_Y} \canon_Y^\dual) \otimes_{f^{-1}\sO_Y} \sheafHom_{\sO_X}(\sheaf G,\, \twist_X)\bigr) \\ &=
        \sD_Y \otimes_{\sO_Y} \canon_Y^\dual \otimes_{\sO_Y} f_*\sheafHom_{\sO_X}(\sheaf G \otimes_{\sO_X} \canon_X,\, \Odc_{\ul X})
        \intertext{and}
        \DVerdier_Yf_\spf(\sD_X \otimes_{\sO_X} \sheaf G) & = 
        \DVerdier_Y\bigl(\sD_Y \otimes_{\sO_Y} \canon_Y^\dual \otimes_{\sO_X} f_*(\canon_X \otimes_{\sO_X} \sheaf G) \bigr) \\ & = 
        \sheafHom_{\sD_Y}\bigl(\sD_Y \otimes_{\sO_Y} \canon_Y^\dual \otimes_{\sO_X} f_*(\canon_X \otimes_{\sO_X} \sheaf G),\, \sD_Y \otimes_{\sO_Y} \twist_Y \bigr) \otimes_{\sO_Y} \canon_Y^\dual \\ & = 
        \sD_Y \otimes_{\sO_Y} \sheafHom_{\sO_Y}( f_*(\canon_X \otimes_{\sO_X} \sheaf G),\, \twist_Y) \\ & = 
        \sD_Y \otimes_{\sO_Y} \sheafHom_{\sO_Y}( f_*(\canon_X \otimes_{\sO_X} \sheaf G),\, \Odc_{\ul Y}) \otimes_{\sO_Y} \canon_{Y}^\dual.
    \end{align*}
    Thus the result follows from Grothendieck duality
    \[
        f_*\sheafHom_{\sO_X}(\sheaf G \otimes_{\sO_X} \canon_X,\, \Odc_{\ul X})
        \cong
        \sheafHom_{\sO_Y}( f_*(\canon_X \otimes_{\sO_X} \sheaf G),\, \Odc_{\ul Y}).
        \qedhere
    \]
\end{proof}

\begin{Theorem}\label{thm:rigid_dualizing_complex}
    Let $X$ be a smooth idealized log variety.
    Then the rigid dualizing complex for $\sD_X$ is given by $\sD_X \otimes_{\sO_X} \twist_X[\logdim X]$.
\end{Theorem}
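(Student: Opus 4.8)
The plan is to invoke the uniqueness of rigid dualizing complexes. We already know from the discussion preceding Theorem~\ref{thm:proper_duality} that $\sD_X$ admits a rigid dualizing complex, and that such a complex is unique up to isomorphism; hence it suffices to check that the candidate $R \coloneqq \sD_X \otimes_{\sO_X} \twist_X[\logdim X]$, carrying the two commuting module structures fixed before Lemma~\ref{lem:dualizing_complex_involution}, is a rigid dualizing complex, so that it must coincide with $\rigidDC$. Following Van den Bergh this amounts to three verifications: that $R$ has finite injective dimension on both sides, that $R$ is a dualizing complex, and that $R$ satisfies the rigidity isomorphism $R \cong \sheafHom_{\sD_X \otimes_\CC \sD_X^\opring}(\sD_X,\, R \boxtimes R)$, where $R \boxtimes R$ is equipped with the bimodule structure of the Van den Bergh square.

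I would dispatch the two \enquote{dualizing complex} conditions first. Finite injective dimension on the left is the lemma proved just above, and on the right it follows by transport along the involution of Lemma~\ref{lem:dualizing_complex_involution}, which interchanges the two $\sD_X$-structures. For the biduality $\DVerdier_X \circ \DVerdier_X \cong \id$ I would argue as in the proof of Theorem~\ref{thm:proper_duality}: by the Way-out Lemma it is enough to treat modules of the form $\sD_X \otimes_{\sO_X} \sheaf G$ with $\sheaf G$ coherent over $\sO_X$, and there the computation collapses, via Lemma~\ref{lem:tensor_product_switching} and the involution, onto the Grothendieck biduality isomorphism for $\Odc_{\ul X}$, which holds because Proposition~\ref{prop:Ishida_canonical} identifies $\Odc_{\ul X}$ with the genuine dualizing complex of $\ul X$.

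The heart of the matter is the rigidity isomorphism, and this is where Theorem~\ref{thm:proper_duality} enters. I would geometrize the Van den Bergh square through the diagonal $\Delta\colon X \to X \times X$, a closed immersion of smooth idealized log varieties. Using $\sD_{X \times X} \cong \sD_X \boxtimes \sD_X$ and $\twist_{X \times X} \cong \twist_X \boxtimes \twist_X$ (the latter via Corollary~\ref{cor:complex_for_twist} and the K\"unneth formula for $\Odc$), together with the bookkeeping $\logdim(X \times X) = 2\logdim X$, the external square $R \boxtimes R$ is identified with the candidate complex on $X \times X$. The rigidity expression $\sheafHom_{\sD_X \otimes_\CC \sD_X^\opring}(\sD_X,\, R \boxtimes R)$ is then rewritten as (a twist of) $\Delta^!(R \boxtimes R)$, after converting the second factor from right to left modules and invoking the adjunction $\Delta_\spf \dashv \Delta^!$ of Proposition~\ref{prop:i^!_adjunction}; this rewriting is exactly where Theorem~\ref{thm:proper_duality}, applied to the projections $X \times X \to X$, supplies the needed compatibility of duality with pushforward. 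Finally Lemma~\ref{lem:i^!_of_twist} evaluates $\Delta^!$ of the candidate on $X \times X$ and, after the Hom from the diagonal bimodule collapses the resulting transfer module, returns $R$.

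I expect the main obstacle to be precisely this last identification: there are several commuting left and right $\sD$-actions on $R \boxtimes R$, and one must arrange them so that the purely algebraic square becomes the geometric $\Delta^!$ on the nose, tracking all shifts and all $\canon$-twists through the left-right conversions and checking that the bimodule structure for which Lemma~\ref{lem:i^!_of_twist} computes $\Delta^!$ is the one demanded by rigidity. The involution of Lemma~\ref{lem:dualizing_complex_involution} is what renders these two structures interchangeable and should keep the bookkeeping consistent, while the smooth case, where $\twist_X = \sO_X[\dim X]$ and the computation reduces to that of Chemla, serves as the template against which to calibrate the signs and shifts. Once the square is identified, rigidity is immediate, and uniqueness yields $\rigidDC \cong R$.
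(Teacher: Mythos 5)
Your proposal is correct in outline and shares all the essential ingredients with the paper's argument: finite injective dimension from the preceding lemma, the dualizing property checked locally by reduction to (induced) free modules where it collapses to Grothendieck duality for $\Odc_{\ul X}$, and rigidity extracted from the diagonal $\Delta\colon X \to X \times X$ using $\twist_{X\times X} \cong \twist_X \boxtimes \twist_X$ with Lemma~\ref{lem:free_module_tensor_involution} sorting out the competing module structures. The one place you genuinely diverge is the mechanism for rigidity. The paper computes $\Delta_\spf\twist_X$ twice: once directly, as $\Delta_*(\Odc_{\ul X}\otimes_{\sO_X}\sD_X)\otimes_{\sO_X}\canon_{X\times X}^\dual$, and once as $\DVerdier_{X\times X}\Delta_\spf\sO_X[\logdim X]$, using $\DVerdier_X\sO_X = \twist_X[-\logdim X]$ (from the resolution of Corollary~\ref{cor:canon_resolution}) together with Theorem~\ref{thm:proper_duality} applied to the closed immersion $\Delta$; unwinding the latter yields exactly the Van den Bergh square $\sheafHom_{\sD_X\otimes\sD_X^\opring}(\sD_X,\,R\boxtimes R)$, and equating the two computations is the rigidity isomorphism. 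You instead run the adjunction $\Delta_\spf\dashv\Delta^!$ (Proposition~\ref{prop:i^!_adjunction}) and evaluate $\Delta^!$ of the candidate by Lemma~\ref{lem:i^!_of_twist}. This is viable and the shifts do come out right, but be aware that the adjunction only converts the rigidity Hom into $\Delta_*\sheafHom_{\sD_X}(\sO_X,\,\Delta^!(R\boxtimes R))$ up to a $\canon_X$-twist, not into $\Delta^!(R\boxtimes R)$ itself, so the de Rham--type collapse of $\sheafHom_{\sD_X}(\sO_X,\,-)$ on the free transfer module that you allude to is a genuinely needed extra step (it supplies the $[-\logdim X]$ that brings $2\logdim X$ down to $\logdim X$). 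Two smaller corrections of bookkeeping: Theorem~\ref{thm:proper_duality} enters for the diagonal closed immersion, not for the projections $X\times X \to X$; and finite injective dimension on the right should be deduced from the side-switching equivalence $\canon_X\otimes_{\sO_X}(-)$ rather than from the involution of Lemma~\ref{lem:dualizing_complex_involution}, which interchanges the two left (respectively the two right) structures but does not exchange left with right.
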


\begin{proof}
    We already know that $\sD_X \otimes_{\sO_X} \twist_X$ has finite injective dimension.
    Showing that the canonical map
    \[
       \sheaf M \to \sheafHom_{\sD_X^\opring}\bigl(\sheafHom_{\sD_X}(\sheaf M,\, \sD_X \otimes_{\sO_X} \twist_X),\, \sD_X \otimes_{\sO_X} \twist_X\bigr)
    \]
    is an isomorphism for coherent $\sheaf M$ is a local question.
    Hence we can assume that $\sheaf M$ has a free resolution.
    Inducting on the length of the resolution, we can further reduce to the case that $\sheaf M = \sD_X$.
    Then,
    \begin{multline*}
        \sheafHom_{\sD_X^\opring}\bigl(\sheafHom_{\sD_X}(\sD_X,\, \sD_X \otimes_{\sO_X} \twist_X),\, \sD_X \otimes_{\sO_X} \twist_X\bigr) \cong \\
        \sheafHom_{\sD_X^\opring}\bigl(\sD_X \otimes_{\sO_X} \twist_X,\, \sD_X \otimes_{\sO_X} \twist_X\bigr) \cong
        \sheafHom_{\sD_X^\opring}\bigl(\sD_X,\, \sD_X\bigr) \cong
        \sD_X,
    \end{multline*}
    where the second isomorphism follows via tensor-hom adjunction from Grothendieck duality $\Hom_{\sO_X}(\twist_X,\, \twist_X) \cong \sO_X$.

    Let $\Delta \colon X \to X \times X$ be the diagonal map.
    Then for any right $\sD_X$-module $\sheaf M$ we have $\sheaf M \otimes_{\sD_X} \sD_{X \to X \times X} \cong \sheaf M \otimes_{\sO_X} \sD_X$ as right $f^{-1}\sD_{X \times X}$-modules.
    It follows from the resolution \eqref{eq:canon_resolution} that $\DVerdier_X \sO_X = \twist_X [-\logdim X]$.
    Thus $\Delta_\spf \twist_X = \DVerdier_{X\times X} \Delta_\spf \sO_X[\logdim X]$.
    Hence, with $d = \logdim X$ and applying Lemma~\ref{lem:free_module_tensor_involution} we have
    \begin{multline*}
        \Delta_*(\Odc_{\ul X} \otimes_{\sO_X} \sD_X) \otimes_{\sO_X} \canon_{X \times X}^\dual =
        \Delta_\spf\twist_X
        \\ \cong
        \sheafHom_{\sD_{X \times X}}\bigl(\Delta_*(\canon_X \otimes_{\sO_X} \sD_X) \otimes_{\sO_{X \times X}} \canon_{X\times X}^\dual,\, \sD_{X \times X} \otimes_{\sO_{X \times X}} \twist_{X\times X} \otimes_{\sO_{X \times X}} \canon_{X \times X}^\dual\bigr)[d]
        \\ \cong
        \sheafHom_{\sD_X \otimes \sD_X}\bigl(\sD_X \otimes_{\sO_X} \canon_X^\dual,\, (\sD_X \otimes_{\sO_X} \twist_X \otimes_{\sO_X} \canon_X^\dual) \otimes (\sD_X \otimes_{\sO_X} \twist_X \otimes_{\sO_X} \canon_X^\dual)\bigr)[d]
        \\ \cong
        \sheafHom_{\sD_X \otimes \sD_X^\opring}\bigl(\sD_X,\, (\sD_X \otimes_{\sO_X} \twist_X \otimes_{\sO_X} \canon_X^\dual) \otimes (\sD_X \otimes_{\sO_X} \twist_X)\bigr)[d]
        \\ \cong
        \sheafHom_{\sD_X \otimes \sD_X^\opring}\bigl(\sD_X,\, (\sD_X \otimes_{\sO_X} \twist_X) \otimes (\sD_X \otimes_{\sO_X} \twist_X)\bigr) \otimes_{\sO_X} \canon_X^\dual[d].
    \end{multline*}
    Twisting by $\canon_X$ and shifting by $d$ we obtain the desired isomorphism
    \[
        \sD_X \otimes_{\sO_X} \twist_X[d] \cong
        \sheafHom_{\sD_X \otimes \sD_X^\opring}\bigl(\sD_X,\, (\sD_X \otimes_{\sO_X} \twist_X[d]) \otimes (\sD_X \otimes_{\sO_X} \twist_X[d])\bigr).
        \qedhere.
    \]
\end{proof}

\begin{Lemma}\label{lem:hom_and_dual}
    For any $\sheaf M \in \catDbcohDMod{X}$ and $\sheaf N \in \catDbDMod{X}$ there exists a canonical isomorphism
    \begin{align*}
        \sheafHom_{\sD_X}(\sheaf M,\, \sheaf N \otimes_{\sO_X} \twist_X) & \cong
        (\canon_X \otimes_{\sO_X} \DVerdier_X\sheaf M) \otimes_{\sD_X} \sheaf N \\
        &\cong
        \canon_X \otimes_{\sD_X} (\DVerdier_X\sheaf M \otimes_{\sO_X} \sheaf N).
    \end{align*}
\end{Lemma}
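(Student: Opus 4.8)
The plan is to reduce both claimed isomorphisms to the defining formula for $\DVerdier_X$ together with the switching identities of Lemma~\ref{lem:tensor_product_switching} and a single projection-formula argument. I first observe that the second isomorphism is essentially free: applying Lemma~\ref{lem:tensor_product_switching} to the right $\sD_X$-module $\canon_X$ and the two left $\sD_X$-modules $\DVerdier_X\sheaf M$ and $\sheaf N$ gives
\[
    (\canon_X \otimes_{\sO_X} \DVerdier_X\sheaf M) \otimes_{\sD_X} \sheaf N
    \cong \canon_X \otimes_{\sD_X} (\sheaf N \otimes_{\sO_X} \DVerdier_X\sheaf M)
    \cong \canon_X \otimes_{\sD_X} (\DVerdier_X\sheaf M \otimes_{\sO_X} \sheaf N),
\]
the last step being the symmetry of the tensor product of two left modules. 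Hence it remains to establish the first isomorphism.

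For that I would first unwind the right-hand side. Since $\canon_X \otimes_{\sO_X} (-)$ and $(-) \otimes_{\sO_X} \canon_X^\dual$ are mutually inverse equivalences between left and right $\sD_X$-modules, the definition of $\DVerdier_X$ yields a canonical identification of right $\sD_X$-modules
\[
    \canon_X \otimes_{\sO_X} \DVerdier_X\sheaf M \cong \sheafHom_{\sD_X}(\sheaf M,\, \sD_X \otimes_{\sO_X} \twist_X),
\]
so that the target of the first isomorphism becomes $\sheafHom_{\sD_X}(\sheaf M,\, \sD_X \otimes_{\sO_X} \twist_X) \otimes_{\sD_X} \sheaf N$. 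The heart of the argument is then the canonical projection-formula morphism
\[
    \sheafHom_{\sD_X}(\sheaf M,\, \sD_X \otimes_{\sO_X} \twist_X) \otimes_{\sD_X} \sheaf N
    \to
    \sheafHom_{\sD_X}\bigl(\sheaf M,\, (\sD_X \otimes_{\sO_X} \twist_X) \otimes_{\sD_X} \sheaf N\bigr),
\]
given by $\phi \otimes n \mapsto (m \mapsto \phi(m) \otimes n)$, where throughout $\sD_X \otimes_{\sO_X} \twist_X$ carries the bimodule structure fixed in Section~\ref{sec:duality} (left multiplication on the left, tensor-product action on the right). A further application of Lemma~\ref{lem:tensor_product_switching}, now to the right module $\sD_X$ and the left modules $\sheaf N$ and $\twist_X$, identifies the inner tensor product as $(\sD_X \otimes_{\sO_X} \twist_X) \otimes_{\sD_X} \sheaf N \cong \sheaf N \otimes_{\sO_X} \twist_X$, compatibly with the residual left $\sD_X$-action, and so rewrites the target as exactly $\sheafHom_{\sD_X}(\sheaf M,\, \sheaf N \otimes_{\sO_X} \twist_X)$.

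It thus remains to check that the projection-formula morphism is an isomorphism, and this is where the main work lies. The morphism is an isomorphism whenever $\sheaf M$ is a perfect complex of $\sD_X$-modules, by the usual d\'evissage: it is visibly an isomorphism for $\sheaf M = \sD_X$ (both sides being $(\sD_X \otimes_{\sO_X} \twist_X) \otimes_{\sD_X} \sheaf N$), and for $\sheaf M = \sD_X \otimes_{\sO_X} \sheaf G$ with $\sheaf G$ a locally free $\sO_X$-module of finite rank this reduces to the previous case; since both sides are exact in $\sheaf M$ and commute with shifts, the general perfect case follows. Because $\sheaf M \in \catDbcohDMod{X}$ and $\sD_X$ has finite global dimension, $\sheaf M$ locally admits a finite resolution of the above type via the logarithmic Spencer resolution of Lemma~\ref{lem:spencer_resolution}, and is therefore perfect.

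The expected obstacle is not any single deep step but the careful bookkeeping of the several left- and right-module structures on $\sD_X \otimes_{\sO_X} \twist_X$, and in particular verifying that each invocation of Lemma~\ref{lem:tensor_product_switching} transports the residual $\sD_X$-action to the structure claimed. One should also note that in the analytic setting perfectness of $\sheaf M$ is only available locally; since the comparison morphism is canonical, however, the resulting local isomorphisms glue to the desired global statement.
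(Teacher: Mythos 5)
Your argument is correct and is essentially the paper's own proof: the paper establishes the first isomorphism by exactly your projection-formula identification $\sheafHom_{\sD_X}(\sheaf M,\, \sheaf N \otimes_{\sO_X} \twist_X) \cong \sheafHom_{\sD_X}(\sheaf M,\, \sD_X \otimes_{\sO_X} \twist_X) \otimes_{\sD_X} \sheaf N$ (stated without justification, which you supply via d\'evissage to locally perfect complexes) and then deduces the second from Lemma~\ref{lem:tensor_product_switching}, just as you do. One minor imprecision: the terms of the Spencer resolution are $\sD_X \otimes_{\sO_X} (\bigwedge^p\Theta_X) \otimes_{\sO_X} F_{k-p}\sheaf M$ with $F_{k-p}\sheaf M$ only coherent, not locally free, so it does not directly give a resolution of the type your d\'evissage covers; but local perfectness of coherent $\sD_X$-modules follows anyway from coherence of $\sD_X$ together with its finite global dimension, so the conclusion stands.
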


\begin{proof}
    One has
    \begin{align*}
        \sheafHom_{\sD_X}(\sheaf M,\, \sheaf N \otimes_{\sO_X} \twist_X)  & \cong
        \sheafHom_{\sD_X}(\sheaf M,\, \sD_X \otimes_{\sO_X} \twist_X) \otimes_{\sD_X} \sheaf N \\&\cong
        (\canon_X \otimes_{\sO_X} \DVerdier_X\sheaf M) \otimes_{\sD_X} \sheaf N.
    \end{align*}
    The second equivalence follows from Lemma~\ref{lem:tensor_product_switching}.
\end{proof}

\subsection{Pullback}\label{sec:pullback}

Let $f\colon X \to Y$ be a morphism of smooth idealized log varieties.
While the pushforward $f_\spf$ is defined as in the classical setting, the definition of the pullback $f^!$ needs some care in order for the expected adjunctions to exist.
Recall that in the classical theory, the functor $f^!$ differs from the naive pullback $f^*$ by a shift by $\dim X - \dim Y$.
Thus, following the spirit of Remark~\ref{rem:why_twist}, in the logarithmic theory one must twist by an appropriate relative version of $\twist_X$.

If $\sheaf F$ is a (complex of) coherent $\sO_Y$-module(s), we write $f^!_{\sO\text{-mod}}$ for the $\sO$-module $!$-pullback along $f$.
Further we write $\DSerre_Y(\sheaf F) = \sheafHom_{\sO_Y}(\sheaf F,\, \Odc_{\ul Y})$ for the $\sO$-module dual of $\sheaf F$.
These two operations are related by $f^!_{\sO\text{-mod}} = \DSerre_X \circ f^* \circ \DSerre_Y$, and if $f$ is Tor-finite\footnote{Recall that a morphism $f\colon X \to Y$ of varieties is called \emph{Tor-finite} if there exists an integer $m$ such that $H^i(f^*\sheaf F) = 0$ for all $\sheaf F \in \catMod{\sO_X}$ and $i < m$.} one has $f^!_{\sO\text{-mod}}\sheaf F = f^*\sheaf F \otimes_{\sO_X} f^!_{\sO\text{-mod}}\sO_Y$.
One always has $f^!_{\sO\text{-mod}}\Odc_{\ul Y} = \Odc_{\ul X}$.

Let now $\sheaf M \in \catDbcohDModop{Y}$ have $\sO_Y$-coherent cohomology sheaves.
Then $\DSerre_Y\sheaf M$ is canonically a left $\sD_Y$-module and hence $f^!_{\sO\text{-mod}}\sheaf M = \DSerre_X \circ f^* \circ \DSerre_Y (\sheaf M)$ is canonically a right $\sD_X$-module.
In particular $\Odc_{\ul X/\ul Y} = f^!_{\sO\text{-mod}}\sO_Y$ is canonically a right $\sD_X$-module.

The right $\sD_Y$-module structure on $\sO_Y$ induces a left module structure on $\canon_Y^\dual = \sheafHom_{\sO_X}(\canon_Y,\, \sO_Y)$ and hence a right $\sD_X$-module structure on $\canon_{X/Y} = \canon_X \otimes_{\sO_X} f^*\canon_Y^\dual$.
Set
\[
    \twist_{X/Y} = \sheafHom_{\sO_X}(\canon_{X/Y},\, \Odc_{\ul X/\ul Y}) \in \catDbcohDMod{X}.
\]

\begin{Definition}\label{def:!-pullback_tor_finite}
    Let $f\colon X \to Y$ be a Tor-finite morphism of smooth idealized log varieties.
    Define a functor $f^!\colon \catDmqcDMod{Y} \to \catDmqcDMod{X}$ by
    \[
        f^!(\sheaf M) = f^*\sheaf M \otimes_{\sO_X} \twist_{X/Y}.
    \]
\end{Definition}

Recalling that for a closed immersion $i\colon Z \to X$ one has $i^!_{\sO\text{-mod}}\sheaf F = \sheafHom_{i^{-1}\sO_X}(\sO_Z,\, \sheaf F)$, a straightforward computation shows that for a Tor-finite closed immersion Definitions~\ref{def:!-pullback_tor_finite} and~\ref{def:!-pullback_closed_immersion} agree.

Any morphism $f\colon X \to Y$ can be canonically factored into a closed immersion $X \to X \times Y$ followed by projection $X \times Y \to Y$.
Having defined the $!$-pullback for each of those factors, one obtains $f^!$ in general.

\begin{Remark}
    This also provides a nice explanation for the definition of $f^!$ in the classical theory of D-modules. 
    For this one notes that every morphism of classically smooth varieties is Tor-finite.
    Hence in this case Definition~\ref{def:!-pullback_tor_finite} suffices to cover the general case.
    If the log structures on $X$ and $Y$ are trivial, then $\twist_{X/Y} \cong \sO_X[\dim X-\dim Y]$.
    In other words, even in the classical setting $f^!$ should be defined via the exceptional $\sO$-module pullback, but switching between left and right modules reduces this to a shift of the naive pullback.

    It might be interesting to see how, for general morphisms $f$ of smooth log varieties, one can obtain a $\sD_X$-module structure directly on (a suitable modification of) $f^!_{\sO\text{-mod}}\sheaf M$.
\end{Remark}

\begin{Proposition}
    Let $f\colon X \to Y$ be a Tor-finite projective morphism of smooth idealized log varieties.
    Then for any $\sheaf M \in \catDbcohDMod{X}$ and $\sheaf N \in \catDbDMod{Y}$ there exists a canonical isomorphism
    \[
        \sheafHom_{\sD_Y}(f_\spf \sheaf M,\, \sheaf N)
        \cong
        f_*\sheafHom_{\sD_X}(\sheaf M,\, f^!\sheaf N).
    \]
\end{Proposition}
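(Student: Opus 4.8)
The plan is to mimic the classical D-module adjunction by factoring $f$ and treating the two elementary morphisms separately. Since $f$ is projective it factors canonically as a closed immersion $i\colon X \hookrightarrow X'$ with $X' = \ps n \times Y$, followed by the projection $p\colon X' \to Y$. Both factors are Tor-finite ($i$ is a regular immersion because $X$ and $X'$ are smooth, and $p$ is flat), so by the construction of the pullback (Definition~\ref{def:!-pullback_tor_finite}) one has $f^! \cong i^! \circ p^!$, while $f_\spf \cong p_\spf \circ i_\spf$. Granting the two special cases, the isomorphism for $f$ follows by stringing them together:
\[
    \sheafHom_{\sD_Y}(f_\spf \sheaf M,\, \sheaf N) \cong p_*\sheafHom_{\sD_{X'}}(i_\spf \sheaf M,\, p^!\sheaf N) \cong p_* i_*\sheafHom_{\sD_X}(\sheaf M,\, i^! p^! \sheaf N) = f_*\sheafHom_{\sD_X}(\sheaf M,\, f^!\sheaf N).
\]

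For the closed immersion $i$, the two definitions of $i^!$ agree (Definition~\ref{def:!-pullback_closed_immersion} matches Definition~\ref{def:!-pullback_tor_finite} for a Tor-finite closed immersion), and the required statement is the sheaf-theoretic refinement of the adjunction already established in Proposition~\ref{prop:i^!_adjunction}. I would simply rerun the chain of isomorphisms in that proof at the level of $\sheafHom$ rather than $\Hom$: using that $i_*$ is exact and fully faithful, the identity $i_\spf \sheaf M = i_*(\sD_{X \from Z} \otimes_{\sD_Z} \sheaf M)$, the $(i^{-1}, i_*)$ and tensor--hom adjunctions, and Lemma~\ref{lem:i^!_and_Gamma} to absorb the local-cohomology functor $\Gamma_Z$. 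No new idea is needed beyond bookkeeping the module structures.

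The projection is the substantive case. Because $\ps n$ is smooth with trivial log structure one has $\twist_{\ps n} = \sO_{\ps n}[n]$, hence $\twist_{X'/Y} \cong \sO_{X'}[n]$ and $p^!\sheaf N = p^*\sheaf N[n]$. By the Way-out Lemma in the variable $\sheaf M$ (exactly as in the proof of Theorem~\ref{thm:proper_duality}) it suffices to treat $\sheaf M = \sD_{X'} \otimes_{\sO_{X'}} \sheaf G$ for a coherent $\sO_{X'}$-module $\sheaf G$. For such $\sheaf M$ both sides unwind to $\sO$-module expressions: the extension--restriction adjunction gives $\sheafHom_{\sD_{X'}}(\sD_{X'}\otimes_{\sO_{X'}}\sheaf G,\, p^*\sheaf N[n]) \cong \sheafHom_{\sO_{X'}}(\sheaf G,\, p^*\sheaf N[n])$, so the right-hand side is $p_*\sheafHom_{\sO_{X'}}(\sheaf G,\, p^*\sheaf N[n])$; and, computing $p_\spf(\sD_{X'}\otimes\sheaf G)$ via the projection formula, the left-hand side becomes $\sheafHom_{\sO_Y}(p_*(\canon_{X'}\otimes_{\sO_{X'}}\sheaf G),\, \canon_Y\otimes_{\sO_Y}\sheaf N)$. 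These two are then identified by Grothendieck--Serre duality for the smooth proper morphism $p$ (with relative dualizing complex $\canon_{X'/Y}[n]$), after cancelling the line bundle $\canon_{X'}$. Thus, just as for Theorem~\ref{thm:proper_duality}, the engine is ordinary Grothendieck duality.

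The main obstacle lies in the projection case and is twofold. First, the isomorphism must be produced \emph{naturally} in $\sheaf N$ for arbitrary $\sheaf N \in \catDbDMod{Y}$, which is not coherent in general; this forces one to build the comparison morphism before invoking the Way-out reduction and to check that it is the correct adjunction/trace map rather than an ad hoc isomorphism --- concretely, the counit $p_\spf p^! \to \id$ must be matched with the Grothendieck trace, in the spirit of Lemma~\ref{lem:proper_duality_morphism}. Second, one must verify that the isomorphisms for the two factors glue to a morphism independent of the chosen factorization of $f$, so that the composite displayed above is canonical; this is the same independence check already required to make $f^!$ well defined. Once the comparison morphism is fixed functorially, showing that it is an isomorphism is local on $Y$ and reduces to the two elementary cases.
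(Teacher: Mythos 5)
Your argument is essentially correct, but it takes a genuinely different route from the paper. The paper does not factor $f$ at all: it sets $\sheaf N' = \sheafHom_{\sO_Y}(\twist_Y,\sheaf N)$, rewrites $f^!\sheaf N = f^*\sheaf N'\otimes_{\sO_X}\twist_X$, and then converts both sides of the desired isomorphism into tensor expressions via Lemma~\ref{lem:hom_and_dual} (namely $\sheafHom_{\sD_X}(\sheaf M,\,{-}\otimes_{\sO_X}\twist_X)\cong(\canon_X\otimes_{\sO_X}\DVerdier_X\sheaf M)\otimes_{\sD_X}{-}$); after the projection formula the statement collapses to the already-proved identity $f_\spf\DVerdier_X\cong\DVerdier_Y f_\spf$ of Theorem~\ref{thm:proper_duality}. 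This global derivation buys exactly the two things you flag as your main obstacles: the chain is manifestly natural in an arbitrary $\sheaf N\in\catDbDMod{Y}$, and no independence-of-factorization check is needed. Your route instead re-proves the substance of Theorem~\ref{thm:proper_duality} inside the projection case (both proofs ultimately bottom out in Grothendieck duality on induced modules $\sD_{X'}\otimes_{\sO_{X'}}\sheaf G$), and additionally requires sheafifying Proposition~\ref{prop:i^!_adjunction} for the immersion; that is all doable, but it is more work and you must carry the good-filtration hypothesis needed for the Way-out reduction, just as Theorem~\ref{thm:proper_duality} does. One justification in your write-up is wrong as stated: the closed immersion $i\colon X\hookrightarrow\ps n\times Y$ need not be a regular immersion, because $X$ and $\ps n\times Y$ are only \emph{log}-smooth and their underlying varieties may be singular. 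Tor-finiteness of $i$ should instead be deduced from Tor-finiteness of $f$ together with flatness of the projection $p$ (a morphism factoring through a flat map is perfect if and only if the immersion is), and you must also verify that the Tor-finite definition of $f^!$ agrees with $i^!\circ p^!$ for your chosen factorization, since Definition~\ref{def:!-pullback_tor_finite} is applied to $f$ directly.
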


\begin{proof}
    One has an isomorphism $\twist_X = \twist_{X/Y} \otimes_{\sO_X} f^*\twist_Y$.
    Set $\sheaf N' = \sheafHom_{\sO_Y}(\twist_Y, \sheaf N)$.
    Then $\sheaf N = \sheaf N' \otimes_{\sO_Y} \twist_Y$ and $f^!\sheaf N = f^!\sheaf N' \otimes_{\sO_X} f^*\twist_Y = f^*\sheaf N' \otimes_{\sO_X}\twist_X$.
    Thus by Lemma~\ref{lem:hom_and_dual} and Theorem~\ref{thm:proper_duality},
    \begin{align*}
        f_*\sheafHom_{\sD_X}(\sheaf M,\, f^!\sheaf N) & \cong
        f_*\sheafHom_{\sD_X}(\sheaf M,\, f^*\sheaf N' \otimes_{\sO_X} \twist_X) \\ & \cong
        f_*\bigl( (\canon_X \otimes_{\sO_X} \DVerdier_X\sheaf M) \otimes_{\sD_X} \sD_{X \to Y} \otimes_{f^{-1}\sD_Y}  f^{-1}\sheaf N'\bigr) \\ & \cong
        f_*\bigl( (\canon_X \otimes_{\sO_X} \DVerdier_X\sheaf M) \otimes_{\sD_X} \sD_{X \to Y} \bigr)\otimes_{\sD_Y}  \sheaf N' \\ & \cong
        (\canon_Y \otimes_{\sO_Y} f_\spf \DVerdier_X \sheaf M) \otimes_{\sD_Y} \sheaf N' \\ & \cong
        (\canon_Y \otimes_{\sO_Y} \DVerdier_X f_\spf \sheaf M) \otimes_{\sD_Y} \sheaf N' \\ & \cong
        \sheafHom_{\sD_Y}(f_\spf \sheaf M,\, \sheaf N' \otimes_{\sO_Y} \twist_Y) = 
        \sheafHom_{\sD_Y}(f_\spf \sheaf M,\, \sheaf N).
        \qedhere
    \end{align*}
\end{proof}

\begin{Remark}
    Let us note that the usual base-change theorem does not generalize to the theory of logarithmic D-modules.
    For example, if $\pt \to \as 1$ it the inclusion of the (idealized) log point into the origin, then $i^!i_\spf$ is not equal to the identity for the same reason as in the $\sO$-module case.
    Indeed,
    \[
        i^!i_\spf \CC \cong  \Hom_{\sD_{\as 1}(\as 1)}(\CC[\partial],\, \CC) \cong
        \Hom_{\sD_{\as 1}(\as 1)}\bigl(\sD_{\as 1}(\as 1) \xrightarrow{x} \sD_{\as 1}(\as 1),\, \CC\bigr) \cong \CC \oplus \CC[-1].
    \]
    One would expect this issue to be resolved by a suitable theory of \enquote{derived logarithmic geometry.}
\end{Remark}

\section{Pushforward of holonomic modules}\label{sec:pushforward_hol}

Recall that a $\sD_X$-module $\sheaf M$ is called holonomic if $\logdim \Ch(\sheaf F) = \logdim X$.
If the characteristic variety of $\sheaf M$ is contained in the zero section of $T^*X$, then $\sheaf M$ is necessarily $\sO_X$-coherent.
In this case we call $\sheaf M$ an integrable (log) connection.
As in the classical setting if $\sheaf M$ is holonomic there always exists a divisor $Z$ on $X$ such that $\res{\sheaf M}{X\setminus Z}$ is an integrable connection.

The aim of this section is to prove the following fundamental theorem.

\begin{Theorem}\label{thm:projective_pf_preserves_holonomic}
    Let $f\colon X \to Y$ be a projective morphism of smooth idealized log varieties and assume that the morphism on characteristics $f^\flat\colon f^{-1}\logMbar_Y \to \logMbar_X$ is injective.
    Assume that $\sheaf M \in \catDbholDMod{X}$ admits locally on $Y$ a good filtration.
    Then $f_\spf\sheaf M$ is holonomic.
\end{Theorem}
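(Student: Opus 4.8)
The plan is to follow the strategy of the classical proof (as in \cite[Chapter~3]{HottaTakeuchiTanisaki:2008:DModulesPerverseSheavesRepresentationTheory}), replacing ordinary dimensions by logarithmic dimensions throughout, and to isolate the one genuinely new input as a logarithmic symplectic estimate for projections. Since $f_\spf \sheaf M$ is coherent by Lemma~\ref{lem:proper_pushforward_preserves_coherence}, and the logarithmic Bernstein inequality \cite[Theorem~3.18]{KT} already gives $\logdim \Ch(H^j f_\spf \sheaf M) \ge \logdim Y$ for every nonzero cohomology module, it suffices to prove the reverse inequality $\logdim \Ch(f_\spf \sheaf M) \le \logdim Y$.

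First I would establish the characteristic variety estimate
\[
    \Ch(f_\spf \sheaf M) \subseteq \rho_f\bigl(\varpi_f^{-1}(\Ch \sheaf M)\bigr),
\]
where $\varpi_f\colon X \times_Y T^*Y \to T^*X$ is the log codifferential and $\rho_f\colon X \times_Y T^*Y \to T^*Y$ the projection; the injectivity of $f^\flat$ is exactly what guarantees that $\varpi_f$ is defined and compatible with the stratifications. Using the good filtration on $\sheaf M$ provided by hypothesis, one filters $f_\spf \sheaf M$ through the relative de Rham complex \eqref{eq:relative_DR_complex} together with the filtered transfer module. Because $\sD_{Y \from X}$ is built from the $\sO_X$-free module $\sD_{X \to Y}$, which by Lemma~\ref{lem:transfer_module_pd} admits the Spencer resolution of Lemma~\ref{lem:spencer_resolution}, the associated graded of the pushforward is computed by a direct image on the log cotangent bundle, and the containment follows exactly as in the classical filtered argument.

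Next I would factor the projective morphism as a closed immersion $i\colon X \hookrightarrow \ps n \times Y$ followed by the projection $p\colon \ps n \times Y \to Y$, and treat the two factors separately via $(p \circ i)_\spf \cong p_\spf \circ i_\spf$. Since $\ps n$ carries the trivial log structure, $p^\flat$ is an isomorphism and $i^\flat$ inherits the injectivity of $f^\flat$. For the closed immersion $\varpi_i$ is a closed embedding onto the conormal locus and $\rho_i$ is finite, so the estimate reduces to bookkeeping of log dimensions: injectivity of $i^\flat$ guarantees that the log codifferential adjoins precisely the conormal log directions, so that
\[
    \logdim \Ch(i_\spf \sheaf M) = \logdim \Ch \sheaf M + \bigl(\logdim(\ps n \times Y) - \logdim X\bigr) = \logdim(\ps n \times Y),
\]
i.e.\ $i_\spf$ preserves holonomicity.

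The crux, and the step I expect to be the main obstacle, is the projection $p$. By the previous step we may assume $\sheaf M$ is holonomic on $\ps n \times Y$, so that $\logdim \Ch \sheaf M = n + \logdim Y$. Here $\varpi_p^{-1}(\Ch \sheaf M)$ is the intersection of $\Ch \sheaf M$ with the zero section in the $T^*\ps n$-direction, and one must bound $\logdim \rho_p\bigl(\varpi_p^{-1}(\Ch \sheaf M)\bigr) \le \logdim Y$. A naive count fails, since intersecting with the codimension-$n$ zero section need not drop the dimension by $n$; what saves the day classically is that $\Ch \sheaf M$ is conic and involutive, so that its image under symplectic reduction along $T^*\ps n$ is isotropic. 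I would therefore prove the logarithmic analogue: the log cotangent bundle carries a canonical log Poisson structure, $\Ch \sheaf M$ is involutive for it, and since $\ps n$ is proper the reduced variety $\rho_p\bigl(\varpi_p^{-1}(\Ch \sheaf M)\bigr)$ is log isotropic, whence its logarithmic dimension is at most $\logdim Y$. Carrying out this reduction stratum by stratum, controlling how the zero-section intersection and the proper projection interact with each piece of the stratification, is the technical heart of the argument; once it is in place, combining it with the estimate above and the Bernstein lower bound yields $\logdim \Ch(H^j f_\spf \sheaf M) = \logdim Y$ for every $j$, that is, holonomicity of $f_\spf \sheaf M$.
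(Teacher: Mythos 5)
Your overall architecture --- Kashiwara's estimate via the Rees/filtered argument, factoring the projective morphism through $\ps n \times Y$ with trivial log structure on $\ps n$, and disposing of the closed immersion by a fiber-dimension count --- matches the paper's proof up to that point. The genuine gap is in the step you yourself identify as the crux, the projection $p\colon \ps n \times Y \to Y$. You propose to prove that the log cotangent bundle carries a canonical log Poisson structure for which $\Ch\sheaf M$ is involutive, and that the image of $\varpi_p^{-1}(\Ch\sheaf M)$ under the projection to $T^*Y$ is \enquote{log isotropic} of logarithmic dimension at most $\logdim Y$. This is precisely the statement the paper declines to attempt: the Poisson bracket on $\gr\sD_X$ degenerates along the log boundary (already for $\as 1$ with the origin as boundary one has $\{x\partial_x, x\} = x$, which vanishes on the stratum $x=0$), which is the very reason holonomicity must be defined through $\logdim$ --- a quantity mixing $\dim(Z\cap X^k)+k$ over all strata --- rather than through ordinary dimension. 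An involutivity or isotropy statement for the degenerate bracket does not by itself control $\logdim$, and no theory of \enquote{log isotropic} subvarieties compatible with \eqref{eq:logdim} is developed or available here. So the one step you flag as the technical heart is not actually supplied, and the route you sketch for it runs directly into the obstruction that motivates the paper's detour.

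What the paper does instead is reduce the projection case to a situation with no interesting log structure, using three lemmas you do not invoke: Lemma~\ref{lem:i^!_to_stratum_and_hol} shows that holonomicity of $p_\spf\sheaf M$ can be tested by $!$-restricting to each closed log stratum of $Y$ (equivalently, by the condition $\logdim \res{\Ch}{Z} = \logdim Y$ stratum by stratum); Lemma~\ref{lem:projection_stratum_basechange} provides base change for such restrictions against the projection $\ps n \times Y \to Y$, so one may assume $Y$ is a single log stratum; and Lemma~\ref{lem:single_stratum_hol} shows that over a single stratum $\sD_Y$-holonomicity coincides with $\sD_{\ul Y}$-coherence plus classical holonomicity, at which point the classical theorem for the projection $\ps n \times \ul Y \to \ul Y$ applies. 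If you want to salvage your outline, you should replace the proposed log symplectic reduction with an argument of this kind (or actually construct the missing theory of log isotropic subvarieties and prove the requisite $\logdim$ bound, which would be a substantial undertaking in its own right).
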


\subsection{Kashiwara's Estimate}

If $f\colon X \to Y$ is a morphism of smooth idealized log varieties then we have an associated diagram of log cotangent bundles
\[
    \begin{tikzcd}
        T^*X &
        \arrow[l, "\rho_f"'] f^*(T^*Y) \arrow[r, "\varpi_f"] &
        T^*Y.
    \end{tikzcd}
\]

\begin{Proposition}[Kashiwara's estimate for the characteristic variety]\label{prop:Kashiwara_estimate}
    Let $f\colon X \to Y$ be a proper morphism of smooth log varieties and assume that $\sheaf M \in \catCohDMod{X}$ has locally on $Y$ a good filtration.
    Then
    \[
        \Ch(f_{\spf}\sheaf M) \subseteq \varpi_f(\rho_f^{-1}\Ch \sheaf M).
    \]
\end{Proposition}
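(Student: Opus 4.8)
The plan is to follow the classical proof of Kashiwara's estimate (as in \cite[Theorem~2.4.6]{HottaTakeuchiTanisaki:2008:DModulesPerverseSheavesRepresentationTheory} or \cite[Th\'eor\`eme~I.5.4.1]{Mebkhout:1989:LeFormalismeDesSixOperationsPourLesDModules}), replacing the classical Spencer resolution by its logarithmic counterpart from Lemma~\ref{lem:spencer_resolution} and using throughout that $\gr \sD_X$ is locally a polynomial ring $\sO_X[t_1,\dotsc,t_{\logdim X}]$ (as recorded in the proof of Lemma~\ref{lem:transfer_module_pd}). Since the statement is local on $Y$, we may assume $\sheaf M$ carries a global good filtration $F_\bullet \sheaf M$. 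The estimate is compatible with composition of morphisms---the correspondences $T^*X \xleftarrow{\rho_f} f^*(T^*Y) \xrightarrow{\varpi_f} T^*Y$ compose as conormal relations---so it suffices to treat separately a closed immersion and a projection $f\colon X \times Y \to Y$. A general proper $f$ factors through its graph $X \hookrightarrow X \times Y$ (a closed immersion) followed by the projection, which is proper on the closed support of the pushed-forward module.

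First I would set up the filtered pushforward for the projection $f\colon X \times Y \to Y$, where $f_\spf \sheaf M = f_* \DR_f(\sheaf M)$ by the relative de Rham formula \eqref{eq:relative_DR_complex}, whose terms are the relative logarithmic forms tensored with $\sheaf M$ and whose differential is induced by the connection $\sheaf M \to \sheaf M \otimes \Omega^1_{X \times Y}$. The good filtration $F_\bullet \sheaf M$ induces a filtration on $\DR_f(\sheaf M)$, and since the differential is of order one it raises the filtration degree by at most one. Passing to the associated graded, $\gr(\DR_f\sheaf M)$ becomes the Koszul complex of $\gr^F\sheaf M$ (viewed as a coherent sheaf on $T^*(X\times Y)$ via $\gr\sD_{X\times Y}$) with respect to the principal symbols of the relative vector fields, i.e. the $\Theta_X$-directions. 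Its cohomology is therefore supported on the locus where all these symbols vanish on $\gr^F\sheaf M$, that is on $\Ch\sheaf M \cap \operatorname{im}\rho_f$, which $\rho_f$ identifies with $\rho_f^{-1}(\Ch\sheaf M)$.

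Next I would apply the sheaf-theoretic pushforward $f_*$. Properness (on the support) together with Grothendieck's coherence theorem and the local goodness of $F_\bullet$ guarantees that the induced filtration on each $H^j(f_\spf \sheaf M)$ is again a good filtration and that its associated graded is a subquotient of $\varpi_{f*}$ applied to the graded cohomology computed above. Since the characteristic variety is by definition the support of the associated graded of a good filtration, we obtain
\[
    \Ch\bigl(H^j(f_\spf \sheaf M)\bigr) \subseteq \varpi_f\bigl(\supp \gr(\DR_f \sheaf M)\bigr) \subseteq \varpi_f\bigl(\rho_f^{-1}\Ch \sheaf M\bigr),
\]
which is the desired estimate for the projection. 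The closed-immersion case is handled by the same filtered argument, and is even more transparent because $\sD_{Y\from X}$ is induced from $\sD_X$ along the embedding, so that the estimate reduces to the essentially tautological statement that $\Ch$ is carried by the conormal correspondence of the immersion.

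The main obstacle I anticipate is the bookkeeping needed to show that the filtration induced on $f_\spf \sheaf M$ is genuinely good and that its associated graded is controlled by $\gr^F \sheaf M$---that is, making precise the comparison between the derived pushforward in the filtered $\sD$-world and the commutative pushforward along the Lagrangian correspondence. This is exactly where properness and the local existence of good filtrations are indispensable (the latter being automatic in the algebraic setting but a genuine hypothesis in the analytic one), and where one must verify that the logarithmic $\gr \sD_X$ behaves like its classical analogue; the polynomiality of $\gr \sD_X$ and the logarithmic Spencer resolution of Lemma~\ref{lem:spencer_resolution} are precisely the inputs that let the classical spectral-sequence argument run unchanged.
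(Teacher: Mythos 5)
Your overall strategy --- pass to the associated graded of the relative de Rham complex, identify it with a Koszul complex whose cohomology is supported on $\rho_f^{-1}\Ch\sheaf M$, push forward, and compare supports --- is the right skeleton, and it is the same skeleton the paper uses. But the step you flag as ``the main obstacle'' is not bookkeeping; it is the entire mathematical content of the proof, and your proposed resolution of it does not work as stated. The claim that ``properness together with Grothendieck's coherence theorem and the local goodness of $F_\bullet$ guarantees that the induced filtration on each $H^j(f_\spf\sheaf M)$ is again a good filtration'' is a genuine gap: Grothendieck/Grauert coherence gives you that each individual step $F_kH^j(f_\spf\sheaf M)$ is $\sO_Y$-coherent, but an exhaustive filtration by coherent $\sO_Y$-modules compatible with the order filtration on $\sD_Y$ need not be \emph{good} (goodness requires $\gr$ of the filtration to be coherent over $\gr\sD_Y$, equivalently that the filtration eventually stabilizes under $F_1\sD_Y$; coherence of the individual pieces does not imply this). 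Without goodness, the support of the associated graded of the induced filtration says nothing about $\Ch(H^j(f_\spf\sheaf M))$, which is defined via a good filtration. Likewise, the assertion that $\gr H^j(f_\spf\sheaf M)$ is a subquotient of $H^j$ of the pushforward of the Koszul complex requires controlling the differentials of the spectral sequence of the filtered complex at all pages, i.e.\ a local degeneration statement, which again is not supplied.

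The paper resolves exactly this point by the Rees-module argument of \cite{MaisonobeSabbah:Kaiserslautern} (following \cite{Malgrange:1985:ImagesDirectes}): one applies the coherence theorem not over $\sO_Y$ but over the Rees ring $R(\sD_Y)$, obtaining that $H^i(f^R_\spf R(\sheaf M))$ is $R(\sD_Y)$-coherent (Lemma~\ref{lem:Rees_pf_properties}); its quotient by $z$-torsion is then by construction the Rees module of a genuinely good filtration on $H^i(f_\spf\sheaf M)$, and coherence forces the $z$-torsion to be killed by a fixed power $z^\ell$. That torsion bound is what replaces your ``subquotient'' step: one only gets that $\gr_{[\ell]}H^i(f_\spf\sheaf M)$ (the step-$\ell$ associated graded) is a subquotient of $H^i(f_\spf\gr_{[\ell]}\sheaf M)$, and a further finite filtration with associated graded $\gr\sD_X[u]/u^\ell$ is needed to relate this back to $H^i(f^{\gr}_\spf\gr\sheaf M)$ and hence to $\varpi_f(\rho_f^{-1}\Ch\sheaf M)$. (Two smaller remarks: the classical shortcut via microdifferential operators is unavailable here, which is why the paper takes this route at all; and the factorization through the graph is unnecessary for Proposition~\ref{prop:Kashiwara_estimate} itself --- the Rees argument applies directly to any proper $f$ --- it is only used later for the holonomicity theorem.)
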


As we currently do not have a theory of logarithmic microdifferential operators, we will follow the proof using the Rees construction given in \cite{MaisonobeSabbah:Kaiserslautern} (which is in turn based on the ideas in \cite{Malgrange:1985:ImagesDirectes}).
Since to the author's knowledge this proof has not appeared in peer-reviewed literature we will reproduce it here.
Of course the author does not claim any originality.

Let $(\sheaf A,\, F_\bullet)$ be a filtered sheaf of $\CC$-algebras.
Recall that the Rees sheaf of rings $R(\sheaf A)$ is the subsheaf $\bigoplus_n F_n\sheaf A z^n$ of $\sheaf A \otimes_{\CC} \CC[z,z^{-1}]$.
If $(\sheaf M,\, F_\bullet)$ is a filtered $\sheaf A$-module, then $R(\sheaf M)$ is similarly defined to be the $R(\sheaf A)$-module $\bigoplus_n F_n\sheaf M z^n$.
It follows that
\[
    R(\sheaf M)/zR(\sheaf M) = \gr \sheaf M
    \quad\text{and}\quad
    R(\sheaf M)/(z-1)R(\sheaf M) = \sheaf M.
\]
Any graded $R(\sheaf A)$-module without $\CC[z]$-torsion is canonically obtained from a filtered $\sheaf A$-module.

We will apply this to $\sD_X$ with its usual filtration.
If $\sheaf M$ is a  coherent $\sD_X$-module with a global good filtration, then the associated Rees module $R(\sheaf M)$ is a coherent $R(\sD_X)$-module.

For a morphism $f\colon X \to Y$ of smooth idealized log varieties we define a functor $f^R_{\spf}\catDp{R(\sD_X)} \to \catDp{R(\sD_Y)}$ by $f^R_{\spf}\sheaf N = f_*\bigl(R(\sD_{Y \from X}) \otimes_{R(\sD_X)} \sheaf N\bigl)$.

\begin{Lemma}\label{lem:Rees_pf_properties}
    Let $f\colon X \to Y$ be a morphism of smooth log varieties.
    \begin{enumerate}
        \item\label{it:lem:Rees_pf_properties:1} If $f$ is proper, then $f^R_{\spf}$ preserves coherence.
        \item\label{it:lem:Rees_pf_properties:2} For any $\sheaf N \in \catMod{R(\sD_X)}$ and any $i$ there exists a canonical equivalence
            \[
                H^i(f^R_{\spf}\sheaf N) / (z-1)H^i(f^R_{\spf}\sheaf N)
                \cong
                H^i\bigl(f_{\spf}(\sheaf N/(z-1)\sheaf N)\bigr).
            \]
        \item\label{it:lem:Rees_pf_properties:3} Assume that $\sheaf M \in \catCohDMod{X}$ has a global good filtration.
            Then the quotient of $H^i(f^R_{\spf} R(\sheaf M))$ by its $z$-torsion is the Rees module associated to some good filtration on $H^i(f_{\spf}\sheaf M)$.
    \end{enumerate}
\end{Lemma}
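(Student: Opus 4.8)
The plan is to prove part~(i) by transcribing the proof of Lemma~\ref{lem:proper_pushforward_preserves_coherence} to the Rees setting, and then to deduce parts~(ii) and~(iii) formally. The entire argument for the latter two rests on one elementary observation about graded modules over $R(\sD_Y)$: since $z$ is homogeneous of positive degree, if $(z-1)m = 0$ then comparing homogeneous components gives $z m_{j-1} = m_j$ for all $j$, forcing the lowest nonzero component of $m$ to vanish; hence \emph{every graded $R(\sD_Y)$-module is $(z-1)$-torsion-free}. Dually, if $\mathcal T$ is a graded module with $z^k \mathcal T = 0$, then $-(1 + z + \dots + z^{k-1})$ is inverse to multiplication by $z-1$, so $(z-1)\mathcal T = \mathcal T$ and $\mathcal T/(z-1)\mathcal T = 0$.

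For part~(i) I would first record that $R(\sD_X)$ is again a noetherian coherent sheaf of rings whose associated graded is locally a polynomial ring, so that, exactly as in Lemma~\ref{lem:transfer_module_pd}, the Rees transfer bimodule $R(\sD_{Y\from X})$ has finite projective dimension and admits a Rees--Spencer resolution by locally free $R(\sD_X)$-modules. Computing $f^R_\spf$ through this resolution (equivalently, through the Rees analogue of the relative de Rham complex after factoring $f$) expresses it as $f_*$ of a bounded complex of $R(\sO_X)$-coherent sheaves, so that the coherence theorem for proper pushforward (Grauert's theorem in the analytic case) gives coherence of each $H^i(f^R_\spf\sheaf N)$ over $R(\sD_Y)$, just as in the cited proof of Mebkhout.

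For part~(ii), the module $\sheaf N$ is $(z-1)$-torsion-free by the observation above, so multiplication by $z-1$ is injective. Since the terms of the relative de Rham complex are $\sO$-flat, $z-1$ is injective on it termwise and the cokernel complex is the de Rham complex of $\sheaf N/(z-1)\sheaf N$; applying $f_*$ to this short exact sequence of complexes yields the long exact sequence
\[
    \cdots \to H^i(f^R_\spf\sheaf N) \xrightarrow{z-1} H^i(f^R_\spf\sheaf N) \to H^i\bigl(f_\spf(\sheaf N/(z-1)\sheaf N)\bigr) \to H^{i+1}(f^R_\spf\sheaf N) \to \cdots,
\]
which breaks into short exact sequences
\[
    0 \to H^i(f^R_\spf\sheaf N)/(z-1) \to H^i\bigl(f_\spf(\sheaf N/(z-1)\sheaf N)\bigr) \to {}_{z-1}H^{i+1}(f^R_\spf\sheaf N) \to 0.
\]
The rightmost term vanishes because $H^{i+1}(f^R_\spf\sheaf N)$ is a graded $R(\sD_Y)$-module, and we obtain the desired isomorphism.

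Finally, for part~(iii) I take $\sheaf N = R(\sheaf M)$, which is coherent and $z$-torsion-free. By part~(i) each $H^i(f^R_\spf R(\sheaf M))$ is coherent and graded; let $\mathcal H$ be its quotient by the $z$-torsion submodule $\mathcal T$. Since $\mathcal T$ is a bounded $z$-torsion graded module, $\mathcal T/(z-1)\mathcal T = 0$, so the natural map $H^i(f^R_\spf R(\sheaf M))/(z-1) \to \mathcal H/(z-1)$ is an isomorphism, and part~(ii) identifies this with $H^i(f_\spf\sheaf M)$ (using $R(\sheaf M)/(z-1) = \sheaf M$). As $\mathcal H$ is coherent, graded and $z$-torsion-free, it is the Rees module of a good filtration on $H^i(f_\spf\sheaf M)$. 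I expect the main obstacle to be part~(i): one must verify that the homological machinery — coherence and noetherianity of $R(\sD_X)$, finiteness of the projective dimension of the Rees transfer bimodule, and the Rees--Spencer resolution — survives the Rees construction, and that the coherence theorem applies to the resulting complex. Once coherence and gradedness are established, parts~(ii) and~(iii) follow formally from the $(z-1)$-torsion observation.
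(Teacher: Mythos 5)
Your proposal is correct and takes essentially the same route as the paper: part~(i) by transcribing the coherence argument of Lemma~\ref{lem:proper_pushforward_preserves_coherence} through a Rees--Spencer resolution, part~(ii) via the observation that graded Rees modules have no $(z-1)$-torsion (which is precisely the content of the paper's unproved claim that $H^i(N/(z-1)N) \cong H^i(N)/(z-1)H^i(N)$ for complexes over a Rees ring), and part~(iii) by killing the $z$-torsion submodule, on which $z-1$ acts invertibly. The paper leaves these torsion observations implicit; your write-up simply makes them explicit.
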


\begin{proof}
    Assertion \ref{it:lem:Rees_pf_properties:1} can be shown in the same way as the corresponding statement for $\sD_X$-modules (Lemma~\ref{lem:proper_pushforward_preserves_coherence}).

    For \ref{it:lem:Rees_pf_properties:2} we first note that for any complex of modules $N$ over a Rees ring one has $H^i(N/(z-1)N) \cong H^i(N)/(z-1)H^i(N)$.
    The tensor product $R(\sD_{Y \from X}) \otimes_{R(\sD_X)} \sheaf N$ in the definition of $f^R_{\spf}$ can be computed with a Spencer-type resolution of $R(\sD_{Y \from X})$ with $\Sp(R(\sD_{Y \from X}))/(z-1)\Sp(R(\sD_{Y \from X})) \cong \Sp(\sD_{Y \from X})$.
    Finally, the derived functor $f_*$ can be computed via a Godement resolution of its argument.
    Restricting the Godement resolution of a sheaf $\sheaf F$ over the Rees algebra to $z=1$ reproduces the Godement resolution for $\sheaf F/(z-1)\sheaf F$.

    To show \ref{it:lem:Rees_pf_properties:3} let $T$ be the $z$-torsion submodule of $H^i(f^R_\spf(R(\sheaf M)))$.
    The module $H^i(f^R_\spf(R(\sheaf M))) / T$ is $z$-torsion free and hence is the Rees module of some good filtration on
    \begin{align*}
        \rquot{H^i(f^R_\spf(R(\sheaf M))) / T}{(z-1)(H^i(f^R_\spf(R(\sheaf M))) / T)} &\cong
        \rquot{H^i(f^R_\spf(R(\sheaf M)))}{(z-1)H^i(f^R_\spf(R(\sheaf M)))} \\ &\cong
        H^i\bigl(f_\spf\bigl(R(\sheaf M)/(z-1)R(\sheaf M)\bigr)\bigr) \\ &\cong
        H^i(f_\spf(\sheaf M)).
        \qedhere
    \end{align*}
\end{proof}

\begin{proof}[Proof of Proposition~\ref{prop:Kashiwara_estimate}]
    As the statement is local on $Y$, one can assume that $\sheaf M$ has a good filtration $F_\bullet$.

    Define a functor $f^{\gr}_\spf\colon \catMod{\gr \sD_X} \to \catMod{\gr \sD_Y}$ by $f^{\gr}_\spf = \varpi_{f,*} \circ \rho_f^*$.
    One sees that
    \[
        \supp H^i (f^{\gr}_\spf \gr \sheaf M) \subseteq \varpi_f(\rho_f^{-1}\Ch\sheaf M).
    \]
    The remainder of the proof consists of comparing the support of $\gr (f_\spf \sheaf M)$ with that of $f^{\gr}_\spf(\gr \sheaf M)$.
    We will do so by interpolating with the help of the Rees construction.

    As $H^i(f^R_\spf(R(\sheaf M)))$ is coherent, the sequence of submodules given by the kernel of multiplication by $z^j$ (locally) stabilizes.
    Thus we can assume that the $z$-torsion submodule of $H^i(f^R_\spf(R(\sheaf M)))$ is given by $\ker z^\ell$ for some fixed $\ell$.
    
    We will denote by $\gr_{[\ell]}\sheaf M = \bigoplus_k F_k\sheaf M/F_{k-\ell}\sheaf M$ the grading with step $\ell$.
    One defines the functor $f_\spf$ for $\gr_{[\ell]}\sD_X$-modules in the same manner as for $\sD_X$-modules.

    It follows from Lemma~\ref{lem:Rees_pf_properties}\ref{it:lem:Rees_pf_properties:3} that $\gr_{[\ell]} H^i(f_\spf \sheaf M) = \rquot{R(H^i(f_\spf\sheaf M))}{z^\ell R(H^i(f_\spf \sheaf M))}$ is a quotient of $\rquot{H^i(f^R_\spf R(\sheaf M))}{z^\ell H^i(f^R_\spf R(\sheaf M))}$.
    The exact sequence
    \[
        \cdots
        \to
        H^i(f^R_\spf R(\sheaf M))
        \xrightarrow{z^\ell}
        H^i(f^R_\spf R(\sheaf M))
        \to
        H^i(f_\spf(R(\sheaf M)/z^\ell R(\sheaf M)))
        \to
        \cdots
    \]
    implies that in turn $\rquot{H^i(f^R_\spf R(\sheaf M))}{z^\ell H^i(f^R_\spf R(\sheaf M))}$ is a submodule of $H^i(f_\spf \gr_{[\ell]}\sheaf M) = H^i(f_\spf(R(\sheaf M)/z^\ell R(\sheaf M)))$.
    Thus $\gr_{[\ell]} H^i(f_\spf \sheaf M)$ is a $\gr_{[\ell]}\sD_Y$-subquotient of $H^i(f_\spf \gr_{[\ell]}\sheaf M)$.

    We filter $\gr_{[\ell]}\sheaf \sD_X$ by the finite filtration
    \[
        G_j \gr_{[\ell]}\sheaf \sD_X = \bigoplus_k F_{k+j-\ell}\sD_X / F_{k-\ell} \sD_X, \quad 0 \le j \le \ell.
    \]
    Similarly, we have finite $G$-filtrations on $\gr_{[\ell]}\sD_X$-modules.
    One notes that $\gr^G \gr^F_{[\ell]} \cong \gr^F \sD_X[u]/u^\ell$ with $u$ in $G$-degree $1$.
    Thus given a coherent $\gr_{[\ell]}\sD_X$-module, its graded module with respect to any $G$-filtration is $\gr \sD_X[u]/u^\ell$-coherent and hence also $\gr \sD_X$-coherent.
    Further, its support does not depend on the choice of $G$-filtration.

    The $G$-filtration induces a finite spectral sequence with $E_2$-term
    \[
        H^i(f^{\gr}_\spf \gr^G\gr^F_{[\ell]} \sheaf M) = H^i(f^{\gr}_\spf \gr^F \sheaf M[u]/(u^\ell)) \cong H^i(f^{\gr}_\spf \sheaf M)^\ell
    \]
    converging to $\gr^G H^i(f_\spf \gr^F_{[\ell]} \sheaf M)$ for some suitable $G$-filtration.
    Hence the support of $\gr^G H^j(f_\spf \gr_{[\ell]}^F \sheaf M)$ is contained in $\supp H^i(f_\spf^{\gr} \sheaf M) \subseteq \varpi_f(\rho_f^{-1}\Ch\sheaf M)$.

    The $G$-filtration on $H^i(f_\spf \gr_{[\ell]} \sheaf M)$ induces a $G$-filtration on the subquotient $\gr_{[\ell]} H^i(f_\spf \sheaf M)$.
    Thus the support of $\gr^G \gr^F_{[\ell]} H^i(f_\spf \sheaf M) \cong \bigl(\gr^F H^i(f_\spf \sheaf M))^\ell$ is contained in the support of $\gr^G H^j(f_\spf \gr_{[\ell]}^F \sheaf M)$ and hence in $\varpi_f(\rho_f^{-1}\Ch\sheaf M)$.
    Therefore, so is the characteristic variety of $H^i(f_\spf \sheaf M)$.
\end{proof}

\subsection{Some reduction steps}

For classical D-modules, Theorem~\ref{thm:projective_pf_preserves_holonomic} follows from Proposition~\ref{prop:Kashiwara_estimate} together with a result from symplectic geometry.
As the logarithmic cotangent bundle does not have a natural symplectic structure, we need some further reductions steps to deal with the logarithmic situation.
The idea is to reduce to a situation with no interesting log structure.

\begin{Lemma}\label{lem:pullback_to_stratum}
    Let $i\colon Z \hookrightarrow X$ be the inclusion of (a component of) a closed log stratum, endowed with the induced idealized log structure.
    Let $\sheaf M = \sD_X \otimes_{\sO_X} \sheaf G$ for some $\sheaf G \in \catCohOMod{X}$ with $\sD_X$-module structure given by left multiplication.
    Then,
    \[
        i^!\sheaf M \cong \canon_Z^\dual \otimes_{\sO_Z} \sD_Z \otimes_{\sO_Z} i^!_{\sO\text{-mod}}(\canon_X \otimes_{\sO_X} \sheaf G).
    \]
\end{Lemma}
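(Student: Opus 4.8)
The plan is to compute $i^!\sheaf M$ directly from Definition~\ref{def:!-pullback_closed_immersion}, following the same chain of isomorphisms used in the proof of Lemma~\ref{lem:i^!_of_twist}, of which this statement is the natural generalization: taking $\sheaf G = \twist_X$ and using $\canon_X \otimes_{\sO_X}\twist_X \cong \Odc_{\ul X}$ together with $i^!_{\sO\text{-mod}}\Odc_{\ul X} \cong \Odc_{\ul Z}$ recovers exactly Lemma~\ref{lem:i^!_of_twist}. First I would write
\[
    i^!\sheaf M = \sheafHom_{i^{-1}\sD_X}\bigl(\sD_{X \from Z},\, i^{-1}\sD_X \otimes_{i^{-1}\sO_X} i^{-1}\sheaf G\bigr)
\]
and rewrite the transfer bimodule as $\sD_{X \from Z} \cong i^{-1}\sD_X \otimes_{i^{-1}\sO_X} i^{-1}\canon_X^\dual \otimes_{i^{-1}\sO_X} \canon_Z$, using $\sD_{Z \to X} = i^*\sD_X$ and that $\canon_X$, $\canon_Z$ are line bundles.

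The core of the argument is then a chain of canonical isomorphisms identical in shape to the one in Lemma~\ref{lem:i^!_of_twist}. The extension-of-scalars adjunction for $i^{-1}\sO_X \to i^{-1}\sD_X$ collapses the outer $\sheafHom_{i^{-1}\sD_X}$ into $\sheafHom_{i^{-1}\sO_X}(i^{-1}\canon_X^\dual \otimes_{i^{-1}\sO_X} \canon_Z,\, i^{-1}\sD_X \otimes_{i^{-1}\sO_X} i^{-1}\sheaf G)$; a projection formula then pulls the free factor $i^{-1}\sD_X$ out of the target; and finally the hom-tensor adjunction separates $\canon_Z$ (over $\sO_Z$) from $\sO_Z$ (over $i^{-1}\sO_X$), leaving the inner term $\sheafHom_{i^{-1}\sO_X}(\sO_Z,\, i^{-1}\sheaf G) = i^!_{\sO\text{-mod}}\sheaf G$. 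What remains after absorbing $\sO_Z$ is
\[
    i^*\sD_X \otimes_{\sO_Z} \sheafHom_{\sO_Z}\bigl(i^*\canon_X^\dual \otimes_{\sO_Z} \canon_Z,\, i^!_{\sO\text{-mod}}\sheaf G\bigr).
\]

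Two identifications finish the proof, and this is where the hypotheses enter. Because $Z$ is a component of a closed log stratum carrying its induced idealized log structure, Remark~\ref{rem:snc} gives $\sD_Z = \sO_Z \otimes_{\sO_X}\sD_X = i^*\sD_X$ (the map $\sT_Z \to i^*\sT_X$ is an isomorphism here, which fails for a general closed immersion and is precisely why the stratum hypothesis is needed). Pulling the invertible $\sO_Z$-module $i^*\canon_X^\dual \otimes_{\sO_Z}\canon_Z$ out of the inner $\sheafHom_{\sO_Z}$ produces the factors $\canon_Z^\dual$ and $i^*\canon_X$, and the line-bundle projection formula $i^*\canon_X \otimes_{\sO_Z} i^!_{\sO\text{-mod}}\sheaf G \cong i^!_{\sO\text{-mod}}(\canon_X \otimes_{\sO_X}\sheaf G)$ reassembles the target, yielding the claimed
\[
    \canon_Z^\dual \otimes_{\sO_Z}\sD_Z \otimes_{\sO_Z} i^!_{\sO\text{-mod}}(\canon_X \otimes_{\sO_X}\sheaf G).
\]

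I expect the main work to lie not in the computation, which mirrors Lemma~\ref{lem:i^!_of_twist}, but in two places. First, every isomorphism must be checked to be one of left $\sD_Z$-modules: the $\sD_Z$-action on $i^!\sheaf M$ comes from the right $\sD_Z$-action on $\sD_{X \from Z}$, and matching it with the left-multiplication action that appears once $\sD_Z = i^*\sD_X$ is pulled to the front requires the same interchange-of-structures argument as in Lemma~\ref{lem:free_module_tensor_involution}. Second, the projection-formula step, in which the infinite-rank free factor $i^{-1}\sD_X$ is pulled out of the target of $\sheafHom_{i^{-1}\sO_X}$, needs justification in the derived category, since $\ul X$ may be singular and so $\sO_Z$ need not be perfect over $\sO_X$. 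This is handled by filtering $\sD_X$ by the finite-rank locally free $\sO_X$-modules $F_n\sD_X$, for which the projection formula is immediate, and passing to the filtered colimit; the passage is legitimate because the target $i^{-1}\sheaf G$ is a single coherent sheaf, so each cohomological degree of the relevant $R\sheafHom$ is a finite-free twist of a sheaf and filtered colimits are exact.
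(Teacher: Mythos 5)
Your computation is correct and follows essentially the same chain of isomorphisms as the paper's proof, which likewise rewrites $\sD_{X \from Z}$ as the induced module $i^{-1}\sD_X \otimes_{i^{-1}\sO_X} (i^{-1}\canon_X^\dual \otimes_{i^{-1}\sO_X} \canon_Z)$, applies hom-tensor adjunction, and uses $i^*\sD_X = \sD_Z$, $i^*\canon_X = \canon_Z$ and $\sheafHom_{i^{-1}\sO_X}(\sO_Z,\,{-}) = i^!_{\sO\text{-mod}}$; the only cosmetic difference is that you extract the $\canon_X$-twist from $i^!_{\sO\text{-mod}}$ at the end rather than carrying $i^{-1}\canon_X$ inside the Hom from the start. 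The two technical points you flag --- tracking the left $\sD_Z$-module structure and justifying the extraction of the infinite-rank free factor $i^{-1}\sD_X$ from the target of $\sheafHom$ via the filtration $F_\bullet\sD_X$ --- are left implicit in the paper's proof but are genuine and are handled correctly in your write-up.
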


\begin{proof}
    In this situation one has $i^*\sD_X = \sD_Z$ and $i^*\canon_X = \canon_Z$.
    Thus,
    \begin{align*}
        i^!\sheaf M & =
        \sheafHom_{i^{-1}\sD_X}(\sD_{X \from Z},\, i^{-1}(\sD_X \otimes_{\sO_X} \sheaf G)) \\ & \cong
        \sheafHom_{i^{-1}\sO_X}(i^{-1}\canon_X^\dual \otimes_{i^{-1}\sO_X} \canon_Z,\, i^{-1}\sD_X \otimes_{i^{-1}\sO_X} i^{-1}\sheaf G) \\& \cong
        \sheafHom_{i^{-1}\sO_X}(i^*\canon_X,\, i^{-1}\canon_X \otimes_{i^{-1}\sO_X} i^{-1}\sD_X \otimes_{i^{-1}\sO_X} i^{-1}\sheaf G) \\ & \cong
        \sheafHom_{i^{-1}\sO_X}(\sO_Z \otimes_{i^{-1}\sO_X} i^{-1}\canon_X,\, i^{-1}\canon_X \otimes_{i^{-1}\sO_X} i^{-1}\sD_X \otimes_{i^{-1}\sO_X} i^{-1}\sheaf G) \\ & \cong
        i^{-1}\canon_X^\dual \otimes_{i^{-1}\sO_X} i^{-1}\sD_X \otimes_{i^{-1}\sO_X} \sheafHom_{i^{-1}\sO_X}(\sO_Z,\, i^{-1}\canon_X \otimes_{i^{-1}\sO_X} i^{-1}\sheaf G) \\ & \cong
        \canon_Z^\dual \otimes_{\sO_Z} \sD_Z \otimes_{\sO_Z} i^!_{\sO\text{-mod}}(\canon_X \otimes_{\sO_X} \sheaf G).
        \qedhere
    \end{align*}
\end{proof}

\begin{Lemma}\label{lem:projection_stratum_basechange}
    Let $X$ and $Y$ be smooth idealized log varieties with $X$ proper.
    Let $f\colon X \times Y \to Y$ be the projection and let $i\colon Z \hookrightarrow Y$ be the inclusion of (a component of) a closed log stratum (endowed with the induced idealized log structure).
    Consider the cartesian diagram
    \[
        \begin{tikzcd}
            X \times Z \arrow[r,"\tilde \imath"] \arrow[d, "\tilde f"] & X \times Y \arrow[d, "f"] \\
            Z \arrow[r, "i"] & Y
        \end{tikzcd}
    \]
    Assume that $\sheaf M \in \catDbcohDMod{X \times Y}$ admits locally on $Y$ a good filtration. 
    Then there exists a canonical isomorphism
    \[
        \tilde f_\spf \tilde\imath^! \sheaf M \isoto i^! f_\spf \sheaf M.
    \]
\end{Lemma}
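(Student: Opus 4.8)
The plan is to produce a canonical comparison morphism by formal adjunction, and then to check that it is an isomorphism after reducing to a class of generators, where everything collapses to an $\sO$-module base-change statement.

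First I would construct the morphism. Since the square commutes, $i\circ\tilde f = f\circ\tilde\imath$, so the functoriality of the pushforward gives a canonical isomorphism $i_\spf\tilde f_\spf \cong f_\spf\tilde\imath_\spf$. Applying this to $\tilde\imath^!\sheaf M$ and composing with $f_\spf$ of the counit $\tilde\imath_\spf\tilde\imath^!\to\id$ of the adjunction of Proposition~\ref{prop:i^!_adjunction}, I obtain
\[
    i_\spf\tilde f_\spf\tilde\imath^!\sheaf M \cong f_\spf\tilde\imath_\spf\tilde\imath^!\sheaf M \to f_\spf\sheaf M .
\]
Transposing along the adjunction $i_\spf\dashv i^!$ yields the desired natural morphism $\tilde f_\spf\tilde\imath^!\sheaf M \to i^! f_\spf\sheaf M$ (all the complexes in sight are bounded, since the closed-immersion pullback and the proper pushforward have bounded amplitude).

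Checking that this is an isomorphism is local on $Y$, so I may assume $\sheaf M$ carries a global good filtration. Both $\tilde f_\spf\tilde\imath^!$ and $i^! f_\spf$ are way-out in both directions, so by the Way-out Lemma it suffices to verify the morphism on the generators $\sheaf M = \sD_{X\times Y}\otimes_{\sO_{X\times Y}}\sheaf G$ with $\sheaf G\in\catCohOMod{X\times Y}$, since every coherent $\sD_{X\times Y}$-module is locally a quotient of such a module (exactly as in the proof of Theorem~\ref{thm:proper_duality}). For these I compute both sides. Since $X\times Z$ is a component of a closed log stratum of $X\times Y$ with its induced idealized structure, Lemma~\ref{lem:pullback_to_stratum} gives
\[
    \tilde\imath^!\sheaf M \cong \canon_{X\times Z}^\dual \otimes_{\sO_{X\times Z}} \sD_{X\times Z} \otimes_{\sO_{X\times Z}} \tilde\imath^!_{\sO\text{-mod}}(\canon_{X\times Y}\otimes_{\sO_{X\times Y}}\sheaf G),
\]
and pushing forward along the projection $\tilde f$ via $\tilde f_\spf = \tilde f_*\DR_{\tilde f}$ turns this induced module into one induced from $\tilde f_*$ of the corresponding $\sO$-module, exactly as in the computation of $f_\spf(\sD_{X\times Y}\otimes\sheaf G)$. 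On the other side, that same computation gives $f_\spf\sheaf M \cong \sD_Y\otimes_{\sO_Y}\canon_Y^\dual\otimes_{\sO_Y}f_*(\canon_{X\times Y}\otimes\sheaf G)$, which is again induced from $\sO_Y$; applying Lemma~\ref{lem:pullback_to_stratum} termwise and using $\canon_Y\otimes_{\sO_Y}\canon_Y^\dual\cong\sO_Y$ yields
\[
    i^! f_\spf\sheaf M \cong \canon_Z^\dual\otimes_{\sO_Z}\sD_Z\otimes_{\sO_Z} i^!_{\sO\text{-mod}}f_*(\canon_{X\times Y}\otimes\sheaf G).
\]
Thus, after stripping off the common factor $\canon_Z^\dual\otimes\sD_Z$, the claim reduces to the $\sO$-module base-change isomorphism $\tilde f_*\circ\tilde\imath^!_{\sO\text{-mod}} \cong i^!_{\sO\text{-mod}}\circ f_*$ for the cartesian square of underlying varieties, which is the base-change property of the twisted inverse image along the proper (projective) morphism $f$ in Grothendieck duality.

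The step I expect to be the main obstacle is the bookkeeping in this reduction: one must track the canonical and relative-canonical twists ($\canon$, $\twist$, and the $\Odc$'s hidden in $\tilde\imath^!_{\sO\text{-mod}}$ and $i^!_{\sO\text{-mod}}$) together with the conversions between left and right $\sD$-modules, so that the two chains of canonical isomorphisms cancel the relative twists coming from $\tilde f$ and $f$ and, crucially, so that the resulting identification is compatible with the abstract morphism built in the first step (rather than merely abstractly isomorphic to it). The genuinely nontrivial analytic input, namely $\sO$-module base change for $!$-pullback along a proper morphism, is already available; the remaining work is the formal, if delicate, matching of the twisting data.
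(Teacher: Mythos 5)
Your proposal is correct and follows essentially the same route as the paper: the comparison morphism is built from the adjunctions of Proposition~\ref{prop:i^!_adjunction} together with $i_\spf\tilde f_\spf \cong f_\spf\tilde\imath_\spf$ (your transposed version is the same map as the paper's unit-then-counit composite), and the verification reduces via the Way-out Lemma to induced modules $\sD_{X\times Y}\otimes_{\sO_{X\times Y}}\sheaf G$, where Lemma~\ref{lem:pullback_to_stratum} collapses both sides to the coherent $\sO$-module base-change isomorphism $\tilde f_*\circ\tilde\imath^!_{\sO\text{-mod}}\cong i^!_{\sO\text{-mod}}\circ f_*$. The twist bookkeeping you flag as the main obstacle is exactly what the paper's displayed chain of isomorphisms carries out, and it works as you expect.
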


\begin{proof}
    By adjunction we get a morphism
    \begin{equation}\label{eq:lem:projection_stratum_basechange:1}
        \tilde f_\spf \tilde\imath^! \to
        i^! i_\spf \tilde f_\spf \tilde\imath^! =
        i^! f_\spf \tilde \imath_\spf \tilde\imath^! \to
        i^! f_\spf.
    \end{equation}
    The question whether this is an isomorphism is local.
    Hence by the Way-Out Lemma it suffices to show that it is an isomorphism when applied to $\sheaf M = \sD_{X\times Y} \otimes_{\sO_X} \sheaf G$ for some $\sO$-coherent sheaf $\sheaf G$.
    Then, using Lemma~\ref{lem:pullback_to_stratum} and base change for coherent $\sO$-modules, one computes
    \begin{align*}
        \tilde f_\spf \tilde\imath^!(\sD_{X \times Y} \otimes_{\sO_X} \sheaf G) & \cong
        \sD_Z \otimes_{\sO_Z} \canon_Z^\dual \otimes_{\sO_Z} \tilde f_*\bigl( \canon_{X \times Z} \otimes_{\sD_{X \times Y}} i^!(\sD_{X \times Y} \otimes_{\sO_{X \times Y}} \sheaf G )\bigr)\\ &\cong
        \sD_Z \otimes_{\sO_Z} \canon_Z^\dual \otimes_{\sO_Z} \tilde f_*\bigl( \tilde\imath_{\Omod}^!(\canon_{X \times Y} \otimes_{\sO_{X \times Y}} \sheaf G) \bigr)\\ &\cong
        \sD_Z \otimes_{\sO_Z} \canon_Z^\dual \otimes_{\sO_Z} i_{\Omod}^!\bigl( f_*(\canon_{X \times Y} \otimes_{\sO_{X \times Y}} \sheaf G) \bigr)\\ &\cong
        i^!f_\spf(\sD_{X \times Y} \otimes_{\sO_{X \times Y}} \sheaf G).
        \qedhere
    \end{align*}
\end{proof}

\begin{Lemma}\label{lem:i^!_to_stratum_and_hol}
    Let $i\colon Z \hookrightarrow X$ be the inclusion of (a component of) a closed log stratum, endowed with the induced idealized log structure. 
    Then for any $\sheaf M \in \catDbcohDMod{X}$ the restriction $i^!\sheaf M$ is holonomic if and only if $\logdim \res{\Ch(\sheaf M)}{Z} = \logdim X$.
\end{Lemma}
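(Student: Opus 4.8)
The plan is to deduce the lemma from the single identity of conic subsets of $T^*Z$
\[
    \Ch(i^!\sheaf M) = \res{\Ch(\sheaf M)}{Z},
\]
where $\res{\Ch(\sheaf M)}{Z} \coloneqq \rho_i(\varpi_i^{-1}\Ch(\sheaf M))$. Granting it, the lemma is formal. By Remark~\ref{rem:snc} one has $\Omega^1_Z = \sO_Z\otimes_{\sO_X}\Omega^1_X$, so $\Omega^1_Z$ and $\Omega^1_X$ have the same rank and $\logdim Z = \logdim X$; thus the right-hand condition of the lemma reads $\logdim\res{\Ch(\sheaf M)}{Z} = \logdim Z$. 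Meanwhile the logarithmic Bernstein inequality forces every component of $\Ch(i^!\sheaf M)$ to have log dimension at least $\logdim Z$, so $i^!\sheaf M$ is holonomic if and only if $\logdim\Ch(i^!\sheaf M) = \logdim Z$; the displayed identity then makes the two conditions identical. The geometry behind the identity is that for a stratum inclusion $T^*Z = \res{T^*X}{Z}$ (Remark~\ref{rem:snc}), so $\rho_i$ is an isomorphism and the relative log conormal $\ker\rho_i$ is the zero section. In other words $i$ is non-characteristic for \emph{every} coherent $\sheaf M$, and the identity is the assertion that this non-characteristic pullback preserves characteristic varieties.

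The inclusion $\subseteq$ holds in general and already gives one implication. Working locally I would fix a good filtration on $\sheaf M$. Since $\Theta_Z = i^*\Theta_X$ yields $\gr\sD_Z = \sO_Z\otimes_{\sO_X}\gr\sD_X$, the associated graded of $i^!\sheaf M$ is a subquotient of the restriction $\sO_Z\otimes_{\sO_X}\gr\sheaf M$; the twist by $\twist_{Z/X}$ entering $i^!$ is $\sO_Z$-coherent and supported on all of $Z$, hence does not enlarge the characteristic variety. Therefore $\Ch(i^!\sheaf M)\subseteq\supp(\sO_Z\otimes_{\sO_X}\gr\sheaf M) = \res{\Ch(\sheaf M)}{Z}$. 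Combined with the first paragraph this proves that $\logdim\res{\Ch(\sheaf M)}{Z} = \logdim X$ implies $i^!\sheaf M$ holonomic.

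The reverse inclusion $\supseteq$ is the main obstacle, and is exactly where $\ker\rho_i = 0$ is used. The computational anchor is Lemma~\ref{lem:pullback_to_stratum}: for an induced module $\sheaf M = \sD_X\otimes_{\sO_X}\sheaf G$ it identifies $i^!\sheaf M$ with $\canon_Z^\dual\otimes_{\sO_Z}\sD_Z\otimes_{\sO_Z}i^!_{\Omod}(\canon_X\otimes_{\sO_X}\sheaf G)$, whose characteristic variety is $\res{T^*Z}{Z\cap\supp\sheaf G}$ because $\supp i^!_{\Omod}(\canon_X\otimes_{\sO_X}\sheaf G) = Z\cap\supp\sheaf G$; this agrees with $\res{\Ch(\sheaf M)}{Z}$ on the nose. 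For general $\sheaf M$ I would resolve it by the logarithmic Spencer complex (Lemma~\ref{lem:spencer_resolution}), whose terms are such induced modules, and — since $i^!$ is a triangulated functor — compute $i^!\sheaf M$ as the total complex obtained by applying the formula termwise; passing to associated graded turns this into the derived $\sO$-module restriction of $\gr\sheaf M$ along $\res{T^*X}{Z}\hookrightarrow T^*X$. The crux is the vanishing of the higher Tor sheaves: the equations cutting out $Z$ (the regular sequence of Proposition~\ref{prop:D_X_in_coords}) pull back to a $\gr\sheaf M$-regular sequence on $T^*X$ precisely because the log conormal vanishes, so that $\gr(i^!\sheaf M) = \sO_Z\otimes_{\sO_X}\gr\sheaf M$ up to the harmless twist and $\supp(\gr\sheaf M/I\gr\sheaf M) = \Ch(\sheaf M)\cap\res{T^*X}{Z} = \res{\Ch(\sheaf M)}{Z}$ for $I$ the ideal of $\res{T^*X}{Z}$. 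Establishing this Tor-vanishing without microdifferential operators — the logarithmic non-characteristic pullback theorem — is the real content of the lemma; the degenerate case $\res{\Ch(\sheaf M)}{Z} = \emptyset$, where $i^!\sheaf M = 0$ is vacuously holonomic, is immediate.
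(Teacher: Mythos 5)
Your reduction of the lemma to the identity $\Ch(i^!\sheaf M)=\res{\Ch(\sheaf M)}{Z}$ is a sensible strategy, and the inclusion $\subseteq$ together with the log Bernstein inequality does give the ``if'' direction. The gap is exactly where you locate it, but it is worse than an omitted verification: the mechanism you propose for the reverse inclusion is false. The vanishing of the log conormal (i.e.\ $\rho_i$ being an isomorphism) does \emph{not} imply that local equations of $Z$ pull back to a $\gr\sheaf M$-regular sequence. Take $X=\as 1$ with the log structure at the origin, $Z=X^1=\{0\}$ and $\sheaf M=\sO_X/x\sO_X$: then $x$ annihilates $\gr\sheaf M$, the Koszul complex computing $i^!\sheaf M$ is $\bigl[\sheaf M\xrightarrow{\,x\,}\sheaf M\bigr]$ with nonzero cohomology in two degrees, and $\gr(i^!\sheaf M)\neq\sO_Z\otimes_{\sO_X}\gr\sheaf M$. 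So there is no logarithmic non-characteristic restriction theorem to appeal to here: for a stratum inclusion \emph{every} coherent module is formally non-characteristic, yet the Tor-vanishing genuinely fails (which is precisely why base change breaks down in this theory, cf.\ the remark after Definition~\ref{def:!-pullback_tor_finite}). Since the ``only if'' direction requires a lower bound on $\Ch(i^!\sheaf M)$ in terms of $\res{\Ch(\sheaf M)}{Z}$, and your only route to it is this Tor-vanishing, that direction is not established; the identity you want may well be true, but not for the reason given.

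The paper sidesteps the problem with a duality trick. Using $\sD_{X\from Z}=\sD_Z=\sD_X\otimes_{i^{-1}\sO_X}\sO_Z$ one computes
\[
    i_*i^!\DVerdier_X\sheaf M\cong\DVerdier_X(\sO_Z\otimes_{\sO_X}\sheaf M),
\]
so holonomicity of $i^!$ applied to (the dual of) $\sheaf M$ becomes holonomicity of the \emph{naive} restriction $\sO_Z\otimes_{\sO_X}\sheaf M$, which is exactly what \cite[Lemma~3.17]{KT} controls in terms of $\logdim\res{\Ch(\sheaf M)}{Z}$; one then only needs that $\DVerdier_X$ preserves holonomicity. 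No statement about $\Ch(i^!\sheaf M)$ itself is ever proved or needed. If you want to rescue your approach, the honest route is to transport your Koszul computation to the dual side in this way rather than to assert a regular-sequence property that fails.
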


\begin{proof}
    One notes that in this situation $\sD_{X \from Z} = \sD_Z = \sD_X \otimes_{i^{-1}\sO_X} \sO_Z$.
    Hence,
    \begin{align*}
        i_*i^! \DVerdier_X\sheaf F & \cong
        i_*\sheafHom_{i^{-1}\sD_X}\bigl(\sD_{X \from Z},\, i^{-1}(\sheafHom_{\sD_X}(\sheaf M, \sD_X \otimes_{\sO_X} \twist_X) \otimes_{\sO_X} \canon_X^\dual)\bigr) \\ & \cong
        i_*\sheafHom_{i^{-1}\sO_X}\bigl(\sO_Z,\, i^{-1}(\sheafHom_{\sD_X}(\sheaf M, \sD_X \otimes_{\sO_X} \twist_X) \otimes_{\sO_X} \canon_X^\dual)\bigr) \\ & \cong
        \sheafHom_{\sO_X}\bigl(\sO_Z,\, \sheafHom_{\sD_X}(\sheaf M, \sD_X \otimes_{\sO_X} \twist_X)\bigr) \otimes_{\sO_X} \canon_X^\dual \\ & \cong
        \sheafHom_{\sD_X}\bigl(\sO_Z \otimes_{\sO_X} \sheaf M,\, \sD_X \otimes_{\sO_X} \twist_X\bigr) \otimes_{\sO_X} \canon_X^\dual \\ & \cong
        \DVerdier_X(\sO_Z \otimes_{\sO_X} \sheaf M).
    \end{align*}
    Thus the result follows from the fact that duality preserves holonomicity (\cite[Proposition~3.24]{KT}) and \cite[Lemma~3.17]{KT}.
\end{proof}

Assume that the rank of $\logMbargp_X$ is constant, i.e.~$X$ consists of a single log stratum.
We note that if the rank is positive, then such $X$ is always idealized.
Then $\ul X$ is necessarily smooth and by the description of $\sD_X$ given in Proposition~\ref{prop:D_X_in_coords} $n=\dim \ul X$, so that $\sD_{\ul X}$ naturally sits inside of $\sD_X$.

\begin{Lemma}\label{lem:single_stratum_hol}
    Let $X$ be a smooth idealized log variety and assume that the rank of $\logMbargp_X$ is constant.
    Then $\sheaf M \in \catCohDMod{X}$ is holonomic if and only if it is coherent and holonomic as a $\sD_{\ul X}$-module.
\end{Lemma}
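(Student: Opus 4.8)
The plan is to make the comparison completely explicit through the local structure of $\sD_X$. Since the rank of $\logMbargp_X$ is constant, say equal to $r$, the variety $\ul X$ is smooth and all the monoid functions $t^m$ with $m\neq 0$ vanish; by Proposition~\ref{prop:D_X_in_coords} the relation $[\partial_{m_i},t^m]=a_it^m$ then becomes trivial, so the operators $s_i\coloneqq\partial_{m_i}$ are central and $\sD_X\cong\sD_{\ul X}[s_1,\dotsc,s_r]$ as a filtered ring, each $s_i$ having order one. Passing to associated graded rings gives $\gr\sD_X\cong\gr\sD_{\ul X}[\sigma_1,\dotsc,\sigma_r]$ and hence $T^*X\cong T^*\ul X\times\as r$, where the symbols $\sigma_i$ of the $s_i$ are Casimirs for the Poisson bracket on $\gr\sD_X$. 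Because the log structure on $T^*X$ also has constant rank $r$, one has $\logdim Z=\dim Z+r$ for conical $Z\subseteq T^*X$; in particular $\logdim X=\dim\ul X+r$, so that $\sheaf M$ is holonomic exactly when $\dim\Ch(\sheaf M)=\dim\ul X$.

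For the direction where $\sheaf M$ is coherent and holonomic over $\sD_{\ul X}$, I would first note that $\sheaf M$ is then automatically coherent over the larger ring $\sD_X$. Fix a good $\sD_{\ul X}$-filtration $F_\bullet\sheaf M$ with $F_0$ an $\sO_X$-coherent generating submodule. As each $s_i$ is central it is $\sO_X$-linear, so $s_iF_0$ is $\sO_X$-coherent and hence contained in some $F_{N_i}$. With $N=\max_iN_i$ one checks that the induced good $\sD_X$-filtration $F^X_p\coloneqq F_p\sD_X\cdot F_0$ satisfies $F_p\subseteq F^X_p\subseteq F_{Np}$, so the two filtrations are comparable and define the same characteristic dimension; thus $\dim\Ch(\sheaf M)=\dim\Ch_{\ul X}(\sheaf M)=\dim\ul X$ and $\sheaf M$ is logarithmically holonomic. (Concretely, a holonomic $\sD_{\ul X}$-module has finite length, so the $s_i$ lie in the finite-dimensional algebra $\End_{\sD_{\ul X}}(\sheaf M)$ and are algebraic over $\CC$; this forces each $\sigma_i$ to act nilpotently on $\gr\sheaf M$, i.e.\ $\Ch(\sheaf M)\subseteq T^*\ul X\times\{0\}$.)

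The reverse direction is the heart of the matter, and is where the input replacing the missing symplectic structure enters. Assuming $\sheaf M$ logarithmically holonomic, the conical variety $V\coloneqq\Ch(\sheaf M)\subseteq T^*\ul X\times\as r$ has dimension $\dim\ul X$. The characteristic variety is involutive (coisotropic) for the Poisson bracket on $\gr\sD_X$; this is Gabber's theorem for the filtered ring $\sD_X$, the same input that underlies the logarithmic Bernstein inequality of \cite[Theorem~3.18]{KT}. Since the $\sigma_i$ are Casimirs, a dimension count on the conormal space $N^*V$ shows that coisotropy together with the minimality $\dim V=\dim\ul X$ forces every $d\sigma_i$ to be conormal to $V$, i.e.\ each $\sigma_i$ is locally constant on $V$. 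As $V$ is conical in the fibre coordinates $(\xi,\sigma)$, this is only possible if each $\sigma_i$ vanishes identically on $V$, so that $V\subseteq T^*\ul X\times\{0\}$. Then $\gr\sheaf M$ is finite over $\gr\sD_{\ul X}$, and lifting generators through the filtration shows $\sheaf M$ is coherent over $\sD_{\ul X}$; its classical characteristic variety is $V$, of dimension $\dim\ul X$, so $\sheaf M$ is holonomic over $\sD_{\ul X}$.

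The main obstacle is precisely this last implication: extracting $\sD_{\ul X}$-coherence (boundedness in the logarithmic directions $s_i$) from logarithmic holonomicity. The only genuine tool is involutivity of the logarithmic characteristic variety, and the key geometric observation is that a minimal-dimensional conical coisotropic subvariety of $T^*\ul X\times\as r$ must lie in the Casimir-zero locus $\{\sigma=0\}$. Everything else reduces to routine bookkeeping comparing the order filtrations on $\sD_X$ and $\sD_{\ul X}$.
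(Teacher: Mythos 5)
Your proof is correct in outline but takes a genuinely different route from the paper. The paper's argument is a d\'evissage: it works locally with $\sD_X \cong \sD_{\ul X}[\partial_{m_1},\dotsc,\partial_{m_k}]$, upgrades Kashiwara's localization functor $\sheaf M(*Z)$ and Kashiwara's equivalence for smooth closed subvarieties to this constant-rank setting, and then inducts on $\dim\supp\sheaf M$ (passing to the singular locus of the support and to the divisor off which $\sheaf M$ is $\sO_X$-coherent), so that both implications reduce to the case of integrable connections. You instead argue directly on the associated graded: the easy direction via algebraicity of the central operators $s_i$ in $\End_{\sD_{\ul X}}(\sheaf M)$ (which is the cleaner of your two arguments --- the Hilbert-function comparison of the two filtrations is workable but needs more care, since the filtrations are good over different rings), and the hard direction via involutivity of $\Ch(\sheaf M)$ for the degenerate Poisson structure on $\gr\sD_X \cong \gr\sD_{\ul X}[\sigma_1,\dotsc,\sigma_r]$, where the Casimir/conicity count forces $\Ch(\sheaf M) \subseteq T^*\ul X \times \{0\}$. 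That dimension count is correct, and it buys a shorter, more conceptual proof. The one point you must shore up is the involutivity input itself: it is not established in \cite{KT} (the log Bernstein inequality there is not derived from involutivity, so your attribution is off), and this paper deliberately avoids symplectic-geometric arguments in the logarithmic setting. You would need to invoke Gabber's integrability theorem in its general filtered-ring form --- which does apply here, since $\gr\sD_X$ is a commutative noetherian $\QQ$-algebra and the $\sigma_i$ are honest Casimirs --- and cite it explicitly; with that reference supplied, your argument stands, and as a bonus it identifies $\Ch_{\ul X}(\sheaf M)$ with $\Ch_X(\sheaf M)$ inside $T^*\ul X \times \{0\}$, which the paper's inductive proof does not make explicit.
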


\begin{proof}
    The question is local, so that we can assume that $\sheaf M$ has a good filtration and $\sD_X \cong \sD_{\ul X}[\partial_{m_1},\dotsc,\partial_{m_k}]$, where the generators $\partial_{m_i}$ commute with everything.

    Let $Z$ be a closed subvariety of $X$ defined by an ideal sheaf $\sheaf J$.
    We set $\sheaf M(*Z) = \varinjlim_m \sheafHom_{\sO_X}(\sheaf J^m, \sheaf M)$.
    If $j \colon U = X \setminus Z \hookrightarrow X$ is the inclusion, then $\sheaf M(*Z)$ is a subsheaf of $j_*j^*\sheaf M$ (indeed in the algebraic setting these two sheaves are the same).
    In particular, if $\sheaf M' \subseteq \sheaf M$ and $\res{\sheaf M}{U} = \res{\sheaf M'}{U}$ then $(\sheaf M/\sheaf M')(*Z) = 0$ and hence $\sheaf M(*Z) \cong \sheaf M'(*Z)$.
    Further there is a canonical map $\sheaf M \to \sheaf M(*Z)$ whose kernel is supported on $Z$.

    Since in the present situation $\sD_X$ differs from $\sD_{\ul X}$ only by the addition of some commuting operators which act trivially on $\sO_X$, many results about $\sD_{\ul X}$-modules also apply to $\sD_X$-modules with virtually unchanged proofs.
    In particular $\sheaf M(*Z)$ is again a $\sD_X$-module \cite[Lemma~1.1]{Kashiwara:1978:HolII}, and if $\res{\sheaf M}{U}$ is holonomic, the so is $\sheaf M(*Z)$ by the second main theorem of \cite{Kashiwara:1978:HolII}.
    In fact more is true: If $\res{\sheaf M}{U}$ is coherent and holonomic as a $\sD_{\ul X}$-module, then so is $\sheaf M(*Z)$.
    Indeed, since we have a global good filtration, there exists a $\sD_{\ul X}$-coherent submodule which coincides with $\sheaf M$ on $U$.
    By the above observation, replacing $\sheaf M$ by this submodule does not change $\sheaf M(*Z)$.

    We further note that in the present situation Kashiwara's Equivalence holds (with the usual proof), that is, if $Z$ is smooth and $i$ the inclusion then $i_\spf$ is exact and induces an equivalence of the category $\catQCDMod{Z}$ with the subcategory of $\catQCDMod{X}$ consisting of modules supported on $Z$.
    This equivalence respects coherence and holonomicity, both as $\sD_X$ and $\sD_{\ul X}$-modules.

    We are now ready to prove the \enquote{only if} statement of the lemma by induction on the dimension of the support of $\sheaf M$.
    If $\dim \supp \sheaf M = 0$ then the result is just an application of Kashiwara's equivalence.

    Set $Z = \supp \sheaf M$ and let $Z_0$ be the singular locus of $Z$.
    Assume that we have shown that $\res{\sheaf M}{X \setminus Z_0}$ is $\sD_{\ul Z \setminus \ul Z_0}$-coherent and holonomic.
    Then $\sheaf M(*Z_0)$ is $\sD_{\ul X}$-coherent and holonomic, and the cone of the morphism $\sheaf M \to \sheaf M(*Z_0)$ is $\sD_X$-coherent, holonomic and supported on $Z_0$.
    We are thus done by induction.

    Hence we can assume that $Z$ is nonsingular and apply Kashiwara's equivalence to replace $X$ by $Z$.
    Then there exists a divisor $D$ such that $\res{\sheaf M}{X \setminus D}$ is $\sO_X$-coherent and thus certainly $\sD_{\ul X}$-coherent and holonomic.
    Taking the cone of the map $\sheaf M \to \sheaf M(*D)$ we reduce to a module supported on $D$ and apply induction.

    The proof of the other implication proceeds in a similar manner.
\end{proof}

\subsection{Proof of Theorem~\ref{thm:projective_pf_preserves_holonomic}}

We factor $f$ into a closed immersion $X \hookrightarrow \ps n \times Y$ followed by a projection $\ps n \times Y \to Y$, where we endow $\ps n$ with the trivial log structure.

If $i \colon Z \to X$ is any closed immersion such that $i^{-1}\logMbar_X \to \logMbar_Z$ is injective, then the fibers of $\rho\colon \res{T^*X}{Z} \to T^*Z$ have at most dimension $\dim X - \dim Z$ (they may be empty).
Hence Proposition~\ref{prop:Kashiwara_estimate} shows that $i_{\spf}$ preserves holonomicity.

Let new $p\colon \ps n \times Y \to Y$ be the projection.
By Lemma~\ref{lem:i^!_to_stratum_and_hol} it suffices to check that the $!$-pullback of $p_{\spf}\sheaf M$ to each log stratum is holonomic.
The statement is local on $Y$, so that using Lemma~\ref{lem:projection_stratum_basechange}, we can thus assume that $Y$ consists of a single log stratum.
But then holonomicity as $\sD_Y$-modules coincides with holonomicity as $\sD_{\ul Y}$-modules by Lemma~\ref{lem:single_stratum_hol}, so that the statement is classical.
\qed

\section{Sheaves on the Kato--Nakayama space}

Let $X$ be an analytic log variety.
Following \cite{KatoNakayama} the logarithmic de Rham functor should take values in sheaves on the so-called Kato--Nakayama space $\KN X$.
Recall that as a set $\KN X$ is defined to be all pairs $(x, \sigma)$, with $x \in X$ and $\sigma\colon \logM_{X,x} \to S^1$ a homomorphism of monoids fitting into the commutative diagram
\[
    \begin{tikzcd}
        \logM_{X,x} \arrow[r, "\sigma"] & S^1 \\
        \sO_{X,x}^* \arrow[u] \arrow[r, "x^*"] & \CC^* \arrow[u, "\operatorname{arg}"].
    \end{tikzcd}
\]
We refer to \cite[Section~1]{KatoNakayama} and \cite[Section~V.1.2]{logbook} for details.
If $X$ is a smooth (idealized) log variety, $\KN X$ has the structure of a topological manifold with boundary whose interior is identified with $X^* = X \setminus X^1$ (resp.~a topological $(S^1)^e$-bundle over $X \setminus X^1$ in the idealized case, where $e$ is the generic rank of $\logMbargp$).
We let $\tau_X \colon \KN X \to X$ be the projection.
It is proper with and its fibers $\tau_X^{-1}(x)$ have a structure of a torsor under the real torus $\Hom(\logMbargp_{X,x},\, S^1) \cong (S^1)^{\mathrm{rk} \logMbargp_{X,x}}$.

If $\ul X$ is classically smooth and the log structure on $X$ is given by a simple normal crossings divisor $D$, then $\KN X$ is the real blowup of $\ul X$ along $D$.

\subsection{Graded topological spaces}

The de Rham functor will send D-modules to certain graded sheaves on the Kato--Nakayama space.
A theory of such graded sheaves was developed in \cite{Koppensteiner:graded_top}.
In order to fix notation, we give a brief overview of the basic definitions of this theory.

Thus in this subsection $X$ will denote be a topological space, which we will always assume to be locally compact and Hausdorff.
We fix a commutative noetherian base ring $k$.
A \emph{graded topological space} $(X, \Lambda)$ consists of a topological space $X$ together with a sheaf of abelian groups $\Lambda$.
A morphism of graded topological spaces $(X,\Lambda_X) \to (Y,\Lambda_Y)$ consists of a pair $(f,f^\flat)$, where $f\colon X \to Y$ is a continuous map and $f^\flat\colon f^{-1}\Lambda_Y \to \Lambda_X$ is a morphism of sheaves of abelian groups.

A \emph{($\Lambda$-graded) presheaf} $\sheaf F$ on $(X, \Lambda)$ assigns to each open subset $U$ of $X$ a $\Lambda(U)$-graded $k$-module $\sheaf F(U)$ together with a restriction map $\rho^U_V$ for each open inclusion $V \subseteq U$ such that $\rho^U_V(\sheaf F(U)_\lambda) \subseteq \sheaf F(V)_{\res{\lambda}{V}}$ for all $\lambda \in \Lambda(U)$.
For any $\lambda \in \Lambda(X)$ we write $\sheaf F_\lambda$ for the ordinary presheaf $U \mapsto \sheaf F(U)_{\res{\lambda}{U}}$ and $\sheaf F\langle \lambda \rangle$ for the graded presheaf with $\sheaf F\langle \lambda \rangle(U)_\mu = \sheaf F(U)_{\res{\lambda}{U} + \mu}$.
A $\Lambda$-graded presheaf $\sheaf F$ is called a \emph{($\Lambda$-graded) sheaf} if for each open subset $U$ of $X$ and each $\lambda \in \Lambda(U)$ the ordinary presheaf $(\res{\sheaf F}{U})_\lambda$ is a sheaf.
Morphism of graded (pre-) sheaves are defined in the obvious way by the requirement to be preserve the grading on each open subset.

Let $f\colon (X,\Lambda_X) \to (Y,\Lambda_Y)$ be a morphism of graded topological spaces.
One defines an adjoint pair of functors between the corresponding categories of graded sheaves as follows.
The functor
\[
    f_{\gr}^{-1}\colon \catSh{Y,\Lambda_Y} \to \catSh{X,\Lambda_X}
\]
is given by
\[
    \Gamma(U,f_{\gr}^{-1}\sheaf F)_\lambda = \bigl\langle s \in \Gamma(U,f^{-1}\sheaf F) : f^\flat(\deg s) = \lambda\bigr\rangle, \quad \lambda \in \Lambda_X(U).
\]
Its right adjoint
\[
    f_{\gr,*}\colon \catSh{X,\Lambda_X} \to \catSh{Y,\Lambda_Y}
\]
is defined as
\[
    \Gamma(V,f_{\gr,*}\sheaf F)_\mu = \Gamma(f^{-1}V,\, \sheaf F)_{f^\flat(\mu)},\quad \mu \in \Lambda_Y(V) \to f^{-1}\Lambda_Y(f^{-1}V).
\]
This theory can be further enhanced by adding a sheaf of rings so that a \emph{ringed graded space} is a triple $(X,\Lambda, \sheaf R)$ such that $(X,\Lambda)$ is a graded topological space and $\sheaf R$ is a $\Lambda$-graded sheaf of $k$-algebras.
A morphism of graded ringed topological spaces $(X,\Lambda_X,\sheaf R_X) \to (Y,\Lambda_Y,\sheaf R_Y)$ is a triple $(f,f^\flat,f^\sharp)$ where $(f,f^\flat)$ is a morphism of graded topological spaces and $f^\sharp\colon f_{\gr}^{-1}\sheaf R_Y \to \sheaf R_X$ is morphism of $\Lambda_X$-graded sheaves of $k$-algebras.
Such a morphism is called \emph{strict} if $f^\flat$ and $f^\sharp$ are isomorphisms.
($\Lambda$-graded) sheaves of $\sheaf R$-modules are defined in the obvious way.

The functor $f_{\gr,*}$ immediately upgrades to a functor of graded sheaves of modules.
Its left adjoint is
\[
    f_{\gr}^*\colon \catSh{Y,\Lambda_Y,\sheaf R_Y} \to \catSh{Y,\Lambda_X,\sheaf R_X},
    \qquad
    \sheaf F \mapsto f_{\gr}^{-1}\sheaf F \otimes_{f_{\gr}^{-1}\sheaf R_Y} \sheaf R_X.
\]
Besides $f_{\gr,*}$ one also has the direct image with proper supports $f_{\gr,!}\sheaf F$, which is the subsheaf of $f_{\gr,*}\sheaf F$ with sections
\[
    \Gamma(U, f_{\gr,!}\sheaf F)_\mu = \bigl\{ s \in \Gamma(f^{-1}U, \sheaf F)_{f^\flat(\mu)} : f\colon \supp s \to U \text{ is proper}\bigr\}.
\]
On the main result of \cite{Koppensteiner:graded_top} is an extension of Poincar\'e--Verdier duality to ringed grades spaces, stating that on the level of derived categories $f_{\gr,!}$ has a right adjoint $f_{\gr}^!$.
Let $p\colon (X, \Lambda, \sheaf R) \to (\pt, 0, k)$ be the canonical morphism.
When $k$ is a field one calls $p_{\gr}^!k$ the \emph{dualizing complex} of $(X,\Lambda,\sheaf R)$.

\subsection{The ring \texorpdfstring{$\Clog_X$}{Clog} and modules over it}\label{sec:Clog}

To classify arbitrary log connections (or $\sD_X$-modules) it is not sufficient to consider sheaves of $\CC$-modules on $\KN X$.
For this reason Ogus \cite{Ogus} introduced a graded sheaf of rings on $\KN X$ whose definition we will recall now.
Set $\Lambda = \CC \otimes \logMbargp_X$.
By abuse of notation we will also write $\Lambda$ for the pullback $\tau_X^{-1}\Lambda$.
Let $\logK_X$ be the sheaf of ideals defining the idealized structure of $X$.
(The reader not interested in the idealized setting may assume $\logK_X$ to be zero.)
The sheaf $\Clog_X$ is defined to be the pullback by $\tau_X$ of the quotient of the sheaf of monoid algebras $\CC[-\logMbar_X]$ by the ideal generated by $-\logK_X$.

The inclusion $\logMbar_X \to \Lambda$ gives $\Clog_X$ the structure of a $\Lambda$-graded sheaf of rings.
We write $\catClogMod{X}$ for the category of $\Lambda$-graded sheaves of $\Clog_X$-modules and $\catDbClogMod{X}$ for its bounded derived category.
Ogus further defines the subcategory $\catLogLocSys{X}$ of \emph{coherent} $\Clog_X$-modules \cite[Definition~3.2.4]{Ogus}, which are the correct analogue of local systems in this setting.
He then proves that this category is equivalent to the category of logarithmic connections on $X$ \cite[Theorem~3.4.2]{Ogus}, where a logarithmic connection is just an $\sO_X$-coherent $\sD_X$-module.

A morphism $f\colon X \to Y$ of analytic log varieties induces a morphism $f_{\log}\colon \KN X \to \KN Y$ of ringed graded topological spaces fitting into the commutative diagram
\[
    \begin{tikzcd}
        \KN{X} \arrow[r, "f_{\log}"] \arrow[d, "\tau_X"] & \KN Y \arrow[d, "\tau_Y"] \\
        X \arrow[r, "f"]{Y} & Y.
    \end{tikzcd}
\]
If $f$ is strict then this diagram is cartesian.

The image of the logarithmic de Rham functor will not be arbitrary sheaves of $\Clog_X$-modules.
As in \cite{Ogus:2003:LogarithmicRiemannHilbertCorrespondence}, we will impose a monodromy condition along the fibers of $\tau_X$.
For this we note that for any point $x \in X$ the logarithmic inertia group $I_x = \Hom(\logMbargp_{X,x},\, 2\pi i \ZZ)$ is canonically identified with the fundamental group of $\tau_X^{-1}(x)$.
For any $x \in X$ there exists a natural pairing $\langle {-},{-} \rangle \colon I_{x} \otimes \Lambda_x \to \CC$.
If $m \in \logMbargp_{X,x}$, then $\exp\langle {-},m\rangle = 1$.

We write $\catClogModTau{X}$ for the full subcategory of $\catClogMod{X}$ consisting of sheaves which are locally constant along fibers of $\tau_X$.
The inclusion $\catClogModTau{X} \to \catClogMod{X}$ has a right adjoint $\Psi^\tau$ given by associating to any sheaf $\sheaf F$ the largest subsheaf of $\sheaf F$ contained in $\catClogModTau{X}$.

For any $\sheaf F \in \catClogModTau{X}$ and any point $z$ of $\KN X$ there is a natural action $\rho$ of $I_{\tau(z)}$ on the stalk of $\sheaf F$ at $z$.
Using this action we let $\catClogModMon{X}$ be the full subcategory of $\catClogModTau{X}$ consisting of sheaves $\sheaf F$ such that for each $z \in \KN X$, $\gamma \in I_{\tau_X(z)}$ and $\lambda \in \Lambda_z$ the action of $\rho_\gamma - \exp\langle \gamma, \lambda \rangle$ on $\sheaf F_{\lambda,z}$ is locally nilpotent, that is for any $s \in \sheaf F_{\lambda,z}$ there exists a positive integer $N$ such that $(\rho_\gamma - \exp\langle \gamma, \lambda \rangle)^N s = 0$.
The inclusion $\catClogModMon{X} \to \catClogModTau{X}$ has a right adjoint $\Psi^\Lambda$ which is explicitly given by associating to a sheaf $\sheaf F$ the subsheaf $\Psi^\Lambda(\sheaf F)$ whose sections over $U$ consist of all those sections $s \in \sheaf F(U)_\lambda$ such that for each point $z \in U$ the action of $\rho_\gamma - \exp\langle \gamma, \lambda \rangle$ on $s_z$ is nilpotent.
Set
\[
    \Psi = \Psi^\Lambda \circ \Psi^\tau\colon \catClogMod{X} \to \catClogModMon{X}.
\]

Let $p\colon \KN X \to \pt$ be the natural map to the (ungraded) point.
The dualizing complex of $\catDbClogMod{X}$ is $\logdc_{\KN X} = p_{\gr}^!\CC$.
In order to describe this complex, we need to introduce some additional notation:
Let $X^k_j$ be the irreducible components of $X^k$.
Each $X^k_j$ is a smooth idealized log space and the inclusion $X^k_j \to X$ induces a map on the corresponding Kato--Nakayama spaces, and in particular a morphism $\Clog_X \to \Clog_{X^k_j}$.
Set 
\[
    \sheaf D^i = \bigoplus_j \Clog_{X^{i + \dim X}_j}, \qquad -\dim X \le i \le 0
\]
and define differentials $\sheaf D^i \to \sheaf D^{i+1}$ coming from the inclusions of the components of $X^{i + \dim X + 1}$ into those of $X^{i + \dim X}$, with appropriate alternating signs.

\begin{Proposition}\label{prop:KN_dc}
    Let $X$ be a smooth analytic log variety.
    The dualizing complex $\logdc_{\KN{X}}$ is concentrated in cohomological degree $-\dim X$.
    Further, there exists a isomorphism
    \[
        \Psi(\logdc_{\KN X}[-\dim X]) \cong \bigl( \sheaf D^{-\dim X} \to \dotsc \to \sheaf D^0 \bigr)
    \]
    in $\catDbClogMod{X}$ (where $\sheaf D^{-\dim X}$ is in cohomological degree $0$).
\end{Proposition}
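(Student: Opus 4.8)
The organizing principle of the proof is that $\logdc_{\KN X}$ should be the Kato–Nakayama analogue of the complex computing $\twist_X$ in Corollary~\ref{cor:complex_for_twist}, so the plan is to redo Ishida's residue-complex computation of $\Odc_{\ul X}$ (Proposition~\ref{prop:Ishida_canonical}) on the graded space $\KN X$, with the graded Poincar\'e–Verdier duality of \cite{Koppensteiner:graded_top} in place of ordinary duality. Since the formation of $p_{\gr}^!\CC$ is local on $\KN X$ and $X$ is \'etale locally isomorphic to an affine toric variety with its toric boundary, I would first reduce to such a local model. The key geometric input is the stratification of $\KN X$ by the locally closed pieces $\tau_X^{-1}(X^k \setminus X^{k+1})$, which is the graded-topological shadow of the filtration $X = X^0 \supseteq \dotsb \supseteq X^{\dim X}$; it is along this stratification that the terms $\sheaf D^i$ and the differentials of the asserted complex will appear.

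The heart of the matter, and the step I expect to be the main obstacle, is the local computation of the graded dualizing complex. Using the product structure of the toric model and a K\"unneth formula for the graded duality of \cite{Koppensteiner:graded_top}, I would reduce to the single factor $X = \as 1$ with its log point at the origin, where $\KN X = \RR_{\geq 0} \times S^1$, and compute $p_{\gr}^!\CC$ directly. The essential phenomenon—and the reason the graded formalism was introduced—is that graded Poincar\'e–Verdier duality records the monodromy around the fiber circles of $\tau_X$ in the grading sheaf $\Lambda$ rather than as a cohomological shift; in particular the computation must pin down that the answer is concentrated in degree $-\dim X$ (the complex, not the real, dimension), consistently with the normalization of $\twist_X$ in Remark~\ref{rem:why_twist}, and that its local sections are valued in $\Clog_X$. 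Extracting this degree and the $\Clog_X$-module structure correctly is where essentially all the content lies, and it also establishes the first assertion of the proposition.

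With the local model understood, I would globalize. The normalization $\phi_k\colon \tilde X^k \to X^k$ has components indexed by $\pi_0(\tilde X^k)$, that is, by the irreducible components $X^k_j$, and the local computation shows that the stratum of codimension $k$ contributes exactly $\bigoplus_j \Clog_{X^k_j} = \sheaf D^{k-\dim X}$ in cohomological degree $k$. The recollement triangles relating successive strata assemble these contributions into a single complex; the connecting maps are the restriction morphisms $\Clog_{X^k_j} \to \Clog_{X^{k+1}_{j'}}$ induced by the inclusions of components, carrying the same alternating Poincar\'e-residue signs as Ishida's complex $\sheaf C^\bullet$ (see \cite[138]{Ishida:1987:TorusEmbeddingsAndDeRhamComplexes}). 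This identifies the underlying complex with $(\sheaf D^{-\dim X} \to \dotsb \to \sheaf D^0)$.

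It remains to insert the monodromization functor $\Psi = \monodromize \circ \Psi^\tau$. Each term $\Clog_{X^k_j}$ lies in $\catClogModMon{X}$, so the assembled complex $\sheaf D^\bullet$ already belongs to the monodromic category; it therefore suffices to produce the canonical comparison map out of $\Psi(\logdc_{\KN X}[-\dim X])$ and verify that it is a stalkwise isomorphism, which again reduces to the $\as 1$ factor, where $\Psi$ simply selects the sections on which the fiber monodromy acts unipotently with the prescribed weight. The two genuinely delicate points are this local duality computation on $\RR_{\geq 0} \times S^1$, which is the crux and carries the whole weight of the theorem, and the sign bookkeeping in the recollement, which I would organize exactly as Ishida does.
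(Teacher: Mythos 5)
Your plan takes a genuinely different route from the paper's, and as written it has a gap at its center: you conflate the graded dualizing complex $\logdc_{\KN X}$ with its monodromization $\Psi(\logdc_{\KN X})$. Your recollement step purports to identify $\logdc_{\KN X}[-\dim X]$ itself with the complex $\sheaf D^{-\dim X} \to \dotsb \to \sheaf D^0$, and you then argue that applying $\Psi$ costs nothing because each term $\Clog_{X^k_j}$ is already monodromic. But these two objects are genuinely different, and the proposition concerns only the monodromization. The paper computes the homogeneous pieces of $\logdc_{\KN X}$ explicitly: writing $j\colon X^* \to \KN X$ for the inclusion of the interior of the manifold-with-boundary $\KN X$, whose ordinary dualizing complex is $j_!\CC_{X^*}[\dim X]$, the comparison formula of \cite[Remark~5.8]{Koppensteiner:graded_top} gives
\[
    (\logdc_{\KN X})_\lambda \cong \sheafHom_{\CC}\bigl(\Clog_{X,-\lambda},\, j_!\CC_{X^*}\bigr)[\dim X] \cong i_{-\lambda,*}\, j_{-\lambda,!}\,\CC_{X^*}[\dim X],
\]
where $i_{-\lambda}$ is the inclusion of the open set $V_{-\lambda}$ on which $\Clog_{X,-\lambda}$ is an extension by zero. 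For most $\lambda$ these pieces are \emph{not} locally constant along the fibers of $\tau_X$, so $\Psi$ alters them drastically; in particular $\logdc_{\KN X}[-\dim X] \not\cong \sheaf D^\bullet$, and the entire content of the second assertion is determining what survives $\Psi$. If your recollement argument really produced $\sheaf D^\bullet$ before applying $\Psi$, it would prove something false.

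Two further ingredients you invoke are not available as stated. A K\"unneth formula for $p_{\gr}^!$ on graded spaces is neither proved in \cite{Koppensteiner:graded_top} nor used in the paper; the paper instead reduces to the classical dualizing complex of $\KN X$ via the displayed formula, which also yields concentration in degree $-\dim X$ once one knows that $i_{\lambda,*}$ is exact (a result of Ogus). Moreover, your picture that ``the stratum of codimension $k$ contributes $\bigoplus_j \Clog_{X^k_j}$ in cohomological degree $k$'' is at odds with the statement being proved: $\Psi(\logdc_{\KN X}[-\dim X])$ is a single sheaf in degree $0$, namely $\ker\bigl(\Clog_X \to \bigoplus_i \Clog_{X^1_i}\bigr)$, and $\sheaf D^\bullet$ is a resolution of it, exact in positive degrees; a Cousin-type assembly would additionally require a purity statement for the graded dualizing complex along the strata, which you do not supply. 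You do correctly locate the shift $-\dim X$ and the role of the fiber monodromy, but the argument needs to be reorganized around the homogeneous-piece computation rather than around recollement.
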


\begin{proof}
    We use the approach of \cite[Remark~5.8]{Koppensteiner:graded_top}.

    As noted above, $\KN X$ is a topological manifold with boundary.
    Let $j\colon X^* \to \KN X$ be the inclusion of the interior.
    Then the dualizing complex of $\KN X$ as an ordinary topological space is $j_!\CC_{X^*}[\dim X]$.

    If $\pi \colon (\KN X, \Lambda, \Clog_X) \to (\KN X, 0, \CC_X)$ is the canonical map, we have for any $\lambda \in \Lambda$
    \begin{equation}\label{eq:prop:KN_dc:1}
        \begin{aligned}
            (\logdc_{\KN{X}})_\lambda
            & \cong \pi_{\gr,*} \sheafHom_{\Clog_X}\bigl(\Clog_X\langle \lambda\rangle,\, \pi_{\gr}^!(j_!\CC_{X^*})\bigr)[\dim X] \\
            & \cong \sheafHom_{\CC}\bigl(\Clog_{X,-\lambda},\, j_!\CC_{X^*}\bigr)[\dim X],
        \end{aligned}
    \end{equation}
    where we note that the functors $\pi_{\gr,*}$ and $j_!$ are exact.

    For each $\lambda \in \Lambda$ there exists an open subset $V_\lambda$ such that $\Clog_{X,\lambda} = i_{\lambda,!}\CC_{V_\lambda}$, where $i_\lambda$ is the inclusion of $V_\lambda$ into $\KN X$.
    Let further $j_\lambda$ be the inclusion of $X^*$ into $V_\lambda$.
    Then from \eqref{eq:prop:KN_dc:1} one obtains
    \[
        (\logdc_{\KN{X}})_\lambda \cong
        i_{-\lambda, *} i_{-\lambda}^!j_! \CC[\dim X] \cong
        i_{-\lambda, *} j_{-\lambda,!}\CC_{X^*}[\dim X].
    \]
    It follows from \cite[Theorem~3.1.2]{Ogus:2003:LogarithmicRiemannHilbertCorrespondence} that $i_{\lambda,*}$ is exact.
    Hence $\logdc_{\KN{X}}$ is concentrated in cohomological degree $-\dim X$.

    One notes that for any point $x$ of $X$, if $(\logdc_{\KN{X}})_\lambda$ is locally constant on $\tau_X^{-1}(x)$, then it has to be actually constant on $\tau_X^{-1}(x)$.
    Thus for $\lambda \notin \logMbargp_X$ one necessarily has $\Psi(\logdc_{\KN{X}})_\lambda = j_!\CC_{X^*}$ (i.e.~the global sections vanish).
    
    To give the required identification of $\Psi(\logdc_{\KN{X}})$ we can assume that $X$ is an affine toric variety.
    Let $v$ be the vertex of $X$.
    Let $X^1_1, \dotsc X^1_n$ be the irreducible components of $X^1$ and let $\lambda_1,\dotsc,\lambda_n$ be the corresponding $\CC$-vector space bases of $\Lambda(X)$ (note that this is in general not a $\ZZ$-basis of $\logMbargp_{X,v}$).
    Write $\lambda = \sum \alpha_i \lambda_i$.
    If there exists some $\alpha_i$ witch is not a negative integer then either $\lambda = 0$ (in which case $\Psi(\logdc_{\KN X}) = j_!\CC_{X^*}$) or $X^* \subsetneq V_{-\lambda} \subsetneq \KN X$.
    In the latter case $i_{-\lambda, *} j_{-\lambda,!}\CC_{X^*}$ is not locally constant on $\tau_X^{-1}(v)$.

    It follows that $\Psi(\logdc_{\KN{X}})$ is isomorphic to the kernel of $\Clog_X \to \bigoplus_i \Clog_{X^1_i}$ (where the components $X^1_i$ are endowed with the induced idealized structure).
    One easily checks that the complex $\sheaf D^{-\dim X} \to \dotsc \to \sheaf D^0$ is exact everywhere else.
\end{proof}

As we alluded to earlier, the logarithmic de Rham functor will have image in $\catDbClogModMon{X}$, the full subcategory of $\catDbClogMod{X}$ consisting of objects with cohomology sheaves in $\catClogModMon{X}$.
\begin{Definition}\label{def:tlogdc}
    Assume $X$ is a smooth log variety.
    The \emph{dualizing complex of $\catDbClogModMon{X}$} is
    \[
        \tlogdc_{\KN X} = \Psi(\logdc_{\KN X}).
    \]
\end{Definition}
This makes sense by Proposition~\ref{prop:KN_dc} and we have an isomorphism
\begin{equation}\label{eq:KN_tdc}
    \tlogdc_{\KN X} \cong \bigl( \sheaf D^{-\dim X} \to \dotsc \to \sheaf D^0 \bigr)
\end{equation}
with $\sheaf D^0$ in cohomological degree $0$.

\subsection{\texorpdfstring{$\sO$}{O}- and \texorpdfstring{$\sD$}{D}-modules on \texorpdfstring{$\KN X$}{the Kato-Nakayama space}}\label{sec:Olog}

In order to define the logarithmic de Rham functor, one needs to enhance the pull back of the structure sheaf $\sO_X$ to $\KN X$ with additional sections.
For this, Kato and Nakayama defined the sheaf $\Olog_X$ \cite[Section 3]{KatoNakayama}, which was then further enlarged by Ogus to the sheaf $\tOlog_X$ \cite[Section~3.3]{Ogus}.
The sheaf of rings $\tOlog_X$ is $\Lambda$-graded with the subsheaf $\Olog_X$ sitting in degree $0$.
It is contained in $\catClogModMon{X}$.

We will not recall the exact definitions here.
Intuitively, if the log structure on $X$ is given by a normal crossings divisor $z_1\dotsm z_\ell = 0$, then $\Olog_X$ should be thought of as formally adding the logarithms $\log z_i$, while $\tOlog_X$ formally adds the powers $z_i^\alpha$, $\alpha \in \CC$ in degree $(0,\dotsc,\alpha,\dotsc,0)$.
If $X$ is a smooth log variety then $\res{\tOlog_X}{X^*}$ is canonically identified with $\sO_{X^*}$.

For any $\sO_X$-module $\sheaf F$ we set
\[
    \tau^*\sheaf F = \tOlog_X \otimes_{\tau^{-1}\sO_X} \tau^{-1} \sheaf F.
\]

\begin{Lemma}
    The sheaf $\tOlog_X$ is flat over $\tau^{-1}\sO_X$.
    In particular, $\tau^*$ is exact.
\end{Lemma}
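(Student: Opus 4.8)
The plan is to prove flatness from an explicit local description of $\tOlog_X$ and then deduce exactness of $\tau^*$ formally. Flatness is a local (indeed stalkwise) property, so it suffices to work on a basis of opens of $\KN X$. Granting the flatness assertion, exactness of $\tau^*$ is immediate: the functor $\sheaf F \mapsto \tau^{-1}\sheaf F$ is exact, since inverse image of sheaves along a continuous map is exact, and tensoring an exact sequence of $\tau^{-1}\sO_X$-modules with the flat module $\tOlog_X$ preserves exactness. Thus $\tau^* = \tOlog_X \otimes_{\tau^{-1}\sO_X} \tau^{-1}(-)$ is a composite of exact functors, which is what the ``in particular'' clause asserts.

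The substance is therefore the flatness of $\tOlog_X$, which I would obtain by factoring it through the intermediate sheaf $\Olog_X$ sitting in degree $0$. First I would recall the Kato--Nakayama description of $\Olog_X$: locally on $\KN X$, after choosing sections $m_1,\dots,m_r$ of $\logMbar_X$ whose images generate $\logMbargp_X$ (with $r = \operatorname{rk}\logMbargp_X$), one has an isomorphism of $\tau^{-1}\sO_X$-algebras
\[
    \Olog_X \cong \tau^{-1}\sO_X[u_1,\dots,u_r],
\]
a polynomial algebra in the formal logarithms $u_i$. In particular $\Olog_X$ is free, hence faithfully flat, over $\tau^{-1}\sO_X$. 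Next, from Ogus's construction $\tOlog_X$ is built from $\Olog_X$ by freely adjoining the homogeneous monomials $t^\lambda$ ($\lambda \in \Lambda$), subject to $t^\lambda t^{\lambda'} = t^{\lambda+\lambda'}$ and to identifying $t^m$ with the corresponding section of $\sO_X$ for $m \in \logMbargp_X$; this exhibits $\tOlog_X$ locally as a \emph{free} $\Olog_X$-module with a homogeneous basis indexed by a set of representatives for $\Lambda/\logMbargp_X$. Consequently $\tOlog_X$ is flat over $\Olog_X$, and composing with the flatness of $\Olog_X$ over $\tau^{-1}\sO_X$ gives the desired flatness.

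The main obstacle is the second half of the middle step, namely extracting from Ogus's definition the precise local trivialization of $\tOlog_X$ as an $\Olog_X$-module and verifying that neither the $\Lambda$-grading nor the idealized relations $t^k = 0$ (for $k \in \logK_X$) destroy local freeness; the nonzero homogeneous components must be checked to be free $\Olog_X$-modules, while the vanishing ones cause no harm. Once the correct homogeneous local model is in hand the flatness is a formal consequence of the composition argument, but some care is needed to phrase the local freeness compatibly with the grading so that the statement is genuinely about the graded sheaf underlying an object of $\catClogModMon{X}$, and not merely about the ungraded $\tau^{-1}\sO_X$-module.
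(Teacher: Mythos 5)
Your overall route is the same as the paper's: the paper also reduces to stalks and invokes Ogus's explicit description of the stalks of $\tOlog_X$ (his Proposition~3.3.5), and your formal deduction of exactness of $\tau^*$ from flatness of $\tOlog_X$ over $\tau^{-1}\sO_X$ is fine. The step that needs repair is your local model of $\tOlog_X$ as an $\Olog_X$-module. There is no relation identifying the formal symbol $t^m$ with a section of $\sO_X$ for $m \in \logMbargp_X$: such a relation would equate an element of $\Lambda$-degree $m$ with one of degree $0$, so it is not homogeneous and cannot be part of the definition of a $\Lambda$-graded ring. You can also see directly that it must fail from the inclusion $\Clog_X \subseteq \tOlog_X$: in local coordinates near the boundary this sends the degree $-n$ generator $s^n$ of $\Clog_X$ to $t^n x_{-n}$, the honest function $t^n$ times the formal degree $-n$ symbol, and this element is a non-unit distinct from $1$; if $x_n$ were identified with the function $t^n$ one would get $s^n = 1$ and $\Clog_X$ would collapse. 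Consequently the homogeneous basis of $\tOlog_X$ over $\Olog_X$ is indexed by all of $\Lambda$, not by $\Lambda/\logMbargp_X$: at a point of the boundary each graded piece $(\tOlog_X)_\lambda$ is free of rank one over $\Olog_X$ on the generator $x_\lambda$.

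This correction does not change your conclusion: $\tOlog_X$ is still locally free over $\Olog_X$, and $\Olog_X$ is locally a polynomial ring $\tau^{-1}\sO_X[u_1,\dotsc,u_r]$, so the composition argument gives freeness, hence flatness, over $\tau^{-1}\sO_X$. Your remark about the idealized relations is also harmless for the right reason, namely that the structure map already kills $\logK_X$ in $\sO_X$, so no new relations are imposed on the $\sO_X$-module structure. With the basis corrected as above, the proof is complete and agrees in substance with the stalkwise description the paper cites from Ogus.
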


\begin{proof}
    Flatness can be checked on stalks.
    Thus the statement follows from \cite[Proposition~3.3.5]{Ogus:2003:LogarithmicRiemannHilbertCorrespondence}.
\end{proof}

Let $f\colon X \to Y$ be a morphism of smooth idealized analytic log varieties.
Then there is a commutative diagram of ringed graded spaces.
\[
    \begin{tikzcd}
        (\KN{X}, \Lambda_X, \tOlog_X) \arrow[r, "f_{\log}"] \arrow[d, "\tau_X"] & (\KN Y,\Lambda_Y, \tOlog_Y) \arrow[d, "\tau_Y"] \\
        (X,0,\sO_X) \arrow[r, "f"]{Y} & (Y, 0, \sO_Y)
    \end{tikzcd}
\]
If $f$ is strict then this diagram is a cartesian square.
Recall from \cite[Definition~2.9]{Koppensteiner:graded_top} that a morphism $f$ of graded topological spaces induces a pushforward functor $f_{\gr,*}$ on graded sheaves.
We will need the following theorem from \cite{Gray:thesis:FunctorilityOfLogRH}.

\begin{Theorem}[{\cite[Theorem~2.3.4]{Gray:thesis:FunctorilityOfLogRH}}]\label{thm:tau_and_O-pushfworward}
    Let $f\colon X \to Y$ be a proper morphism of log analytic spaces such that the morphism of characteristics $f^{-1}\logMbargp_Y \to \logMbargp_X$ is injective with torsion free cokernel.
    Then for any $\sheaf F \in \catDbOMod{X}$ the natural isomorphism
    \[
        \tau_Y^* f_* \sheaf F \to f_{\log,\gr,*} \tau_X^* \sheaf F
    \]
    induced by $(\tau^*,\tau_{\gr,*})$-adjunction is an isomorphism.
\end{Theorem}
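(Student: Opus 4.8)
The plan is to check the asserted isomorphism stalk-locally on $\KN Y$ and to compute both sides by proper base change. By the flatness of $\tOlog_X$ over $\tau_X^{-1}\sO_X$ shown above, $\tau_X^*$ is exact, and $f_*$, $f_{\log,\gr,*}$ are the (derived) pushforwards; hence both functors in the statement are exact in $\sheaf F$ and the comparison map is a morphism of triangulated functors. It therefore suffices to show it is an isomorphism on the stalk at an arbitrary point $\tilde y \in \KN Y$ lying over $y = \tau_Y(\tilde y)$. Since $f$ is proper, so are $f_{\log}$ and the two projections $\tau$, and proper base change is available on both sides.

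On the source of the map, proper base change (applied to the underlying sheaves) together with flatness of $\tOlog_Y$ gives
\[
    (\tau_Y^* f_*\sheaf F)_{\tilde y} \cong (\tOlog_Y)_{\tilde y} \otimes_{\sO_{Y,y}} R\Gamma\bigl(X_y,\, \res{\sheaf F}{X_y}\bigr), \qquad X_y = f^{-1}(y).
\]
On the target, the proper base change furnished by the six-functor formalism of \cite{Koppensteiner:graded_top} identifies, for a local section $\mu$ of $\Lambda_Y$ at $\tilde y$, the degree-$\mu$ part of $(f_{\log,\gr,*}\tau_X^*\sheaf F)_{\tilde y}$ with the degree-$f^\flat(\mu)$ part of $R\Gamma\bigl(G_{\tilde y},\, \res{\tau_X^*\sheaf F}{G_{\tilde y}}\bigr)$, where $G_{\tilde y} = f_{\log}^{-1}(\tilde y)$. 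The key structural point, already visible in the definition of $f_{\log,\gr,*}$, is that only degrees in the image of $f^\flat\colon f^{-1}\Lambda_Y \to \Lambda_X$ contribute: the \enquote{cokernel directions} are automatically discarded.

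Next I would analyse $G_{\tilde y}$. The projection $\tau_X$ restricts to a map $G_{\tilde y}\to X_y$ whose fibre over $x$ is the set of homomorphisms $\logMbargp_{X,x}\to S^1$ extending the homomorphism determined by $\tilde y$ along $f^\flat$; this is a torsor under $\Hom\bigl(\logMbargp_{X,x}/f^\flat\logMbargp_{Y,y},\, S^1\bigr)$. The hypotheses enter decisively here: injectivity of $f^\flat$ on characteristics makes $f^\flat\logMbargp_Y$ a genuine sublattice, so that the quotient is the relevant cokernel, while torsion-freeness of the cokernel forces each fibre to be a \emph{connected} real torus $(S^1)^r$ (with $r$ its rank) rather than a disconnected torus quotient. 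Thus $G_{\tilde y}\to X_y$ is a torus bundle, and for $\lambda = f^\flat(\mu)$ the pairing $\langle\gamma,\lambda\rangle$ vanishes on the fibre inertia $\gamma \in \Hom(\mathrm{coker},\, 2\pi i\ZZ)$, so that $\lambda$ has trivial ordinary monodromy along the fibre directions.

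The heart of the argument — and the step I expect to be the main obstacle — is the computation of the pushforward of $\res{\tOlog_X}{G_{\tilde y}}$ along this torus bundle. Even in the degrees $f^\flat(\mu)$, where the $z^\alpha$-monodromy is trivial, the sheaf $\tOlog_X$ still carries the unipotent action coming from the $\log$ of the fibre coordinates, whose monodromy adds $2\pi i$; one must show that this contributes only $\sO$ in cohomological degree $0$ and is acyclic in positive degree along the fibre. The model computation is that of the standard log point, where $\Olog$ on $S^1$ has stalk $\CC[\ell]$ with monodromy $\ell\mapsto\ell+2\pi i$ and $R\Gamma(S^1,\Olog)\cong\CC$, because $\ell\mapsto\ell+2\pi i$ minus the identity is surjective on $\CC[\ell]$ and so annihilates the would-be $H^1$. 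The general fibre is locally a product of such circles, and one reduces to the model by Künneth; this acyclicity is precisely the exactness of $i_{\lambda,*}$ proved by Ogus \cite[Theorem~3.1.2]{Ogus:2003:LogarithmicRiemannHilbertCorrespondence}, which I would invoke rather than reprove. Feeding this back into the two base-change identifications collapses the right-hand stalk to $(\tOlog_Y)_{\tilde y}\otimes_{\sO_{Y,y}} R\Gamma(X_y,\, \res{\sheaf F}{X_y})$, matching the left-hand side; a final check that the adjunction map realises exactly this identification completes the proof.
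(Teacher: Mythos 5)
First, a point of comparison: the paper does not prove this statement at all --- it is imported verbatim from Gray's thesis (\cite[Theorem~2.3.4]{Gray:thesis:FunctorilityOfLogRH}) and used as a black box, so there is no internal proof to measure your argument against. Judged on its own terms, your skeleton is sensible and you locate the hypotheses correctly: injectivity of $f^\flat$ makes the relative fibres torsors under $\Hom(\operatorname{coker} f^\flat,\,S^1)$, torsion-freeness of the cokernel makes those torsors connected (otherwise $f_{\log,\gr,*}$ would pick up contributions from extra components and the map would fail), and the acyclicity of $\Olog$ along circle directions is exactly the Ogus-type computation one must feed in.

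There are, however, two genuine gaps. The larger one is the claim that $G_{\tilde y}\to X_y$ is a torus bundle, to be handled by K\"unneth: it is not a bundle in general, because the rank of $\logMbargp_{X,x}/f^\flat\logMbargp_{Y,y}$ jumps as $x$ moves across the log strata of $X_y$ (already $\tau_X\colon \KN X\to X$ over a normal crossings point has fibres $\pt$, $S^1$, $(S^1)^2,\dotsc$ over the different strata). So the reduction to the standard log point by a product decomposition only works stratum-locally, and one must replace the K\"unneth step by the stratified local computation of $R\tau_*\tOlog$ (equivalently, work with the \emph{relative} Kato--Nakayama space of $X_y$ and rerun Ogus's argument there); merely citing the exactness of $i_{\lambda,*}$ does not by itself produce the identification of the degree-$f^\flat(\mu)$ part of the fibrewise cohomology with $(\tOlog_Y)_{\tilde y,\mu}\otimes_{\sO_{Y,y}}\sO_{X_y}$. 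The second gap is that you invoke a proper base change theorem for the graded pushforward $f_{\log,\gr,*}$ to a point of $\KN Y$; the stalk formula $\bigl(f_{\log,\gr,*}\sheaf G\bigr)_{\tilde y,\mu}\cong R\Gamma\bigl(G_{\tilde y},\res{\sheaf G}{G_{\tilde y}}\bigr)_{f^\flat(\mu)}$ requires an argument (properness of $f_{\log}$ plus the behaviour of the degree-$\lambda$ extraction under restriction to the fibre), and is not among the results of \cite{Koppensteiner:graded_top} that this paper actually uses. Both gaps are fillable, but as written the proof is a plan rather than a proof, and the heavy lifting it defers to Ogus is precisely the part where the varying fibre dimension must be confronted.
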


Following Ogus, we set $\tOmegalogi{i}_X = \tau^*(\bigwedge^i \Omega_X)$.
It is a sheaf of $\tOlog_X$-modules and admits a de Rham differential $d\colon\tOmegalogi{i}_X \to \tOmegalogi{i+1}_X$.
The corresponding complex $(\tOmegalogi{\bullet}, d)$ is quasi-isomorphic to $\Clog_X$ \cite[Proposition~3.3.1]{Ogus}.

In analogy to \cite[Section~3.1.1]{KT} and using the formulas given in \cite[Proposition~3.3.1]{Ogus:2003:LogarithmicRiemannHilbertCorrespondence} we define a sheaf of rings $\tDlog_X$ on $\KN X$ with generators $\tOlog_X$ and $\tau^{-1}(\Theta_X)$.
Clearly $\tOlog_X$ has a natural structure of (left) $\tDlog_X$-module.
Thus if $\sheaf M$ is a $\sD_X$-module, $\tau^*\sheaf M$ has an induced $\tDlog_X$-module structure.
Similarly, if $\sheaf N$ is a right $\sD_X$-module, $\tau^*\sheaf N$ is a right $\tDlog_X$-module.
In particular $\tau^*\sD_X$ is canonically isomorphic to $\tDlog_X$ as a $\tDlog_X$-bimodule and we have an isomorphism
\[
    \tau^*\sheaf M \cong \tDlog_X \otimes_{\tau^{-1}\sD_X} \tau^{-1} \sheaf M.
\]

\begin{Lemma}%
    \label{lem:tau*_and_tensor}%
    \label{lem:tau*_and_hom}%
    For any $\sD_X$-modules $\sheaf M$ and $\sheaf N$ in $\catDbqcDMod{X}$ there is a canonical isomorphisms of $\tDlog_X$-modules
    \begin{align*}
        \tau^*\sheaf M \otimes_{\tOlog_X} \tau^*\sheaf N &\cong \tau^*(\sheaf M \otimes_{\sO_X} \sheaf N)
    \intertext{and if $\sheaf M$ is $\sO_X$-coherent a canonical isomorphism}
        \tau^*\sheafHom_{\sO_X}(\sheaf M, \sheaf N) &\cong \sheafHom_{\tOlog_X}(\tau^*\sheaf M, \tau^* \sheaf N),
    \end{align*}
    where the $\tDlog$-structure on the left hand sides is defined as in Proposition~\ref{prop:tensor_module_struct}.
    In particular, $\tau^*(\canon_X^\dual) \cong \tau^*(\canon_X)^\dual$.
\end{Lemma}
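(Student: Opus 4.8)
We need isomorphisms of $\tDlog_X$-modules
$$\tau^*\sheaf M \otimes_{\tOlog_X} \tau^*\sheaf N \cong \tau^*(\sheaf M \otimes_{\sO_X} \sheaf N)$$
and (for $\sO_X$-coherent $\sheaf M$)
$$\tau^*\sheafHom_{\sO_X}(\sheaf M, \sheaf N) \cong \sheafHom_{\tOlog_X}(\tau^*\sheaf M, \tau^* \sheaf N).$$

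The key definitional fact is $\tau^*\sheaf F = \tOlog_X \otimes_{\tau^{-1}\sO_X} \tau^{-1}\sheaf F$.

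**The tensor isomorphism.** This should be purely formal. We have
$$\tau^*\sheaf M \otimes_{\tOlog_X} \tau^*\sheaf N = (\tOlog_X \otimes_{\tau^{-1}\sO_X} \tau^{-1}\sheaf M) \otimes_{\tOlog_X} (\tOlog_X \otimes_{\tau^{-1}\sO_X} \tau^{-1}\sheaf N).$$

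The first tensor factor over $\tOlog_X$ cancels, leaving $\tOlog_X \otimes_{\tau^{-1}\sO_X} \tau^{-1}\sheaf M \otimes_{\tau^{-1}\sO_X} \tau^{-1}\sheaf N$. Since $\tau^{-1}$ commutes with tensor products, this is $\tOlog_X \otimes_{\tau^{-1}\sO_X} \tau^{-1}(\sheaf M \otimes_{\sO_X}\sheaf N) = \tau^*(\sheaf M\otimes_{\sO_X}\sheaf N)$.

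The isomorphism is thus clear at the level of underlying $\tOlog_X$-modules. The content is to check it is $\tDlog_X$-linear, i.e. respects the action of $\theta \in \tau^{-1}\sT_X$. On the left, by Proposition~\ref{prop:tensor_module_struct}, $\theta$ acts by the Leibniz rule $(m\otimes n)\theta = \theta m \otimes n + m\otimes \theta n$; on the right it acts via the $\sD_X$-structure on $\sheaf M\otimes_{\sO_X}\sheaf N$, which is by exactly the same Leibniz formula. So compatibility is immediate from how the $\tDlog_X$-action on $\tau^*$ of a module was defined (generated by $\tOlog_X$ and $\tau^{-1}\sT_X$, with $\tau^{-1}\sT_X$ acting through the $\sD_X$-structure). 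I would also note the flatness of $\tOlog_X$ over $\tau^{-1}\sO_X$ (the preceding Lemma) so that no derived subtleties enter.

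**The Hom isomorphism.** For the second isomorphism I would first reduce to the case where $\sheaf M$ is locally free. Since the statement is local on $\KN X$ and $\sheaf M$ is $\sO_X$-coherent, I would work locally and use a presentation $\sO_X^{\oplus a} \to \sO_X^{\oplus b} \to \sheaf M \to 0$; both sides of the desired isomorphism send such a presentation to an exact sequence (using exactness of $\tau^*$ and left-exactness of the two Hom functors), so by the five lemma it suffices to treat $\sheaf M$ finite free, where both sides reduce to finite direct sums of copies of $\tau^*\sheaf N$ and the map is visibly an isomorphism. Concretely, for $\sheaf M = \sO_X$ the natural map $\tau^*\sheafHom_{\sO_X}(\sO_X,\sheaf N) = \tau^*\sheaf N \to \sheafHom_{\tOlog_X}(\tOlog_X, \tau^*\sheaf N)$ is the identity. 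Again the real point is constructing the natural comparison map and checking it is $\tDlog_X$-linear with respect to the module structures of Proposition~\ref{prop:tensor_module_struct}; the comparison map itself is the adjunction/evaluation map $\phi \otimes n \mapsto (a \mapsto \phi(a)\otimes n)$ extended $\tOlog_X$-linearly.

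**Main obstacle.** The computations of underlying modules are routine; \textbf{the genuine work is the $\tDlog_X$-linearity}, i.e. verifying that under each canonical isomorphism the $\tau^{-1}\sT_X$-action defined via Proposition~\ref{prop:tensor_module_struct} on the left matches the action coming from the $\sD_X$-module structure on the right. For the tensor case the two Leibniz formulas coincide on the nose. For the Hom case one must check that the sign conventions in Proposition~\ref{prop:tensor_module_struct} for $\sheafHom_{\sO_X}(\sheaf M,\sheaf N)$ (namely $(\theta\phi)(m) = \theta(\phi(m)) - \phi(\theta m)$) are exactly reproduced by the action on $\sheafHom_{\tOlog_X}(\tau^*\sheaf M,\tau^*\sheaf N)$, where $\theta$ acts on a $\tOlog_X$-linear map through its actions on source and target. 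This is a direct but sign-sensitive computation. The final consequence $\tau^*(\canon_X^\dual)\cong\tau^*(\canon_X)^\dual$ follows by applying the Hom isomorphism with $\sheaf M = \canon_X$ (a line bundle, hence $\sO_X$-coherent) and $\sheaf N = \sO_X$, recalling the definition $\canon_X^\dual = \sheafHom_{\sO_X}(\canon_X,\sO_X)$.
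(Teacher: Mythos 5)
Your proposal is correct and follows essentially the same route as the paper: check the tensor isomorphism on underived functors and invoke exactness/flatness of $\tau^*$ for the derived statement, and for $\sheafHom$ reduce locally to the case $\sheaf M = \sO_X$ via finite free modules. The only refinement worth making is that, since the objects live in $\catDbqcDMod{X}$ and the $\sheafHom$ is derived, a two-term presentation plus the five lemma only compares the degree-zero part; one should instead replace $\sheaf M$ locally by a full free resolution of finite rank, which is exactly what the paper does.
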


\begin{proof}
    For the tensor product one easily checks the equivalence for the underived functors.
    The derived statement follows as $\tau^*$ is exact and preserves flatness.

    To obtain the statement for $\sheafHom$ it remains to check that the map
    \[
        \tau^{-1}\sheafHom_{\sO_X}(\sheaf M, \sheaf N) \to \sheafHom_{\tau^{-1}\sO_X}(\tau^{-1}\sheaf M, \tau^{-1}\sheaf N)
    \]
    is an isomorphism.
    This can be proved locally, so that we can replace $\sheaf M$ by a free resolution consisting of finite rank modules.
    Hence we are reduced to the case $\sheaf M = \sO_X$ where the statement is trivial.
\end{proof}

\begin{Corollary}\label{cor:tau*canon_resolution}\label{cor:tOlog_resolution}
    One has the following locally free resolutions of the left $\tDlog_X$-module $\tOlog_X$ and the right $\tDlog_X$-module $\tau^*\canon_X$:
    \begin{equation}\label{eq:tOlog_resolution}
        0 \to \tDlog_X \otimes_{\tOlog_X} \bigwedge^{\logdim X} \tau^*\Theta_X \to \cdots \to \tDlog_X \otimes_{\tOlog_X} \bigwedge^0 \tau^*\Theta_X \to \tOlog_X \to 0,
    \end{equation}
    \begin{equation}\label{eq:tau*canon_resolution}
        0 \to \tOmegalogi{0}_X \otimes_{\tOlog_X} \tDlog_X \to \cdots \to \tOmegalogi{\logdim X}_X \otimes_{\tOlog_X} \tDlog_X \to \tau^*\canon_X \to 0.
    \end{equation}
\end{Corollary}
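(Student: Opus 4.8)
The plan is to obtain both resolutions by simply applying the functor $\tau^*$ to the logarithmic Spencer resolutions of Corollary~\ref{cor:canon_resolution}. Concretely, I would start from the locally free resolution \eqref{eq:O_resolution} of the left $\sD_X$-module $\sO_X$ and the resolution \eqref{eq:canon_resolution} of the right $\sD_X$-module $\canon_X$, and push both forward to $\KN X$. Since $\tOlog_X$ is flat over $\tau^{-1}\sO_X$, the functor $\tau^*$ is exact, so the image of an exact complex is again exact. As $\tau^*$ sends left (resp.\ right) $\sD_X$-modules to left (resp.\ right) $\tDlog_X$-modules and carries $\sD_X$-linear maps to $\tDlog_X$-linear maps, applying it to \eqref{eq:O_resolution} and \eqref{eq:canon_resolution} produces exact complexes of $\tDlog_X$-modules with $\tDlog_X$-linear differentials, augmented over $\tau^*\sO_X = \tOlog_X$ and $\tau^*\canon_X$ respectively.

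It then remains to identify the terms. Here I would use the canonical isomorphism $\tau^*\sD_X \cong \tDlog_X$ of $\tDlog_X$-bimodules together with the monoidality of $\tau^*$ underlying Lemma~\ref{lem:tau*_and_tensor}, namely $\tau^*(\sheaf M \otimes_{\sO_X} \sheaf N) \cong \tau^*\sheaf M \otimes_{\tOlog_X} \tau^*\sheaf N$. Since $\Theta_X$ and $\Omega_X$ are locally free, the same identity (applied at the level of $\sO$-modules, where $\tau^*$ is the ordinary pullback) gives $\tau^*(\bigwedge^p \Theta_X) \cong \bigwedge^p \tau^*\Theta_X$ and, by definition, $\tau^*(\bigwedge^p\Omega_X) = \tOmegalogi{p}_X$. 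Combining these, the $p$-th term $\sD_X \otimes_{\sO_X} \bigwedge^p\Theta_X$ of \eqref{eq:O_resolution} becomes $\tDlog_X \otimes_{\tOlog_X}\bigwedge^p\tau^*\Theta_X$, and the $p$-th term $\bigwedge^p\Omega_X \otimes_{\sO_X}\sD_X$ of \eqref{eq:canon_resolution} becomes $\tOmegalogi{p}_X \otimes_{\tOlog_X}\tDlog_X$, which are precisely the entries of \eqref{eq:tOlog_resolution} and \eqref{eq:tau*canon_resolution}. Local freeness is then immediate, since each $\bigwedge^p\tau^*\Theta_X$ is locally free over $\tOlog_X$ as the pullback of a locally free sheaf, so every term is a locally free $\tDlog_X$-module.

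The step requiring the most care is checking that the module structures match correctly. In \eqref{eq:O_resolution} the left $\sD_X$-action on $\sD_X \otimes_{\sO_X}\bigwedge^p\Theta_X$ is the one given by left multiplication on the $\sD_X$-factor, and I must verify that $\tau^*$ transports this to left multiplication on the $\tDlog_X$-factor of $\tDlog_X \otimes_{\tOlog_X}\bigwedge^p\tau^*\Theta_X$, rather than to the tensor-product action of Proposition~\ref{prop:tensor_module_struct}. This follows from the bimodule identification $\tau^*\sD_X \cong \tDlog_X$ together with the $\tOlog_X$-linearity of the isomorphism in Lemma~\ref{lem:tau*_and_tensor}, but it is the one point where one should spell out the compatibility explicitly; in particular one should confirm that the Spencer differentials, which involve the bracket $[v_i,v_j]$ and the action of $\Theta_X$ on the coefficients, are carried to the intended differentials on $\KN X$. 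Once this bookkeeping is done, both \eqref{eq:tOlog_resolution} and \eqref{eq:tau*canon_resolution} drop out directly.
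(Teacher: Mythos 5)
Your proposal is exactly the paper's argument: the paper proves this corollary in one line by applying the exact functor $\tau^*$ to the resolutions of Corollary~\ref{cor:canon_resolution} and identifying the terms via Lemma~\ref{lem:tau*_and_tensor}. Your additional care about matching the left-multiplication versus tensor-product module structures is a reasonable elaboration of the same route, not a different one.
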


\begin{proof}
    As $\tau^*$ is exact this follows from Corollary~\ref{cor:canon_resolution} by Lemma~\ref{lem:tau*_and_tensor}.
\end{proof}

For a $\tDlog$-module $\sheaf M$ we set
\[
    \tDVerdierlog_X \sheaf M =
    \sheafHom_{\tDlog_X}(\sheaf M,\, \tDlog_X \otimes_{\tOlog_X} \tau^*\twist_X) \otimes_{\tOlog_X} \tau^*(\canon_X^\dual).
\]

\begin{Lemma}\label{lem:tau*_and_duality}
    There exists a natural isomorphism of functors on $\catDbcohDMod{X}$
    \[
        \tau^* \circ \DVerdier_X \cong \tDVerdierlog_X \circ \tau^*.
    \]
    In particular, for any $\sheaf M \in \catDbcohDMod{X}$ there exists a canonical isomorphism $\tDVerdierlog \circ \tDVerdierlog (\tau^*\sheaf M) \cong \tau^*\sheaf M$.
\end{Lemma}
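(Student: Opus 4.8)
The plan is to prove the isomorphism of functors $\tau^* \circ \DVerdier_X \cong \tDVerdierlog_X \circ \tau^*$ by unwinding both definitions and transporting each ingredient through $\tau^*$ using Lemma~\ref{lem:tau*_and_tensor}. Writing $\sheaf T = \sD_X \otimes_{\sO_X} \twist_X$, the tensor-product part of Lemma~\ref{lem:tau*_and_tensor} together with $\tau^*\sD_X \cong \tDlog_X$ gives $\tau^*\sheaf T \cong \tDlog_X \otimes_{\tOlog_X} \tau^*\twist_X$, which is exactly the module appearing in the definition of $\tDVerdierlog_X$. Applying $\tau^*$ to $\DVerdier_X\sheaf M = \sheafHom_{\sD_X}(\sheaf M, \sheaf T) \otimes_{\sO_X} \canon_X^\dual$ and pulling the outer tensor factor through $\tau^*$ (again Lemma~\ref{lem:tau*_and_tensor}, using $\tau^*(\canon_X^\dual) \cong \tau^*(\canon_X)^\dual$), the whole statement reduces to the single key isomorphism
\[
    \tau^*\sheafHom_{\sD_X}(\sheaf M, \sheaf T) \cong \sheafHom_{\tDlog_X}(\tau^*\sheaf M, \tau^*\sheaf T),
\]
that is, to showing that $\tau^*$ commutes with $\sD_X$-linear $\sheafHom$ into $\sheaf T$.

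To establish this key isomorphism I would reduce the $\sD_X$-linear $\sheafHom$ to an $\sO_X$-linear one, where Lemma~\ref{lem:tau*_and_hom} already applies. The statement is local on $X$, so I may assume $\sheaf M$ carries a global good filtration and replace it by its logarithmic Spencer resolution (Lemma~\ref{lem:spencer_resolution}): a finite resolution by induced modules $\sD_X \otimes_{\sO_X} \sheaf P$ with $\sheaf P = \bigl(\bigwedge^p\Theta_X\bigr) \otimes_{\sO_X} F_{k-p}\sheaf M$ an $\sO_X$-coherent sheaf. For such induced modules the free-module adjunction gives $\sheafHom_{\sD_X}(\sD_X \otimes_{\sO_X}\sheaf P, \sheaf T) \cong \sheafHom_{\sO_X}(\sheaf P, \sheaf T)$, and since $\sheaf P$ is coherent the $\sheafHom$ part of Lemma~\ref{lem:tau*_and_hom} yields $\tau^*\sheafHom_{\sO_X}(\sheaf P, \sheaf T) \cong \sheafHom_{\tOlog_X}(\tau^*\sheaf P, \tau^*\sheaf T)$. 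On the other hand, because $\tau^*$ is exact it carries the Spencer resolution to a finite resolution of $\tau^*\sheaf M$ by induced $\tDlog_X$-modules $\tDlog_X \otimes_{\tOlog_X} \tau^*\sheaf P$ (as in Corollary~\ref{cor:tOlog_resolution}), for which the analogous adjunction gives $\sheafHom_{\tDlog_X}(\tDlog_X \otimes_{\tOlog_X}\tau^*\sheaf P, \tau^*\sheaf T) \cong \sheafHom_{\tOlog_X}(\tau^*\sheaf P, \tau^*\sheaf T)$. Matching the two computations term by term produces the desired isomorphism; since both sides are derived $\sheafHom$ computed by these resolutions, the resulting comparison is independent of the chosen resolution and hence glues to a global natural isomorphism.

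The main obstacle is precisely this key isomorphism, and within it the bookkeeping of module structures: one must check that the left-multiplication and tensor-product $\sD_X$-module structures on $\sheaf T$ (as fixed after Proposition~\ref{prop:tensor_module_struct}) are transported compatibly to the corresponding $\tDlog_X$-structures on $\tau^*\sheaf T$, so that the term-by-term isomorphisms above are maps of $\tDlog_X$-modules and are compatible with the Spencer differentials. This is routine but must be carried out carefully, as it is what makes the comparison functorial rather than merely an abstract isomorphism of underlying complexes.

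Finally, for the \enquote{in particular} statement I would combine the established isomorphism with biduality for $\DVerdier_X$. Since $\sD_X \otimes_{\sO_X} \twist_X[\logdim X]$ is the rigid dualizing complex of $\sD_X$ (Theorem~\ref{thm:rigid_dualizing_complex}), the functor $\DVerdier_X$ is an involution on $\catDbcohDMod{X}$, so $\DVerdier_X \circ \DVerdier_X \cong \id$. As $\DVerdier_X\sheaf M$ again lies in $\catDbcohDMod{X}$, applying the natural isomorphism twice gives
\[
    \tDVerdierlog(\tDVerdierlog(\tau^*\sheaf M)) \cong \tDVerdierlog(\tau^*\DVerdier_X\sheaf M) \cong \tau^*(\DVerdier_X\DVerdier_X\sheaf M) \cong \tau^*\sheaf M,
\]
as claimed.
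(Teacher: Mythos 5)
Your proposal is correct and follows essentially the same route as the paper: both reduce the statement via Lemma~\ref{lem:tau*_and_tensor} to the single key isomorphism $\tau^*\sheafHom_{\sD_X}(\sheaf M,\, \sD_X \otimes_{\sO_X} \twist_X) \cong \sheafHom_{\tDlog_X}(\tau^*\sheaf M,\, \tDlog_X \otimes_{\tOlog_X} \tau^*\twist_X)$ and then verify it by a local resolution argument. The one structural difference is that the paper first writes down a canonical \emph{global} morphism between the two sides and only afterwards checks locally that it is an isomorphism (reducing to $\sheaf M = \sD_X$ via a free resolution), which sidesteps the gluing/independence-of-resolution issue that your version, building local isomorphisms from the Spencer resolution, has to assert at the end; your use of induced modules $\sD_X \otimes_{\sO_X} \sheaf P$ together with Lemma~\ref{lem:tau*_and_hom} is an equally valid local check, but the argument is cleaner if you define the natural transformation globally first and use your resolution only to verify it is an isomorphism.
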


\begin{proof}
    By Lemma~\ref{lem:tau*_and_tensor}, it suffices to prove that for any $\sheaf M \in \catDbcohDMod{X}$ there exists a canonical isomorphism of right $\tDlog_X$-modules
    \[
        \tau^*\sheafHom_{\sD_X}(\sheaf M,\, \sD_X \otimes_{\sO_X} \twist_X)
        \cong
        \sheafHom_{\tDlog_X}(\tau^*\sheaf M,\, \tDlog_X \otimes_{\tOlog_X} \tau^*\twist_X).
    \]
    There is a canonical morphism
    \begin{align*}
        \tau^*\sheafHom_{\sD_X}(\sheaf M,\, \sD_X \otimes_{\sO_X} \twist_X) & =
        \tDlog_X \otimes_{\tau^{-1} \sD_X} \tau^{-1} \sheafHom_{\sD_X}(\sheaf M,\, \sD_X \otimes_{\sO_X} \twist_X) \\ & \to 
        \tDlog_X \otimes_{\tau^{-1} \sD_X} \sheafHom_{\tau^{-1}\sD_X}(\tau^{-1} \sheaf M,\, \tau^{-1}(\sD_X \otimes_{\sO_X} \twist_X)) \\ & \cong
        \sheafHom_{\tau^{-1}\sD_X}(\tau^{-1} \sheaf M,\, \tau^*(\sD_X \otimes_{\sO_X} \twist_X)) \\ & \cong
        \sheafHom_{\tDlog_X}(\tau^* \sheaf M,\, \tDlog_X \otimes_{\tOlog_X} \tau^*\twist_X).
    \end{align*}
    It remains to show that this morphism is an isomorphism.
    This question is local and we can assume that $\sheaf M$ has a resolution by free $\sD_X$-modules.
    Thus we are reduced to showing the statement for $\sheaf M = \sD_X$ which follows immediately from Lemma~\ref{lem:tau*_and_tensor}.
\end{proof}

\section{The de Rham functor}

\begin{Definition}
    Let $X$ be a smooth idealized analytic log variety.
    One defines the \emph{(logarithmic) de Rham functor} $\tDR_X\colon \catDbDMod{X} \to \catDbClogModMon{X}$ by
    \[
        \sheaf M \mapsto \tau^*\canon_X \otimes_{\tDlog} \tau^*\sheaf M.
    \]
    If $X$ is a smooth idealized algebraic log variety, one sets $\tDR_X(\sheaf M) = \tDR_{X^\an}(\sheaf M^\an)$.
\end{Definition}

By Corollary~\ref{cor:tau*canon_resolution} the logarithmic de Rham functor is equivalent to 
\[
    \tDR_X(\sheaf M) \cong \tOmegalogi{\bullet}_X \otimes_{\tOlog_X} \tau^*\sheaf M[\logdim X].
\]

\begin{Lemma}\label{lem:DR_and_Ogus}
    With the notation of \cite[Definition~3.4.1]{Ogus:2003:LogarithmicRiemannHilbertCorrespondence}, $\tDR_X\sheaf M$ is isomorphic to $\sheaf V_X(\sheaf M)[\logdim X]$ for any $\sO_X$-coherent $\sD_X$-module $\sheaf M$.
    In other words up to a shift $\tDR_X$ induces Ogus's Riemann--Hilbert correspondence between logarithmic connections and the category $\catLogLocSys{X}$.
\end{Lemma}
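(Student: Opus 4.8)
The plan is to reduce the statement to an identification of two explicit complexes, since both functors are, at bottom, the same logarithmic de Rham complex. First I would invoke Corollary~\ref{cor:tau*canon_resolution}, which rewrites the de Rham functor as the shifted logarithmic de Rham complex
\[
    \tDR_X(\sheaf M) \cong \tOmegalogi{\bullet}_X \otimes_{\tOlog_X} \tau^*\sheaf M\,[\logdim X].
\]
Because $\sheaf M$ is $\sO_X$-coherent, $\tau^*\sheaf M = \tOlog_X \otimes_{\tau^{-1}\sO_X} \tau^{-1}\sheaf M$ is a $\tOlog_X$-module of finite type carrying an integrable logarithmic connection, namely the one encoding its $\tDlog_X$-module structure. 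The remaining task is then purely to match this complex, term by term and differential by differential, with the complex $\sheaf V_X(\sheaf M)$ of \cite[Definition~3.4.1]{Ogus:2003:LogarithmicRiemannHilbertCorrespondence}.

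For the comparison of the underlying graded objects I would use the identification $\tOmegalogi{i}_X = \tau^*(\bigwedge^i \Omega_X)$ recorded in Section~\ref{sec:Olog}, so that the two complexes have the same terms. The only point requiring genuine verification is that the differentials coincide. Here I would unwind the $\tDlog_X$-module structure on $\tau^*\canon_X$ together with the tensor product $\tau^*\canon_X \otimes_{\tDlog_X} \tau^*\sheaf M$ defining $\tDR_X$: by construction the sheaf $\tDlog_X$ was defined following the formulas of \cite[Proposition~3.3.1]{Ogus:2003:LogarithmicRiemannHilbertCorrespondence}, precisely so that the induced differential on $\tOmegalogi{\bullet}_X \otimes_{\tOlog_X} \tau^*\sheaf M$ is the connection differential extended by the Leibniz rule. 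This is exactly the differential Ogus uses in defining $\sheaf V_X(\sheaf M)$, so the two complexes agree up to the shift by $\logdim X$ already present in our formula.

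The main obstacle is therefore not conceptual but a matter of bookkeeping: one must keep careful track of the signs in the de Rham differential, of the normalization in Ogus's Definition~3.4.1, and of the compatibility of the left--right switch for $\tDlog_X$-modules with $\tau^*$, which is supplied by Lemma~\ref{lem:tau*_and_tensor}. Once the isomorphism $\tDR_X(\sheaf M) \cong \sheaf V_X(\sheaf M)[\logdim X]$ is established for every $\sO_X$-coherent $\sheaf M$, the second assertion is immediate: Ogus's Theorem~3.4.2 states that $\sheaf V_X$ is an equivalence between logarithmic connections and $\catLogLocSys{X}$, so composing with the shift $[\logdim X]$ shows that $\tDR_X$ realizes the same equivalence. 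In particular $\tDR_X(\sheaf M)$ lies in $\catClogModMon{X}$, concentrated in degree $-\logdim X$, consistently both with the stated target category and with the classical behaviour of the de Rham functor on $\sO$-coherent modules.
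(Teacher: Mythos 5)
Your reduction via Corollary~\ref{cor:tau*canon_resolution} to the shifted complex $\tOmegalogi{\bullet}_X \otimes_{\tOlog_X} \tau^*\sheaf M\,[\logdim X]$ is the right starting point, but the step where you propose to ``match this complex term by term and differential by differential'' with $\sheaf V_X(\sheaf M)$ rests on a misreading of Ogus's Definition~3.4.1: $\sheaf V_X(\sheaf M)$ is not a complex. It is a single coherent $\Clog_X$-module --- essentially $\Psi$ applied to the kernel of the connection $\tau^*\sheaf M \to \tau^*\sheaf M \otimes_{\tOlog_X}\tOmegalogi{1}_X$ --- as it must be, since it is an object of $\catLogLocSys{X}$. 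The two sides are therefore not term-by-term comparable, and the actual content of the lemma is that the inclusion of $\sheaf V_X(\sheaf M)$ into degree $0$ of the logarithmic de Rham complex of $\tau^*\sheaf M$ is a quasi-isomorphism, i.e., that this complex is acyclic away from degree $0$ and its zeroth cohomology is exactly $\sheaf V_X(\sheaf M)$. That is a genuine logarithmic Poincar\'e lemma on the Kato--Nakayama space, not a matter of signs and normalizations; your proposal never establishes it, and the concentration of $\tDR_X(\sheaf M)$ in a single cohomological degree, which you list at the end as a pleasant consequence, is in fact the hard part.

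The paper's own proof is a one-line citation: this quasi-isomorphism is precisely the statement of Ogus's Theorem~3.4.2~3(b). So the fix is to replace your ``bookkeeping'' paragraph by that citation (or to reprove the acyclicity, which would require the exactness results on $\tOlog_X$ from Ogus's Section~3.3). Your handling of the second assertion --- deducing the equivalence with $\catLogLocSys{X}$ from Ogus's Theorem~3.4.2 once the isomorphism is in hand --- is fine.
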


\begin{proof}
    This is just the statement of \cite[Theorem~3.4.2~3(b)]{Ogus:2003:LogarithmicRiemannHilbertCorrespondence}.
\end{proof}

\begin{Lemma}\label{lem:DR_as_Hom}
    For $\sheaf M \in \catDbcohDMod{X}$ there exists a canonical isomorphism
    \[
        \tDR_X(\sheaf M) \cong \sheafHom_{\tDlog_X}(\tOlog_X,\, \tau^*\sheaf M)[\logdim X].
    \]
\end{Lemma}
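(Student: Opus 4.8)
The plan is to reduce everything to the explicit de Rham complex. Right after the Definition we recorded, using Corollary~\ref{cor:tau*canon_resolution}, that
\[
    \tDR_X(\sheaf M) \cong \tOmegalogi{\bullet}_X \otimes_{\tOlog_X} \tau^*\sheaf M[\logdim X],
\]
where the complex $\tOmegalogi{\bullet}_X \otimes_{\tOlog_X} \tau^*\sheaf M$ sits in cohomological degrees $0,\dotsc,\logdim X$, with $\tOmegalogi{p}_X \otimes_{\tOlog_X} \tau^*\sheaf M$ placed in degree $p$. Since the shift $[\logdim X]$ occurs on both sides of the asserted identity, it suffices to construct a natural isomorphism
\[
    \sheafHom_{\tDlog_X}(\tOlog_X,\, \tau^*\sheaf M) \cong \tOmegalogi{\bullet}_X \otimes_{\tOlog_X} \tau^*\sheaf M
\]
in $\catDbClogMod{X}$.

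First I would compute the left-hand side by means of the locally free resolution \eqref{eq:tOlog_resolution} of the left $\tDlog_X$-module $\tOlog_X$ from Corollary~\ref{cor:tOlog_resolution}. Its terms $\tDlog_X \otimes_{\tOlog_X} \bigwedge^p \tau^*\Theta_X$ are locally free $\tDlog_X$-modules, hence compute the derived $\sheafHom$, and the standard adjunction gives
\[
    \sheafHom_{\tDlog_X}\bigl(\tDlog_X \otimes_{\tOlog_X} \bigwedge^p \tau^*\Theta_X,\, \tau^*\sheaf M\bigr)
    \cong
    \sheafHom_{\tOlog_X}\bigl(\bigwedge^p \tau^*\Theta_X,\, \tau^*\sheaf M\bigr).
\]
Because $\bigwedge^p \tau^*\Theta_X$ is a finite locally free $\tOlog_X$-module whose dual is $\tOmegalogi{p}_X$ --- here one invokes that $\tau^*$ commutes with $\sheafHom$ out of the coherent sheaf $\bigwedge^p\Theta_X$, by Lemma~\ref{lem:tau*_and_hom} --- this term equals $\tOmegalogi{p}_X \otimes_{\tOlog_X} \tau^*\sheaf M$. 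As \eqref{eq:tOlog_resolution} places $\bigwedge^p\tau^*\Theta_X$ in homological degree $p$, the contravariant functor $\sheafHom$ returns it in cohomological degree $p$. Thus the two complexes agree term by term and degree by degree.

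It then remains only to identify the differentials, and this is the one genuine point of the argument. The differential on $\sheafHom_{\tDlog_X}(\tOlog_X,\, \tau^*\sheaf M)$ is induced by the Spencer differential $\delta$ of \eqref{eq:tOlog_resolution} together with the left $\tDlog_X$-module structure on $\tau^*\sheaf M$; unwinding the two terms of $\delta$ recovers precisely the de Rham differential determined by the connection on $\tau^*\sheaf M$, exactly as in the classical Koszul-dual identification of the Spencer and de Rham complexes (compare \cite[Proposition~I.2.1.18]{Mebkhout:1989:LeFormalismeDesSixOperationsPourLesDModules}). As the assertion is local and $\tau^*$ is exact, this verification may be performed after localizing and, if desired, after reducing to the case $\sheaf M = \sD_X$ via a free resolution, just as in the proof of Lemma~\ref{lem:tau*_and_duality}. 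Shifting the resulting isomorphism by $[\logdim X]$ and combining it with the formula for $\tDR_X(\sheaf M)$ above gives the claim. The main obstacle is solely this bookkeeping of terms and signs in matching the two differentials; everything else is formal.
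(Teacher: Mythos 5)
Your proposal is correct and follows essentially the same route as the paper: the paper first reduces to $\sheaf M = \sD_X$ via $\sheafHom_{\tDlog_X}(\tOlog_X,\, \tau^*\sheaf M) \cong \sheafHom_{\tDlog_X}(\tOlog_X,\, \tDlog_X) \otimes_{\tDlog_X} \tau^*\sheaf M$ and then performs exactly the computation you describe with the resolution \eqref{eq:tOlog_resolution} of Corollary~\ref{cor:tOlog_resolution}. Your direct term-by-term identification of the dualized Spencer complex with the de Rham complex, including the degree bookkeeping and the matching of differentials, is the same argument carried out without the preliminary reduction.
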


\begin{proof}
    As $\sheafHom_{\tDlog_X}(\tOlog_X,\, \tau^*\sheaf M) \cong \sheafHom_{\tDlog_X}(\tOlog_X,\, \tDlog_X) \otimes_{\tDlog_X} \tau^*\sheaf M$ it suffices to show the existence of the isomorphism for $\sheaf M = \sD_X$.
    The result now follows from a computation using Corollary~\ref{cor:tOlog_resolution}.
\end{proof}

\begin{Proposition}\label{prop:DR_commutes_with_proper_pushforward}
    Assume that $f\colon X \to Y$ is a proper morphism of smooth analytic log spaces such that the induced map on characteristics $f^{-1}\logMbargp_Y \to \logMbargp_X$ is injective with torsion free cokernel.
    Then there exists a canonical isomorphism
    \[
        \tDR_Y \circ f_\spf \cong f_{\log,\gr,*} \circ \tDR_X.
    \]
\end{Proposition}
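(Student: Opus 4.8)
The plan is to exhibit $\tDR$ as the composite of the ``naive'' de Rham functor $\canon\otimes_{\sD}(-)$ with the enhanced pullback $\tau^*$, and then to feed the (purely D-module) compatibility of the naive de Rham functor with $f_\spf$ into Gray's Theorem~\ref{thm:tau_and_O-pushfworward}. The first observation I would record is that for every $\sheaf N\in\catDbDMod{Y}$ there is a canonical isomorphism
\[
    \tDR_Y(\sheaf N)\cong\tau_Y^*\bigl(\canon_Y\otimes_{\sD_Y}\sheaf N\bigr).
\]
Indeed, resolving the right $\tDlog_Y$-module $\tau_Y^*\canon_Y$ by the locally free complex \eqref{eq:tau*canon_resolution} identifies the left-hand side with $\tOmegalogi{\bullet}_Y\otimes_{\tOlog_Y}\tau_Y^*\sheaf N[\logdim Y]$, while resolving $\canon_Y$ by the Spencer complex \eqref{eq:canon_resolution} and invoking the monoidality of $\tau_Y^*$ from Lemma~\ref{lem:tau*_and_tensor} identifies the right-hand side with the same complex. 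In particular $\tDR_X\cong\tau_X^*\circ(\canon_X\otimes_{\sD_X}-)$, so the claim reduces to commuting the naive de Rham functor past $f_\spf$ and commuting $\tau^*$ past $f_*$.

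Next I would establish the D-module statement
\[
    \canon_Y\otimes_{\sD_Y}f_\spf\sheaf M\cong f_*\bigl(\canon_X\otimes_{\sD_X}\sheaf M\bigr).
\]
Unwinding $f_\spf\sheaf M=f_*(\sD_{Y\from X}\otimes_{\sD_X}\sheaf M)$ and applying the projection formula for $f_*$ — legitimate since $\canon_Y$ is perfect over $\sD_Y$ by the log Spencer resolution \eqref{eq:canon_resolution} and $f$ is proper — reduces this to the transfer-module identity $f^{-1}\canon_Y\otimes_{f^{-1}\sD_Y}\sD_{Y\from X}\cong\canon_X$ of right $\sD_X$-modules, which is immediate from the definition of $\sD_{Y\from X}$ and holds verbatim as in the classical theory.

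Combining the two ingredients yields the chain
\begin{align*}
    \tDR_Y(f_\spf\sheaf M)
    &\cong\tau_Y^*\bigl(\canon_Y\otimes_{\sD_Y}f_\spf\sheaf M\bigr)
    \cong\tau_Y^* f_*\bigl(\canon_X\otimes_{\sD_X}\sheaf M\bigr)\\
    &\cong f_{\log,\gr,*}\,\tau_X^*\bigl(\canon_X\otimes_{\sD_X}\sheaf M\bigr)
    \cong f_{\log,\gr,*}\tDR_X(\sheaf M),
\end{align*}
where the third isomorphism is Theorem~\ref{thm:tau_and_O-pushfworward} applied to $\canon_X\otimes_{\sD_X}\sheaf M$ and the outer isomorphisms are the identity of the first paragraph. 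The points needing care are that the hypotheses of the Proposition — $f^{-1}\logMbargp_Y\to\logMbargp_X$ injective with torsion-free cokernel — are exactly those required by Theorem~\ref{thm:tau_and_O-pushfworward}, and that $\canon_X\otimes_{\sD_X}\sheaf M$ genuinely lands in $\catDbOMod{X}$ so that Gray's theorem applies. The substantive content is entirely carried by that theorem; I expect the most delicate (though still routine) step to be checking that the naive compatibility of the second paragraph respects all the module structures, so that the outer isomorphisms are isomorphisms of $\Lambda$-graded $\Clog_Y$-modules and not merely of underlying sheaves.
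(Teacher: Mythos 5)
Your proof is correct and rests on the same three ingredients as the paper's: Gray's Theorem~\ref{thm:tau_and_O-pushfworward}, a projection formula, and the transfer-module identity $f^{-1}\canon_Y\otimes_{f^{-1}\sD_Y}\sD_{Y\from X}\cong\canon_X$; what differs is the order in which they are deployed. The paper first rewrites $\tau_Y^*\canon_Y\otimes_{\tDlog_Y}(-)$ as $\tau_Y^{-1}\canon_Y\otimes_{\tau_Y^{-1}\sD_Y}(-)$, applies Gray's theorem to $\sD_{Y\from X}\otimes_{\sD_X}\sheaf M$, and only then pulls $\canon_Y$ inside $f_{\log,\gr,*}$ using the projection formula for \emph{graded} topological spaces \cite[Proposition~4.13]{Koppensteiner:graded_top}. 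You instead perform the projection formula downstairs on $Y$ (the ordinary $f_*$ against the perfect $\sD_Y$-complex $\canon_Y$, for which properness is not even needed) before ever passing to the Kato--Nakayama space, so that Gray's theorem is applied to $\canon_X\otimes_{\sD_X}\sheaf M$ directly. This buys a slightly more elementary argument --- the graded-space projection formula is never invoked --- at essentially no cost. The one subtlety you flag, namely that $\canon_X\otimes_{\sD_X}\sheaf M$ is not literally an object of $\catDbOMod{X}$ (its de Rham representative has only $\CC$-linear differentials), is real but is present in identical form in the paper's proof, where Gray's theorem is applied to $\sD_{Y\from X}\otimes_{\sD_X}\sheaf M$; in both cases one applies the base-change morphism termwise to the Spencer/relative de Rham resolution and checks compatibility with the differentials.
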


\begin{proof}
    To start, note that by Theorem~\ref{thm:tau_and_O-pushfworward}
    \begin{align*}
        \tDR_Y(f_\spf \sheaf M) & =
        \tau_Y^*\canon_Y \otimes_{\tDlog_Y} \tau_Y^* f_*( \sD_{Y \from X} \otimes_{\sD_X} \sheaf M) \\&\cong
        \tau_Y^{-1}\canon_Y \otimes_{\tau_Y^{-1} \sD_Y} \tau_Y^* f_*( \sD_{Y \from X} \otimes_{\sD_X} \sheaf M) \\&\cong
        \tau_Y^{-1}\canon_Y \otimes_{\tau_Y^{-1} \sD_Y} f_{\log,\gr,*}\tau_X^*( \sD_{Y \from X} \otimes_{\sD_X} \sheaf M).
    \end{align*}
    As $f$ is proper, so is $f_{\log}$.
    Hence by the projection formula for graded spaces \cite[Proposition~4.13]{Koppensteiner:graded_top} the above is isomorphic to
    \begin{multline*}
        f_{\log,\gr,*} \bigr( f_{\log}^{-1} \tau_Y^{-1}\canon_Y \otimes_{f_{\log}^{-1}\tau_Y^{-1} \sD_Y} \tau_X^*( \sD_{Y \from X} \otimes_{\sD_X} \sheaf M)\bigr) \\ \cong
        f_{\log,\gr,*} \bigr( \tau_X^{-1} f^{-1} \canon_Y \otimes_{\tau_X^{-1} f^{-1} \sD_Y} \tau_X^{-1}\sD_{Y \from X} \otimes_{\tau_X^{-1}\sD_X} \tau_X^*\sheaf M\bigr).
    \end{multline*}
    As in the classical setting, one has a canonical isomorphism $f^{-1} \canon_Y \otimes_{f^{-1}\sD_Y} \sD_{Y \from X} \cong \canon_X$.
    Hence the above isomorphic to
    \[
        f_{\log,\gr,*} \bigr( \tau_X^{-1} \canon_X \otimes_{\tau_X^{-1}\sD_X} \tau_X^*\sheaf M)\bigr) \cong
        f_{\log,\gr,*} \bigr( \tDR_X(\sheaf M)\bigr).
        \qedhere
    \]
\end{proof}

\subsection{Finiteness}

Write $\catDbfClogMod{X}$ for the subcategory of $\catDbClogMod{X}$ consisting of all $\sheaf F$ such that for all $x \in \KN X$ the stalk $\sheaf F_x$ is finitely generated over $\Clog_{X,x}$.

\begin{Theorem}\label{thm:finiteness}
    Let $X$ be a smooth idealized log variety and let $\sheaf M \in \catDbholDMod{X}$.
    Then $\tDR(\sheaf M) \in \catDbfClogMod{X}$.
\end{Theorem}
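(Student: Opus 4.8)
The plan is to reduce the statement to the case of an integrable connection, where it becomes the finiteness already built into Ogus's Riemann--Hilbert correspondence, and then to propagate finiteness through a dévissage of $\sheaf M$ along the dimension of its support.

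First, the assertion is local on $\KN X$, hence on $X$. Since $\tDR_X$ is triangulated and the stalks $\Clog_{X,x}$ are noetherian (being quotients of monoid algebras of finitely generated monoids), the subcategory $\catDbfClogMod{X}$ is closed under shifts and cones; applying $\tDR_X$ to the truncation triangles of $\sheaf M$ therefore reduces us to a single holonomic module $\sheaf M \in \catCohDMod{X}$, which we may assume to carry a good filtration. If $\sheaf M$ is moreover an integrable connection, i.e.\ $\sO_X$-coherent, then by Lemma~\ref{lem:DR_and_Ogus} the complex $\tDR_X(\sheaf M)$ is, up to a shift, the coherent $\Clog_X$-module $\sheaf V_X(\sheaf M)$ of \cite[Definition~3.2.4]{Ogus}, whose stalks are finitely generated by construction. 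This is the base case.

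For the general case I would induct on $d = \dim \supp \sheaf M$. Because $\sheaf M$ is holonomic there is a divisor $Z$ such that $\res{\sheaf M}{X \setminus Z}$ is an integrable connection, $\supp \sheaf M \setminus Z$ is smooth, and $\dim(Z \cap \supp \sheaf M) < d$. Writing $\sheaf M(*Z)$ for the localization and $\Gamma_Z \sheaf M$ for the (derived) local cohomology along $Z$, there is a distinguished triangle
\[
    \Gamma_Z \sheaf M \to \sheaf M \to \sheaf M(*Z) \xrightarrow{+1},
\]
and by exactness of $\tDR_X$ it suffices to treat the two outer terms. The module $\Gamma_Z \sheaf M$ is holonomic with support of dimension strictly less than $d$, so it is covered by the inductive hypothesis. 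We are thus reduced to proving finiteness for $\tDR_X(\sheaf M(*Z))$, a holonomic module whose restriction to $X \setminus Z$ is a connection on a smooth variety.

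The hard part is this last step, which is already the crux of the classical constructibility theorem for the de Rham complex. Here I would resolve the support and absorb $Z$ into the boundary: choose a projective log modification $\nu\colon X' \to X$ with $X'$ smooth such that $\nu^{-1}\logMbargp_X \to \logMbargp_{X'}$ is injective with torsion-free cokernel and such that $\sheaf M(*Z)$ is realized by $\nu_\spf$ of a holonomic $\sD_{X'}$-module $\sheaf M'$ whose support is smooth and full-dimensional and which, along the enlarged boundary, is a log connection to which the base case applies. Granting such a presentation, Proposition~\ref{prop:DR_commutes_with_proper_pushforward} identifies $\tDR_X(\sheaf M(*Z))$ with $\nu_{\log,\gr,*}\tDR_{X'}(\sheaf M')$, while Theorem~\ref{thm:projective_pf_preserves_holonomic} guarantees that holonomicity is preserved; finiteness of the stalks then follows from the finiteness of a proper pushforward of graded sheaves along the torus-bundle map $\KN{X'} \to \KN X$, as provided by the Poincar\'e--Verdier formalism of \cite{Koppensteiner:graded_top}. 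The two points I expect to require genuine care are precisely the construction of the modification $\nu$ and of $\sheaf M'$ realizing the meromorphic extension $\sheaf M(*Z)$ as a pushforward of a log connection (a logarithmic incarnation of Deligne's canonical extension together with resolution of singularities of $\supp \sheaf M$), and the verification that $\nu_{\log,\gr,*}$ preserves finite generation over $\Clog_X$ at the level of stalks.
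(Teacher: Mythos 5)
Your reduction to a single holonomic module and your base case (integrable connections via Lemma~\ref{lem:DR_and_Ogus} and Ogus's theorem) are fine, but the core of your argument has a genuine gap. Your induction is on $d = \dim\supp\sheaf M$, and the triangle $\Gamma_Z\sheaf M \to \sheaf M \to \sheaf M(*Z)$ only lowers the support dimension for the \emph{first} term; the term $\sheaf M(*Z)$ still has support of dimension $d$, so the induction does not close and everything rests on the separate argument you sketch for $\sheaf M(*Z)$. That argument --- realizing $\sheaf M(*Z)$ as $\nu_\spf$ of an integrable log connection on a projective log modification $X'$ --- is not a technical verification but a theorem-level assertion: it is essentially a logarithmic Deligne canonical extension combined with a resolution of the singularities (and, for non-regular modules, of the turning points) of $\sheaf M$, and nothing in the paper or in \cite{Ogus} provides it. There is also a secondary unproven ingredient: that $\Gamma_Z\sheaf M$ (equivalently $\sheaf M(*Z)$) is again holonomic over $\sD_X$ is only established in this paper in the single-stratum case (inside the proof of Lemma~\ref{lem:single_stratum_hol}), not for a general smooth idealized log variety.

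The paper avoids both issues by inducting on $\dim X$ rather than on $\dim\supp\sheaf M$, and by never extending $\sheaf M$ itself to a nice form. Near a point of the divisor $Z$ outside which $\sheaf M$ is a connection, Lemma~\ref{lem:straighten_divisor} (Weierstra{\ss} preparation) writes $X \cong X' \times D^1$ with $\res{p}{Z}$ finite over $X'$; after extending $\sheaf M$ across $X' \times \ps1$, one applies the open/closed triangle $j_{\gr,!}j_{\gr}^{-1}\sheaf F \to \sheaf F \to i_{\gr,!}i_{\gr}^{-1}\sheaf F$ for $\sheaf F = \tDR_X(\sheaf M)$ on the Kato--Nakayama space and pushes it forward along $p_{\log}$. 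The open term has finitely generated stalks by Ogus plus Lemma~\ref{lem:proper_pf_and_fg}; the middle term is $\tDR_{X'}(p_\spf\sheaf M)$ by Proposition~\ref{prop:DR_commutes_with_proper_pushforward}, which is covered by the induction hypothesis since $p_\spf\sheaf M$ is holonomic by Theorem~\ref{thm:projective_pf_preserves_holonomic} and $\dim X' = \dim X - 1$; finiteness on $\hat Z$ is then read off from the triangle and the finiteness of $p_{\log}\circ i$. If you want to salvage your approach you would need to supply the canonical-extension/modification statement as an independent theorem, which is at least as hard as the finiteness theorem itself; otherwise you should switch to the projection argument.
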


In the following, $D^n$ is an $n$-dimensional polydisk with the trivial log structure.

\begin{Lemma}\label{lem:straighten_divisor}
    Let $X$ be a smooth idealized analytic log variety.
    Let $Z$ be a divisor and fix a point $x \in Z$.
    Let $X^k$ be the smallest closed log stratum with $x \in X^k$ and assume that $X^k \cap Z$ is contained in a divisor of $X^k$.
    Then there exists a smooth analytic idealized log variety $X'$ and locally around $x$ an isomorphism $X \cong X' \times D^1$ such that the restriction of the projection $X \to X'$ to $Z$ is finite.
\end{Lemma}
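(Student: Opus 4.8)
The plan is to reduce the statement to an application of the Weierstrass preparation theorem, using the local structure of smooth log varieties to single out a coordinate direction transverse to the log structure.

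First I would invoke the local structure theory underlying Proposition~\ref{prop:D_X_in_coords}: analytically locally around $x$ the variety $X$ is isomorphic to a product $V \times D^n$, where $V$ is a (possibly singular, idealized) toric variety carrying the entire log structure and $D^n$ is a polydisk with trivial log structure, with coordinates $x_1, \dots, x_n$ furnished by a regular sequence of maximal length on the deepest stratum. Under this identification the smallest closed log stratum $X^k$ through $x$ is $\{v\} \times D^n \cong D^n$, where $v$ is the vertex of $V$, and $x$ corresponds to $(v,0)$. Any decomposition $X \cong X' \times D^1$ that we produce with $D^1$ carrying the trivial log structure must therefore have its $D^1$-factor among the free coordinates $x_1, \dots, x_n$, since these are exactly the directions transverse to the log structure.

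Next I would choose a local equation $f \in \sO_{X,x}$ for the divisor $Z$. The hypothesis that $X^k \cap Z$ is contained in a divisor of $X^k$ says precisely that $Z$ does not contain $X^k$; hence $\dim X^k \ge 1$ (so $n \ge 1$) and the restriction $f|_{X^k} \in \CC\{x_1, \dots, x_n\}$ is not identically zero. The $x_n$-axis through $x$ lies inside $X^k$, so the restriction of $f$ to this axis agrees with the restriction of $f|_{X^k}$ to the same axis; since $f|_{X^k} \ne 0$, a generic $\CC$-linear change of the coordinates $x_1, \dots, x_n$ makes $f|_{X^k}$, and therefore $f$ itself, regular in $x_n$. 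Crucially, such a change only mixes the trivial-log coordinates and so preserves the product decomposition $X \cong V \times D^n$.

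Finally I would set $X' = V \times D^{n-1}$ (forgetting the coordinate $x_n$) and let $D^1$ be the $x_n$-disk, so that $X \cong X' \times D^1$ is an isomorphism of idealized log varieties with $D^1$ carrying the trivial log structure. Since $f$ is $x_n$-regular, the Weierstrass preparation and division theorem over the analytic local ring $\sO_{X',x'}$—which remains valid even though $V$, and hence $X'$, may be singular—shows that $f$ agrees up to a unit with a monic polynomial in $x_n$; consequently $\sO_{X,x}/(f)$ is a finite $\sO_{X',x'}$-module. After shrinking to suitable polydisk neighborhoods this exhibits the restriction of the projection $X \to X'$ to $Z$ as a finite map, as required. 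I expect the one genuinely delicate point to be this first reduction: one must know that the Weierstrass direction can be taken transverse to the log structure, and it is exactly the hypothesis on $X^k \cap Z$ that guarantees $f|_{X^k} \ne 0$ and hence allows the Weierstrass variable to be chosen among the trivial-log coordinates of $D^n$. Without this assumption (for instance if $Z \supseteq X^k$) no such transverse direction need exist and the conclusion can fail.
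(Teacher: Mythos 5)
Your proposal is correct and takes essentially the same route as the paper: both rest on the local product decomposition of $X$ into a log model times a polydisk with trivial log structure, observe that the hypothesis on $X^k \cap Z$ forces the local equation of $Z$ to restrict to a nonzero germ on $X^k$ so that the Weierstrass direction can be chosen among the trivial-log coordinates, and then conclude by Weierstrass preparation. The only (harmless) divergence is at the last step: you apply Weierstrass division directly over the possibly singular analytic algebra $\sO_{X',x'}$, whereas the paper applies classical Weierstrass preparation on the classically smooth stratum $\ul X^k$ to split off the $D^1$-factor and then upgrades finiteness of $\tilde Z \to \tilde X'$ to finiteness of $Z \to X'$ by upper semicontinuity of fiber dimension after shrinking; both variants are standard.
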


\begin{proof}
    Set $d = \dim X$.
    By \cite[Corollary~IV.3.3.4]{logbook}, locally at $x$ we can assume that $X$ is of the form $Y \times D^{d-k}$ for some $k$-dimensional idealized log variety $Y$ such that $X^k = Y^k \times D^{d-k}$.
    Set $\tilde Z = X^k \cap Z$.
    Then, using the Weierstra\ss\ Preparation Theorem, locally around $x$ there exists an identification $\ul X^k \cong \tilde X' \times D^1$ with $\tilde X'$ (classically) smooth such that the restriction of the projection $\ul X^k \to \tilde X'$ to $\tilde Z$ is finite.
    Set $X' = Y \times \tilde X'$ and let $p$ be the projection $X = X' \times D^1 \to X'$.
    Then by upper semicontinuity for a small enough neighborhood of $x$ the restriction of $p$ to $Z$ is finite .
\end{proof}

\begin{Lemma}\label{lem:fg_and_grading_restriction}
    Let $\Lambda$ be an abelian group, $R$ a $\Lambda$-graded ring and $M$ a $\Lambda$-graded $R$-module.
    For a subgroup $\Lambda' \subseteq \Lambda$ write $R'$ for the $\Lambda'$-graded subring of $R$ generated by the homogeneous elements of $R$ with degrees in $\Lambda'$.
    Analogously, let $M'$ be the submodule of $M$ generated by homogeneous elements of $M$ with degree in $\Lambda'$.
    If $M$ is finitely generated as an $R$-module, then $M'$ is finitely generated as an $R'$-module.
\end{Lemma}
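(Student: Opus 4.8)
The plan is to pass from the $R'$-module $M'$ to a genuine $R$-submodule of $M$, use finite generation there, and then descend the generators back to $R'$ by a degree count. Set $N = R\cdot M'$, the $R$-submodule of $M$ generated by $M'$. Since $M'=\bigoplus_{\lambda'\in\Lambda'}M_{\lambda'}$ is spanned by homogeneous elements whose degrees lie in $\Lambda'$, the submodule $N$ is graded, and for $\lambda'\in\Lambda'$ one has $M_{\lambda'}=R_0 M_{\lambda'}\subseteq N_{\lambda'}\subseteq M_{\lambda'}$, so that $N_{\lambda'}=M_{\lambda'}$. In other words $M'$ is exactly the $\Lambda'$-graded part $\bigoplus_{\lambda'\in\Lambda'}N_{\lambda'}$ of $N$.

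Granting that $N$ is finitely generated as an $R$-module, I would first choose finitely many homogeneous generators $x_1,\dots,x_p$ of $N$ lying in $M'$. This is possible because $N$ is generated by the full set $M'$, so already finitely many elements of $M'$ generate it, and one may replace each by its (finitely many) homogeneous components, which again lie in $M'$. Write $\lambda'_j=\deg x_j\in\Lambda'$.

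The descent to $R'$ is then a one-line degree argument. Let $y\in M_{\lambda'}$ be homogeneous with $\lambda'\in\Lambda'$; since $y\in N_{\lambda'}$ we may write $y=\sum_j r_j x_j$ with each $r_j\in R$ homogeneous of degree $\lambda'-\lambda'_j$. As $\lambda'$ and $\lambda'_j$ both lie in the subgroup $\Lambda'$, so does $\lambda'-\lambda'_j$, and hence $r_j\in R_{\lambda'-\lambda'_j}\subseteq R'$. Thus every homogeneous element of $M'$ is an $R'$-linear combination of $x_1,\dots,x_p$, and since these span $M'$ we conclude $M'=\sum_j R'x_j$ is finitely generated over $R'$.

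The main obstacle is the one hypothesis I used but did not yet justify: that $N=R\cdot M'$ is finitely generated over $R$. This is genuinely needed, and finite generation of $M$ alone does not suffice for an arbitrary graded ring (for instance a square-zero extension of a field with an infinite-dimensional piece in degree $-1$ and a free generator in degree $1$ produces an $M'$ that is not finitely generated). In the setting where the lemma is applied, however, $R$ is a stalk $\Clog_{X,x}$, hence a quotient of the monoid algebra of a finitely generated monoid over the noetherian base ring and therefore noetherian; finite generation of the submodule $N\subseteq M$ of the finitely generated module $M$ is then automatic, and the argument goes through.
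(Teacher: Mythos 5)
Your degree-descent argument is sound and is, at bottom, the same bookkeeping the paper uses: the paper works with the projections $\phi\colon R \to R'$ and $\psi\colon M \to M'$ and the identity $\psi(rm)=\phi(r)m$ for $m\in M'$, which is exactly your observation that in $y=\sum_j r_jx_j$ only the components of the $r_j$ with degree in $\Lambda'$ can contribute to a component whose degree lies in $\Lambda'$. The genuinely different part of your write-up is also the most valuable one: you are right that the lemma as stated is false, and your counterexample works. Take $R=k\oplus V$ with $V$ an infinite-dimensional vector space placed in degree $-1$ and $V^2=0$, and let $M=R$ with the grading shifted so that the generator sits in degree $1$; then $M$ is cyclic, while for $\Lambda'=0$ one gets $M'=M_0\cong V$, which is not finitely generated over $R'=R_0=k$. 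The paper's own proof breaks at precisely the point you isolate: the sentence asserting that, as $M$ is finitely generated, the chain $RM'_N=RM'_{N+1}=\dotsb$ must stabilize is an appeal to the ascending chain condition for $R$-submodules of $M$, which does not follow from finite generation of $M$ alone (in the example above the chain $RM'_n$ is strictly increasing).

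Your repair is the correct one for the way the lemma is actually used. In Lemma~\ref{lem:proper_pf_and_fg} the ring $R$ is a stalk of $\Clog_X$, i.e.\ a quotient of the monoid algebra $\CC[-\logMbar_{X,x}]$ of a finitely generated monoid, hence noetherian; then $N=RM'$ is automatically a finitely generated $R$-module and both your argument and the paper's go through verbatim. So the statement should either carry a noetherian hypothesis on $R$ (equivalently, an ACC hypothesis on $M$) or be restricted to the rings in play; with that amendment nothing downstream (in particular Theorem~\ref{thm:finiteness}) is affected.
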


\begin{proof}
    Let $\phi\colon R \to R'$ and $\psi\colon M \to M'$ be the projection maps.
    Then for $r \in R$ and $m \in M' \subseteq M$ one has $\psi(rm) = \phi(r)m$.

    Assume for contradiction that $M'$ is not finitely generated.
    Then there exists an infinite sequence $m_1, m_2, \dotsc$ in $M'$ such that the submodules $M_n' = \langle m_1,\dots,\,m_n \rangle$ form an infinite asscending sequence.
    On the other hand, as $M$ is finitely generated, there must exist an index $N$ such that $RM'_N = RM'_{N+1} = \dots$.
    Thus, for any $n > N$ there exists $r_i \in R$ with $m_n = \sum_{i=1}^N r_im_i$.
    But then also $m_n = \psi(m_n) = \sum_{i=1}^N \phi(r_i)m_i$ and the sequence $M_n'$ stabilizes.
\end{proof}

We call a sheaf $\sheaf F \in \catClogMod{X}$ \emph{poorly constructible} if there exists a stratification $\KN X = \bigsqcup_\alpha X_\alpha$ by subanalytic subsets such that all restrictions $\res{\Lambda}{X_\alpha}$, $\res{\Clog}{X_\alpha}$ and $\res{\sheaf F}{X_\alpha}$ are locally constant as ordinary sheaves.
A complex in $\catDbClogMod{X}$ is poorly constructible if all its cohomology sheaves are.

\begin{Lemma}\label{lem:proper_pf_and_fg}
    Let $f\colon X \to Y$ be a proper morphism of log varieties and assume that the induced map $f^{-1}\logMbargp_Y \to \logMbargp_X$ is injective.
    Let $\sheaf F \in \catDbClogMod{X}$ be poorly constructible and assume that for each $x \in \KN X$ the stalk $\sheaf F_x$ is finitely generated over $\Clog_{X,x}$.
    Then for each $y \in \KN Y$ the stalk $(f_{\log,\gr,*}\sheaf F)_y$ is finitely generated over $\Clog_{Y,y}$.
\end{Lemma}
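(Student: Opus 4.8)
The plan is to identify the stalk $(f_{\log,\gr,*}\sheaf F)_y$ with the cohomology of the fibre of $f_{\log}$ over $y$, and then to control the grading so that Lemma~\ref{lem:fg_and_grading_restriction} reduces finite generation over $\Clog_X$ to finite generation over $\Clog_Y$. Since $f$ is proper, the induced map $f_{\log}\colon \KN X \to \KN Y$ is proper, so $f_{\log,\gr,*} = f_{\log,\gr,!}$, and proper base change for graded topological spaces \cite{Koppensteiner:graded_top} identifies $(f_{\log,\gr,*}\sheaf F)_y$ with the graded cohomology $R\Gamma(f_{\log}^{-1}(y),\, \res{\sheaf F}{f_{\log}^{-1}(y)})$ of the compact fibre. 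Unwinding the definition of $f_{\log,\gr,*}$, the degree-$\mu$ part of this stalk (for $\mu \in \Lambda_{Y,y}$) is $R\Gamma(f_{\log}^{-1}(y),\, \sheaf F_{f^\flat\mu})$; thus only the part of $\sheaf F$ whose degrees lie in the subgroup $\Lambda' = f^\flat(\Lambda_{Y,y}) \subseteq \Lambda_{X,x}$ contributes, and the $\Clog_{Y,y}$-action is induced through the structure map $f^\sharp$.

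Next I would apply Lemma~\ref{lem:fg_and_grading_restriction} stalkwise. For a point $x$ of the fibre, take $R = \Clog_{X,x}$ with its $\Lambda_{X,x}$-grading and the subgroup $\Lambda'$; writing $R'$ for the $\Lambda'$-graded subring and $\sheaf F'_x$ for the $\Lambda'$-graded submodule of $\sheaf F_x$, the hypothesis that $\sheaf F_x$ is finitely generated over $\Clog_{X,x}$ gives that $\sheaf F'_x$ is finitely generated over $R'$. This is where the injectivity of $f^{-1}\logMbargp_Y \to \logMbargp_X$ enters: it exhibits $\Lambda'$ as a genuine copy of $\Lambda_{Y,y}$ inside $\Lambda_{X,x}$ and, through $f^\sharp$, makes $R'$ compatible with $\Clog_{Y,y}$, so that finite generation over $R'$ yields finite generation of the relevant $\Lambda'$-graded data over $\Clog_{Y,y}$. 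Globalising the submodules $\sheaf F'_x$ produces a poorly constructible subsheaf $\sheaf F' \subseteq \sheaf F$ of modules over $f_{\log,\gr}^{-1}\Clog_Y$ whose fibre cohomology is, as a $\Clog_{Y,y}$-module, exactly $(f_{\log,\gr,*}\sheaf F)_y$.

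It then remains to prove that $R\Gamma(f_{\log}^{-1}(y),\, \sheaf F')$ is finitely generated over the ring $\Clog_{Y,y}$, which is noetherian because $\logMbar_{Y,y}$ is a finitely generated monoid. For this I would use poor constructibility together with compactness of the fibre: choosing a finite subanalytic stratification adapted to $\sheaf F'$, the iterated cone (or spectral sequence) built from the compactly supported cohomologies $R\Gamma_c(S,\, \res{\sheaf F'}{S})$ of the strata computes $R\Gamma(f_{\log}^{-1}(y),\, \sheaf F')$. On each stratum $S$ the sheaf $\res{\sheaf F'}{S}$ is locally constant with a finitely generated $\Clog_{Y,y}$-stalk, while $R\Gamma_c(S,\CC)$ is finite dimensional, so each contribution is finitely generated over $\Clog_{Y,y}$; noetherianity then propagates finite generation through the finitely many extensions that assemble the total cohomology.

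The main obstacle I anticipate is twofold and lives in the last two steps. First, there is the bookkeeping identifying the $\Lambda'$-graded subring $R'$ and its module with the pullback of $\Clog_Y$ along $f^\sharp$, which is precisely where the injectivity hypothesis must be exploited and where the subtleties of the log structures enter. Second, there is the graded analogue of the finiteness of cohomology of constructible sheaves on a compact space, namely establishing that the compact-fibre cohomology of a poorly constructible, stalkwise finitely generated $\Clog$-module is again finitely generated.
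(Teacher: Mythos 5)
Your proposal is correct and follows essentially the same route as the paper: the paper factors $f_{\log}$ through the intermediate ringed graded space $(\KN X,\, f^{-1}\Lambda_Y,\, f^{-1}\Clog_Y)$, handling the change of grading via Lemma~\ref{lem:fg_and_grading_restriction} and the topological pushforward by the classical properness/constructibility argument on compact fibres, which is exactly your two-step decomposition (you merely unpack the ``classical'' step with the stratification and noetherianity argument that the paper leaves implicit). The points you flag as obstacles are precisely the points the paper also treats at this level of brevity, so there is no gap relative to the paper's own proof.
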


\begin{proof}
    Let $Z = (\KN X,\, f^{-1}\Lambda_Y,\, f^{-1}\Clog_Y)$, and let $g\colon \KN X \to Z$ and $h \colon Z \to \KN Y$ be the induced maps.
    Then the fact that $h$ preserves finite generation of stalks is classical upon base-changing to a point $y \in \KN Y$, while for $g$ it follows from Lemma~\ref{lem:fg_and_grading_restriction}.
\end{proof}

\begin{proof}[Proof of Theorem~\ref{thm:finiteness}]
    It clearly suffices to prove the theorem in the case that $X$ is analytic.
    As $\tau^*\canon_X$ has a finite length projective resolution over $\tDlog_X$, $\tDR_X$ respects the boundedness condition of the derived categories.
    We have to show that all stalks of $\tDR_X(\sheaf M)$ are finitely generated.

    It clearly suffices to show the statement for $\sheaf M$ in the abelian category $\catHolDMod{X}$.
    We induct on the dimension of $X$.
    If $\dim X = 0$, then $\sheaf M$ is necessarily $\sO_X$-coherent and the statement is an immediate consequence of \cite[Theorem~3.4.2]{Ogus:2003:LogarithmicRiemannHilbertCorrespondence}.
    Thus assume that the theorem holds for all smooth idealized analytic log varieties of dimension strictly less that $\dim X$.

    As $\sheaf M$ is holonomic, $\logdim \Ch(\sheaf M) = \logdim X$ and hence generically $\Ch(\sheaf M)$ is contained in the zero section of $T^*X$.
    Thus there exists a divisor $Z$ of $X$ such that $\sheaf M$ is an integrable connection away from $Z$.
    As $\sheaf M$ is holonomic, so are the restrictions to log strata, and hence by \cite[Lemma~3.17]{KT} the restrictions $\res{\Ch(\sheaf M)}{X^k}$ are generically contained in the zero section.
    Thus $Z \cap X^k$ has at least codimension $1$ in $X^k$.

    Let now $x \in X$ be any point.
    If $x \notin Z$, then $\sheaf M$ is an integrable connection in a neighborhood of $x$ and hence for points of $\KN X$ lying over $x$ the statement follows from \cite[Theorem~3.4.2]{Ogus:2003:LogarithmicRiemannHilbertCorrespondence}.
    Otherwise by Lemma~\ref{lem:straighten_divisor}, there exists a smooth analytic idealized log variety $X'$ with $\dim X' = \dim X - 1$ such that locally $X = X' \times D^1$ and the restriction of the projection $X \to X'$ to $Z$ is finite.
    Shrinking $X$ if necessary, we can assume that $\sheaf M$ has a global good filtration.
    
    As $\res{p}{Z}$ is finite, there exists a radius $0< r < 1$ such that $\{x\} \times \{z : |z| = r\}$ does not intersect $Z$.
    Thus by shrinking $X'$ and $D^1$ if necessary we can assume that the closure of $Z$ in $X' \times \overline D^1$ does not intersect $X' \times \partial D^1$.

    It follows that $(X' \times D^1) \setminus Z$ is a retraction of $(X' \times \CC) \setminus Z$ in a way that is compatible with the log structure.
    Hence \cite[Theorem~3.4.2]{Ogus:2003:LogarithmicRiemannHilbertCorrespondence} implies that the integrable connection $\res{\sheaf M}{X'\times D^1\setminus Z}$ can be extended to an integrable connection on $(X' \times \CC)\setminus Z$.
    As this keeps $\sheaf M$ unchanged in a neighborhood of $Z$, this gives an extension of $\sheaf M$ to a holonomic D-module on $X' \times \CC$.
    Shrinking $X'$ if necessary we can extend $\sheaf M$ to a coherent D-module on $X' \times \ps 1$, where we give $\ps 1$ the log structure defined by just the point at infinity.
    Using \cite[Lemma~3.18]{KT}, we can further assume that $\res{\Ch(\sheaf M)}{X' \times \{\infty\}}$ is contained in the zero section.
    
    To summarize, we can assume that we are in the following situation:
    $\sheaf M$ is a holonomic D-module on $X = X' \times \ps 1$ (with the log structure of $\ps 1$ as above) with a global good filtration and there exists a divisor $Z$ of $X$, not intersecting $X' \times \{\infty\}$, such that the restriction of $\sheaf M$ to $U = X \setminus Z$ is an integrable connection and the restriction of the projection $p\colon X \to X'$ to $Z$ is finite.

    Let $\hat U = \tau_X^{-1}(U)$ and $\hat Z = \tau^{-1}(Z)$ be the ringed graded topological spaces obtained by restricting the structure of $(\KN X,\, \Lambda,\, \Clog_X)$ (i.e.~as topological spaces these are $\KN U$, resp.~$\KN Z$, but the latter is potentially endowed with a different sheaf of rings).
    In particular the restriction of the map $p_{\log}\colon \KN{X} \to \KN{X'}$ to $\hat Z$ is finite and strict.
    Write $j\colon \hat U \to \KN{X}$  and $i \colon \hat Z \to \KN{X}$ for the inclusions.
    Set $\sheaf F = \tDR_{X}(\sheaf M)$.

    We obtain a distinguished triangle (see \cite[Lemma~4.12]{Koppensteiner:graded_top})
    \[
        j_{\gr,!}j_{\gr}^{-1} \sheaf F
        \to
        \sheaf F
        \to
        i_{\gr,!}i_{\gr}^{-1} \sheaf F.
    \]
    Applying $p_{\log,\gr,*} = p_{\log,\gr,!}$ this becomes
    \begin{equation}\label{eq:pf:thm:finiteness:triangle}
        (p_{\log} \circ j)_{\gr,!} j_{\gr}^{-1} \sheaf F
        \to
        p_{\log,\gr,*} \sheaf F
        \to
        (p_{\log} \circ i)_{\gr,!}i_{\gr}^{-1} \sheaf F.
    \end{equation}
    Since $\hat U$ is open, we have $j_{\gr}^{-1} \sheaf K \cong \tDR_{U}(\res{\sheaf F}U)$, which has finitely generated stalks by \cite[Theorem~3.4.2]{Ogus:2003:LogarithmicRiemannHilbertCorrespondence}.
    As $\hat Z$ is a subanalytic subset of $\KN X$ it follows that $j_{\gr,!}j_{\gr}^{-1}\sheaf F$ is poorly constructible and has finitely generated stalks.
    Thus by Lemma~\ref{lem:proper_pf_and_fg} $(p_{\log} \circ j)_{\gr,!} j_{\gr}^{-1} \sheaf F$ also has finitely generated stalks.

    Further, since $p$ is a proper projection, Proposition~\ref{prop:DR_commutes_with_proper_pushforward} implies that
    \[
        p_{\log,\gr,*}\sheaf F = p_{\log,\gr,*}\tDR_{X' \times \ps 1}(\sheaf M) = \tDR_{X'}(p_\spf \sheaf M).
    \]
    As $\sheaf M$, and hence also $p_\spf \sheaf M$, is holonomic (Theorem~\ref{thm:projective_pf_preserves_holonomic}), by the induction hypothesis $p_{\log,\gr,*} \sheaf F$ has finitely generated stalks.
    Thus from the triangle \eqref{eq:pf:thm:finiteness:triangle} one sees that so has $(p_{\log} \circ i)_{\gr,!}i_{\gr}^{-1} \sheaf F$.
    As $p_{\log} \circ i$ is strict and finite, one sees immediately by base changing to a point that $i_{\gr}^{-1} \sheaf F = i_{\gr}^{-1}\tDR_X(\sheaf M)$ also has finitely generated stalks.
    So in particular this is true for any points lying over $x$.
\end{proof}

\subsection{Duality}

As in Definition~\ref{def:tlogdc} we let $\tlogdc_{\KN X} = \Psi(\logdc_{\KN X})$ be the dualizing complex of $\catDbClogModMon{X}$.
The corresponding duality functor is
\[
    \tDD_{\KN X} = \sheafHom_{\Clog_X}({-},\tlogdc_{\KN X}).
\]

\begin{Lemma}
    Let $X$ be a smooth idealized analytic log variety.
    There exists a canonical morphism
    \[
        \tDR_X \circ \DVerdier_X \to \tDD_{\KN X} \circ \tDR_X
    \]
    of functors from $\catDbcohDMod{X}$ to $\catDbClogModMon{X}$.
\end{Lemma}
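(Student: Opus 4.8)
The plan is to produce the morphism after a tensor--hom adjunction, by realizing both functors as $\sheafHom$-complexes over $\tDlog_X$ and then using composition of morphisms together with a single comparison map into the dualizing complex. Throughout I work in $\catDbClogModMon{X}$, where the $\Clog_X$-linear tensor--hom adjunction of \cite{Koppensteiner:graded_top} is available.

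First I would rewrite the two sides as Hom-complexes. By Lemma~\ref{lem:DR_as_Hom} one has $\tDR_X\sheaf M\cong\sheafHom_{\tDlog_X}(\tOlog_X,\tau^*\sheaf M)[\logdim X]$, and in particular the same formula for $\tDR_X\twist_X$. For the dual side I would combine Lemma~\ref{lem:tau*_and_duality}, which gives $\tau^*\DVerdier_X\sheaf M\cong\tDVerdierlog_X\tau^*\sheaf M$, with the evident $\tDlog_X$-analogue of Lemma~\ref{lem:hom_and_dual} (proved verbatim, replacing $\sD_X,\sO_X,\canon_X,\DVerdier_X$ by $\tDlog_X,\tOlog_X,\tau^*\canon_X,\tDVerdierlog_X$ and invoking Lemma~\ref{lem:tau*_and_tensor}); taking the second argument to be $\tOlog_X$ this yields the identification
\[
    \tDR_X(\DVerdier_X\sheaf M)\cong\sheafHom_{\tDlog_X}(\tau^*\sheaf M,\,\tau^*\twist_X). \qquad(\star)
\]

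Next comes the formal heart of the construction. Currying the $\Clog_X$-bilinear composition pairing $\sheafHom_{\tDlog_X}(\tau^*\sheaf M,\tau^*\twist_X)\otimes_{\Clog_X}\sheafHom_{\tDlog_X}(\tOlog_X,\tau^*\sheaf M)\to\sheafHom_{\tDlog_X}(\tOlog_X,\tau^*\twist_X)$, $(g,f)\mapsto g\circ f$, gives a natural morphism
\[
    \sheafHom_{\tDlog_X}(\tau^*\sheaf M,\,\tau^*\twist_X)\to\sheafHom_{\Clog_X}\bigl(\sheafHom_{\tDlog_X}(\tOlog_X,\tau^*\sheaf M),\,\sheafHom_{\tDlog_X}(\tOlog_X,\tau^*\twist_X)\bigr).
\]
Rewriting the two inner arguments by Lemma~\ref{lem:DR_as_Hom} as $\tDR_X\sheaf M[-\logdim X]$ and $\tDR_X\twist_X[-\logdim X]$, the two shifts cancel inside the outer $\sheafHom_{\Clog_X}$, and using $(\star)$ this becomes a natural morphism $\tDR_X\DVerdier_X\sheaf M\to\sheafHom_{\Clog_X}(\tDR_X\sheaf M,\tDR_X\twist_X)$.

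It therefore remains to construct a canonical morphism $\tDR_X\twist_X\to\tlogdc_{\KN X}$ and postcompose; this last step is where the real work lies, and I expect it to be the main obstacle. I would compare the explicit complex for $\twist_X$ from Corollary~\ref{cor:complex_for_twist}, namely $\bigl(\phi_{0,*}\sO_{\tilde X^0}\to\dotsb\to\phi_{\dim X,*}\sO_{\tilde X^{\dim X}}\bigr)[\dim X]$, with the explicit complex $\bigl(\sheaf D^{-\dim X}\to\dotsb\to\sheaf D^0\bigr)$ of \eqref{eq:KN_tdc}, where $\sheaf D^i=\bigoplus_j\Clog_{X^{i+\dim X}_j}$. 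The normalization maps $\phi_k$ are finite, hence proper and strict on characteristics, so Proposition~\ref{prop:DR_commutes_with_proper_pushforward} together with Ogus's computation $\tDR(\sO)\simeq\Clog$ identifies $\tDR_X(\phi_{k,*}\sO_{\tilde X^k})$ with $\bigoplus_j\Clog_{X^k_j}$, matching the term $\sheaf D^{k-\dim X}$. The difficult part will be to verify that the de Rham functor carries the Poincar\'e residue differentials of Corollary~\ref{cor:complex_for_twist} to the stratum-inclusion differentials of \eqref{eq:KN_tdc}, and to bookkeep the various shifts (the $\logdim$ of each stratum, the $[\dim X]$, and the $[\logdim X]$ built into $\tDR$) so that they assemble correctly. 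Since $\tDR_X$ takes values in $\catDbClogModMon{X}$, the resulting map is naturally a morphism into the monodromic dualizing complex $\tlogdc_{\KN X}=\Psi(\logdc_{\KN X})$. Naturality in $\sheaf M$ is automatic, as every arrow above is functorial, and postcomposing $\sheafHom_{\Clog_X}(\tDR_X\sheaf M,\tDR_X\twist_X)\to\sheafHom_{\Clog_X}(\tDR_X\sheaf M,\tlogdc_{\KN X})=\tDD_{\KN X}\tDR_X\sheaf M$ completes the construction.
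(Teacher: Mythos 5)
Your proposal follows essentially the same route as the paper: both establish the identification $\tDR_X(\DVerdier_X\sheaf M)\cong\sheafHom_{\tDlog_X}(\tau^*\sheaf M,\tau^*\twist_X)$ via the $\tDlog_X$-analogue of Lemma~\ref{lem:hom_and_dual}, then curry the $\Clog_X$-linear composition pairing through Lemma~\ref{lem:DR_as_Hom}, and finally identify $\tDR_X(\twist_X)$ with $\tlogdc_{\KN X}$ by comparing Corollary~\ref{cor:complex_for_twist} with Proposition~\ref{prop:KN_dc} using exactness of $\tDR_X$ on integrable connections. Your extra care about matching the Poincar\'e residue differentials with the stratum-inclusion differentials of \eqref{eq:KN_tdc} is a point the paper treats rather tersely, but it is the same argument, not a different one.
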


\begin{proof}
    As Lemma~\ref{lem:hom_and_dual} also holds for $\tDlog_X$-modules, for any $\sheaf M \in \catDbcohDMod{X}$ there exists a canonical isomorphism
    \begin{align*}
        \tDR_X(\DVerdier_X\sheaf M) & =
        \tau^*\canon_X \otimes_{\tDlog_X} \tau^*(\DVerdier_X\sheaf M) \\ &\cong
        \tau^*\canon_X \otimes_{\tDlog_X} \tDVerdierlog_X \tau^*\sheaf M \\ & \cong
        \sheafHom_{\tDlog_X}(\tau^*\sheaf M,\, \tau^*\twist_X).
    \end{align*}
    Thus it suffices to construct a morphism $\sheafHom_{\tDlog_X}(\tau^*\sheaf M,\, \tau^*\twist_X) \to \tDD_{\KN X} \tDR_X(\sheaf M)$.

    As noted before, the kernel of the differential $\tOlog_X \to \tOmegalogi{1}_X$ is identified with $\Clog_X$.
    This is by definition the same as the center of $\tDlog_X$, so that the category of $\tDlog_X$-modules is enriched in $\Clog_X$-modules.
    Consider the canonical morphism
    \[
        \sheafHom_{\tDlog_X}(\tOlog_X,\tau^*\sheaf M) \otimes_{\Clog_X} \sheafHom_{\tDlog_X}(\tau^*\sheaf M,\, \tau^*\twist_X) \to \sheafHom_{\tDlog_X}(\tOlog_X,\, \tau^*\twist_X).
    \]
    By Lemma~\ref{lem:DR_as_Hom} this gives a morphism
    \[
        \tDR_X(\sheaf M) \otimes_{\Clog_X} \sheafHom_{\tDlog_X}(\tau^*\sheaf M,\, \tau^*\twist_X) \to \tDR_X(\twist_X).
    \]
    By Lemma~\ref{lem:DR_and_Ogus}, $\tDR_X$ is exact on integrable connections.
    Thus Corollary~\ref{cor:complex_for_twist} and Proposition~\ref{prop:KN_dc} imply that $\tDR_X(\twist_X)$ isomorphic to the dualizing complex $\tlogdc_{\KN X}$ of $\catDbClogModMon{X}$.
    Hence one obtains a morphism
    \[
        \sheafHom_{\tDlog_X}(\tau^*\sheaf M,\, \tau^*\twist_X) \to 
        \sheafHom_{\Clog_X}(\tDR_X(\sheaf M),\, \tlogdc_{\KN X}) =
        \tDD_{\KN X}\tDR_X(\sheaf M).
        \qedhere
    \]
\end{proof}

\begin{Theorem}\label{thm:DR_and_duality}
    Let $X$ be a smooth idealized log variety and let $\sheaf M \in \catDbholDMod{X}$.
    Then the canonical morphism $\tDR_X \circ \DVerdier_X (\sheaf M) \to \tDD_{\KN X} \circ \tDR_X (\sheaf M)$ is an isomorphism.
\end{Theorem}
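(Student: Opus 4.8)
The morphism in question was constructed in the previous lemma and is natural in $\sheaf M$; write $\operatorname{cone}(\sheaf M)$ for the cone of $\tDR_X \DVerdier_X(\sheaf M) \to \tDD_{\KN X}\tDR_X(\sheaf M)$, so that the claim is that $\operatorname{cone}(\sheaf M) = 0$. Being an isomorphism is a condition on the stalks of $\KN X$, hence a local question, and both sides are way-out functors in both directions; by the Way-out Lemma I may therefore assume that $\sheaf M$ lies in the abelian category $\catHolDMod{X}$, and (as in Theorem~\ref{thm:finiteness}) that $X$ is analytic. I would then argue by induction on $\dim X$, following exactly the geometric reduction used in the proof of Theorem~\ref{thm:finiteness}.

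The base of the induction, and more generally the case where $\sheaf M$ is an integrable connection, is handled directly. Here $\tau^*\sheaf M$ is a locally free $\tOlog_X$-module, so by Lemma~\ref{lem:DR_and_Ogus} the complex $\tDR_X(\sheaf M)$ is a shift of a coherent $\Clog_X$-module in the sense of Ogus, and from the computation in the previous lemma one has $\tDR_X(\DVerdier_X\sheaf M) \cong \sheafHom_{\tDlog_X}(\tau^*\sheaf M,\, \tau^*\twist_X)$ together with the identification $\tDR_X(\twist_X) \cong \tlogdc_{\KN X}$. The morphism defining $\operatorname{cone}(\sheaf M)$ is then induced by the evaluation pairing $\tDR_X(\sheaf M)\otimes_{\Clog_X}\sheafHom_{\tDlog_X}(\tau^*\sheaf M,\,\tau^*\twist_X)\to \tDR_X(\twist_X)$, and it suffices to see that this pairing is perfect. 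This can be checked on the open stratum $X^* = X\setminus X^1$, where $\tOlog_X$ restricts to $\sO_{X^*}$ and the statement is the classical duality for integrable connections, and then propagated to all of $\KN X$ using Ogus's description of coherent $\Clog_X$-modules \cite[Theorem~3.4.2]{Ogus:2003:LogarithmicRiemannHilbertCorrespondence}.

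For the inductive step I reduce, exactly as in the proof of Theorem~\ref{thm:finiteness} (via Lemma~\ref{lem:straighten_divisor}), to the situation $X = X'\times\ps 1$ with $\dim X' = \dim X - 1$ (the point at infinity carrying the log structure of $\ps 1$), where $\sheaf M$ is holonomic with a global good filtration, $\res{\sheaf M}{X\setminus Z}$ is an integrable connection for a divisor $Z$ finite over $X'$, and $Z$ is disjoint from $X'\times\{\infty\}$. Let $p\colon X\to X'$ be the projection. By the connection case, $\operatorname{cone}(\sheaf M)$ vanishes over $\tau_X^{-1}(X\setminus Z)$, so it is supported on $\hat Z = \tau_X^{-1}(Z)$. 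Now Proposition~\ref{prop:DR_commutes_with_proper_pushforward} gives $\tDR_{X'}\circ p_\spf \cong p_{\log,\gr,*}\circ\tDR_X$, Theorem~\ref{thm:proper_duality} gives $\DVerdier_{X'}\circ p_\spf\cong p_\spf\circ\DVerdier_X$, and Poincar\'e--Verdier duality for graded spaces \cite{Koppensteiner:graded_top}, together with the properness of $p_{\log}$ (so that $p_{\log,\gr,!} = p_{\log,\gr,*}$), gives $\tDD_{\KN{X'}}\circ p_{\log,\gr,*}\cong p_{\log,\gr,*}\circ\tDD_{\KN X}$. Granting that the morphism of the previous lemma is compatible with all three of these identifications, one obtains $p_{\log,\gr,*}\operatorname{cone}(\sheaf M)\cong \operatorname{cone}(p_\spf\sheaf M)$, and the latter vanishes by the induction hypothesis applied to the holonomic module $p_\spf\sheaf M$ on $X'$ (holonomic by Theorem~\ref{thm:projective_pf_preserves_holonomic}).

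It then remains to pass from the vanishing of $p_{\log,\gr,*}\operatorname{cone}(\sheaf M)$ to that of $\operatorname{cone}(\sheaf M)$ itself. Since $\operatorname{cone}(\sheaf M)$ is supported on $\hat Z$ and the restriction of $p_{\log}$ to $\hat Z$ is finite and strict, base-changing to a point of $\KN{X'}$ (exactly as at the end of the proof of Theorem~\ref{thm:finiteness}) shows that $p_{\log,\gr,*}\operatorname{cone}(\sheaf M) = 0$ forces $\res{\operatorname{cone}(\sheaf M)}{\hat Z} = 0$; hence $\operatorname{cone}(\sheaf M)$ vanishes over every point of $Z$ as well, completing the induction. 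The main obstacle is the compatibility asserted in the previous paragraph: one must verify that the evaluation/trace morphism defining the natural transformation $\tDR_X\DVerdier_X\to\tDD_{\KN X}\tDR_X$ is carried to the analogous morphism on $X'$ under the pushforward isomorphisms of Proposition~\ref{prop:DR_commutes_with_proper_pushforward}, Theorem~\ref{thm:proper_duality} and graded Poincar\'e--Verdier duality, which amounts to tracing how the various adjunctions and the trace map for $p$ interact; the perfectness of the evaluation pairing in the connection case is the only other point requiring genuine verification.
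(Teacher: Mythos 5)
Your overall strategy is the same as the paper's: reduce to the analytic case, run the induction on $\dim X$ exactly as in the proof of Theorem~\ref{thm:finiteness} to reduce to integrable connections, and treat the connection case separately. Your unpacking of the inductive step (the cone is supported on $\hat Z$, it is killed by $p_{\log,\gr,*}$ by the induction hypothesis via the three pushforward compatibilities, and finiteness plus strictness of $p_{\log}$ on $\hat Z$ makes that pushforward conservative) is a faithful and more explicit rendering of what the paper leaves implicit, and you are right to flag the compatibility of the duality morphism with the pushforward isomorphisms as the point needing verification.

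The genuine gap is in your base case. You propose to prove that the evaluation pairing is perfect by checking it on the open stratum $X^* = X \setminus X^1$ and then \enquote{propagating} to all of $\KN X$ using Ogus's description of coherent $\Clog_X$-modules. But restriction to $\tau_X^{-1}(X^*)$ is not conservative on coherent $\Clog_X$-modules: for instance the summands $\Clog_{X^1_i}$ of the dualizing complex restrict to zero on $X^*$, and a twist map $\Clog_X\langle\lambda\rangle \to \Clog_X$ can be an isomorphism over $X^*$ without being one globally. So a morphism that is an isomorphism over the open stratum can fail to be one over the boundary, which is exactly where all the content of the statement lies; as written, the propagation step does not go through. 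The paper avoids this entirely: for an integrable connection $\sheaf M$ with $\sheaf F = \tDR_X(\sheaf M)$, Ogus's theorem gives a natural identification $\tau^*\sheaf M \cong \tOlog_X \otimes_{\Clog_X} \sheaf F$, and then tensor--hom adjunction yields
\[
    \sheafHom_{\tDlog_X}(\tau^*\sheaf M,\, \tau^*\twist_X)
    \cong
    \sheafHom_{\Clog_X}\bigl(\sheaf F,\, \sheafHom_{\tDlog_X}(\tOlog_X,\, \tau^*\twist_X)\bigr)
    \cong
    \tDD_{\KN X}\sheaf F
\]
directly, with no separate perfectness check. Replacing your open-stratum argument by this adjunction computation closes the gap and makes your proof essentially coincide with the paper's.
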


\begin{proof}
    As in the classical situation, taking duality commutes with analytification. 
    Hence it suffices to prove the theorem for analytic log varieties.
    Then, as in the proof of Theorem~\ref{thm:finiteness}, one uses induction to reduce to the case of an integrable connection, which is treated in the following lemma.
\end{proof}

\begin{Lemma}\label{thm:DR_and_duality_for_connections}
    Let $X$ be a smooth idealized analytic log variety and let $\sheaf M$ be an integrable connection on $X$
    Then the  canonical morphism $\tDR_X \circ \DVerdier_X (\sheaf M) \to \tDD_{\KN X} \circ \tDR_X (\sheaf M)$ is an isomorphism.
\end{Lemma}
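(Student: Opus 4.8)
The plan is to reduce the assertion to a local, stalkwise statement and then to feed it into Ogus's Riemann--Hilbert equivalence. First I would exploit the identifications already made in constructing the canonical morphism: for an integrable connection one has $\tDR_X \DVerdier_X \sheaf M \cong \sheafHom_{\tDlog_X}(\tau^*\sheaf M,\, \tau^*\twist_X)$ (by Lemma~\ref{lem:tau*_and_duality} together with Lemma~\ref{lem:tau*_and_tensor}), while $\tDR_X \twist_X \cong \tlogdc_{\KN X}$ (shown above via Corollary~\ref{cor:complex_for_twist} and Proposition~\ref{prop:KN_dc}), so that the target is $\tDD_{\KN X}\tDR_X\sheaf M = \sheafHom_{\Clog_X}(\tDR_X\sheaf M,\, \tDR_X\twist_X)$. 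By Corollary~\ref{cor:complex_for_twist} the object $\twist_X$ is a bounded complex whose terms $\phi_{k,*}\sO_{\tilde X^k}$ are $\sO_X$-coherent, hence are themselves integrable connections. Since $\tDR_X$ is exact on connections (Lemma~\ref{lem:DR_and_Ogus}) and both sides of the morphism are triangulated in the second argument, the stupid filtration of $\twist_X$ reduces the problem to showing, for two integrable connections $\sheaf M$ and $\sheaf N$, that the natural map
\[
    \sheafHom_{\tDlog_X}(\tau^*\sheaf M,\, \tau^*\sheaf N) \to \sheafHom_{\Clog_X}(\tDR_X\sheaf M,\, \tDR_X\sheaf N)
\]
is an isomorphism, which is a local question on $\KN X$.

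Next I would rewrite the left-hand side. The internal hom $\sheafHom_{\tOlog_X}(\tau^*\sheaf M,\, \tau^*\sheaf N)$ carries the tensor-connection of Proposition~\ref{prop:hom_module_struct}, and taking $\tDlog_X$-horizontal sections identifies $\tDlog_X$-linear homomorphisms with the de Rham complex of this connection. Using Lemma~\ref{lem:tau*_and_hom} and Lemma~\ref{lem:DR_as_Hom} one obtains
\[
    \sheafHom_{\tDlog_X}(\tau^*\sheaf M,\, \tau^*\sheaf N) \cong \sheafHom_{\tDlog_X}\bigl(\tOlog_X,\, \tau^*\sheafHom_{\sO_X}(\sheaf M,\sheaf N)\bigr) \cong \tDR_X\bigl(\sheafHom_{\sO_X}(\sheaf M,\sheaf N)\bigr)[-\logdim X].
\]
Thus the claim becomes the compatibility of $\tDR_X$ with internal hom of connections, namely $\tDR_X(\sheafHom_{\sO_X}(\sheaf M,\sheaf N)) \cong \sheafHom_{\Clog_X}(\tDR_X\sheaf M,\, \tDR_X\sheaf N)[\logdim X]$. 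This is a monoidality property of Ogus's functor $\sheaf V_X$ of Lemma~\ref{lem:DR_and_Ogus}: on the open stratum $X^* = X \setminus X^1$, where $\tOlog_X = \sO_{X^*}$ and $\Clog_X = \CC$, it is the classical compatibility of the de Rham functor with $\sheafHom$, while along the deeper strata it must be checked on stalks using the explicit local descriptions of $\tOlog_X$, $\Clog_X$ and of coherent $\Clog_X$-modules recalled in the proof of Proposition~\ref{prop:KN_dc}.

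The main obstacle is precisely this last step. The connections $\sheaf M$ and $\sheaf N$ need not be locally free over $\sO_X$ — indeed the terms of $\twist_X$ are supported on strata — so the internal hom cannot be replaced by a tensor product with a dual, and the grading by $\Lambda = \CC \otimes \logMbargp_X$ interacts nontrivially with the monodromy operator $\Psi$ along the fibres of $\tau_X$. I would handle this by decomposing $\tau^*\sheaf M$ and $\tau^*\sheaf N$ into their $\Lambda$-graded pieces built from the standard $\tOlog_X$-modules, and then matching, degree by degree, the horizontal sections of $\sheafHom_{\tOlog_X}(\tau^*\sheaf M,\, \tau^*\sheaf N)$ with the corresponding $\Clog_X$-linear homomorphisms of de Rham complexes. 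Concretely, this amounts to checking that the pairing $\tDR_X\sheaf M \otimes_{\Clog_X} \tDR_X\DVerdier_X\sheaf M \to \tlogdc_{\KN X}$ is perfect, which one verifies stratum by stratum against the explicit model \eqref{eq:KN_tdc} for $\tlogdc_{\KN X}$; biduality for $\sD_X$-modules (Theorem~\ref{thm:rigid_dualizing_complex}) and the corresponding self-duality of $\tlogdc_{\KN X}$ then guarantee that a perfect pairing in one direction forces the canonical morphism to be an isomorphism.
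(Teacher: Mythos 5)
Your reductions are all legitimate — passing to the terms of $\twist_X$ (which are indeed integrable connections) and rewriting the source as $\sheafHom_{\tDlog_X}(\tOlog_X,\,\tau^*\sheafHom_{\sO_X}(\sheaf M,\sheaf N))$ — but the argument stops exactly where the real content begins. The claimed compatibility $\tDR_X(\sheafHom_{\sO_X}(\sheaf M,\sheaf N)) \cong \sheafHom_{\Clog_X}(\tDR_X\sheaf M,\,\tDR_X\sheaf N)[\logdim X]$, equivalently the perfectness of the pairing $\tDR_X\sheaf M \otimes_{\Clog_X}\tDR_X\DVerdier_X\sheaf M \to \tlogdc_{\KN X}$, is asserted to be checkable \enquote{stratum by stratum on stalks} without any actual argument; as you yourself observe, $\sheaf M$ and $\sheaf N$ need not be locally free, so there is no dual to reduce to, and the interaction of the $\Lambda$-grading with the monodromy condition is precisely the delicate point. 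The closing appeal to biduality is circular: showing the pairing is perfect \emph{is} showing the canonical morphism is an isomorphism, and neither Theorem~\ref{thm:rigid_dualizing_complex} nor self-duality of $\tlogdc_{\KN X}$ upgrades a one-sided statement to the isomorphism you need.

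The missing ingredient is the quasi-inverse direction of Ogus's Riemann--Hilbert correspondence: for an integrable connection $\sheaf M$ with $\sheaf F = \tDR_X(\sheaf M)$ one has a natural identification $\tau^*\sheaf M \cong \tOlog_X \otimes_{\Clog_X} \sheaf F$ (up to the shift; this is Theorem~3.4.2(3a) of Ogus's paper). Granting this, the lemma is a one-line tensor--hom adjunction,
\[
    \sheafHom_{\tDlog_X}(\tOlog_X \otimes_{\Clog_X}\sheaf F,\, \tau^*\twist_X)
    \cong \sheafHom_{\Clog_X}\bigl(\sheaf F,\, \sheafHom_{\tDlog_X}(\tOlog_X,\, \tau^*\twist_X)\bigr)
    \cong \tDD_{\KN X}\sheaf F,
\]
using Lemma~\ref{lem:DR_as_Hom} and the identification $\tDR_X(\twist_X) \cong \tlogdc_{\KN X}$ that you already quoted; this is the paper's proof and it makes the reduction to two connections and the stratumwise verification unnecessary. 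Without invoking this identification (or proving an equivalent statement by hand, which is essentially reproving part of Ogus's theorem), your final step is a genuine gap rather than a routine verification.
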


\begin{proof}
    Set $\sheaf F = \tDR(\sheaf M)$.
    Then by \cite[Theorem~3.4.2.3a]{Ogus:2003:LogarithmicRiemannHilbertCorrespondence}, there is a natural identification $\tau^*\sheaf M \cong \tOlog_X \otimes_{\Clog_X} \sheaf F$ and hence
    \begin{align*}
        \sheafHom_{\tDlog_X}(\tau^*\sheaf M,\, \tau^*\twist_X) & \cong
        \sheafHom_{\tDlog_X}(\tOlog_X \otimes_{\Clog_X} \sheaf F,\, \tau^*\twist_X) \\&  \cong
        \sheafHom_{\Clog_X}(\sheaf F,\, \sheafHom_{\tDlog_X}(\tOlog_X,\, \tau^*\twist_X)) \cong
        \tDD_{\KN X}\sheaf F.
        \qedhere
    \end{align*}
\end{proof}

\section{V-filtration and grading}

Let $\ul X$ be a classically smooth complex variety and $\ul D$ a smooth divisor on $X$.
Let $\sheaf J$ be the ideal defining $\ul D$.
This data induces a filtration on $\sD_{\ul X}$ given by
\[
    (V_k\sD_{\ul X})_x = \{ \theta \in \sD_{X,x} : \theta \sheaf J_x^i \subseteq \sheaf J_x^{i-k} \text{ for all } i \in \ZZ \},
\]
where one sets $\sheaf J^i = \sO_X$ for $i \le 0$.
If $X = (\ul X, \ul D)$ is the associated log variety, one notes that $V_0\sD_{\ul X} = \sD_X$.
We will always write $t$ for a local equation of $D$.

We fix a total order on $\CC$ compatible with the usual order on the real numbers.
Let $\sheaf M$ be any holonomic $\sD_{\ul X}$-module.
Then $\sheaf M$ has a canonical Kashiwara--Malgrange V-filtration which we normalize by the requirement that $(t\frac{\partial}{\partial t} + \alpha)$ acts nilpotently on $\gr_\alpha \sheaf M = V_\alpha \sheaf M / V_{<\alpha}\sheaf M$.
We refer to the classical texts \cite{MebkhoutSabbah:1989:DModulesEtCyclesEvanescents} and \cite{Sabbah:1987:DModulesEtCyclesEvanescents} for a detailed introduction to the theory of V-filtrations.

\begin{Lemma}\label{lem:V_holonomic}
    If $\sheaf M$ is a holonomic $\sD_{\underline X}$-module, then $V_\alpha\sheaf M$ is a holonomic $\sD_X$-module for any $\alpha \in \CC$.
\end{Lemma}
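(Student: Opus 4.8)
The plan is to reduce the log-holonomicity of $V_\alpha\sheaf M$ over $\sD_X = V_0\sD_{\ul X}$ to the classical holonomicity of the graded pieces of the $V$-filtration over $\sD_{\ul D}$, using the stratumwise criterion of Lemma~\ref{lem:i^!_to_stratum_and_hol}. First I would record that $V_\alpha\sheaf M$ is a coherent $\sD_X$-module: it is a $\sD_X$-module since $V_0\sD_{\ul X}\cdot V_\alpha\sheaf M\subseteq V_\alpha\sheaf M$, and its coherence over $V_0\sD_{\ul X}=\sD_X$ is one of the defining properties of a good $V$-filtration \cite{MebkhoutSabbah:1989:DModulesEtCyclesEvanescents,Sabbah:1987:DModulesEtCyclesEvanescents}. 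As holonomicity is local and is clear on $X\setminus D$ — there the log structure is trivial, $\sD_X=\sD_{\ul X}$, and $\res{V_\alpha\sheaf M}{X\setminus D}=\res{\sheaf M}{X\setminus D}$ is holonomic — it remains only to control the components of $\Ch(V_\alpha\sheaf M)$ lying over $D$.

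By Lemma~\ref{lem:i^!_to_stratum_and_hol} applied to the inclusion $i\colon D\hookrightarrow X$ of the stratum $X^1=D$ (with its induced, constant rank-one, log structure), together with the fact that any component of $\Ch(V_\alpha\sheaf M)$ meeting $\res{T^*X}{X\setminus D}$ already has log dimension $\logdim X$ by the argument above, it suffices to show that $i^!V_\alpha\sheaf M$ is holonomic over $\sD_D$; indeed this forces $\logdim\res{\Ch(V_\alpha\sheaf M)}{D}=\logdim X$, and combined with \cite[Theorem~3.18]{KT} every component then has the correct log dimension. By Lemma~\ref{lem:single_stratum_hol}, applicable since $D$ carries a constant rank-one log structure, this is in turn equivalent to $i^!V_\alpha\sheaf M$ being coherent and holonomic as a complex of $\sD_{\ul D}$-modules.

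To compute $i^!V_\alpha\sheaf M$ I would use $\sD_{X\from D}=\sD_D=\sD_X/t\sD_X$ together with the free resolution $0\to\sD_X\xrightarrow{\cdot t}\sD_X\to\sD_D\to0$, which identifies $i^!V_\alpha\sheaf M$ with the two-term complex $[\,V_\alpha\sheaf M\xrightarrow{t}V_\alpha\sheaf M\,]$; its cohomology sheaves are the kernel and cokernel of $t$ acting on $V_\alpha\sheaf M$. The cokernel $V_\alpha\sheaf M/tV_\alpha\sheaf M$ carries a finite filtration — finite because the $V$-filtration has only finitely many jumps in a bounded interval and $V_\alpha\sheaf M$ is $\sD_X$-coherent — whose successive quotients are subquotients of the graded pieces $\gr_\beta\sheaf M$, each of which is a holonomic $\sD_{\ul D}$-module on which $t\frac{\partial}{\partial t}+\beta$ acts nilpotently. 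The kernel of $t$ is contained in the $t$-torsion of $\sheaf M$, which is a holonomic $\sD_{\ul D}$-module supported on $D$. Hence both cohomology sheaves of $i^!V_\alpha\sheaf M$ are $\sD_{\ul D}$-holonomic, and the argument closes.

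The main obstacle is precisely this last reduction: it is where the special nature of the Kashiwara--Malgrange filtration — as opposed to an arbitrary good filtration, which would yield only coherence — is used to force the expected drop in dimension of the characteristic variety over $D$. Concretely, the nilpotence of $t\frac{\partial}{\partial t}+\beta$ on $\gr_\beta\sheaf M$ is what guarantees that the new components of $\Ch(V_\alpha\sheaf M)$ created along $D$ have dimension $\dim X-1$ rather than $\dim X$, so that their log dimension equals $\logdim X$. I would take care to invoke the classical holonomicity of the graded pieces \cite{MebkhoutSabbah:1989:DModulesEtCyclesEvanescents,Sabbah:1987:DModulesEtCyclesEvanescents} rather than re-prove it.
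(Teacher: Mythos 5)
Your proposal is correct and follows essentially the same route as the paper: reduce holonomicity along $D$ to the classical $\sD_{\ul D}$-holonomicity of the graded pieces $\gr_\beta\sheaf M$ and pass back to $\sD_D$ via Lemma~\ref{lem:single_stratum_hol}. The only differences are cosmetic: the paper invokes the $*$-restriction criterion of \cite[Lemma~3.17]{KT} rather than the $!$-restriction of Lemma~\ref{lem:i^!_to_stratum_and_hol} (for a divisor both are the Koszul complex $[V_\alpha\sheaf M\xrightarrow{t}V_\alpha\sheaf M]$ up to shift), and it sidesteps your analysis of $\ker t$ and of $V_{\alpha-1}\sheaf M/tV_\alpha\sheaf M$ by first treating $\alpha<0$, where $t$ is injective and $tV_\alpha\sheaf M=V_{\alpha-1}\sheaf M$, and then realizing a general $V_\alpha\sheaf M$ as a finite iterated extension of $V_{\alpha'}\sheaf M$, $\alpha'<\min(0,\alpha)$, by graded pieces.
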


\begin{proof}
    By definition, each $V_\alpha\sheaf M$ is $\sD_X$-coherent and $\res{V_\alpha\sheaf M}{X\setminus X^1}$ is holonomic.
    
    By \cite[Corollaire~4.6.3]{MebkhoutSabbah:1989:DModulesEtCyclesEvanescents} each $\res{\gr_\alpha \sheaf M}{X^1}$ is holonomic over $\sD_{\ul X^1}$ and hence by Lemma~\ref{lem:single_stratum_hol} also over $\sD_{X^1}$.
    
    Let us first assume that $\alpha < 0$.
    By \cite[Lemma~3.17]{KT}, it suffices to check that each $\sO_{X^1} \otimes_{\sO_X} V_\alpha\sheaf M$ is $\sD_{X^1}$-holonomic.
    Then $t\colon V_\alpha\sheaf M \to V_{\alpha-1}\sheaf M$ is an isomorphism, so that $\sO_{X^1} \otimes_{\sO_X} V_\alpha\sheaf M$ is a finite sum of modules of the form $\gr_\mu \sheaf M$ for $\alpha-1 < \mu \le \alpha$.
    Hence it is holonomic.
    
    For the general case fix some $\alpha' < \min(0,\alpha)$.
    Then $V_\alpha\sheaf M$ is a finite iterated extension of $V_{\alpha'}\sheaf M$ by sheaves of the form $\gr_\mu\sheaf M$ and hence is holonomic.
\end{proof}

The $\Lambda$-grading on $\tDR_X(\sheaf M)$ induces a filtration, which we will also denote by $V_\lambda$.
The main theorem of this section is as follows.

\begin{Theorem}\label{thm:filtrations_compatible}
    Let $\sheaf M$ be a holonomic $\sD_{\ul X}$-module.
    Then $\tDR_X(V_\alpha \sheaf M) = V_\alpha(\tDR_X\sheaf M)$ for any $\alpha \in \CC \cong \Lambda(X)$.
\end{Theorem}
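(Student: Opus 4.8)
The plan is to realise both sides as subobjects of $\tDR_X\sheaf M$ and to match them through the associated graded of the V-filtration. First I would observe that the statement is local on $\ul X$ and trivial away from $\ul D$ (there the V-filtration is constant and the $\Lambda$-grading is concentrated in degree $0$), so I may work near a point of $\ul D$ and write $\ul X\cong\ul D\times\Delta$ for a disc $\Delta$ with coordinate $t$, and $\pi\colon\ul X\to\ul D$ the projection. Using the description $\tDR_X\sheaf M\cong\tOmegalogi{\bullet}_X\otimes_{\tOlog_X}\tau^*\sheaf M[\logdim X]$ (Corollary~\ref{cor:tau*canon_resolution}), one checks that the logarithmic de Rham differential preserves the $\Lambda$-grading, since $t\frac{\partial}{\partial t}(t^\gamma)=\gamma t^\gamma$ and $\frac{dt}{t}$ sits in degree $0$. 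Because $\Clog_X$ lives in non-positive $\Lambda$-degrees, the degree-$\le\alpha$ part $V_\alpha(\tDR_X\sheaf M)$ is then a well-defined graded subcomplex; this is the filtration on the right-hand side.

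The heart of the argument is the computation of the graded pieces. For each $\beta$ the module $\gr_\beta\sheaf M=V_\beta\sheaf M/V_{<\beta}\sheaf M$ is holonomic over $\sD_X$ (Lemma~\ref{lem:V_holonomic}), is annihilated by $t$, and carries a nilpotent $t\frac{\partial}{\partial t}+\beta$. I claim $\tDR_X(\gr_\beta\sheaf M)$ is concentrated in $\Lambda$-grading degree $\beta$. To prove it I would split $\Omega^1_X=\sO_X\frac{dt}{t}\oplus\pi^*\Omega^1_{\ul D}$ and view the log de Rham complex as the Koszul complex for the commuting operators $t\frac{\partial}{\partial t}$ and the $\ul D$-directions acting on $\tOlog_X\otimes_{\tau^{-1}\sO_X}\tau^{-1}\gr_\beta\sheaf M$. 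On the component of $\Lambda$-degree $\gamma$, spanned over $\KN{D}$ by $t^\gamma$ times powers of $\log t$, the operator $\nabla_{t\partial_t}$ acts as $(\gamma-\beta)$ plus a nilpotent; it is invertible unless $\gamma=\beta$, so the cohomology in the $\frac{dt}{t}$-direction is supported in degree $\gamma=\beta$. Since the $\ul D$-directions and the $\log t$ terms leave the $t$-grading unchanged, the whole of $\tDR_X(\gr_\beta\sheaf M)$ sits in degree $\beta$. I expect this graded-piece computation to be the main obstacle, as it requires turning the log de Rham complex into a Koszul complex and carefully tracking the $\Lambda$-grading through the interaction of the nilpotent/$\log t$ terms, the transverse directions, and the monodromy projection implicit in landing in $\catClogModMon{X}$.

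From this concentration I would deduce, by dévissage along the V-filtration, that $\tDR_X(V_\alpha\sheaf M)$ is concentrated in $\Lambda$-degrees $\le\alpha$ and $\tDR_X(\sheaf M/V_\alpha\sheaf M)$ in degrees $>\alpha$. For the quotient this is immediate: $\sheaf M/V_\alpha\sheaf M=\varinjlim_{\beta>\alpha}V_\beta\sheaf M/V_\alpha\sheaf M$ is the filtered colimit of holonomic modules whose (finitely many, by local finiteness of the jumps) graded pieces $\gr_\gamma\sheaf M$, $\alpha<\gamma\le\beta$, lie in degrees $>\alpha$, and $\tDR_X$ commutes with such colimits. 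For $V_\alpha\sheaf M$ I would instead argue stalkwise, using Theorem~\ref{thm:finiteness} to reduce to finitely generated $\Clog_X$-modules, whose nonzero degrees are bounded above: the top degrees are governed by the graded pieces $\gr_\beta\sheaf M$ with $\beta\le\alpha$, while multiplication by the negative-degree generator of $\Clog_X$ only lowers degrees, so all degrees are $\le\alpha$.

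Finally I would apply $\tDR_X$ to the short exact sequence $0\to V_\alpha\sheaf M\to\sheaf M\to\sheaf M/V_\alpha\sheaf M\to0$, obtaining a distinguished triangle $\tDR_X(V_\alpha\sheaf M)\to\tDR_X\sheaf M\to\tDR_X(\sheaf M/V_\alpha\sheaf M)\xrightarrow{+1}$. The connecting morphism is a grading-preserving map from an object concentrated in degrees $>\alpha$ to one concentrated in degrees $\le\alpha$, hence it vanishes; the triangle therefore splits, exhibiting $\tDR_X(V_\alpha\sheaf M)$ as the degree-$\le\alpha$ summand of $\tDR_X\sheaf M$. Taking the degree-$\le\alpha$ part of this splitting identifies $\tDR_X(V_\alpha\sheaf M)$ with $V_\alpha(\tDR_X\sheaf M)$, which is exactly the assertion of the theorem.
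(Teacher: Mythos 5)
Your overall architecture matches the paper's: the concentration of $\tDR_X(\gr_\beta\sheaf M)$ in $\Lambda$-degree $\beta$, proved by observing that $\nabla_{t\partial/\partial t}$ acts on the degree-$\gamma$ component as $(\gamma-\beta)$ plus a locally nilpotent operator and is therefore invertible for $\gamma\ne\beta$, is exactly the paper's Lemma~\ref{lem:filtration_building_blocks_and_grading}; the finiteness input is Theorem~\ref{thm:finiteness}; and your endgame of applying the (exact) degree-$\le\alpha$ truncation to the triangle coming from $0\to V_\alpha\sheaf M\to\sheaf M\to\sheaf M/V_\alpha\sheaf M\to 0$ is a clean variant of the paper's exhaustivity argument. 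Your treatment of $\sheaf M/V_\alpha\sheaf M$ as a filtered colimit of finite iterated extensions of graded pieces in degrees $>\alpha$ is also fine.

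The gap is in the claim that $\tDR_X(V_\alpha\sheaf M)$ is concentrated in degrees $\le\alpha$ (the paper's Lemma~\ref{lem:filtrations_subset}), which is the real content of the theorem. Your justification --- that ``the top degrees are governed by the graded pieces $\gr_\beta\sheaf M$ with $\beta\le\alpha$'' and that the negative-degree generator of $\Clog_X$ only lowers degrees --- is not an argument. The graded pieces only control the finite quotients $V_\alpha\sheaf M/V_\beta\sheaf M$; indeed, for $\mu>\alpha$ the d\'evissage triangle shows that $\tDR_X(V_\beta\sheaf M)_\mu\to\tDR_X(V_\alpha\sheaf M)_\mu$ is an isomorphism for every $\beta\le\alpha$, so a hypothetical component in degree $\mu>\alpha$ persists all the way down the filtration and is invisible to every graded piece. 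What closes this off in the paper is the extra input that $t\colon V_\alpha\sheaf M\to V_{\alpha-1}\sheaf M$ is an isomorphism for $\alpha<0$: by centrality of $s=tx_{-1}$ (Lemma~\ref{lem:dt=td}) multiplication by $s$ induces an isomorphism $\tDR_X(V_\alpha\sheaf M)_{\mu+1}\isoto\tDR_X(V_{\alpha-1}\sheaf M)_\mu$, and combining this with the triangle for $V_{\alpha-1}\sheaf M\subseteq V_\alpha\sheaf M$ and the existence of a maximal nonvanishing degree (guaranteed by Theorem~\ref{thm:finiteness}) forces $\tDR_X(V_\alpha\sheaf M)_\mu=0$ for all $\mu>\alpha$; general $\alpha$ is then reduced to $\alpha<0$ by finitely many extensions by graded pieces. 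Without this mechanism the upper bound on the grading of $\tDR_X(V_\alpha\sheaf M)$ does not follow, and the rest of your argument has nothing to truncate against.
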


Identify the sections of $\Clog_X$ locally over points of $D$ with $\CC[s]$ with $\deg s = -1$.
Further locally we have $\Olog_X \cong \tau^{-1}\sO_X[\sigma]$ (with $\deg \sigma = 0$) and $\tOlog_X$ is obtained from $\Olog$ by adding $x_\lambda$ in degree $\lambda$ for all $\lambda \in \CC$.
We note that the inclusion $\Clog_X \subseteq \tOlog_X$ identifies $s^n$ with $t^nx_{-n}$.

\begin{Lemma}\label{lem:dt=td}
    Let $\sheaf M$ be any $\sD_X$-module.
    The differentials of the complex $\tDR_X(\sheaf M)$ are $s$-linear.
\end{Lemma}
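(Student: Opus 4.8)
The plan is to reduce the statement to the Leibniz rule together with the observation that $s$ is a flat section. First I would pass to the explicit model of the de Rham complex provided by Corollary~\ref{cor:tau*canon_resolution}, namely
\[
    \tDR_X(\sheaf M) \cong \tOmegalogi{\bullet}_X \otimes_{\tOlog_X} \tau^*\sheaf M[\logdim X],
\]
whose differentials are the logarithmic de Rham differentials induced by the $\tDlog_X$-module structure and therefore satisfy the Leibniz rule. Here each term $\tOmegalogi{i}_X \otimes_{\tOlog_X} \tau^*\sheaf M$ is regarded as a $\Clog_X$-module through the inclusion $\Clog_X \hookrightarrow \tOlog_X$, so that multiplication by $s$ is simply multiplication by the image of $s$ in $\tOlog_X$.

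The key point, which I would record next, is that $\Clog_X$ is precisely the kernel of the differential $\tOlog_X \to \tOmegalogi{1}_X$; hence every local section of $\Clog_X$, and in particular $s$, is annihilated by $d$. Locally one may verify this directly from the identifications set up above: under $\Clog_X \subseteq \tOlog_X$ the element $s$ corresponds to $t x_{-1}$, and since $d(t) = t\,\tfrac{dt}{t}$ while $d(x_{-1}) = -x_{-1}\,\tfrac{dt}{t}$, the two contributions to $d(t x_{-1})$ cancel.

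With $ds = 0$ in hand, the Leibniz rule gives, for any local section $\omega \otimes m$,
\[
    d\bigl(s \cdot (\omega \otimes m)\bigr) = ds \wedge (\omega \otimes m) + s \cdot d(\omega \otimes m) = s \cdot d(\omega \otimes m),
\]
so the differential commutes with multiplication by $s$. I do not expect a serious obstacle here: the only step requiring care is the correct identification of the $\Clog_X$-module structure on the complex with multiplication by the flat sections inside $\tOlog_X$, after which $s$-linearity is an immediate consequence of the flatness of $s$ and the Leibniz rule.
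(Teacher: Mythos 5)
Your proof is correct, but it takes a more hands-on route than the paper, whose entire argument is the single observation that $s$ lies in the center of $\tDlog_X$: the differentials of $\tau^*\canon_X \otimes_{\tDlog_X} \tau^*\sheaf M$ are induced by maps of right $\tDlog_X$-modules in the resolution \eqref{eq:tau*canon_resolution}, so they automatically commute with the action of any central element. You instead work in the explicit model $\tOmegalogi{\bullet}_X \otimes_{\tOlog_X} \tau^*\sheaf M$, identify $\Clog_X$ with $\ker\bigl(d\colon \tOlog_X \to \tOmegalogi{1}_X\bigr)$ so that $ds = 0$, and conclude by the Leibniz rule; your local verification $d(t x_{-1}) = dt\cdot x_{-1} - t x_{-1}\tfrac{dt}{t} = 0$ is accurate. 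The two arguments rest on the same underlying fact --- the paper itself notes elsewhere that $\ker\bigl(\tOlog_X \to \tOmegalogi{1}_X\bigr) = \Clog_X$ is by definition the center of $\tDlog_X$ --- but the centrality formulation is more robust, since it requires no identification of the explicit differential and applies verbatim to every term of the complex, whereas your version buys a concrete computation that makes the flatness of $s$ visible and is perhaps more illuminating in the local coordinates used throughout the V-filtration section.
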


\begin{proof}
    This is simply a consequence of the fact that $s$ is contained in the center of $\tDlog_X$
\end{proof}

\begin{Lemma}\label{lem:filtration_building_blocks_and_grading}
    Let $\sheaf M$ be a holonomic $\sD_X$-module, set-theoretically supported on $D$ and fix $\alpha \in \CC$.
    Assume that the minimal polynomial of the action of $t\frac{\partial}{\partial t} \in \sD_X$ on $\sheaf M$ is a power of $(X+\alpha)$.
    Then $\tDR_X(\sheaf M)$ is concentrated in $\Lambda$-degree $\alpha$.
\end{Lemma}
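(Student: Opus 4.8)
The plan is to compute $\tDR_X(\sheaf M)$ locally near $D$ and to show that its $\Lambda$-graded piece in every degree $\lambda \neq \alpha$ is acyclic (note that the individual terms of the complex do not vanish in such degrees, so this must be a genuinely cohomological statement). Since $\sheaf M$ is supported on $D$, its de Rham complex vanishes over $X^* = X \setminus X^1$, where $\Lambda = 0$; so it suffices to work in a neighbourhood of a point of $D$, where $\Lambda \cong \CC$. By the resolution in Corollary~\ref{cor:tau*canon_resolution} we have $\tDR_X(\sheaf M) \cong \tOmegalogi{\bullet}_X \otimes_{\tOlog_X} \tau^*\sheaf M[\logdim X]$, with differential the log connection $\nabla$ induced by the $\sD_X$-action. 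Choosing local log coordinates with $D = \{t = 0\}$, the log $1$-forms are $\frac{dt}{t}, dz_2, \dotsc, dz_n$ with dual log vector fields $t\frac{\partial}{\partial t}, \partial_{z_2}, \dotsc, \partial_{z_n}$, all of which preserve the $\Lambda$-grading. Using the local description $\tOlog_X = \tau^{-1}\sO_X[\sigma]\{x_\lambda\}$, the degree-$\lambda$ part of $\tau^*\sheaf M$ is $x_\lambda\,\tau^{-1}\sO_X[\sigma] \otimes_{\tau^{-1}\sO_X} \tau^{-1}\sheaf M$.

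Next I would compute the action of $t\frac{\partial}{\partial t}$ on this degree-$\lambda$ piece. Recall $t\frac{\partial}{\partial t}(x_\lambda) = \lambda x_\lambda$ and $t\frac{\partial}{\partial t}(\sigma) = 1$, so by the Leibniz rule
\[
    t\tfrac{\partial}{\partial t}\bigl(x_\lambda \sigma^k \otimes m\bigr) = \lambda\, x_\lambda \sigma^k \otimes m + k\, x_\lambda \sigma^{k-1}\otimes m + x_\lambda \sigma^k \otimes t\tfrac{\partial}{\partial t}m.
\]
By hypothesis $(t\frac{\partial}{\partial t} + \alpha)$ acts nilpotently on $\sheaf M$, so $t\frac{\partial}{\partial t}m = -\alpha m + (\text{nilpotent})$; and the operator $x_\lambda\sigma^k\otimes m \mapsto k\,x_\lambda\sigma^{k-1}\otimes m$ is locally nilpotent, since every section has bounded $\sigma$-degree. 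These two commuting locally nilpotent contributions assemble into a single locally nilpotent $N$, and on the degree-$\lambda$ piece
\[
    t\tfrac{\partial}{\partial t} = (\lambda - \alpha)\cdot\mathrm{id} + N.
\]
In particular, for $\lambda \neq \alpha$ the operator $t\frac{\partial}{\partial t}$ is invertible there, with inverse the locally finite Neumann series in $N/(\lambda-\alpha)$.

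Finally I would exploit the product structure of the de Rham complex. Separating off the $\frac{dt}{t}$-factor writes $\tOmegalogi{\bullet}_X$ as $K^\bullet \oplus K^\bullet \wedge \frac{dt}{t}$, where $K^\bullet$ is the Koszul (partial de Rham) complex for the commuting fields $\partial_{z_2}, \dotsc, \partial_{z_n}$ on $\tau^*\sheaf M$. Since $[t\frac{\partial}{\partial t}, \partial_{z_j}] = 0$, the operator $t\frac{\partial}{\partial t}$ is a chain endomorphism of $K^\bullet$, and $\tDR_X(\sheaf M)$ is, up to shift, the mapping cone of $t\frac{\partial}{\partial t}\colon K^\bullet \to K^\bullet$. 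Everything here is $\Lambda$-graded with degree-preserving differentials, so in each degree $\lambda \neq \alpha$ the previous step shows that this chain map is an isomorphism of complexes; hence its cone $\tDR_X(\sheaf M)_\lambda$ is acyclic. Therefore $\tDR_X(\sheaf M)$ is concentrated in degree $\alpha$.

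I expect the main obstacle to be the second step: pinning down the exact action of $t\frac{\partial}{\partial t}$ on $\tau^*\sheaf M$ in each graded piece and verifying that the $\sigma$-lowering part and the $\sheaf M$-nilpotent part combine into a single locally nilpotent $N$, so that $(\lambda - \alpha)\cdot\mathrm{id} + N$ is genuinely invertible for $\lambda \neq \alpha$. Once that is in hand, the reduction to a neighbourhood of $D$ and the identification of $\tDR_X(\sheaf M)$ with the mapping cone of $t\frac{\partial}{\partial t}$ are formal.
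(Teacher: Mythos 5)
Your proposal is correct and rests on the same key computation as the paper's own proof: on the degree-$\lambda$ graded piece the operator $t\frac{\partial}{\partial t}$ acts as $(\lambda-\alpha)\cdot\mathrm{id}$ plus a locally nilpotent operator (the $\sigma$-lowering part combined with the nilpotent part of the action on $\sheaf M$), hence is invertible for $\lambda\neq\alpha$. The paper draws the conclusion by an explicit two-step manipulation of cocycles (first subtracting exact forms to remove the $\frac{dt}{t}$-component, then using the top $\sigma$-degree term of the closedness condition), whereas your mapping-cone packaging of the same invertibility statement is an equivalent, and arguably cleaner, way to finish.
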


\begin{proof}
    The question is local, so we use the descriptions of $\Clog_X$ and $\tOlog_X$ given above.

    Let $\bar\psi$ be a local section of $H^{k-\dim X}(\tDR_X(\sheaf M))$, represented by a form
    \[
        \psi = \sum_{\substack{\mu \in \CC\\ \ell \in \NN}} \omega_{\mu,\ell} \otimes x_\mu \otimes \sigma^{\ell} \otimes m_{\mu,\ell}, \qquad 
        \omega_{\mu,\ell} \in \Omega^k,\, m_{\mu,\ell} \in \sheaf M
    \]
    with $d^k(\psi) = 0$, where $d^\bullet$ is the differential of the complex $\tDR_X(\sheaf M)$.
    We have to show that there exists a representative of $\bar\psi$ which is entirely contained in degree $\alpha$, i.e.~all of whose terms contain $x_{\alpha}$.

    Since the differential $d^\bullet$ of $\tDR_X(\sheaf M)$ respects the grading, we can assume that $\psi$ is contained in a single degree $\mu$.
    Assume that $\mu \ne \alpha$.

    Let now $\phi$ be any $(k-1)$-form of $\Lambda$-degree $\mu$ in $\tDR_X(\sheaf M)$,
    \[
        \phi = \sum_{\ell \in \NN} \omega'_{\ell} \otimes x_\mu \otimes \sigma^{\ell} \otimes m_{\ell}'.
    \]
    Assume that the differential forms $\omega'_{\ell}$ do not contain $\frac{dt}{t}$, i.e.~$(t\frac{\partial}{\partial t},\omega'_\ell)=0$.
    Then,
    \begin{align*}
        d^{k-1}(\phi)  &= 
        \sum_{\ell} \frac{dt}{t} \wedge \omega'_{\ell} \otimes \biggl(
            \mu x_\mu \otimes \sigma^{\ell} \otimes m_{\ell}' +
            x_\mu \otimes \ell \sigma^{\ell - 1} \otimes m_{\ell}' +
            x_\mu \otimes \sigma^{\ell} \otimes t\frac{\partial}{\partial t} \cdot m_{\ell}'
        \biggr)\\
        & =
        \sum_{\ell} \frac{dt}{t} \wedge \omega'_{\ell} \otimes \biggl(
            x_\mu \otimes \sigma^{\ell} \otimes (t\frac{\partial}{\partial t} + \mu) \cdot m_{\ell}' +
            x_\mu \otimes \ell \sigma^{\ell - 1} \otimes m_{\ul\ell}'
        \biggr).\\
    \end{align*}
    By assumption the operator $t\frac{\partial}{\partial t} + \mu$ is invertible, and hence there exists sections $m'_{\ell}$ of $\sheaf M$ such that $(t\frac{\partial}{\partial t} + \mu) \cdot m'_{\ell} = m_{\ell}$.
    Thus we can recursively (on $\ell$) modify $\psi$ such that it does not contain a term containing $\frac{dt}{t}$.

    We then easily compute
    \[
        \bigl(t\frac{\partial}{\partial t},\, d(\psi)\bigr) =
        \sum_{\ell} \frac{dt}{t} \wedge \omega_{\ell} \otimes \biggl(
            x_\mu \otimes \sigma^{\ell} \otimes (t\frac{\partial}{\partial t} + \mu) m_{\ell} +
            x_\mu \otimes \ell \sigma^{\ell - 1} \otimes m_{\ul\ell}'
        \biggr).\\
    \]
    This has to vanish by assumption.
    Let $\ell'$ be the largest appearing  exponent, which exists as $\sheaf M$ is holonomic.
    Then necessarily $(t\frac{\partial}{\partial t} + \mu) m_{\ell'} = 0$.
    But this is only possible if $\mu = \alpha$.
\end{proof}

\begin{Lemma}\label{lem:filtrations_subset}
    Let $\sheaf M$ be a holonomic $\sD_X$-module.
    Then $\tDR_X(V_\alpha \sheaf M) \subseteq V_\alpha(\tDR_X(\sheaf M))$ for every $\alpha \in \CC$.
\end{Lemma}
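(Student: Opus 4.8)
The plan is to reduce the asserted inclusion to a statement about the $\Lambda$-grading of a single complex. By Lemma~\ref{lem:V_holonomic} the subsheaf $V_\alpha\sheaf M$ is itself a holonomic $\sD_X$-module, so $\tDR_X(V_\alpha\sheaf M)$ is defined and the inclusion $V_\alpha\sheaf M\hookrightarrow\sheaf M$ induces a morphism of $\Lambda$-graded complexes $\tDR_X(V_\alpha\sheaf M)\to\tDR_X(\sheaf M)$, whose image is what the lemma denotes $\tDR_X(V_\alpha\sheaf M)$. Since this morphism preserves the $\Lambda$-grading and $V_\alpha(\tDR_X(\sheaf M))$ is by definition the sum of the graded pieces in degrees $\le\alpha$, it suffices to prove that $\tDR_X(V_\alpha\sheaf M)$ has cohomology only in degrees $\mu\le\alpha$. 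The question is local and trivial away from $D$ (there $V_\alpha\sheaf M=\sheaf M$ and $\Lambda=0$), so I would work analytically near a point of $D$ using the explicit local presentations of $\Clog_X$, $\Olog_X$ and $\tOlog_X$ recorded above.

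The key observation is that the defining property of the Kashiwara--Malgrange filtration bounds the $t\frac{\partial}{\partial t}$-eigenvalues on $V_\alpha\sheaf M$ from below: every subquotient $\gr_\beta\sheaf M$ with $\beta\le\alpha$ carries the nilpotent operator $t\frac{\partial}{\partial t}+\beta$, so the generalized eigenvalues of $t\frac{\partial}{\partial t}$ on $V_\alpha\sheaf M$ all lie in $\{-\beta:\beta\le\alpha\}\subseteq[-\alpha,\infty)$. Consequently, for every $\mu>\alpha$ the operator $t\frac{\partial}{\partial t}+\mu$ has all eigenvalues bounded below by $\mu-\alpha>0$ and hence acts invertibly on $V_\alpha\sheaf M$. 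This is precisely the input that drives the cocycle computation of Lemma~\ref{lem:filtration_building_blocks_and_grading}: with this invertibility in hand that argument applies in each fixed degree $\mu>\alpha$, since the de Rham differential preserves the grading (so one may fix $\mu$) and is $\Clog_X$-linear (Lemma~\ref{lem:dt=td}). One first clears all terms containing $\frac{dt}{t}$ by solving $(t\frac{\partial}{\partial t}+\mu)m'=m$, and then the vanishing of the top $\sigma$-power coefficient of $(t\frac{\partial}{\partial t},\,d\psi)$ — the number of $\sigma$-powers being bounded by holonomicity of $V_\alpha\sheaf M$ — forces the representative to vanish. Thus $\tDR_X(V_\alpha\sheaf M)$ is concentrated in degrees $\le\alpha$, as required. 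Note that, unlike in Lemma~\ref{lem:filtration_building_blocks_and_grading}, I only claim invertibility, and hence vanishing, for $\mu>\alpha$; for $\mu\le\alpha$ the operator need not be invertible and the cohomology is in general nonzero.

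The main obstacle will be the surjectivity half of this invertibility in the analytic category, injectivity being immediate from the eigenvalue bound. Solving $(t\frac{\partial}{\partial t}+\mu)m'=m$ requires producing a solution $m'$ that again lies in the coherent subsheaf $V_\alpha\sheaf M$ rather than merely in $\sheaf M$ or its localization. I would handle this by dévissage along the filtration — on each $\gr_\beta\sheaf M$ with $\beta\le\alpha$ the operator is the product of a unipotent operator and the nonzero scalar $\mu-\beta$, hence invertible — together with the separatedness and local completeness of the V-filtration on $V_\alpha\sheaf M$; the uniform lower bound $\mu-\beta\ge\mu-\alpha>0$ on the scalar parts is what guarantees that the graded-wise solutions assemble to a convergent solution in $V_\alpha\sheaf M$. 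Alternatively, this bijectivity can be extracted directly from the foundational results on V-filtrations in \cite{MebkhoutSabbah:1989:DModulesEtCyclesEvanescents,Sabbah:1987:DModulesEtCyclesEvanescents}.
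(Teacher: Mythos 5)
Your reduction is the right one --- the lemma amounts to showing $\tDR_X(V_\alpha\sheaf M)_\mu=0$ for $\mu>\alpha$ --- and your instinct that the invertibility of $t\frac{\partial}{\partial t}+\mu$ is what should drive a Lemma~\ref{lem:filtration_building_blocks_and_grading}-type computation is also correct. The gap is precisely the step you flag as the ``main obstacle'': surjectivity of $t\frac{\partial}{\partial t}+\mu$ on all of $V_\alpha\sheaf M$. The dévissage you propose runs over \emph{infinitely} many graded pieces $\gr_\beta\sheaf M$, $\beta\le\alpha$, and the successive corrections live in $V_{\alpha-n}\sheaf M=t^nV_\alpha\sheaf M$; assembling them requires completeness of $V_\alpha\sheaf M$ for this topology. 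But $V_\alpha\sheaf M$ is a coherent \emph{analytic} sheaf, and such sheaves are not $t$-adically complete (already $\sO_{\CC}$ is not), so ``separatedness and local completeness'' gives you injectivity but not surjectivity. Making the formal solution converge would require genuine norm estimates, and the statement in this generality (invertibility on the full $V_\alpha\sheaf M$ rather than on a finite subquotient $V_\alpha\sheaf M/V_\beta\sheaf M$ or on $\gr_\beta\sheaf M$) is not something you can simply quote from \cite{MebkhoutSabbah:1989:DModulesEtCyclesEvanescents,Sabbah:1987:DModulesEtCyclesEvanescents}. Relatedly, the phrase ``generalized eigenvalues of $t\frac{\partial}{\partial t}$ on $V_\alpha\sheaf M$'' is only meaningful on the finite subquotients; no nonzero section of $V_\alpha\sheaf M$ need be annihilated by a polynomial in $t\frac{\partial}{\partial t}$.

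The paper's proof is designed to avoid exactly this infinite dévissage, and it is worth seeing how. For $\alpha<0$ one argues by contradiction: Theorem~\ref{thm:finiteness} (finite generation of the stalks of $\tDR_X$ of a holonomic module over $\Clog_X$, applicable to $V_\alpha\sheaf M$ by Lemma~\ref{lem:V_holonomic}) yields a \emph{maximal} degree $\mu>\alpha$ with $\tDR_X(V_\alpha\sheaf M)_\mu\ne 0$. The isomorphism $t\colon V_\alpha\sheaf M\isoto V_{\alpha-1}\sheaf M$ together with the $s$-linearity of the de Rham differential (Lemma~\ref{lem:dt=td}) identifies $\tDR_X(V_{\alpha-1}\sheaf M)_\mu$ with $\tDR_X(V_\alpha\sheaf M)_{\mu+1}$, which vanishes by maximality; the remaining quotient $V_\alpha\sheaf M/V_{\alpha-1}\sheaf M$ is a \emph{finite} iterated extension of modules $\gr_\nu\sheaf M$, to which Lemma~\ref{lem:filtration_building_blocks_and_grading} applies directly. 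The long exact sequence then forces $\tDR_X(V_\alpha\sheaf M)_\mu=0$, a contradiction; general $\alpha$ reduces to $\alpha'<\min(\alpha,0)$ by another finite extension. So the only place the eigenvalue computation is ever invoked is on finite extensions of graded pieces, where invertibility is elementary, and the ``infinite depth'' of the filtration is absorbed by the finiteness theorem plus the shift-by-$t$ trick. If you want to salvage your route, you would need to either prove the convergence estimates for the inverse of $t\frac{\partial}{\partial t}+\mu$ on $V_\alpha\sheaf M$, or restructure along these lines.
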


\begin{proof}
    First consider the case that $\alpha < 0$.
    We assume for contradiction that there exists a $\mu > \alpha$ such that $\tDR_X(V_\alpha\sheaf M)_\mu \ne 0$.
    Pick a maximal such $\mu$.
    This exists because $V_\alpha\sheaf M$ is $\sD_X$-holonomic by Lemma~\ref{lem:V_holonomic} and hence $\tDR_X(V_\alpha\sheaf M)$ is locally finitely generated as a $\Clog_X$-module by Theorem~\ref{thm:finiteness}.

    Since $\alpha < 0$ multiplication by $t$ induces an isomorphism $V_\alpha\sheaf M \to V_{\alpha - 1}\sheaf M$.
    Thus Lemma~\ref{lem:dt=td} implies that multiplication by $s$ induces an isomorphism $\tDR_X(V_\alpha\sheaf M)_{\mu+1} \to \tDR_X(V_{\alpha-1}\sheaf M)_\mu$.
    Hence maximality of $\mu$ implies that $\tDR_X(V_{\alpha-1}\sheaf M)_\mu = 0$.

    The short exact sequence
    \[
        0 \to V_{\alpha-1}\sheaf M \to V_{\alpha}\sheaf M \to \rquot{V_\alpha\sheaf M}{V_{\alpha-1}\sheaf M} \to 0
    \]
    induces a distinguished triangle of $\CC$-modules on $\KN X$
    \[
        \tDR_X(V_{\alpha-1}\sheaf M)_\mu \to \tDR_X(V_{\alpha}\sheaf M)_\mu \to \tDR_X\left(\rquot{V_\alpha\sheaf M}{V_{\alpha-1}\sheaf M}\right)_\mu.
    \]
    We have already established that the first term vanishes.
    We note that the third term is a finite iterated extension of terms of the form $\tDR_X(\gr_\nu \sheaf M)$ for $\alpha-1 < \nu \le \alpha$.
    Thus, by Lemma~\ref{lem:filtration_building_blocks_and_grading}, $\tDR_X\bigl(\rquot{V_\alpha\sheaf M}{V_{\alpha-1}\sheaf M}\bigr)$ has no degree $\mu$ component and hence the third term vanishes too.
    Therefore also $\tDR_X(V_{\alpha}\sheaf M)_\mu = 0$ in contradiction to the assumption.

    Now if $\alpha$ is general index, we can pick $\alpha' < \min(\alpha,0)$.
    Then $V_\alpha\sheaf M$ is a finite iterated extension of $V_{\alpha'}\sheaf M$ by sheaves of the form $\gr_\mu\sheaf M$ for $\alpha' < \mu \le \alpha$.
    Hence the statement follows form the previous case and Lemma~\ref{lem:filtration_building_blocks_and_grading}.
\end{proof}

\begin{proof}[Proof of Theorem~\ref{thm:filtrations_compatible}]
    We already know that $\tDR_X(V_\alpha \sheaf M) \subseteq V_\alpha(\tDR_X(\sheaf M))$.
    Suppose there exists some $\alpha \in \CC$ for which this inequality is strict.
    Then the difference $\sheaf G$ is in $\Lambda$-degrees at most $\alpha$.
    But by Lemma~\ref{lem:filtration_building_blocks_and_grading} for any $\alpha' > \alpha$, $\tDR_X(V_{\alpha'} \sheaf M)$ agrees with $\tDR_X(V_{\alpha} \sheaf M)$ in $\Lambda$-degrees at most $\alpha$.
    Thus $\sheaf G \subseteq  V_{\alpha'}(\tDR_X(\sheaf M)) \setminus \tDR_X(V_{\alpha'} \sheaf M)$ for all $\alpha' > \alpha$.

    On the other hand, as $\tDR_X$ commutes with colimits, both sides are exhaustive filtrations of $\tDR_X(\sheaf M)$, contradicting what was just observed.
\end{proof}

\sloppy
\printbibliography

\end{document}